\newtheorem{remark}[theorem]{\sc Remark}
\newtheorem{example}[theorem]{\sc example}
\providecommand{\Div}{\operatorname{div}}          % Divergence
\providecommand{\curl}{\operatorname{{\bf curl}}}  % Standard curl operator
\renewcommand{\grad}{\operatorname{{\bf grad}}}       % standard gradient
\providecommand*{\Dist}[2]{\operatorname{dist}({#1};{#2})}   % distance
\providecommand*{\Dist}[2]{\Dist{#1}{#2}}
\providecommand*{\Span}[1]{\operatorname{Span}\left\{{#1}\right\}}     % Span
\providecommand{\Supp}{\operatorname{supp}}                            % support
\providecommand{\supp}{\Supp}
\newcommand{\Vb}{{\mathbf{b}}}
\newcommand{\Vf}{{\mathbf{f}}}
\newcommand{\Vp}{{\mathbf{p}}}
\newcommand{\Vu}{{\mathbf{u}}}
\newcommand{\Vv}{{\mathbf{v}}}
\newcommand{\Vw}{{\mathbf{w}}}
\newcommand{\Ba}{{\boldsymbol{a}}}
\newcommand{\Bb}{{\boldsymbol{b}}}
\newcommand{\Bn}{{\boldsymbol{n}}}
\newcommand{\Bp}{{\boldsymbol{p}}}
\newcommand{\Bq}{{\boldsymbol{q}}}
\newcommand{\Bx}{{\boldsymbol{x}}}
\newcommand{\By}{{\boldsymbol{y}}}
\newcommand{\Bz}{{\boldsymbol{z}}}
\newcommand{\VH}{{\mathbf{H}}}
\newcommand{\VU}{{\mathbf{U}}}
\newcommand{\VV}{{\mathbf{V}}}
\newcommand{\BH}{{\boldsymbol{H}}}
\newcommand{\BPi}{\mathbf{\Pi}}
\newcommand{\BPsi}{\mathbf{\Psi}}
\newcommand{\nubf}{\boldsymbol{\nu}}
\newcommand{\xibf}{\boldsymbol{\xi}}
\newcommand{\Pibf}{\boldsymbol{\Pi}}
\newcommand{\Psibf}{\boldsymbol{\Psi}}
\newcommand{\Phibf}{\boldsymbol{\Phi}}
\newcommand{\Ce}{{\cal E}}
\newcommand{\Ci}{{\cal I}}
\newcommand{\Cl}{{\cal L}}
\newcommand{\Cm}{{\cal M}}
\newcommand{\Cn}{{\cal N}}
\newcommand{\Cp}{{\cal P}}
\newcommand{\Ct}{{\cal T}}
\newcommand{\Cv}{{\cal V}}
\newcommand{\bbN}{\mathbb{N}}
\newcommand{\bbP}{\mathbb{P}}
\newcommand{\bbR}{\mathbb{R}}
\newcommand{\FB}{\mathfrak{B}}
\newcommand*{\SP}[2]{\left<{#1},{#2}\right>} % Euclidean inner product
\providecommand*{\wt}[1]{\widetilde{#1}}
\providecommand*{\wh}[1]{\widehat{#1}}
\newcommand{\DS}{\displaystyle}
\newcommand*{\N}[1]{\left\|{#1}\right\|}     % Double bar norm
\newcommand*{\SN}[1]{\left|{#1}\right|}      % Single bar norm
\renewcommand*{\SP}[2]{\left({#1},{#2}\right)} % Inner product
\newcommand{\mesh}{\Cm}
\newcommand*{\blf}[1]{\mathsf{#1}} % Bilinear forms
\newcommand*{\Lp}[2][\defaultdomain]{L^{#2}({#1})}
\newcommand*{\NLp}[3][\defaultdomain]{\N{#2}_{\Lp[#1]{#3}}}
\newcommand*{\Ltwo}[1][\defaultdomain]{\Lp[#1]{2}}
\newcommand*{\NLtwo}[2][\defaultdomain]{\NLp[#1]{#2}{2}}
\newcommand*{\Linf}[1][\defaultdomain]{L^{\infty}({#1})}
\newcommand*{\Hm}[2][\defaultdomain]{H^{#2}({#1})}
\newcommand*{\bHm}[3][\defaultdomain]{H_{#3}^{#2}({#1})}
\newcommand*{\NHm}[3][\defaultdomain]{{\N{#2}}_{\Hm[{#1}]{#3}}}
\newcommand*{\SNHm}[3][\defaultdomain]{{\SN{#2}}_{\Hm[{#1}]{#3}}}
\newcommand*{\Hone}[1][\defaultdomain]{\Hm[#1]{1}}
\newcommand*{\bHone}[2][\defaultdomain]{\bHm[#1]{1}{#2}}
\newcommand*{\NHone}[2][\defaultdomain]{{\N{#2}}_{\Hone[{#1}]}}
\newcommand*{\SNHone}[2][\defaultdomain]{{\SN{#2}}_{\Hone[{#1}]}}
\newcommand{\hlb}{\frac{1}{2}}
\newcommand*{\Hdiv}[1][\defaultdomain]{\boldsymbol{H}(\Div,{#1})}
\newcommand*{\kHdiv}[1][\defaultdomain]{\boldsymbol{H}(\Div0,{#1})}
\newcommand*{\Hcurl}[1][\defaultdomain]{\boldsymbol{H}(\curl,{#1})}
\newcommand*{\bHcurl}[2][\defaultdomain]{\boldsymbol{H}_{#2}(\curl,{#1})}
\newcommand*{\kHcurl}[1][\defaultdomain]{\boldsymbol{H}(\curl0,{#1})}
\newcommand*{\bkHcurl}[2][\defaultdomain]{\boldsymbol{H}_{#2}(\curl0,{#1})}
\newcommand*{\NHcurl}[2][\defaultdomain]{\N{#2}_{\Hcurl[#1]}}
\newcommand{\mcal}{\mathcal}
\newcommand{\ol}{\overline}
\newcommand{\script}[1]{\mathrm{\scriptsize #1}}
\newcommand{\rh}[1]{{#1}}%{{\color{red}#1}}
\newcommand{\rhrev}[1]{{#1}}%{{\color{magenta}#1}}
\newtheorem{assumption}{Assumption}[theorem]
\title{Local Multigrid in $\mathbf{H}(\mathbf{\curl})$}
\author{Ralf Hiptmair\thanks{SAM, ETH Z\"urich, CH-8092 Z\"urich,
    hiptmair\symbol{64}sam.math.ethz.ch} \and
   Weiying Zheng \thanks{LSEC,
Institute of Computational Mathematics, Academy of Mathematics and
System Sciences, Chinese Academy of Sciences, Beijing, 100080,
People's Republic of China. This author was supported in part by
China NSF under the grant 10401040 (zwy@lsec.cc.ac.cn).}}
\date{Research Report 2007-03, Seminar for Applied Mathematics, ETH Z\"urich}
\begin{document}
\DeclareGraphicsRule{.eps.bz2}{eps}{.eps.bb}{`bunzip2 -c #1}
\DeclareGraphicsRule{.ps.bz2}{eps}{.ps.bb}{`bunzip2 -c #1}
\DeclareGraphicsRule{.eps.bz}{eps}{.eps.bb}{`bunzip2 -c #1}
\DeclareGraphicsRule{.eps.gz}{eps}{.eps.bb}{`gunzip -c #1}
\DeclareGraphicsRule{.ps.bz}{eps}{.ps.bb}{`bunzip2 -c #1}
\DeclareGraphicsRule{.ps.gz}{eps}{.ps.bb}{`gunzip -c #1}

% \thispagestyle{empty}
% \begin{center}
  
%   {\Huge Local Multigrid in $\mathbf{H}(\mathbf{\curl})$}
%   \vspace{3cm}
  
%   {\Large Ralf Hiptmair and Weiying Zheng\footnote{W.-Y. Zheng was supported in part by
%     China NSF under the grant 10401040}}
%   \vfill\vfill\vfill
  
%   Research Report No. 2007-03 \\[1cm]
%   Seminar f\"ur Angewandte Mathematik \\
%   ETH Z\"urich \\
%   CH-8092 Z\"urich, Switzerland
%   \vfill

%   \mbox{$$}
% \end{center}
% \newpage

\maketitle

% \centerline{\textsf{Research Report 2007-03, Seminar for Applied Mathematics, ETH Z\"urich}}

\begin{abstract}
  We consider $\Hcurl$-elliptic variational problems on bounded Lipschitz polyhedra
  and their finite element Galerkin discretization by means of lowest order edge
  elements. We assume that the underlying tetrahedral mesh has been created by
  successive local mesh refinement, either by local uniform refinement with hanging
  nodes or bisection refinement. In this setting we develop a convergence theory for
  the the so-called local multigrid correction scheme with hybrid smoothing. We
  establish that its convergence rate is uniform with respect to the number of
  refinement steps. The proof relies on corresponding results for local multigrid in
  a $\Hone$-context along with local discrete Helmholtz-type decompositions of
  the edge element space. 
\end{abstract}

\begin{keywords}
  Edge elements, local multigrid, stable multilevel splittings, subspace correction
  theory, regular decompositions of $\Hcurl$, Helmholtz-type decompositions, local 
  mesh refinement
\end{keywords}

\begin{AMS}
65N30, 65N55, 78A25
\end{AMS}

\pagestyle{myheadings} \thispagestyle{plain} \markboth{R. Hiptmair
and W.-Y. Zheng}{Local Multigrid in $\mathbf{H}(\mathbf{\curl})$}

\newcommand{\LFE}{V}   % Linear Lagrangian finite elements
\newcommand{\VLFE}{\VV} % Vectorial Lagrangian finite elements
\newcommand{\EFE}{\VU} % Edge finite elements
\newcommand{\Bas}{\FB} % Finite element bases
\newcommand{\EIP}{\Pibf} % Edge element interpolation operator
\newcommand{\LIP}{\Ci} % Nodal interpolation operator
\newcommand{\mwf}{h}
\newcommand{\lev}{\ell}
\newcommand{\QIP}{\mathsf{Q}} % Quasi-interpolation operator
\newcommand{\QIOp }{\QIP_h} % Quasi-interpolation operator

\begin{table}[!htb]
  \centering
  
\noindent\begin{tabular}[c]{l@{: }p{0.75\textwidth}}
  $\Hcurl$ & Sobolev space of square integrable vector fields on
  $\Omega\subset\bbR^{3}$ with square integrable
  $\curl$ \\
  $\bHcurl{\Gamma_{D}}$ & vector fields in $\Hcurl$ with vanishing tangential
  components on $\Gamma_{D}\subset\partial\Omega$ \\
  $\mesh$, $\Ct$ & tetrahedral finite element meshes, may contain hanging nodes \\
  $\Cn(\mesh)$ & set of vertices (nodes) of a mesh $\mesh$ \\
  $\Ce(\mesh)$ & set of edges of a mesh $\mesh$ \\
  $\rho_K,\rho_{\mesh}$ & shape regularity measures \\
  $h$ & \parbox[t]{0.75\textwidth}{
    -- local meshwidth function for a finite element mesh \\
    -- (as subscript) tag for finite element functions} \\
  $\EFE(\mesh)$ & lowest order edge element space on $\mesh$ \\
  $\Vb_E$ & nodal basis function of $\EFE(\mesh)$ associated with edge $E$ \\
  $\LFE(\mesh)$ & space of continuous piecewise linear functions on $\mesh$ \\
  $\LFE_{2}(\mesh)$ & quadratic Lagrangian finite element space on $\mesh$ \\
  $\wt{\LFE}_{2}(\mesh)$ & quadratic surplus space, see \eqref{def:wtlfe2} \\
  $b_{\Bp}$ & nodal basis function of $\LFE(\mesh)$ (``tent function'') associated
  with vertex $\Bp$ \\
  $\Bas_{X}(\mesh)$ & set of nodal basis functions for finite element space $X$
  on mesh $\mesh$ \\
  $\EIP_{h}$ & nodal edge interpolation operator onto $\EFE(\mesh)$, see 
  \eqref{eq:fem14} \\
  $\LIP_{h}$ & vertex based piecewise linar interpolation onto $\LFE(\mesh)$\\
  $\bbP_{p}$ & space of 3-variate polynomials of total degree $\leq p$\\
  $\ol{\EFE}(\mesh)$, $\ol{\LFE}(\mesh)$ & finite element spaces oblivious of 
  zero boundary conditions \\
  $\prec$ & nesting of finite element meshes \\
  $\lev(K)$ & level of element $K$ in hierarchy of refined meshes \\
  $\omega_l$ & refinement zone, see \eqref{eq:3}\\
  $\Sigma_l$ & refinement strip, see \eqref{eq:sp7} \\
  $\Bas_{\LFE}^{l}$, $\Bas_{\EFE}^{l}$ & sets of basis functions supported inside
  refinement zones, see \eqref{eq:NB} \\
  $\QIOp$ & quasi-interpolation operator, Def.~\ref{def:QIP} \\
\end{tabular}
\caption{Important notation used in this paper}
\label{tab:notations}
\end{table}

% Introduction
\section{Introduction}
\label{sec:introduction}

\newcommand{\DBd}{\Gamma_D} %Dirichlet boundary part
\newcommand{\aform}{\blf{a}} %energy inner product

On a polyhedron \rh{$\Omega{\subset\mathbb{R}^{3}}$}, scaled such that
$\operatorname{diam}(\Omega)=1$, we consider the variational problem: seek
$\Vu\in\bHcurl{\DBd}$ such that
\begin{align}
  \label{eq:VPcurl}
  \underbrace{\SP{\curl\Vu}{\curl\Vv}_{\Ltwo} +
    \SP{\Vu}{\Vv}_{\Ltwo}}_{=: \aform(\Vu,\Vv)}
  = \SP{\Vf}{\Vv}_{\Ltwo}\quad\forall \Vv\in \bHcurl{\DBd}\;.
\end{align}
For the Hilbert space of square integrable vector fields with square integrable
$\curl$ and vanishing tangential components on $\DBd$ we use the symbol
$\bHcurl{\DBd}$, see \cite[Ch.~1]{GIR86} for details. The source term $\Vf$ in
\eqref{eq:VPcurl} is a vector field in $(\Ltwo)^{3}$. The left hand side of
\eqref{eq:VPcurl} agrees with the inner product of $\bHcurl{\DBd}$ and will be
abbreviated by $\aform(\Vu,\Vv)$ (``energy inner product'').

Further, $\DBd$ denotes the part of the boundary $\partial\Omega$ on which
homogeneous Dirichlet boundary conditions in the form of vanishing tangential traces
of $\Vu$ are imposed. The geometry of the Dirichlet boundary part $\DBd$ is supposed
to be simple in the following sense: for each connected component $\Gamma_{i}$ of
$\DBd$ we can find an open Lipschitz domain $\Omega_{i}\subset\bbR^{3}$ such that
\begin{gather}
  \label{eq:1}
  \ol{\Omega}_{i}\cap \ol{\Omega} = \Gamma_{i}\;,\quad
  \Omega_{i}\cap\Omega=\emptyset\;,
\end{gather}
and $\Omega_{i}$ and $\Omega_{j}$ have positive distance for $i\not=j$. Further, the
interior of $\ol{\Omega}\cup\ol{\Omega_{1}}\cup\ol{\Omega_{2}}\dots$ is expected to
be a Lipschitz-domain, too (see Fig.  \ref{fig:attachdom}). This is not a severe
restriction, because variational problems related to \eqref{eq:VPcurl} usually arise
in quasi-static electromagnetic modelling, where simple geometries are common. Of
course, $\DBd=\emptyset$ is admitted.

Lowest order $\bHcurl{\DBd}$-conforming edge elements are widely used for the finite
element Galerkin discretization of variational problems like \eqref{eq:VPcurl}. Then,
for \rh{a solution $\Vu\in(\Hone)^{3}$ with $\curl\Vu\in(\Hone)^{3}$} we can expect
the optimal asymptotic convergence rate
\begin{gather}
  \label{eq:optrate}
  \NHcurl{\Vu-\Vu_{h}} \leq C N_{h}^{-1/3}\;,
\end{gather}
on families of finite element meshes arising from global refinement. Here, $\Vu_{h}$
is the finite element solution, $N_{h}$ the dimension of the finite element space,
and $C>0$ does not depend on $N_{h}$. However, often $\Vu$ will fail to possess the
required regularity due to singularities arising at edges/corners of $\partial\Omega$
and material interfaces \cite{CDN03,COD00}. Fortunately, it seems to be possible to
retain \eqref{eq:optrate} by the use of adaptive local mesh refinement based on a
posteriori error estimates, see \cite{STE06,BDD04} for theory in $H^{1}$-setting,
\cite{ZCW05a,BDH97} for numerical evidence in the case of edge element
discretization, \rh{and \cite{SHO05,HOS07,BHH98} for related theoretical investigations.}

We also need ways to \emph{compute} the asymptotically optimal finite element
solution with optimal computational effort, that is, with a number of operations
proportional to $N_{h}$. This can only be achieved by
means of iterative solvers, whose convergence remains fast regardless of the depth of
refinement. Multigrid methods are the most prominent class of iterative solvers that
achieve this goal. By now, geometric multigrid methods for discrete $\Hcurl$-elliptic
variational problems like \eqref{eq:VPcurl} have become well established
\cite{HIP99,SHW06,WEB05,CFW04}. Their asymptotic theory on sequencies of regularly
refined meshes has also matured \cite{AFW99,GPD03,HIP99,HIP00b,SHO05p}. It confirms
\emph{asymptotic optimality}: the speed of convergence is uniformly fast regardless
of the number of refinement levels involved. In addition, the costs of one step of
the iteration scale linearly with the number of unknowns.

Yet, the latter property is lost when the standard multigrid correction scheme is
applied to meshes generated by pronounced local refinement. Optimal computational
costs can only be maintained, if one adopts the local multigrid policy, which was
pioneered by \rh{A. Brandt et al. in \cite{BAB87}, see also \cite{MIT92}}. Crudely
speaking, its gist is to confine relaxations to ``new'' degrees of freedom located in
zones where refinement has changed the mesh. Thus an exponential increase of
computational costs with the number of refinement levels can be avoided: the total
costs of a V-cycle remain proportional to the number of unknowns. \rh{An algorithm
  blending the local multigrid idea with the geometric multigrid correction scheme of
  \cite{HIP99} is described in \cite{SHW06}.} On the other hand, a proof of uniform
asymptotic convergence has remained elusive so far. It is the objective of this paper
to provide it, see Theorem~\ref{thm:main}.

We recall the key insight that \eqref{eq:VPcurl} is one member of a family of
variational problems. Its kin is obtained by replacing $\curl$ with $\grad$ or
$\Div$, respectively. All these differential operators turn out to be incarnations of
the fundamental exterior derivative of differential geometry, \textit{cf.}
\cite[Sect.~2]{HIP99}. They are closely connected in the deRham complex \cite{AFW06}
and, thus, it is hardly surprising that results about the related
$\bHone{\DBd}$-elliptic variational problem, which seeks $u\in\bHone{\DBd}$ such that
\begin{gather}
  \label{eq:VPgrad}
  \SP{\grad u}{\grad v}_{\Ltwo} +
  \SP{u}{v}_{\Ltwo} = \SP{f}{v}_{\Ltwo}\quad\forall \,v\in \bHone{\DBd}\;,
\end{gather}
prove instrumental in the multigrid analysis for discretized versions of
\eqref{eq:VPcurl}. Here $\bHone{\DBd}$ is the subspace of $\Hone$ whose functions
have vanishing traces on $\DBd$.

Thus, when tackling \eqref{eq:VPcurl}, we take the cue from the local multigrid
theory for \eqref{eq:VPgrad} discretized by means of linear continuous finite
elements. This theory has been developed in various settings, \textit{cf.}
\cite{BAB87,BOY93,BPW91,BRP93,JXU97}. In \cite{AIM01} local refinement with hanging
nodes is treated. Recently, H.~Wu and Z.~Chen \cite{WUC05} proved the uniform
convergence of local multigrid V-cycles on adaptively refined meshes in two
dimensions. Their mesh refinements are controlled by a posteriori error estimators
and carried out according to the ``newest vertex bisection'' strategy introduced,
\rh{independently, in \cite{MIT89,BAE91}}.
%, see also
%  \cite{BAE91,LIJ95,AMP98a,MAU95,TRA97,KOS94a}}.

As in the case of global multigrid, the essential new aspect of local multigrid
theory for \eqref{eq:VPcurl} compared to \eqref{eq:VPgrad} is the need to deal with
the kernel of the $\curl$-operator, \textit{cf.}  \cite[Sect.~3]{HIP99}. In this
context, the availability of discrete scalar potential representations for
irrotational edge element vector fields is pivotal. Therefore, we devote the entire
Sect.~\ref{sec:edge-elements} to the discussion of edge elements and their
relationship with conventional Lagrangian finite elements. \rh{Meshes with hanging
  nodes will receive particular attention.} Next, in Sect.~\ref{sec:LMG} we present
details about local mesh refinement, because some parts of the proofs rest on the
subtleties of how elements are split. The following Sect.~\ref{sec:local-multigrid}
introduces the local multigrid method from the abstract perspective of successive
subspace correction.

The proof of uniform convergence (Theorem~\ref{thm:main}) is tackled in
Sects.~\ref{sec:stability} and \ref{sec:quasi-orthogonality}, which form the core of
the article. In particular, the investigation of the stability of the local
multilevel splitting requires several steps, the first of which addresses the issue
for the bilinear form from \eqref{eq:VPgrad} and linear finite elements. These
results are already available in the literature, but are re-derived to make the
presentation self-contained. This also applies to the continuous and discrete
Helmholtz-type decompositions covered in Sect.~\ref{sec:helmh-type-decomp}. Many
developments are rather technical and to aid the reader important notations are
listed in Table~\ref{tab:notations}. Eventually, in Sect.~\ref{sec:numer-exper}, we
report two numerical experiments to show the competitive performance of the local
multigrid method and the relevance of the convergence theory.

\begin{remark}
  In this article we forgo generality and do not discuss
  the more general bi-linear form
  \begin{align}
    \label{eq:VPcurlgen}
     \aform(\Vu,\Vv) := \SP{\alpha\curl\Vu}{\curl\Vv}_{\Ltwo} +
      \SP{\beta\Vu}{\Vv}_{\Ltwo}\;,
      \quad\forall \Vu,\Vv\in \bHcurl{\DBd}\;,
  \end{align}  
  with uniformly positive coefficient functions $\alpha,\beta\in L^{\infty}(\Omega)$.
  We do this partly for the sake of lucidity and partly, because the current theory
  cannot provide estimates that are robust with respect to large variations of 
  $\alpha$ and $\beta$, \textit{cf.} \cite{HIX06}. We refer to \cite{XUZ07} for
  further information and references. 
\end{remark}

%%% Local Variables:
%%% mode: latex
%%% TeX-master: "main"
%%% End:

%%Successive subspace correction
%% \input{ssc.tex}
%
% Finite elements
\section{Finite element spaces}
\label{sec:edge-elements}

% Macros for finite element spaces

Whenever we refer to a finite element mesh in this article, we have in mind a
tetrahedral triangulation of $\Omega$, see \cite[Ch.~3]{CIA78}. In certain settings,
it may feature hanging nodes, that is, the face of one tetrahedron can coincide with
the union of faces of other tetrahedra. Further, the mesh is supposed to resolve the
Dirichlet boundary in the sense that $\DBd$ is the union of faces of tetrahedra. The
symbol $\mesh$ with optional subscripts is reserved for finite element meshes and the
sets of their elements alike.

We write $h\in\Linf$ for the piecewise constant function, which assumes value $h_{K}
:= \operatorname{diam}(K)$ in each element $K\in\mesh$. The ratio of
$\operatorname{diam}(K)$ to the radius of the largest ball contained in $K$ is called
the shape regularity measure $\rho_{K}$ \cite[Ch.~3, \S3.1]{CIA78}. The shape
regularity measure $\rho_{\mesh}$ of $\mesh$ is the maximum of all $\rho_{K}$,
$K\in\mesh$.

\subsection{Conforming meshes}
\label{sec:conforming-meshes}

\rh{Provisionally, we consider only finite element meshes $\mesh$ that are
  conforming, that is, each face of a tetrahedron is either contained in
  $\partial\Omega$ or a face of another tetrahedron, see \cite[Ch.~2, \S~2.2]{CIA78}.
  In particular, this rules out hanging nodes}. Following \cite{NED80,BOS88a}, we
introduce the space of lowest order $\bHcurl{\DBd}$-conforming edge finite elements,
also known as Whitney-1-forms \cite{WIT57},
\begin{gather}
  \label{fem:udef}
  \EFE(\mesh) := \{\Vv_{h}\in\bHcurl{\DBd}:\; \forall\, K\in\mesh:
  \exists\, \Ba,\Bb\in\bbR^{3}:\\
  \label{fem:locform}
  \Vv_{h}(\Bx) = \Ba + \Bb\times\Bx,\;\Bx\in K\}\;.
\end{gather}
For a detailed derivation and description please consult \cite[Sect.~3]{HIP02} or the
monographs \cite{MON03,BOS98b}. Notice that $\curl\EFE(\mesh)$ is a space of
piecewise constant vector fields. We also remark that appropriate global degrees of
freedom (d.o.f.) for $\EFE(\mesh)$ are given by
\begin{gather}
  \label{eq:fem12}
  \left\{
    \begin{array}[c]{rcl}
      \EFE(\mesh) &\mapsto&\bbR\\
      \Vv_{h} &\mapsto& \int\nolimits_{E} \Vv_{h}\cdot\mathrm{d}\vec{s}
    \end{array}\right.\quad,\quad E\in \Ce(\mesh)\;,
\end{gather}
where $\Ce(\mesh)$ is the set of edges of $\mesh$ \rh{not contained in $\Gamma_{D}$}.
% , \textit{i.e.}, those not contained
% in $\DBd$ or in another longer edge
We write $\Bas_{\EFE}(\mesh)$ for the nodal basis of $\EFE(\mesh)$ dual to the global
d.o.f. \eqref{eq:fem12}. Basis functions are associated with active edges. Hence, we
can write $\Bas_{\EFE}(\mesh) = \{\Vb_{E}\}_{E\in\Ce(\mesh)}$. 
% In the absence of hanging nodes
The support of the basis function $\Vb_{E}$ is the union of tetrahedra
sharing the edge $E$. We recall the simple formula for local shape functions
\begin{gather}
  \label{eq:fem13}
  {\Vb_{E}}_{|K} = \lambda_{i}\grad \lambda_{j} - \lambda_{j}\grad \lambda_{i}\quad
  E=[\Ba_{i},\Ba_{j}]\subset\ol{K}
\end{gather}
for any tetrahedron $K\in\mesh$ with vertices $\Ba_{i}$, $i=1,2,3,4$, and
associated barycentric coordinate functions $\lambda_{i}$.

The edge element space $\EFE(\mesh)$ with basis $\Bas_{\EFE}(\mesh)$ is perfectly
suited for the finite element Galerkin discretization of \eqref{eq:VPcurl}. The
discrete problem based on $\EFE(\mesh)$ reads: seek $\Vu_{h}\in\EFE(\mesh)$ such
that
\begin{align}
  \label{eq:VPcurlh}
  \SP{\curl\Vu_{h}}{\curl\Vv_h}_{\Ltwo} +
  \SP{\Vu_{h}}{\Vv_h}_{\Ltwo} = \SP{\Vf}{\Vv_h}_{\Ltwo}\quad\forall
  \Vv_h\in \EFE(\mesh)\;.
\end{align}
% Similarly the discrete problem of \eqref{eq:VPgrad} based on
% $\LFE(\mesh)$ reads: seeks $u_{\mesh}\in\LFE(\mesh)$ such that
% \begin{gather}
%   \label{eq:VPgradh}
%   \SP{\grad u_{\mesh}}{\grad v_h}_{\Ltwo} +
%   \SP{u_{\mesh}}{v_h}_{\Ltwo} = \SP{f}{v_h}_{\Ltwo}\quad\forall
%   \,v_h\in \LFE(\mesh)\;,
% \end{gather}
The properties of $\EFE(\mesh)$ will be key to constructing and analyzing the local
multigrid method for the large sparse linear system of equations resulting from
\eqref{eq:VPcurlh}. Next, we collect important facts.

The basis $\Bas_{\EFE}(\mesh)$ enjoys uniform $L^{2}$-stability,
meaning the existence of a constant\footnote{The symbol $C$ will stand for generic
  positive constants throughout this article. Its value may vary between different
  occurrences. We will always specify on which quantities these constants may
  depend.} $C=C(\rho_{\mesh})>0$ such that for all
$\Vv_{h} =
\sum\limits_{E\in\Ce(\mesh)}\alpha_{E}\Vb_{E}\in\EFE(\mesh)$,
$\alpha_{E}\in\bbR$,
\begin{gather}
  \label{eq:fem17}
  C^{-1}\NLtwo{\Vv_{h}}^{2}\leq
  \sum\limits_{E\in\Ce(\mesh)}\alpha_{E}^{2}\NLtwo{\Vb_{E}}^{2}
  \leq C\NLtwo{\Vv_{h}}^{2}\;.
\end{gather}

The global d.o.f. induce a nodal edge interpolation operator
\begin{gather}
  \label{eq:fem14}
  \EIP_{h}:
  \left\{
    \begin{array}[c]{rcl}
      \operatorname{dom}(\EIP_{h})\subset\bHcurl{\DBd}&\mapsto& \EFE(\mesh)\\
      \Vv & \mapsto & \sum\limits_{E\in\Ce(\mesh)} \Bigl(\int\nolimits_{E}
      \Vv\cdot\mathrm{d}\vec{s}\Bigr) \cdot \Vb_{E}\;.
    \end{array}\right.
\end{gather}
Obviously, $\EIP_{h}$ provides a local projection, but it turns out to be
unbounded even on $(\Hone)^{3}$. Only for vector fields with discrete
rotation the following interpolation error estimate is available, see
\cite[Lemma~4.6]{HIP02}:

\begin{lemma}
  \label{lem:42}
  The interpolation operator $\EIP_{h}$ is bounded on $\{\Psibf\in (\Hone)^{3},\,
  \curl\Psibf\in\curl\EFE(\mesh)\}\rh{\subset (\Hone)^{3}}$, and for any \emph{conforming}
  mesh there is $C=C(\rho_{\mesh})>0$ such that
    \begin{gather*}
      \NLtwo{\mwf^{-1}(Id-\EIP_{h})\Psibf} \leq C \SNHone{\Psibf}\quad
      \forall \Psibf\in (\Hone)^{3},\;\curl\Psibf\in \curl\EFE(\mesh)\;.
    \end{gather*}
\end{lemma}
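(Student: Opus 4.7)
The plan is to argue elementwise, pull back to a reference tetrahedron $\hat{K}$ by an affine map, and conclude via a Bramble--Hilbert argument on a suitable closed subspace of $H^{1}(\hat{K})^{3}$. Since $(\EIP_{h}\Psibf)\big|_{K}$ depends only on the six line integrals $\int_{E}\Psibf\cdot\mathrm{d}\vec{s}$ for $E\subset\partial K$, all determined by $\Psibf|_{K}$, the global estimate reduces to the elementwise bound
\[
\NLtwo[K]{\Psibf-\EIP_{h}\Psibf}\le C\,h_{K}\,\SNHone[K]{\Psibf}\qquad\forall K\in\mesh,
\]
where the constraint $\curl\Psibf\in\curl\EFE(\mesh)$ forces $\curl\Psibf|_{K}$ to be a constant vector. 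Using the covariant pullback $\hat{\Psibf}(\hat{\Bx}):=DF_{K}^{T}\,\Psibf(F_{K}\hat{\Bx})$ edge integrals are preserved and $\curl$ transforms covariantly, so standard scaling (with constants depending only on $\rho_{K}$) further reduces the task to establishing
\[
\NLtwo[\hat{K}]{\hat{\Psibf}-\hat{\EIP}\hat{\Psibf}}\le C\,\SNHone[\hat{K}]{\hat{\Psibf}}\qquad\forall\,\hat{\Psibf}\in V,
\]
where $V:=\{\hat{\Bw}\in H^{1}(\hat{K})^{3}:\widehat{\curl}\,\hat{\Bw}\text{ is a constant vector}\}$ is a closed subspace of $H^{1}(\hat{K})^{3}$ and $\hat{\EIP}$ denotes the reference-element edge interpolation.

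The crucial preparatory step is to establish that $\hat{\EIP}:V\to\Ltwov[\hat{K}]$ is continuous, because edge traces of plain $H^{1}(\hat{K})^{3}$-fields are a priori not well defined. I would exploit the structural constraint through a \emph{regular decomposition}: set $\hat{\Psibf}_{0}:=\tfrac{1}{2}\bigl(\widehat{\curl}\,\hat{\Psibf}\bigr)\times\hat{\Bx}\in\EFE(\hat{K})$ so that $\widehat{\curl}\,\hat{\Psibf}_{0}=\widehat{\curl}\,\hat{\Psibf}$, and use simple connectedness of $\hat{K}$ to produce $\hat{\phi}\in H^{2}(\hat{K})$, unique up to an additive constant, with $\hat{\Psibf}-\hat{\Psibf}_{0}=\grad\hat{\phi}$. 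Normalizing by $\int_{\hat{K}}\hat{\phi}=0$, Poincaré together with the trivial bound $\|\hat{\Psibf}_{0}\|_{H^{1}(\hat{K})}\le C\,\|\hat{\Psibf}\|_{H^{1}(\hat{K})}$ yields $\|\hat{\phi}\|_{H^{2}(\hat{K})}\le C\,\|\hat{\Psibf}\|_{H^{1}(\hat{K})}$. For each edge $\hat{E}=[\hat{\Bp}_{0},\hat{\Bp}_{1}]$ of $\hat{K}$, the identity
\[
\int_{\hat{E}}\hat{\Psibf}\cdot\mathrm{d}\vec{s}=\hat{\phi}(\hat{\Bp}_{1})-\hat{\phi}(\hat{\Bp}_{0})+\int_{\hat{E}}\hat{\Psibf}_{0}\cdot\mathrm{d}\vec{s},
\]
combined with the Sobolev embedding $H^{2}(\hat{K})\hookrightarrow C^{0}(\overline{\hat{K}})$ and the polynomial character of $\hat{\Psibf}_{0}$, yields a uniform bound $\bigl|\int_{\hat{E}}\hat{\Psibf}\cdot\mathrm{d}\vec{s}\bigr|\le C\,\|\hat{\Psibf}\|_{H^{1}(\hat{K})}$, whence $\hat{\EIP}$ is continuous on $V$.

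The reference estimate now follows from the abstract Bramble--Hilbert lemma. The map $\Id-\hat{\EIP}:V\to\Ltwov[\hat{K}]$ is bounded by the previous step, and it vanishes on $\EFE(\hat{K})\subset V$; in particular it vanishes on $\bbR^{3}\subset\EFE(\hat{K})$, which is precisely the kernel of $|\cdot|_{H^{1}(\hat{K})}$. Consequently
\[
\NLtwo[\hat{K}]{\hat{\Psibf}-\hat{\EIP}\hat{\Psibf}}\le C\,\inf_{\Bc\in\bbR^{3}}\|\hat{\Psibf}-\Bc\|_{H^{1}(\hat{K})}\le C\,\SNHone[\hat{K}]{\hat{\Psibf}}.
\]
Transforming back to $K$ and summing over $K\in\mesh$ produces the asserted inequality with $C$ depending only on $\rho_{\mesh}$. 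The principal technical obstacle in this plan is not the Bramble--Hilbert closure but the continuity of $\hat{\EIP}$ on $V$: without the hypothesis $\curl\Psibf\in\curl\EFE(\mesh)$ the edge integrals defining $\EIP_{h}$ would be ill-defined on $H^{1}$-fields, and it is precisely this structural constraint, exploited through the regular decomposition, that keeps them meaningful and uniformly controlled.
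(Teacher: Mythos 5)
Your proposal is correct, and its central mechanism — extracting a polynomial lifting $\hat\Psibf_{0}$ that matches the (constant) curl, reducing the remainder to the gradient of an $H^{2}$ scalar potential $\hat\phi$, and exploiting $H^{2}\hookrightarrow C^{0}$ to make the edge integrals well defined — is the same structural idea as the paper's proof, which uses the Koszul lifting $\PM\curl\Psibf$ in exactly that role. Where you diverge is in the closing step. The paper never argues that the edge interpolation operator is continuous on a constrained subspace; instead, it observes that $\PM\curl\Psibf\in\EFE(K)$ is reproduced exactly by $\EIP_{h}$, and then invokes the commuting diagram property \eqref{CDP} to collapse the remaining error $\grad p-\EIP_{h}\grad p$ into $\grad(p-\LIP_{h}p)$, at which point the known scalar estimate \eqref{eq:fem15} finishes the job. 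You, by contrast, pull back to the reference tetrahedron, prove continuity of $\hat\EIP$ on $V=\{\hat\Bw\in H^{1}(\hat K)^{3}:\widehat\curl\,\hat\Bw\ \text{const}\}$ by hand, and then apply the abstract Bramble--Hilbert argument using that constants lie in $\EFE(\hat K)$ and in the kernel of the $H^{1}$-seminorm. Both routes are sound; the paper's detour through the commuting diagram is slightly slicker because it avoids having to verify operator continuity at all, whereas your version is more self-contained and makes the well-posedness issue (which you correctly flag as the crux) explicit. One small remark: the normalizing factor in the Koszul/lifting vector field should be $\tfrac12$, as you have it, since $\curl(\Vw\times\Bx)=2\Vw$ for constant $\Vw$; the $\tfrac13$ in the paper's formula \eqref{eq:Poinc} appears to be a typo, though it is immaterial to either argument.
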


If $\Omega$ is homeomorphic to a ball, then $\grad\Hone = \kHcurl :=
\{\Vv\in\Hcurl,\,\curl\Vv=0\}$, that is, $\Hone$ provides scalar potentials for $\Hcurl$.
To state a discrete analogue of this relationship we need the Lagrangian
finite element space of piecewise linear continuous functions on $\mesh$
\begin{gather}
  \label{fem:vdef}
  \LFE(\mesh) := \{u_{h}\in\bHone{\DBd}:\;
  {u_{h}}_{|K}\in\bbP_{1}(K)\;\forall K\in\mesh\}\;,
\end{gather}
where $\bbP_{p}(K)$ is the space of 3-variate polynomials of degree $\leq p$ on $K$.
The global degrees of freedom for $\LFE(\mesh)$ boil down to point evaluations at
% active
the vertices of $\mesh$ away from $\overline{\Gamma}_{D}$ (set $\Cn(\mesh)$). The
dual basis of ``tent functions'' will be denoted by $\Bas_{\LFE}(\mesh) =
{\{b_{\Bp}\}}_{\Bp\in\Cn(\mesh)}$. Its unconditional $L^{2}$-stability is well known:
with a universal constant $C>0$ we have for all $u_{h} =
\sum\limits_{\Bp\in\Cn(\mesh)} \alpha_{\Bp}b_{\Bp}\in\LFE(\mesh)$,
$\alpha_{\Bp}\in\bbR$,
\begin{gather}
  \label{eq:fem18}
  C^{-1}\NLtwo{u_{h}}^{2} \leq \sum\limits_{\Bp\in\Cn(\mesh)} \alpha_{\Bp}^{2}
  \NLtwo{b_{\Bp}}^{2} \leq C \NLtwo{u_{h}}^{2}\;.
\end{gather}

For the nodal interpolation operator related to $\Bas_{\LFE}$ we write
$\LIP_{h}:\operatorname{dom}(\LIP_{h})\subset\bHone{\DBd}\mapsto \LFE(\mesh)$.
Recall the standard estimate for linear interpolation on \emph{conforming} meshes
(i.e., no hanging nodes allowed), \cite[Thm.~3.2.1]{CIA78}, that asserts the existence of
$C=C(k,\rho_{\mesh})>0$ such that
\begin{gather}
  \label{eq:fem15}
  \NHm{\mwf^{k-2}(Id-\LIP_{h})u}{k} \leq C \SNHm{u}{2}\quad \forall
  u\in\Hm{2}\cap\bHone{\DBd},\; k\in\{0,1\}\;.
\end{gather}
Obviously, $\grad \LFE(\mesh)\subset\EFE(\mesh)$, and immediate from Stokes
theorem is the crucial \emph{commuting diagram property}
\begin{gather}
  \label{CDP}
  \EIP_{h}\circ\grad = \grad\circ\LIP_{h}\quad\text{on } \operatorname{dom}(\LIP_{h})\;.
\end{gather}%
This enables us to give an elementary proof of Lemma~\ref{lem:42}.

\newcommand{\PM}{\operatorname{\Cl}}
\begin{proof}[of Lemma~\ref{lem:42}] Pick one $K\in\mesh$ and, without loss of
  generality, assume $0\in K$. Then define the lifting operator, \textit{cf.} 
  the ``Koszul lifting'' \cite[Sect.~3.2]{AFW06},
  \begin{gather}
    \label{eq:Poinc}
    \Vw \mapsto \PM\Vw\;,\quad
    \PM\Vw(\Bx) := \tfrac{1}{3} \Vw(\Bx)\times\Bx
% OLD VERSION: Poincare map
%    \PM\Vw(\Bx) := \int\nolimits_{0}^{1}
%    t\Vw(t\Bx)\times\Bx\,\mathrm{d}t
    \;,\quad
    \Bx\in K\;.
  \end{gather}
  Elementary calculations reveal that for any \rh{constant vectorfield $\Vw \in (\Cp_{0}(K))^{3}$}
  \begin{gather}
    \label{eq:femLP1}
% OLD VERSION for Poincare map
%    \Vw\in (C^{1}(\ol{K}))^{3}\;\wedge\; \Div\Vw=0
    \curl\PM\Vw = \Vw\;,\\
    \label{eq:femLP2}
    \NLtwo[K]{\PM \Vw} \leq h_{K}\NLtwo[K]{\Vw}\;,\\
    \label{eq:femLP3}
    \PM\Vw \in \EFE(K)\;.
  \end{gather}
  The continuity \eqref{eq:femLP2} permits us to extend $\PM$ to $(\Ltwo[K])^{3}$.

  Given $\Psibf\in(\Hone[K])^{3}$ with $\curl\Psibf \equiv \mathrm{const}^{3}$,
  by \eqref{eq:femLP3} we know $\PM\curl\Psibf\in (\Cp_{1}(K))^{3}$. Thus, an inverse
  inequality leads to
  \begin{gather}
    \label{eq:femLP4}
    \SNHone[K]{\PM\curl\Psibf} \leq Ch_{K}^{-1} \NLtwo[K]{\PM\curl\Psibf}
    \overset{\text{\eqref{eq:femLP2}}}{\leq} C \NLtwo[K]{\curl\Psibf}\;,
  \end{gather}
  with $C=C(\rho_{K})>0$. Next, \eqref{eq:femLP1} implies
  \begin{gather}
    \label{eq:femLP5}
    \curl(\Psibf-\PM\curl\Psibf) = 0 \quad\Rightarrow\quad
    \exists p\in\Hone[K]:\quad \Psibf-\PM\curl\Psibf = \grad p \;.
  \end{gather}
  From \eqref{eq:femLP4} we conclude that $p\in\Hm[K]{2}$ and $\SNHm[K]{p}{2} \leq C
  \SNHone[K]{\Psibf}$. Moreover, thanks to the commuting diagram property we have
  \begin{gather}
    \label{eq:femLP6}
    \Psibf-\EIP_{h}\Psibf =
    \underbrace{\PM\curl\Psibf - \EIP_{h}\PM\curl\Psibf}_{=0\;\text{by
        \eqref{eq:femLP3}}} + \grad(p-\LIP_{h}p)\;,
  \end{gather}
  which means, by the standard estimate \eqref{eq:fem15} for linear interpolation on
  $K$,
  \begin{gather*}
    \NLtwo[K]{\Psibf-\EIP_{h}\Psibf} = \SNHone[K]{p-\LIP_{h}p} \leq
    Ch_{K}\SNHm[K]{p}{2} \leq Ch_{K}\SNHone[K]{\Psibf}\;.
  \end{gather*}
  Summation over all elements finishes the proof.
\end{proof}

As theoretical tools we need ``higher order'' counterparts of the above
finite element spaces. We recall the quadratic Lagrangian finite
element space
\begin{gather}
  \label{def:wtlfe2}
  \LFE_{2}(\mesh) := \{u_{h}\in\bHone{\DBd}:\;
  {u_{h}}_{|K}\in\bbP_{2}(K)\;\forall K\in\mesh\}\;,
\end{gather}
and its subspace of quadratic surpluses
\begin{gather}
  \label{def:splfe}
  \wt{\LFE}_{2}(\mesh) := \{u_{h}\in\LFE_{2}(\mesh):\;
  \LIP_{h}u_{h} = 0 \}\;.
\end{gather}
This implies a direct splitting
\begin{gather}
  \label{eq:fem20}
  \LFE_{2}(\mesh) = \LFE(\mesh) \oplus \wt{\LFE}_{2}(\mesh)\;,
\end{gather}
which is unconditionally $H^{1}$-stable: there is a $C=C(\rho_{\mesh})>0$ such
that
\begin{gather}
  \label{eq:fem21}
  C^{-1}\SNHone{{u}_{h}}^{2} \leq \SNHone{(Id-\LIP_{h}){u}_{h}}^{2} +
  \SNHone{\LIP_{h}{u}_{h}}^{2} \leq C\SNHone{{u}_{h}}^{2}\;,
\end{gather}
for all ${u}_{h}\in {\LFE}_{2}(\mesh)$.

Next, we examine the space $(\LFE(\mesh))^{3}$ of continuous piecewise linear
vector fields that vanish on $\DBd$. Standard affine equivalence techniques for edge elements,
see \cite[Sect.~3.6]{HIP02}, confirm
\begin{gather}
  \label{eq:fem22}
  \exists C=C(\rho_{\mesh})>0:\quad
  \NLtwo{\EIP_{h}\Psibf_{h}} \leq C \NLtwo{\Psibf_{h}}\quad\forall
  \Psibf_{h}\in (\LFE(\mesh))^{3}\;.
\end{gather}

\begin{lemma}
  \label{lem:VLFEdec}
  For all $\Psibf_{h}\in (\LFE(\mesh))^{3}$ we can find
  $\wt{v}_{h}\in \wt{\LFE}_{2}(\mesh)$ such that
  \begin{gather*}
    \Psibf_{h} = \BPi_h\Psibf_{h} + \grad \wt{v}_{h}\;,
  \end{gather*}
  and, with $C=C(\rho_{\mesh})>0$,
  \begin{gather*}
    C^{-1}\NLtwo{\Psibf_{h}}^{2} \leq
    \NLtwo{\BPi_h\Psibf_{h}}^{2} + \NLtwo{\grad \wt{v}_{h}}^{2} \leq C \NLtwo{\Psibf_{h}}^{2}\;.
  \end{gather*}
\end{lemma}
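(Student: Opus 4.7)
The plan is to set $\Vw_h := \Psibf_h - \EIP_h \Psibf_h$ and produce $\wt{v}_h \in \wt{\LFE}_2(\mesh)$ with $\grad \wt{v}_h = \Vw_h$. Once this is done, the norm equivalence is immediate: the upper bound combines~\eqref{eq:fem22} with the triangle inequality $\NLtwo{\grad \wt{v}_h} \leq \NLtwo{\Psibf_h} + \NLtwo{\EIP_h \Psibf_h}$, while the lower bound follows from $\NLtwo{\Psibf_h}^2 \leq 2\NLtwo{\EIP_h \Psibf_h}^2 + 2\NLtwo{\grad \wt{v}_h}^2$. Hence the real content is the construction of $\wt{v}_h$, which I would carry out first locally on each tetrahedron and then by a gluing argument.

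First I would establish the element-wise direct sum decomposition
\begin{equation*}
  (\bbP_1(K))^3 = \EFE(K) \oplus \grad \wt{\LFE}_2(K)\qquad \forall K\in\mesh\;.
\end{equation*}
The dimension count is $12 = 6+6$: the six edge bubbles $\lambda_i\lambda_j$, $1\leq i<j\leq 4$, are linearly independent quadratic polynomials vanishing at the vertices of $K$, and $\grad$ is injective on $\wt{\LFE}_2(K)$ because constants are excluded by the zero vertex values. The intersection is trivial, since for $q\in \wt{\LFE}_2(K)$ we have $\int_E \grad q\cdot\ds = 0$ on every edge $E$ of $K$, so unisolvence of the edge d.o.f.~on $\EFE(K)$ forces $\grad q = 0$ as soon as $\grad q\in \EFE(K)$. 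Applied to $\Psibf_h|_K\in (\bbP_1(K))^3$, this yields a unique splitting $\Psibf_h|_K = \Vv_K + \grad q_K$; applying $\EIP_h$ and using that $\grad q_K$ has vanishing edge circulations identifies $\Vv_K = (\EIP_h \Psibf_h)|_K$ and $\grad q_K = \Vw_h|_K$.

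The harder step is gluing the $q_K$ into a global $\wt{v}_h \in \wt{\LFE}_2(\mesh)$. Since functions in $\wt{\LFE}_2(\mesh)$ are determined by their edge midpoint values (the vertex values being zero), it suffices to show that for every edge $E=[\Ba,\Bb]$ of $\mesh$ with midpoint $\Bm_E$, the value $q_K(\Bm_E)$ is independent of the tetrahedron $K\ni E$. Integrating $\grad q_K = \Vw_h|_K$ tangentially from $\Ba$ to $\Bm_E$ and using $q_K(\Ba)=0$ gives
\begin{equation*}
  q_K(\Bm_E) = \int\nolimits_{[\Ba,\Bm_E]} (\Psibf_h - \EIP_h \Psibf_h)\cdot \ds\;,
\end{equation*}
and the right-hand side only depends on the well-defined tangential trace of $\Vw_h$ on $E$, since $\Psibf_h$ is globally continuous and $\EIP_h \Psibf_h$ is tangentially continuous across faces by $\Hcurl$-conformity. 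Continuity of the resulting $\wt{v}_h$ across a face shared by two tetrahedra then follows from the fact that a quadratic polynomial on a triangle is uniquely determined by its three vertex and three edge-midpoint values, and these match from either side by construction. Finally, the boundary condition $\wt{v}_h = 0$ on $\DBd$ is inherited from $\Psibf_h|_{\DBd}=0$ and the vanishing tangential trace of $\EIP_h \Psibf_h$ on $\DBd$.

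I expect the main obstacle to be this global well-definedness argument for $\wt{v}_h$ together with verifying $H^1$-conformity across shared faces; once the element-wise direct decomposition has been isolated, the remaining pieces (dimension count, norm estimates via the triangle inequality and~\eqref{eq:fem22}) are routine.
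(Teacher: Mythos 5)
Your argument is correct, but it takes a genuinely different route from the paper's. The paper first shows $\curl(Id-\EIP_h)\Psibf_h=0$ via Stokes' theorem (the residual's piecewise constant curl has zero face fluxes), then invokes Lemma~\ref{lem:notop} — a topological argument via the relative homology group $H_1(\Omega;\DBd)$ — to conclude a global potential in $\bHone{\DBd}$, which is then identified as piecewise quadratic with equal vertex values by a path-integral argument. You instead establish the \emph{element-wise} direct sum $(\bbP_1(K))^3 = \EFE(K)\oplus\grad\wt{\LFE}_2(K)$ by dimension count and unisolvence, define $q_K$ locally, and glue across faces by showing the edge-midpoint values are independent of the adjacent tetrahedron (using global continuity of $\Psibf_h$, tangential continuity of $\EIP_h\Psibf_h$, and $\bbP_2$-Lagrange unisolvence on a triangle). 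The paper's route is shorter and reuses Lemma~\ref{lem:notop}, which is needed anyway in the proof of Lemma~\ref{lem:disc-hem}, so it gets topology for free; your route is more elementary and self-contained, avoids homology entirely, and yields an explicit formula for the midpoint degrees of freedom of $\wt{v}_h$, at the cost of more local bookkeeping. Both handle the boundary condition on $\DBd$ correctly. The norm equivalence via \eqref{eq:fem22} and the triangle inequality is the same in both.
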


For the proof we rely on a very useful insight, which relieves us from all worries concerning
the topology of $\Omega$:

\begin{lemma}
  \label{lem:notop}
  If $\Vv\in\bkHcurl{\DBd}$ and $\EIP_{h}\Vv=0$, then $\Vv\in\grad\bHone{\DBd}$.
\end{lemma}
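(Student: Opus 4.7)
I would prove Lemma~\ref{lem:notop} by an explicit construction of a scalar potential $\varphi\in\bHone{\DBd}$ satisfying $\grad\varphi=\Vv$. Since every tetrahedron $K\in\mesh$ is simply connected and $\curl\Vv|_K=0$, there exists a local potential $\varphi_K\in H^1(K)$ with $\grad\varphi_K=\Vv|_K$, unique up to an additive constant. The task is to fix these constants in a mutually consistent way that also realizes the homogeneous boundary condition on $\DBd$; the hypothesis $\EIP_h\Vv=0$ enters precisely at this gluing step.

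The constants are fixed by assigning a value $\varphi(\Bp)\in\bbR$ to every vertex $\Bp$ of $\mesh$. Choose a reference vertex $\Bp_0$ on $\DBd$ if $\DBd\neq\emptyset$, and arbitrarily otherwise; set $\varphi(\Bp_0):=0$. For any other vertex $\Bp$, pick a path $\gamma$ in the mesh's 1-skeleton from $\Bp_0$ to $\Bp$ (it exists since $\Omega$ is connected and $\mesh$ triangulates $\overline{\Omega}$) and define
\begin{equation*}
  \varphi(\Bp) := \sum_{E\in\gamma}\pm\int_E \Vv\cdot\mathrm{d}\vec{s}.
\end{equation*}
Every summand vanishes: for $E\in\Ce(\mesh)$ by the assumption $\EIP_h\Vv=0$, and for $E\subset\DBd$ because $\Vv\in\bkHcurl{\DBd}$ has vanishing tangential trace on $\DBd$. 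Consequently $\varphi(\Bp)=0$ at every vertex, and in particular the construction is independent of the chosen path (since any two paths form a cycle whose edge integrals all vanish).

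Next I would glue the local potentials. On each $K$, pick the representative $\varphi_K$ whose vertex traces agree with the prescribed (zero) vertex values; this is legitimate in the regularity class where $\EIP_h$ is defined, because the local potential of a curl-free field can be recovered by edge-path integration from any vertex of $K$. Across a shared face $F=K_1\cap K_2$, the difference $\varphi_{K_1}-\varphi_{K_2}$ has vanishing surface gradient (the tangential trace of $\Vv$ is continuous across $F$), hence is constant on $F$; matching zero vertex values then forces this constant to be zero, so the pieces fit together to define $\varphi\in H^1(\Omega)$ with $\grad\varphi=\Vv$. The boundary condition is automatic: on each face $F\subset\DBd$ the tangential trace of $\Vv$ is zero, so $\varphi|_F$ is constant, and its vanishing at the vertices of $F$ forces $\varphi|_F=0$; thus $\varphi\in\bHone{\DBd}$.

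The main obstacle is conceptual rather than combinatorial: the statement implicitly assumes that $\Vv$ lies in $\operatorname{dom}(\EIP_h)$, so one must verify that in this regularity class the local potentials $\varphi_K$ carry well-defined vertex traces and that the path-integral representation of potential differences along mesh edges is valid. Once this is settled, the argument above is essentially a simplicial-approximation statement: every closed cycle in $\overline{\Omega}$ rel $\DBd$ is homologous to a cycle in the 1-skeleton of $\mesh$, so vanishing of every edge d.o.f.\ automatically kills every period of the closed form $\Vv$, which by the standard characterization of exact $\Hcurl$-fields implies $\Vv\in\grad\bHone{\DBd}$---irrespective of the topology of $\Omega$ or the connectedness of $\DBd$.
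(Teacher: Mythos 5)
Your proof is correct but takes a genuinely different, more constructive route than the paper. The paper gives a short topological argument: $H_1(\Omega;\DBd)$ is generated by edge paths (simplicial approximation), the hypothesis $\EIP_h\Vv=0$ makes all those circulations vanish, and an irrotational $\Hcurl$-field with vanishing periods along a complete set of $\DBd$-relative fundamental cycles is a gradient. You instead build the potential by hand: local potentials $\varphi_K$ on each simply-connected tetrahedron, constants fixed by the (all-zero) vertex values, gluing across faces via constancy of the jump, and the boundary condition falling out for free. Your route has the merit that it never needs the fact that edge cycles generate the relevant relative homology---once every edge integral is known to vanish, the potential exists by direct construction irrespective of the topology---whereas the paper's route is shorter and leans on standard facts about the de~Rham/\v{C}ech correspondence. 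Your closing paragraph in fact recapitulates the paper's topological argument as an alternative, so you've essentially identified both proofs. The one caveat you correctly flag, shared by the paper's proof, is the tacit assumption $\Vv\in\operatorname{dom}(\EIP_h)$, which is what gives meaning to edge integrals and vertex traces of the local potentials; without this the statement is vacuous, and the paper leaves this regularity hypothesis implicit as well.
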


\begin{proof}
  Since the mesh covers $\Omega$, the relative homology group $H_{1}(\Omega;\DBd)$ is
  generated by a set of edge paths. By definition \eqref{eq:fem12} of the d.o.f. of
  $\EFE(\mesh)$, the path integrals of $\Vv$ along all these paths vanish. As an
  irrotational vector field with vanishing circulation along a complete set of
  $\DBd$-relative fundamental cycles, $\Vv$ must be a gradient.
\end{proof}

\begin{proof}[of Lemma~\ref{lem:VLFEdec}]
  Given $\Psibf_{h}\in (\LFE(\mesh))^{3}$, we decompose it according to
  \begin{gather}
    \label{eq:fem23}
    \Psibf_{h} = \EIP_{h}\Psibf_h +
    \underbrace{(Id-\EIP_{h})\Psibf_{h}}_{=: \grad \wt{v}_{h}}\;.
  \end{gather}
  Note that $\curl (Id-\EIP_{h})\Psibf_{h}$ is piecewise constant with
  vanishing flux through all triangular faces of $\mesh$. Then Stokes'
  theorem teaches that $\curl (Id-\EIP_{h})\Psibf_{h}=0$.

  By the projector property of $\EIP_{h}$, $(Id-\EIP_{h})\Psibf_{h}$ satisfies the
  assumptions of Lemma~\ref{lem:notop}. Taking into account that, moreover, the field
  is piecewise linear, it is clear that $(Id-\EIP_{h})\Psibf_{h} = \grad\psi$ with
  $\psi\in \LFE_{2}(\mesh)$. \rh{Along an arbitrary edge path $\gamma$ in $\mesh$ we
  have $\int\nolimits_{\gamma}(Id-\EIP_{h})\Psibf_{h}\cdot\mathrm{d}\vec{s} =0$
  so that $\psi$ attains the same value (w.l.o.g. $=0$) on all vertices of $\mesh$.}
  The stability of the splitting is a consequence of \eqref{eq:fem22}.
\end{proof}\vspace{2mm}

By definition, the spaces $\EFE(\mesh)$ and $\LFE(\mesh)$ accommodate the homogeneous
boundary conditions on $\DBd$. Later, we will also need finite element spaces
oblivious of boundary conditions, that is, for the case $\DBd=\emptyset$. These
will be tagged by a bar on top, e.g., $\ol{\EFE}(\mesh)$, $\ol{\LFE}(\mesh)$, etc.
The same convention will be employed for notions and operators associated with
finite element spaces: if they refer to the particular case $\DBd=\emptyset$, they will be
endowed with an overbar, e.g. $\ol{\EIP}_{h}$, $\ol{\LIP}_{h}$,
$\ol{\Bas}_{\EFE}(\mesh)$, $\ol{\Cn}(\mesh)$, etc.

\subsection{Meshes with hanging nodes}
\label{sec:meshes-with-hanging}

\rh{Now, general tetrahedral meshes \emph{with hanging nodes} are admitted. We simply
  retain the definitions \eqref{fem:vdef} and \eqref{def:wtlfe2} of the spaces
  $\LFE(\mesh)$ and $\LFE_{2}(\mesh)$ of continuous finite element functions. Degrees
  of freedom for $\LFE(\mesh)$ are point evaluations at \emph{active vertices} of
  $\mesh$. A vertex is called active, if it is not located in the interior of an edge/face
  of $\mesh$ or on $\Gamma_{D}$.}  A 2D\footnote{For ease of visualization, we will
  often elucidate geometric concepts in two-dimensional settings. Their underlying
  ideas are the same in 2D and 3D.} illustration is given in
Fig.~\ref{fig:2drefactvert}.

\begin{figure}[!htb]
  \centering
  \begin{minipage}[c]{0.25\textwidth}\centering
    \includegraphics[width=0.95\textwidth]{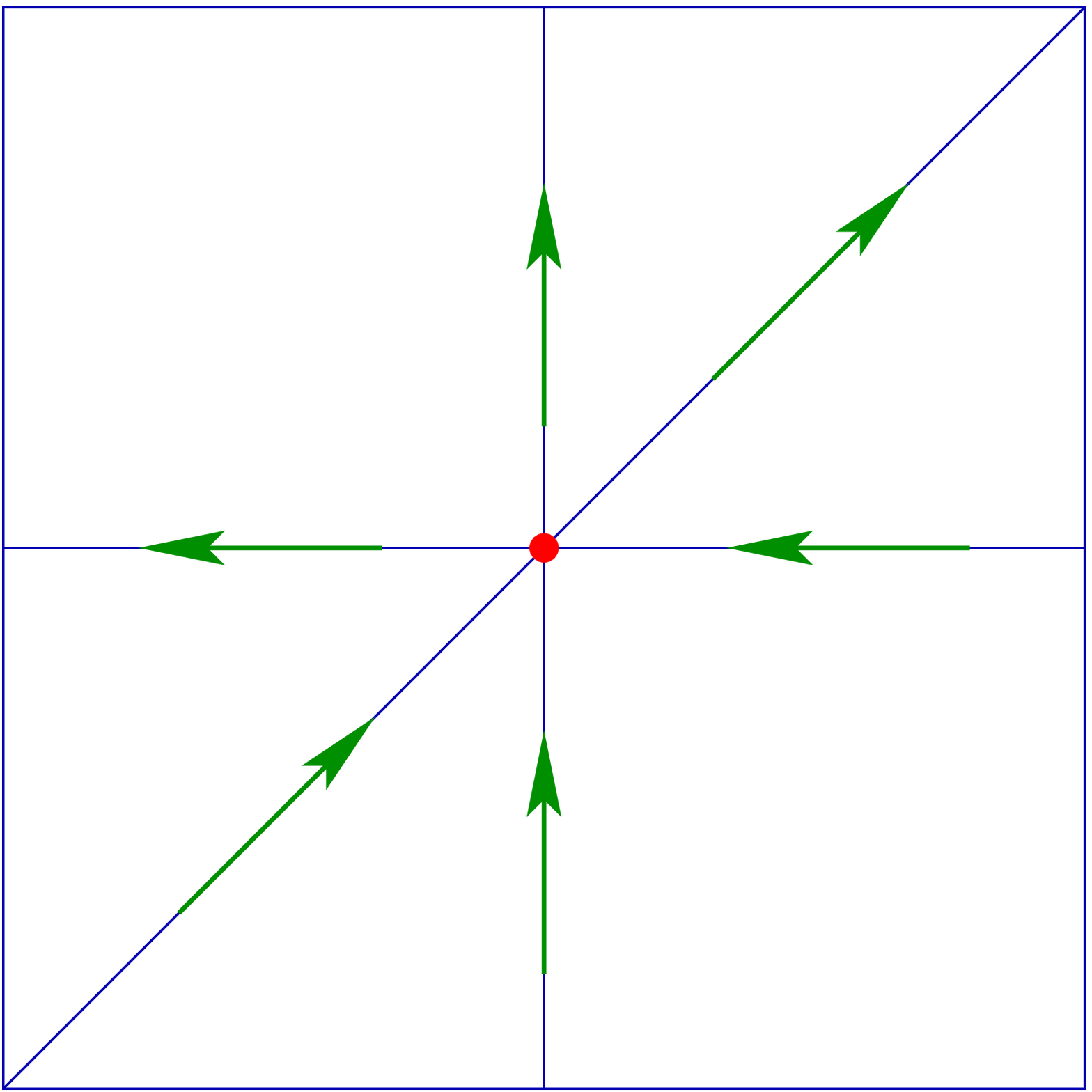}

    $\mesh_{0}$
  \end{minipage}%
  \begin{minipage}[c]{0.25\textwidth}\centering
    \includegraphics[width=0.95\textwidth]{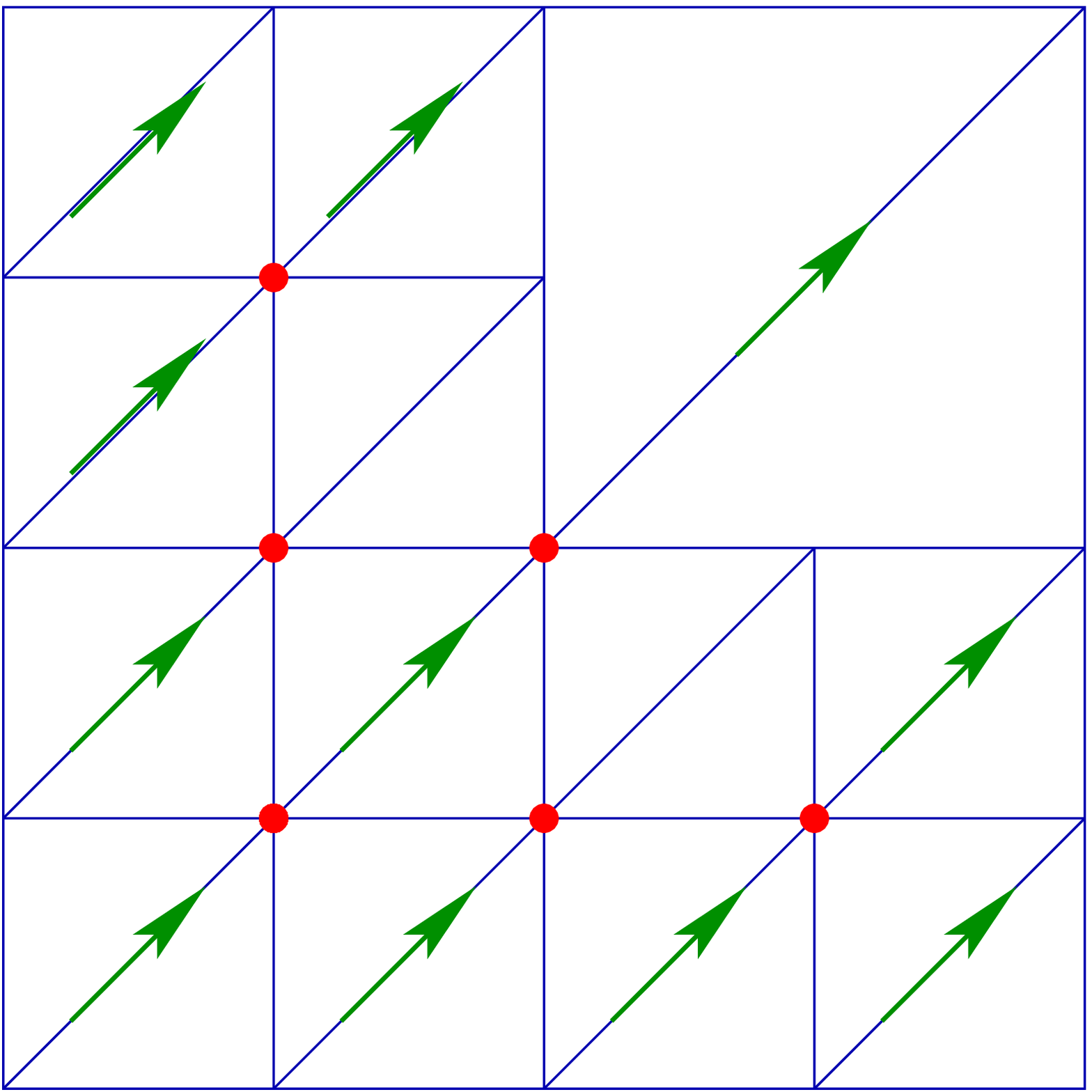}

    $\mesh_{1}$
  \end{minipage}%
  \begin{minipage}[c]{0.25\textwidth}\centering
    \includegraphics[width=0.95\textwidth]{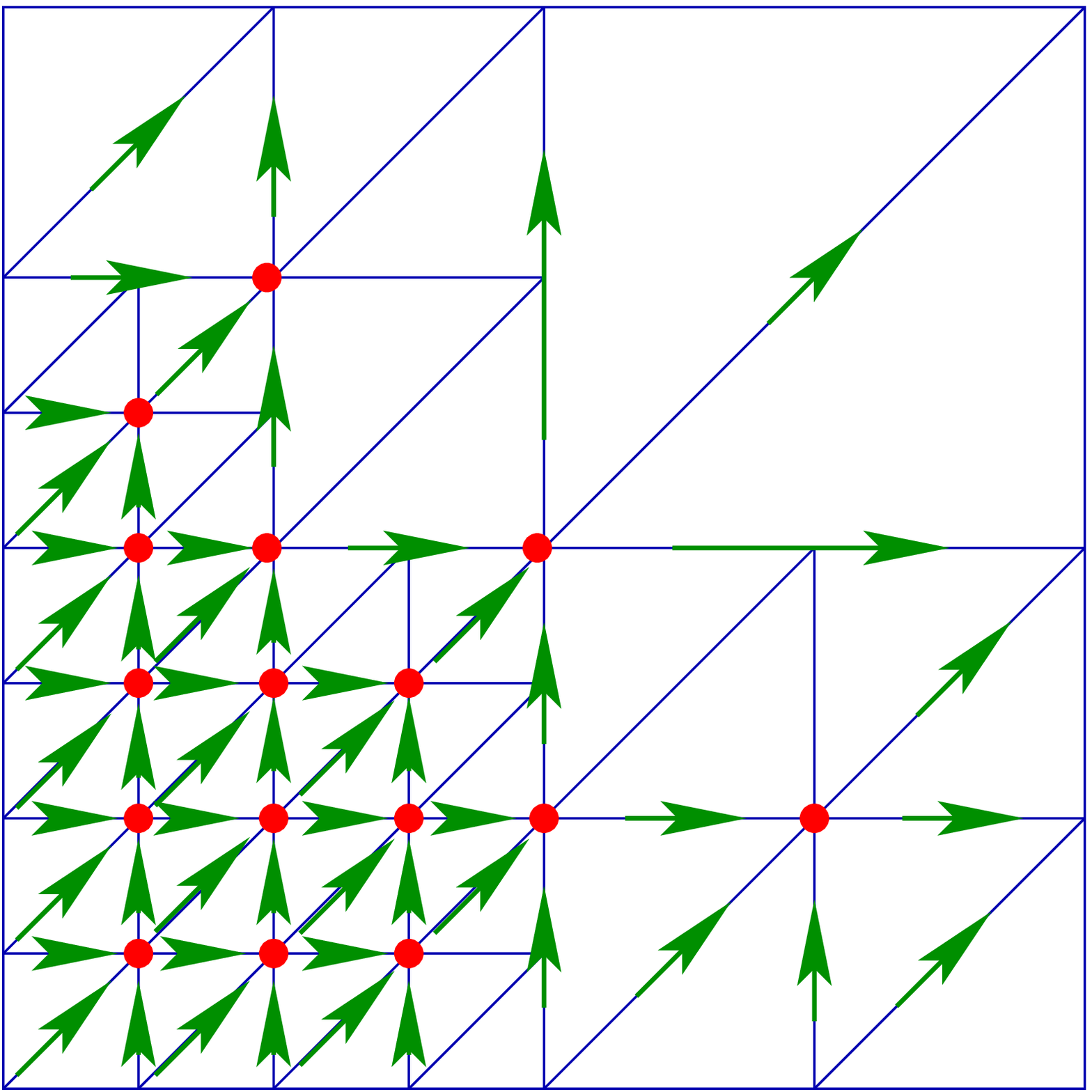}

    $\mesh_{2}$
  \end{minipage}%
  \begin{minipage}[c]{0.25\textwidth}\centering
    \includegraphics[width=0.95\textwidth]{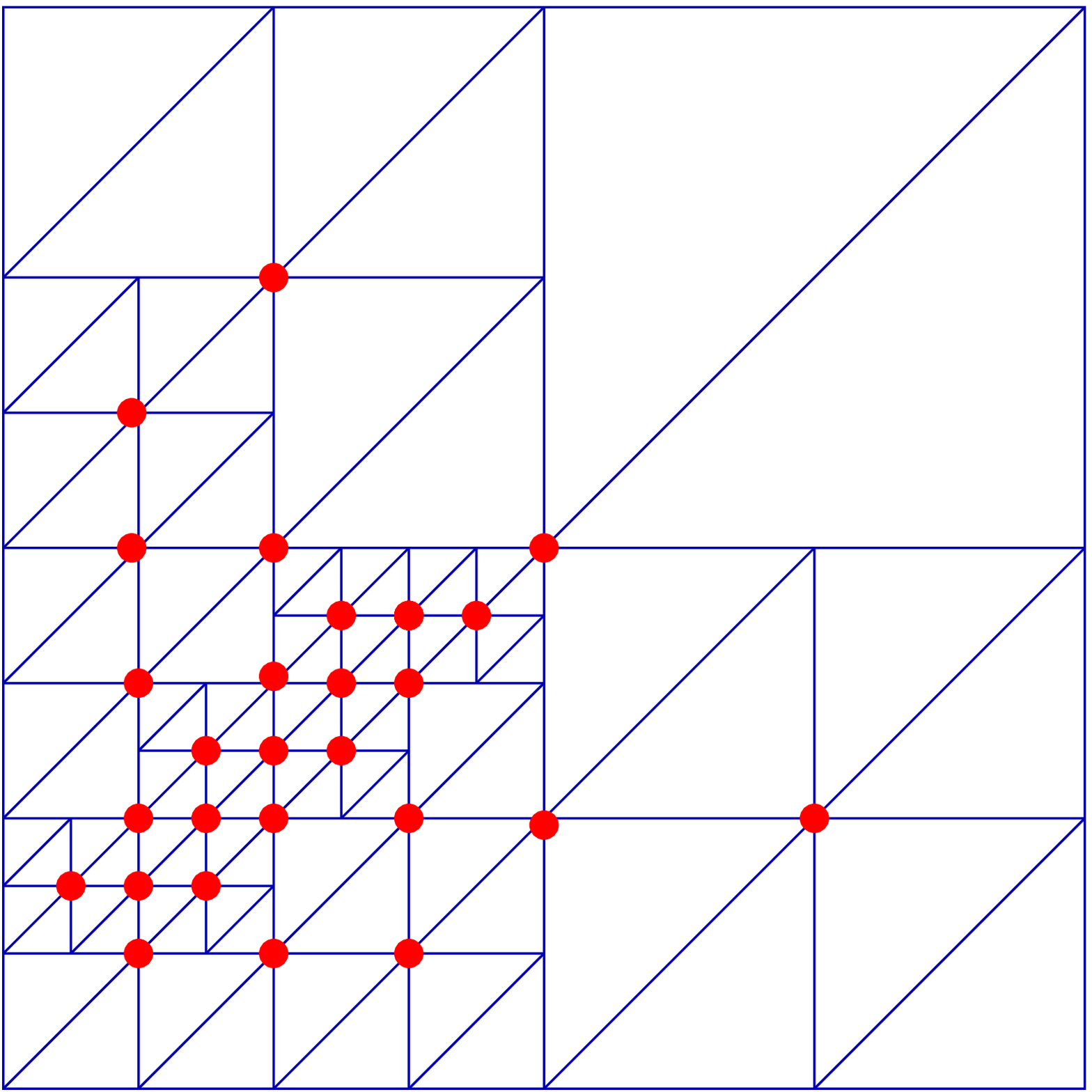}

    $\mesh_{3}$
  \end{minipage}%
  \caption{Active vertices ({\color{red}$\bullet$}) of 2D triangular meshes with hanging nodes,
    $\Omega=]0,1[^{2}$, $\Gamma_{D}=\partial\Omega$. In
    $\mesh_{1},\mesh_{2},\mesh_{3}$ active edges are marked with green arrows. 
  }
  \label{fig:2drefactvert}
\end{figure}

\rh{
The values of a finite element function at the remaining (``slave'') vertices
are determined by recursive affine interpolation. A dual nodal basis
$\Bas_{V}(\mesh)$ and corresponding interpolation operator $\LIP_{h}$ can be
defined as above. 

In principle, the definition \eqref{fem:udef} of the edge element space could be
retained on non-conforming meshes, as well. Yet, for this choice an edge
interpolation operator $\EIP_{h}$ that satisfies the commuting diagram property
\eqref{CDP} is not available. Thus, we construct basis functions directly and
rely on the notion of \emph{active edges}, see Fig.~\ref{fig:2drefactvert}.

\begin{definition}
  \label{def:actedge}
  An edge of $\mesh$ is \emph{active}, if it is an edge of some $K\in\mesh$, not
  contained in $\Gamma_{D}$, and connects two vertices that are either active or
  located on $\Gamma_{D}$.
\end{definition}

We keep the symbol $\Ce(\mesh)$ to designate the set of active edges of $\mesh$.  To
each $E\in\Ce(\mesh)$ we associate a basis function $\Vb_{E}$, which, locally on the
tetrahedra of $\mesh$, is} \rhrev{a polynomial of the form \eqref{fem:locform}}.
\rhrev{In order to fix this basis function completely,} \rh{it suffices to speficify its
  path integrals \eqref{eq:fem12} along \emph{all} edges of $\mesh$. In the spirit of
  duality, we demand
\begin{gather}
  \label{eq:ae}
  \int\nolimits_{F}\Vb_{E}\cdot\mathrm{d}\vec{s} =
  \begin{cases}
    1 & \text{, if } F=E\;,\\
    0 & \text{, if } F\in\Ce(\mesh)\setminus\{E\}\;.
  \end{cases}
\end{gather}
For the non-active (``slave'') edges of $\mesh$ the path integrals of $\Vb_{E}$
(subsequently called ``weights'') are chosen to fit \eqref{CDP}, keeping in mind that
$\Bas_{\EFE}(\mesh) := \{\Vb_{E}\}_{E\in\Ce(\mesh)}$, and that the d.o.f. and
$\EIP_{h}$ are still defined according to \eqref{eq:fem12} and \eqref{eq:fem14},
respectively. Ultimately, we set $\EFE(\mesh) := \Span{\Bas_{\EFE}(\mesh)}$.

Let us explain the policy for setting the weights in the case of the subdivided
tetrahedron of Fig.~\ref{fig:tetsub} with hanging nodes at the midpoints of edges,
which will turn out to be the only relevant situation, \textit{cf.}
Sect.~\ref{sec:helmh-type-decomp}.}
\rhrev{Weights have to be assigned to the ``small edges'' of the refined tetrahedron,
some of which will be active, and some of which will have ``slave'' status, see the
caption of Fig.~\ref{fig:tetsub}.}

\rh{We write the direction vectors of 
slave edges as linear combinations of active edges, for instance,
\begin{align*}
  \Bq_{1} - \Bp_{3} & = \tfrac{1}{2}(\Bp_{4}-\Bp_{3})\;, \\
  \Bq_{1} - \Bq_{2} & = \tfrac{1}{2}(\Bp_{4}+\Bp_{3}) - \tfrac{1}{2}(\Bp_{2}+\Bp_{3})
  = \tfrac{1}{2}(\Bp_{4}-\Bp_{2})\;,\\
  \Bp_{5}-\Bq_{4} & = \Bp_{5}-\tfrac{1}{2}(\Bp_{1}+\Bp_{3}) = 
  \Bp_{5}-\Bp_{1} + \tfrac{1}{2}(\Bp_{1}-\Bp_{3})\;,\\
  \Bq_{4}-\Bq_{3} & = \tfrac{1}{2}(\Bp_{1}+\Bp_{3}) - \tfrac{1}{2}(\Bp_{4}+\Bp_{2}) = 
  \tfrac{1}{2}(\Bp_{1}-\Bp_{2}) + \tfrac{1}{2}(\Bp_{3}-\Bp_{4})\;.
\end{align*}
In a sence, we express slave edges as ``linear combinations'' of active edges.
In a different context, this policy is explained in more detail in \cite{GRH99}. }

\begin{figure}[!htb]
  \centering
  \psfrag{p1}{$\Bp_{1}$}
  \psfrag{p2}{$\Bp_{2}$}
  \psfrag{p3}{$\Bp_{3}$}
  \psfrag{p4}{$\Bp_{4}$}
  \psfrag{p5}{$\Bp_{5}$}
  \psfrag{p6}{$\Bp_{6}$}
  \psfrag{q1}{$\Bq_{1}$}
  \psfrag{q2}{$\Bq_{2}$}
  \psfrag{q3}{$\Bq_{3}$}
  \psfrag{q4}{$\Bq_{4}$}
  \psfrag{E}{$E$}
  \includegraphics[width=0.8\textwidth]{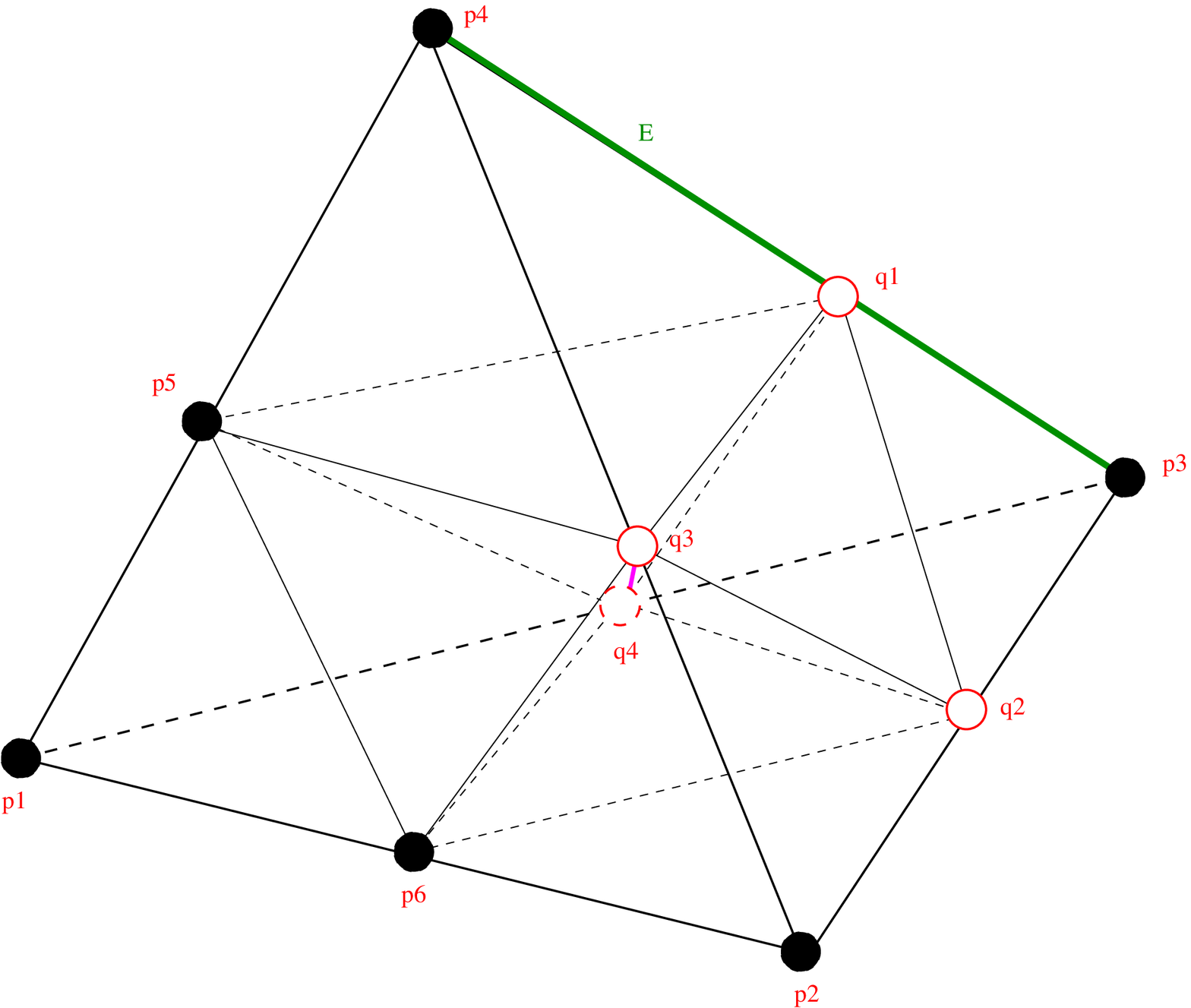}
  \caption{Subdivided tetrahedron, active vertices ($\bullet$)
     $\Bp_{1},\ldots,\Bp_{6}$, slave vertices 
     ({\color{red}$\circ$})
    $\Bq_{1},\ldots,\Bq_{4}$, active edges
    $[\Bp_{1},\Bp_{5}]$,
    $[\Bp_{1},\Bp_{6}]$,
    $[\Bp_{4},\Bp_{5}]$,
    $[\Bp_{2},\Bp_{6}]$,
    $[\Bp_{2},\Bp_{3}]$,
    $[\Bp_{2},\Bp_{4}]$,
    $[\Bp_{3},\Bp_{4}]$,
    $[\Bp_{5},\Bp_{6}]$,
    $[\Bp_{1},\Bp_{3}]$,
    \rhrev{slave edges
      $[\Bp_{1},\Bq_{4}]$,
      $[\Bq_{4},\Bp_{3}]$,
      $[\Bp_{2},\Bq_{2}]$,
      $[\Bq_{2},\Bp_{3}]$,
      $[\Bp_{3},\Bq_{1}]$,
      $[\Bq_{1},\Bp_{4}]$,
      $[\Bp_{6},\Bq_{4}]$,
      $[\Bq_{2},\Bq_{4}]$,
      $[\Bp_{6},\Bq_{2}]$,
      $[\Bq_{1},\Bq_{2}]$,
      $[\Bq_{2},\Bq_{3}]$,
      $[\Bq_{1},\Bq_{3}]$,
      $[\Bq_{1},\Bp_{5}]$,
      $[\Bq_{3},\Bp_{5}]$,
      $[\Bq_{4},\Bp_{5}]$
    } 
  }
  \label{fig:tetsub}
\end{figure}
 
\rh{The coefficients in the combinations tell us the weights. For example, for the active
edge $E=[\Bp_{3},\Bp_{4}]$ in Fig.~\ref{fig:tetsub} they are given in
Table~\ref{tab:sw}. Using these weights and the formula \eqref{eq:fem13}, $\Vb_{E}$
can be assembled on the tetrahedron}
\rhrev{by imposing (see Table~\ref{tab:sw} for notations)
  \begin{gather*}
    \int\nolimits_{S} \Vb_{E}\cdot\mathrm{d}\vec{s} =
    \begin{cases} w_{S} & 
    \text{for any contributing slave edge } S\;,\\
    0 & \text{for all other (slave) edges.}
    \end{cases}\;,\quad S\in \{\text{``small edges''}\}\;.
  \end{gather*}}%

\begin{table}[!htb]
  \centering
  \begin{tabular}[c]{||l|c|c|c|c|c||}\hline
    Slave edge \rhrev{$S$} & $[\Bq_{1},\Vp_{4}]$ & $[\Bp_{3},\Bq_{1}]$ & $[\Bq_{2},\Bq_3]$ &
    $[\Bq_{4},\Bp_{5}]$ &
    $[\Bq_{3},\Bq_{4}]$ \\\hline
    weight \rhrev{$w_{S}$} &   $\frac{1}{2}$    &   $\frac{1}{2}$    & $\frac{1}{2}$     &
    $\frac{1}{2}$ & $-\frac{1}{2}$ \\\hline
  \end{tabular}
  \caption{Weights for slave edges in Fig.~\ref{fig:tetsub} relative to
    active edge $E=[\Bp_{3},\Bp_{4}]$. Only slave edges with non-zero weights 
  are listed.}
\label{tab:sw}
\end{table}

\rh{Firstly, the procedure for the selection of weight guarantees that $\grad
  \LFE(\mesh) \subset \EFE(\mesh)$.}  \rhrev{%
  For illustration, we single out the gradient $\Vw_{h}$ of the nodal basis function
  belonging to vertex $\Bp_{5}$ in Fig.~\ref{fig:tetsub}. Its path integral equals
  $1$ along the (oriented) edges $[\Bp_{1},\Bp_{5}],[\Bp_{3},\Bp_{5}],
  [\Bp_{6},\Bp_{5}],[\Bq_{4},\Bp_{5}],[\Bq_{3},\Bp_{5}],[\Bq_{1},\Bp_{5}]$, and
  vanishes on all other edges. Hence we expect
  \begin{gather}
    \label{eq:slaveedge}
    \Vw_{h} = \Vb_{[\Bp_{1},\Bp_{5}]} + \Vb_{[\Bp_{3},\Bp_{5}]} + \Vb_{[\Bp_{6},\Bp_{5}]}\;.
  \end{gather}
  This can be verified through showing equality of path integrals along slave
  edges. We take a close look at the slave edge $[\Bq_{4},\Bp_{5}]$. By construction
  the basis functions belonging to active edges satisfy
  \begin{align*}
    & \int\limits_{[\Bq_{4},\Bp_{5}]}\Vb_{[\Bp_{1},\Bp_{5}]}\cdot\mathrm{d}\vec{s} =
    1\;,
    \int\limits_{[\Bq_{4},\Bp_{5}]}\Vb_{[\Bp_{5},\Bp_{4}]}\cdot\mathrm{d}\vec{s} =
    0\;,
     \int\limits_{[\Bq_{4},\Bp_{5}]}\Vb_{[\Bp_{3},\Bp_{4}]}\cdot\mathrm{d}\vec{s} =
     0\;,\\ & 
     \int\limits_{[\Bq_{4},\Bp_{5}]}\Vb_{[\Bp_{1},\Bp_{3}]}\cdot\mathrm{d}\vec{s} =
     -\tfrac{1}{2}\;,
    \int\limits_{[\Bq_{4},\Bp_{5}]}\Vb_{[\Bp_{1},\Bp_{6}]}\cdot\mathrm{d}\vec{s} =
     0\;
    \int\limits_{[\Bq_{4},\Bp_{5}]}\Vb_{[\Bp_{2},\Bp_{6}]}\cdot\mathrm{d}\vec{s} =
     0\;,\\ & 
    \int\limits_{[\Bq_{4},\Bp_{5}]}\Vb_{[\Bp_{2},\Bp_{3}]}\cdot\mathrm{d}\vec{s} =
     0\;.
   \end{align*}
   Then, evidently, 
   \begin{multline*}
     1= \int\limits_{[\Bq_{4},\Bp_{5}]}\Vw_{h}\cdot\mathrm{d}\vec{s} \\
     = \int\limits_{[\Bq_{4},\Bp_{5}]}
     \Vb_{[\Bp_{1},\Bp_{5}]}\cdot\mathrm{d}\vec{s}
     + \int\limits_{[\Bq_{4},\Bp_{5}]}\Vb_{[\Bp_{3},\Bp_{5}]}\cdot\mathrm{d}\vec{s}
     + \int\limits_{[\Bq_{4},\Bp_{5}]}\Vb_{[\Bp_{6},\Bp_{5}]}\cdot\mathrm{d}\vec{s}
     = 1 + 0 + 0 \;.
  \end{multline*}
  The same considerations apply to all other slave edges and \eqref{eq:slaveedge}
  is established.}
\rh{Secondly, the construction ensures the commuting diagram
  property \eqref{CDP}: again appealing to Fig.~\ref{fig:tetsub} we find, for
  example,
\begin{multline*}
  \int\limits_{[\Bq_{3},\Bq_{4}]}\grad \LIP_{h}u\cdot\mathrm{d}\vec{s} = 
  \LIP_{h}u(\Bq_{4})- \LIP_{h}u(\Bq_{3}) = \\
  \tfrac{1}{2}(u(\Bp_{4})+u(\Bp_{2})) - \tfrac{1}{2}(u(\Bp_{1})+u(\Bp_{3}))
  = \tfrac{1}{2}\int\limits_{[\Bp_{3},\Bp_{4}]}\grad u\cdot \mathrm{d}\vec{s} + 
  \tfrac{1}{2}\int\limits_{[\Bp_{1},\Bp_{2}]}\grad u\cdot \mathrm{d}\vec{s} = \\
  \tfrac{1}{2}\int\limits_{[\Bp_{3},\Bp_{4}]}\grad u\cdot \mathrm{d}\vec{s} + 
  \tfrac{1}{4}\int\limits_{[\Bp_{1},\Bp_{6}]}\grad u\cdot \mathrm{d}\vec{s} + 
  \tfrac{1}{4}\int\limits_{[\Bp_{6},\Bp_{2}]}\grad u\cdot \mathrm{d}\vec{s}\;.
\end{multline*}
In words, combining the path integrals of $\grad u$ along active edges with the
relative weights of the slave edge $[\Bq_{3},\Bq_{4}]$ yields the same result as
evaluating the path integral of the gradient of the interpolant $\LIP_{h}u$ along
$[\Bq_{3},\Bq_{4}]$. 

The definitions \eqref{def:wtlfe2} and \eqref{def:splfe} also carry over to meshes
with hanging nodes. This remains true for the splitting asserted in Lemma~2.2.
However, though the algebraic relationships like \eqref{CDP} remain valid, the
estimates and norm equivalences of the previous section do not hold for general
families of meshes with hanging nodes. This entails restrictions on the location of
hanging nodes, whose discussion will be postponed until
Sect.~\ref{sec:recurs-bisect-refin}, \textit{cf.} Assumption~\ref{ass:hn}. }

\begin{remark}
  Our presentation is confined to tetrahedral meshes and lowest order edge elements
  for the sake of simplicity. \rh{Extension of all results to hexahedral meshes and
  higher order edge elements is possible, but will be technical and tedious.}
\end{remark}

%%% Local Variables:
%%% mode: latex
%%% TeX-master: "main"
%%% End:

%\section{Adaptive refinement and local multigrid}
\section{Local mesh refinement}
\label{sec:LMG}

We study the case where the actual finite element mesh $\mesh_{h}$ of $\Omega$ has
been created by successive local refinement of a relatively uniform initial mesh
$\mesh_{0}$.  Concerning $\mesh_{h}$ and $\mesh_{0}$ the following \emph{asumptions}
will be made:
\begin{enumerate}
\item Given $\mesh_{0}$ and $\mesh_{h}$ we can construct a \emph{virtual} refinement hierarchy
  of $L+1$ \emph{nested}\footnote{
  two finite element meshes $\Cm$ and $\Ct$ are nested, $\Cm\prec\Ct$, if every
  element of $\Cm$ is the union of elements of $\Ct$.} tetrahedral meshes, $L\in\bbN$:
  \begin{gather}
    \label{eq:ref2}
    \mesh_{0}\; \prec\; \mesh_{1} \;\prec\; \mesh_{2}\;\prec\;\cdots\;\prec \mesh_{L} = \mesh_{h}\;.
  \end{gather}
  Please note that the virtual refinement hierarchy may be different from the actual
  sequence of meshes spawned during adaptive refinement\footnote{%
    For the local multigrid algorithm examined in this article the implementation
    must provide access to the virtual refinement hierarchy. This entails suitable
    bookkeeping data structures, which are available in the ALBERTA package used
    for the numerical experiments in Sect.~\ref{sec:numer-exper}}.
\item Inductively, we assign to each tetrahedron $K\in\mesh_{l}$ a level
  $\lev(K)\in\bbN_{0}$ by counting the number of subdivisions it took to generate it
  from an element of $\mesh_{0}$.
\item For all $0\leq l < L$ the mesh $\mesh_{l+1}$  is created by subdividing some or all
  of the tetrahedra in $\{K\in\mesh_{l}:\,\lev(K)=l\}$.
\item The shape regularity measures of the meshes $\mesh_{l}$ are uniformly bounded
  independently of $L$.
\end{enumerate}

Refinement may be local, but it must be regular in the following sense, \textit{cf.}
\rh{\cite[Sect.~4.2.2]{OSW94} and} \cite{WUC05}: we can find a second sequence of
nested tetrahedral meshes of $\Omega$
\begin{gather}
  \label{eq:ref24}
  \mesh_{0}=\wh{\mesh}_{0}\; \prec\;
  \wh{\mesh}_{1} \;\prec\; \wh{\mesh}_{2}\;\prec\;\cdots\;\prec \wh{\mesh}_{L}\;.
\end{gather}
that satisfies
\begin{enumerate}
\item $\mesh_{l}\prec \wh{\mesh}_{l}$ and $\{K\in\mesh_{l}:\,\lev(K)=l\} \subset
  \wh{\mesh}_{l}$, $l=0,\ldots,L$,
\item that the shape regularity measure $\rho_{\wh{\mesh}_{l}}$ is bounded
  independently of $l$,
\item and that there exist two constants $C>0$ and $0<\theta<1$ independent of $l$
  and $L$ such that
  \begin{gather}
    \label{eq:ref25}
    C^{-1} \theta^{l} \leq h_{K} \leq C \theta^{l}\quad\forall
    K\in\wh{\mesh}_{l}\;,\quad
    0\leq l \leq L\;.
  \end{gather}
  This means that the family ${\{\wh{\mesh}_{l}\}}_{l}$ is quasi-uniform. Hence, it
  makes sense to refer to a mesh width $h_{l} := \max\{h_{K},\,K\in\wh{\mesh}_{l}\}$
  of $\wh{\mesh}_{l}$. It decreases geometrically for growing $l$.
\end{enumerate}

\rh{Our analysis targets two popular tetrahedral refinement schemes that generate
  sequences of meshes that meet the above requirements.} 

\subsection{Local regular refinement}
\label{sec:local-regul-refin}
\rh{This scheme produces $\mesh_{l+1}$ by splitting some of the tetrahedra of the
  current mesh $\mesh_{l}$ into eight smaller ones, possibly creating hanging nodes
  in the process \cite{AIM01}.}  An illustrative 2D example with hanging nodes is
depicted in Figure \ref{fig:2dref}.  The accompanying sequence
$\{\wh{\mesh}_{l}\}_{0\leq l \leq L}$ is produced by global regular refinement, which
implies \eqref{eq:ref25} with $\theta=\frac{1}{2}$. Uniform shape-regularity can also
be guaranteed for repeated regular refinement of tetrahedra, see \cite{BEY94}.

\begin{figure}[!htb]
  \centering
  \begin{minipage}[c]{0.25\textwidth}\centering
    \includegraphics[width=0.95\textwidth]{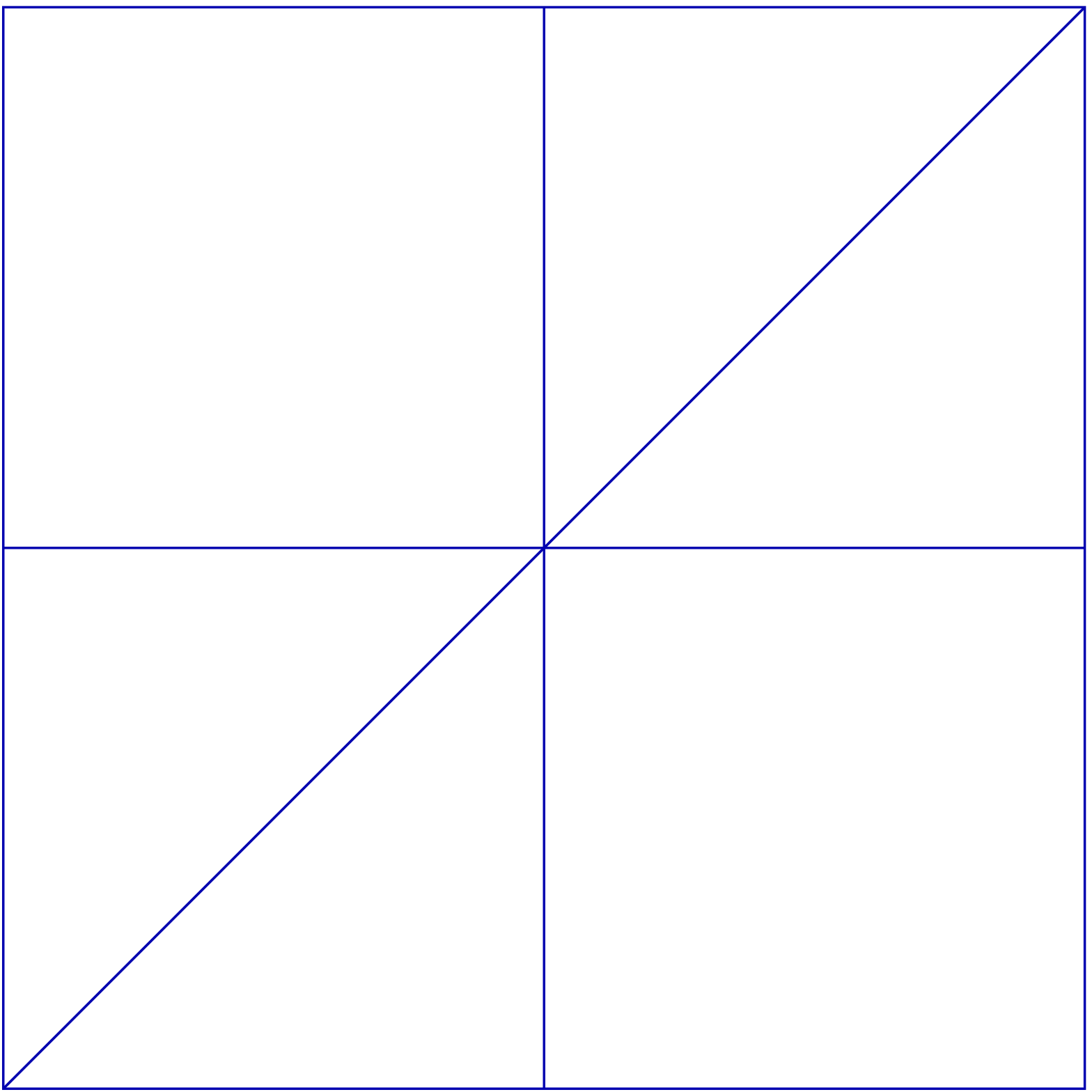}

    $\mesh_{0}$
  \end{minipage}%
  \begin{minipage}[c]{0.25\textwidth}\centering
    \includegraphics[width=0.95\textwidth]{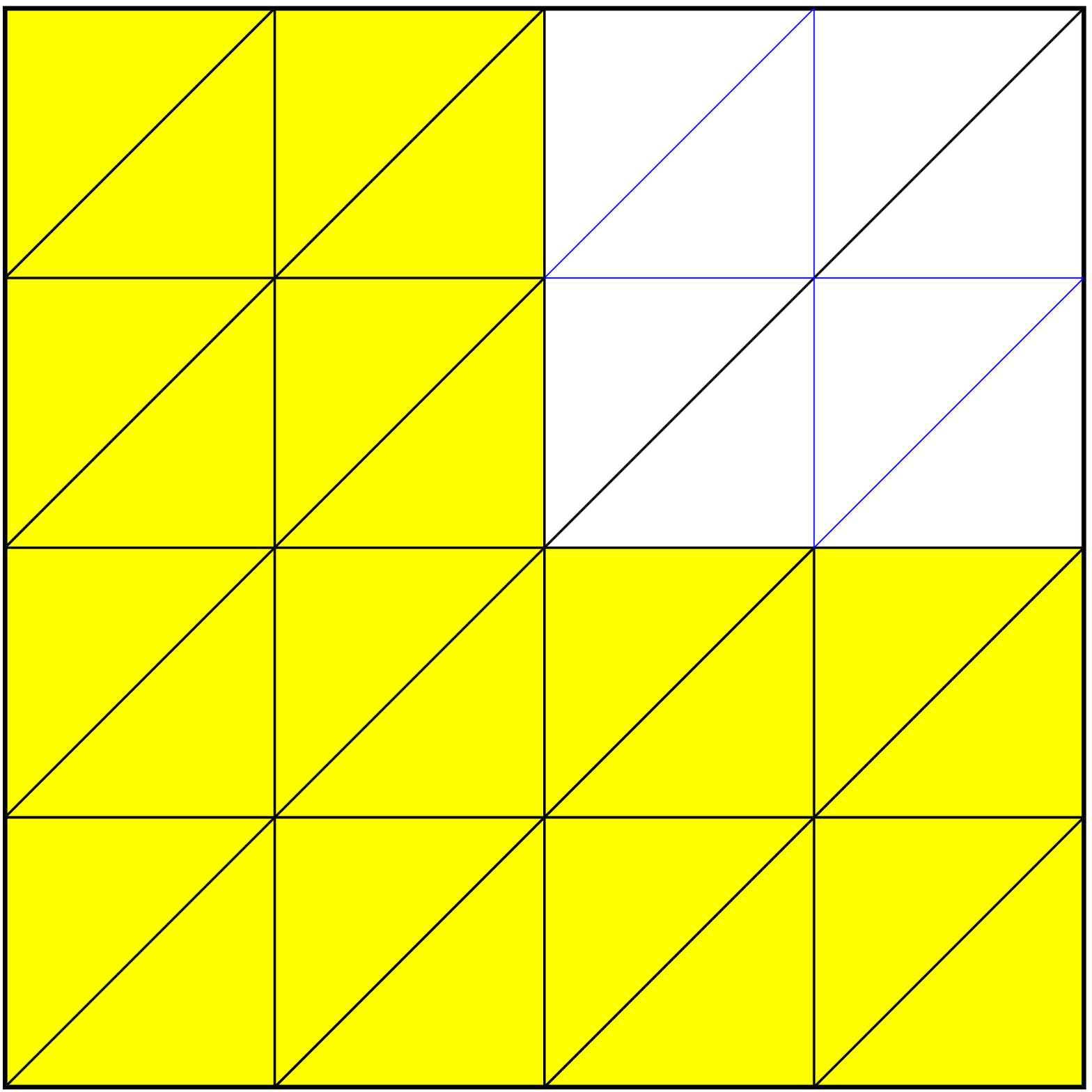}

    $\mesh_{1}$
  \end{minipage}%
  \begin{minipage}[c]{0.25\textwidth}\centering
    \includegraphics[width=0.95\textwidth]{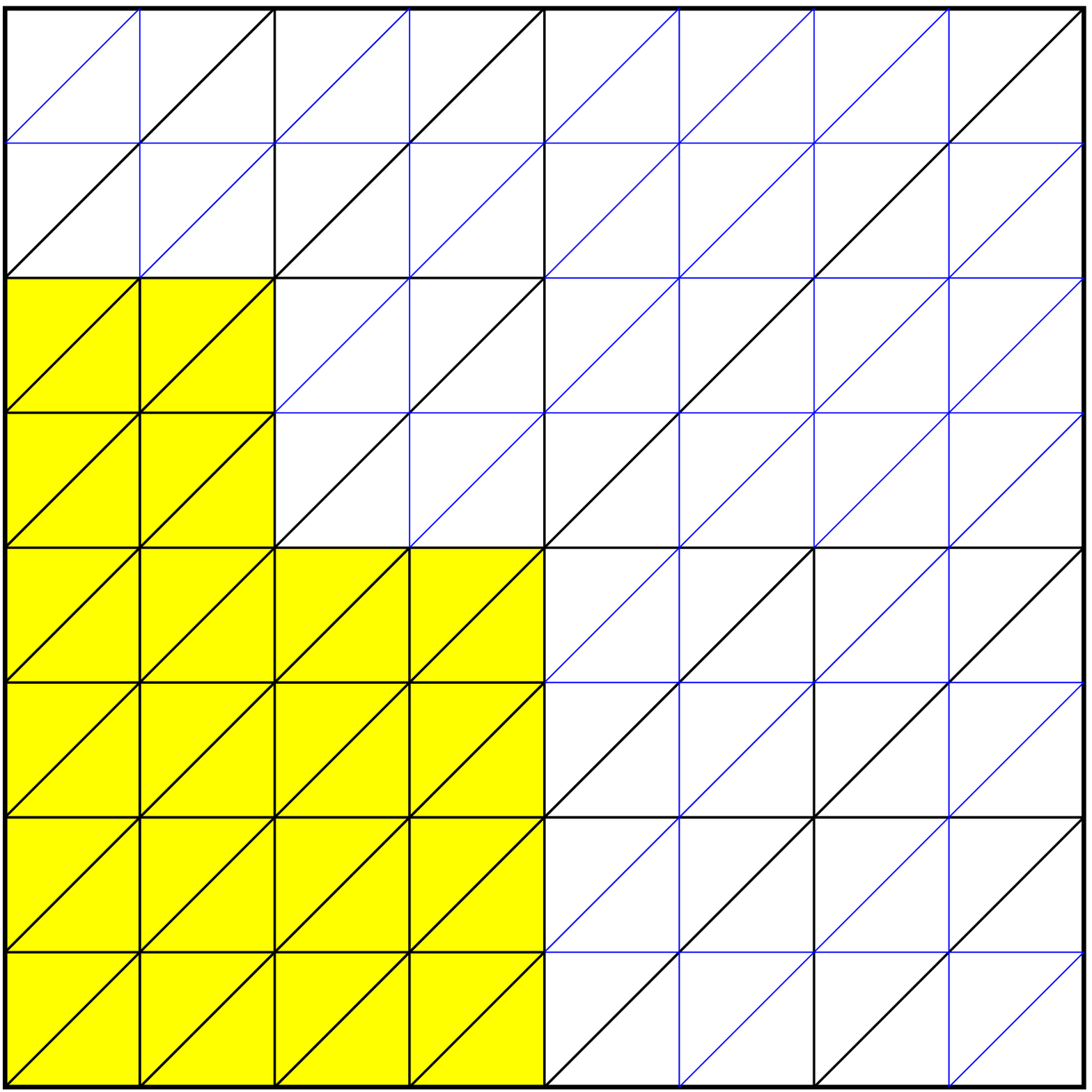}

    $\mesh_{2}$
  \end{minipage}%
  \begin{minipage}[c]{0.25\textwidth}\centering
    \includegraphics[width=0.95\textwidth]{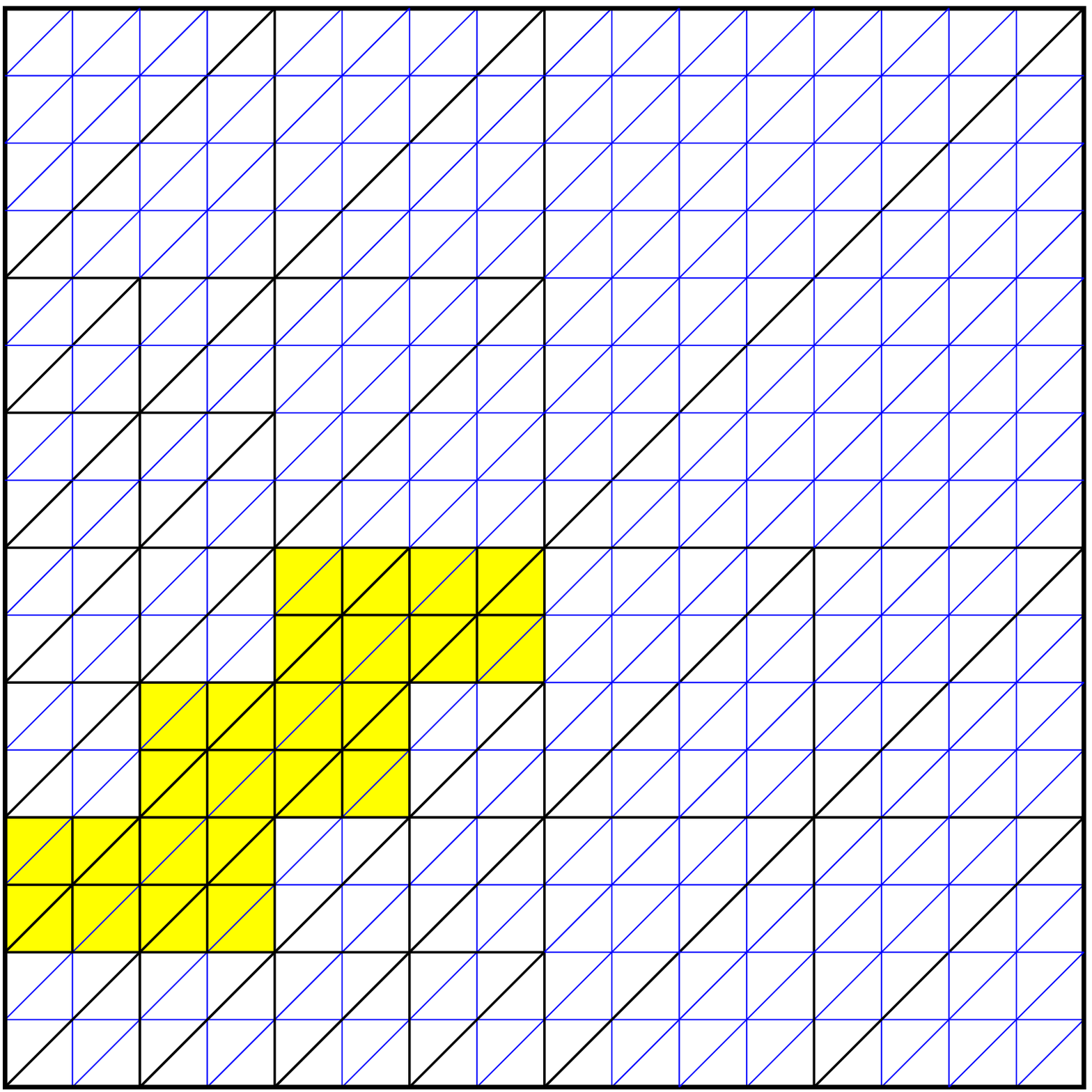}

    $\mesh_{3}=\mesh_{h}$
  \end{minipage}%
  \caption{Virtual refinement hierarchy for 2D triangular meshes.
    The quasi-uniform sequence $\{\wh{\mesh}_{l}\}_{0\leq l \leq L}$ is sketched in
    blue. Elements of $\mesh_{l}$ eligible for further subdivision are marked yellow.
  }
  \label{fig:2dref}
\end{figure}

\rh{The meshes occurring in the virtual refinement hierarchy need not agree with
the meshes that arise during adaptive refinement in an actual computation. Yet,
given $\mesh_{h}$, the virtual refinement hierarchy can always be found \emph{a posteriori}.
Write $\mesh_{\mathrm{hier}}$ for the union of all tetrahedra ever created during the
refinement process. Then, for $ 0<l<L$, define
\begin{gather}
  \label{eq:virt-mesh}
  \mesh_{l} := \left\{K\in\mesh_{\mathrm{hier}}:\; 
    \begin{array}[c]{l}
      \lev(K) \leq l\quad\text{and }K\;\text{does not contain a }\\
      K'\in \mesh_{\mathrm{hier}}\setminus\{K\}\;\text{with }
      \lev(K')\le l    
    \end{array}
    \right\}\;.
\end{gather}
}

\rh{
Using the construction of finite element spaces detailed in
Sect.~\ref{sec:meshes-with-hanging}, the local multigrid algorithms can handle any
kind of local regular refinement. Yet, convergence may degrade unless we curb extreme
jumps of local meshwidth. Thus, we assume the following throughout the remainder of 
this paper. 

\begin{assumption}
  \label{ass:hn}
  Any edge of $\mesh_{h}$ may contain at most one hanging node.
\end{assumption}

This will automatically be satisfied for all meshes $\mesh_{l}$ of the virtual
refinement hierarchy. Consequently, hanging nodes can occur only in a few geometric
configurations, one of which is depicted in Fig.~\ref{fig:tetsub}. This paves the way
for using mapping techniques and scaling arguments, see \cite[Sect.~3.6]{HIP02},
which confirm the following generalization of results of
Sect.~\ref{sec:conforming-meshes}. Of course, we rely on the constructions of finite
element spaces and interpolation operators described in
Sect.~\ref{sec:meshes-with-hanging}.

\begin{proposition}
  \label{prop:hn}
  Under Assumption \ref{ass:hn} the $L^{2}$-stability of bases, see \eqref{eq:fem17},
  \eqref{eq:fem18}, carries over uniformly to meshes created by local regular
  refinement. So do Lemmas~\ref{lem:42}, \ref{lem:VLFEdec}, and Estimates
  \eqref{eq:fem21}, \eqref{eq:fem22}.
\end{proposition}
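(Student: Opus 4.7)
The overarching strategy is a reduction to finitely many reference configurations. Under Assumption~\ref{ass:hn} together with uniform shape regularity of the virtual hierarchy $\{\mesh_l\}$, every tetrahedron $K\in\mesh_l$ can only occur in a bounded number of combinatorial patterns of refined neighbours (an unrefined face, a face split into four by a single layer of hanging midpoints, or the interior configuration of Fig.~\ref{fig:tetsub}). Up to affine equivalence this yields a \emph{finite} list of ``reference patches''. Since all asserted properties are invariant under affine scaling (with constants depending only on $\rho_K$), it suffices to verify them on each reference patch; the global statements then follow by the usual element-wise assembling and summing.

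First I would handle the $L^2$-stability of $\Bas_{\EFE}(\mesh)$ and $\Bas_{\LFE}(\mesh)$. For each reference patch, the basis functions $\Vb_E$ and $b_{\Bp}$ are determined by finitely many weights (those prescribed in Table~\ref{tab:sw} and its analogues) and live in a finite-dimensional polynomial space, so the local mass matrix has a condition number bounded by a constant. Locality of supports, restricted as it is by Assumption~\ref{ass:hn}, guarantees that only a bounded number of basis functions overlap on any $K$, so the patch-wise equivalences \eqref{eq:fem17} and \eqref{eq:fem18} assemble to the global bounds with constants depending only on $\rho_{\mesh}$.

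Next I would push Lemmas~\ref{lem:42} and \ref{lem:VLFEdec} and the estimates \eqref{eq:fem21}, \eqref{eq:fem22} through. The proof of Lemma~\ref{lem:42} given in the text is purely element-wise: on each $K$ it invokes only the Koszul lifting, an inverse inequality, and the scalar interpolation bound \eqref{eq:fem15}, while the only global ingredient is the commuting diagram property \eqref{CDP}. The construction of slave-edge weights in Sect.~\ref{sec:meshes-with-hanging} was tailored precisely to preserve \eqref{CDP}, and \eqref{eq:fem15} for $\LIP_h$ on meshes with a single hanging node per edge is a standard mapping/scaling statement once the reference-patch reduction is in place; hence the proof of Lemma~\ref{lem:42} runs verbatim. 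Estimate \eqref{eq:fem22} is the companion statement for $\EIP_h$ on piecewise linear vector fields, and is proved by the same patchwise mapping argument, noting that the weights used to extend $\Vb_E$ to slave edges are uniformly bounded. The splitting stability \eqref{eq:fem21} is genuinely local on $K$ and transfers immediately. Finally, the proof of Lemma~\ref{lem:VLFEdec} written out in the text uses only \eqref{eq:fem22}, the projector property of $\EIP_h$, the fact that $\curl(Id-\EIP_h)\Psibf_h$ has vanishing fluxes through faces (still true because the weights on a subdivided face were chosen consistently with Stokes' theorem), and Lemma~\ref{lem:notop}, whose proof is topological and insensitive to hanging nodes.

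The main obstacle, and the reason Assumption~\ref{ass:hn} is needed, is controlling the extended support of $\Vb_E$: through the slave-edge weights, $\Vb_E$ is not supported only on tetrahedra touching $E$ but also leaks into neighbouring refined tetrahedra. Without a bound on the number of hanging nodes per edge, this leakage could cascade arbitrarily far, and both the patch-reduction argument and the uniform bound on the slave weights would break. With Assumption~\ref{ass:hn} the leakage is confined to a one-layer neighbourhood of bounded combinatorial type, so the scaling constants remain uniform and all the stated estimates inherit their form from the conforming case.
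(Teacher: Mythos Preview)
Your proposal is correct and follows essentially the same approach as the paper, which merely sketches the argument by observing that Assumption~\ref{ass:hn} restricts hanging-node configurations to finitely many geometric patterns (as in Fig.~\ref{fig:tetsub}) and then invokes ``mapping techniques and scaling arguments'' with a reference to \cite[Sect.~3.6]{HIP02}. In fact your write-up supplies considerably more detail than the paper's own justification, including the explicit identification of which ingredients (the commuting diagram property, the locality of the Koszul-lifting proof of Lemma~\ref{lem:42}, the topological nature of Lemma~\ref{lem:notop}) survive the passage to hanging nodes and why the support leakage of $\Vb_{E}$ must be bounded.
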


Summing up, Assumption \ref{ass:hn} makes it possible to use the results obtained
in Sect.~\ref{sec:conforming-meshes} in the case of local regular refinement as
well. To avoid a proliferation of labels, we are going to quote the statements
from Sect.~\ref{sec:conforming-meshes} even when we mean their generalization
to meshes with hanging nodes. 
}

\subsection{\rh{Recursive bisection refinement}}
\label{sec:recurs-bisect-refin}
This procedure involves splitting a tetrahedron into two by promoting the midpoint
of the so-called refinement edge to a new vertex. Variants of bisection differ by the
selection of refinement edges: The iterative bisection strategy by B\"{a}nsch
\cite{BAE91,AMP98a} needs the intermediate handling of hanging nodes. The recursive
bisection strategies of \cite{KOS94a,MAU95,TRA97} do not create such hanging nodes
and, therefore, are easier to implement.  But for special $\mesh_0$, the two
recursive algorithms result in exactly the same tetrahedral meshes as the iterative
algorithm.  Since our implementation relies on the bisection algorithm of
\cite{KOS94a}, we outline its bisection policy in the following.  For more
information on bisection algorithms, we refer to \cite{ALBTA,STE08}.

\newcommand{\Type}{\operatorname{\textsf type}}
For the recursive bisection algorithm of \cite{KOS94a}, the
bisections of tetrahedra are totally determined by the local
vertex numbering of $\mesh_0$, plus a prescribed type for every
element in $\mesh_0$. Each tetrahedron $K$ is endowed with the
local indices 0, 1, 2, and 3 for its vertices. The refinement edge
of each element is always set to be the edge connecting vertex 0
and vertex 1. After bisection of $K$, the ``child  tetrahedron''
of $K$ which contains vertex 0 of $K$ is denoted by Child[0] and
the other one is denoted by Child[1]. The types of Child[0] and
Child[1] are defined by
$$\Type(\hbox{Child[0]})=\Type(\hbox{Child[1]})=(\Type(K)+1)\mod 3.$$

The new vertex at the midpoint of the refinement edge of $K$ is
always numbered by 3 in Child[0] and Child[1]. The four vertices
of $K$ are numbered in Child[0] and Child[1] as follows (see Fig.
\ref{fig:2tetra}):
\begin{eqnarray*}
\hbox{In Child[0]}: \quad && (0,2,3) \rightarrow (0,1,2),\\
\hbox{In Child[1]}: \quad && (0,2,3) \rightarrow (0,2,1)
\quad \hbox{, if}\quad \Type(K)=0,\\
\hbox{In Child[1]}: \quad && (0,2,3) \rightarrow (0,1,2) \quad
\hbox{, if}\quad \Type(K)>0.
\end{eqnarray*}
This recursive bisection creates only a small number of similarity
classes of tetrahedra, see \cite{KOS94a,ALBTA,TRA97}.

\begin{figure}[!htb]
\centering
\includegraphics*[width=5in,height=2.5in]{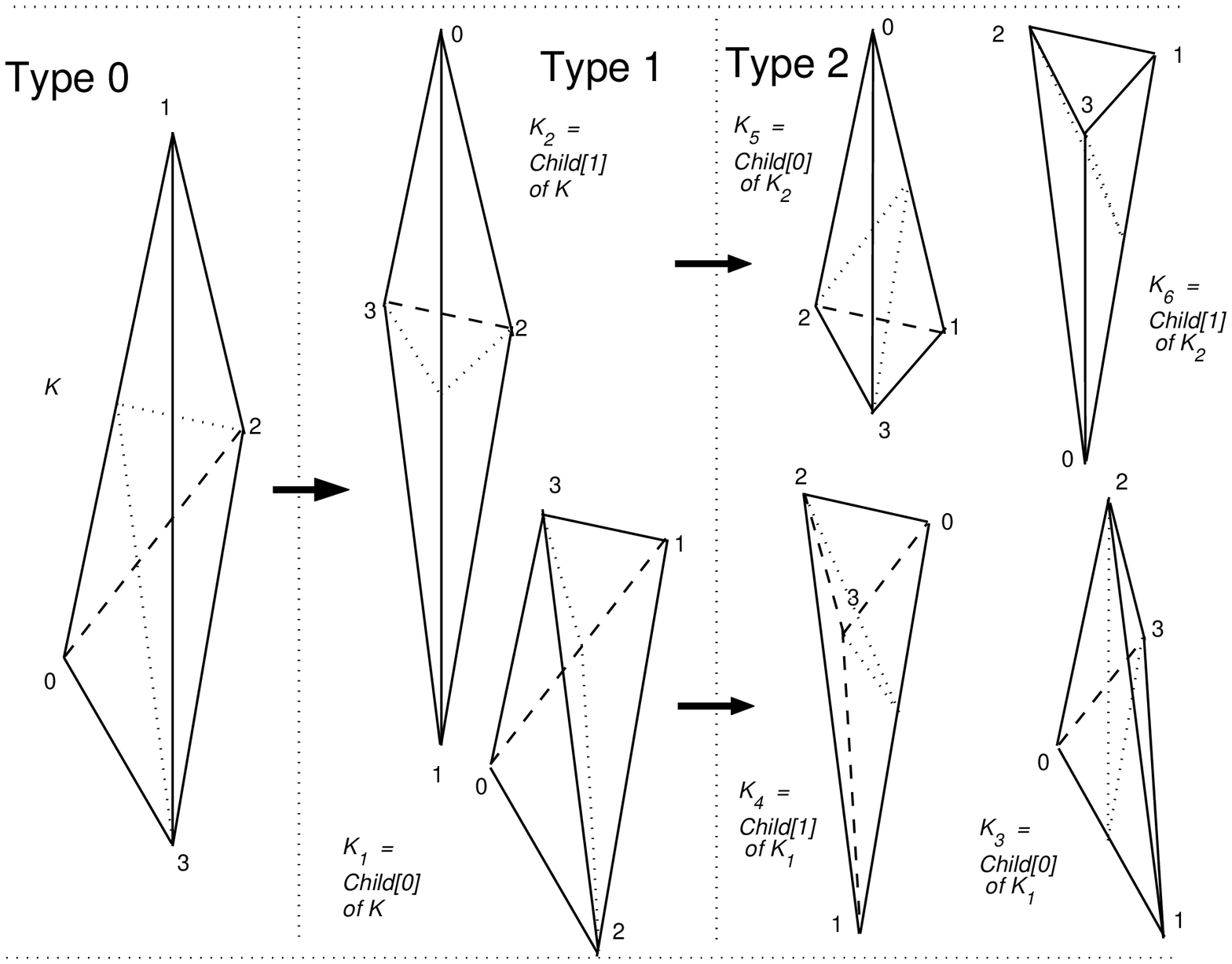}\\
\includegraphics*[width=5in,height=2in]{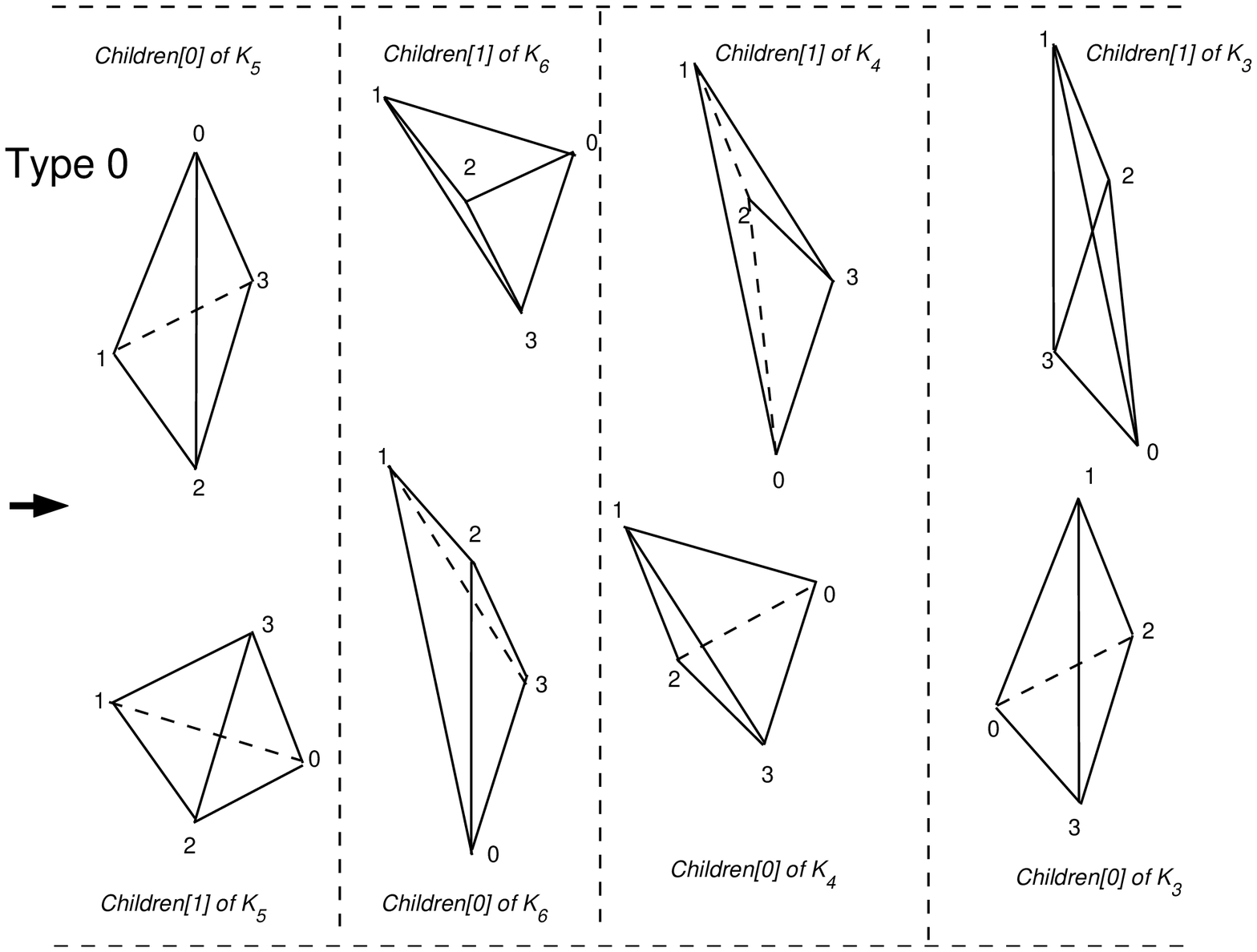}
\caption{Bisection of tetrahedra in the course of recursive bisection. Assignment of
  types to children}
\label{fig:2tetra}
\end{figure}

Fig~\ref{fig:nvb} shows a 2D example of the recursive bisection refinement (the
algorithm for 2D case is called ``the newest vertex bisection'' in \cite{MIT92}).
Similar to the 3D algorithm, for any element $K$, its three vertices are locally
numbered by 0, 1, and 2, its refinement edge is the edge between vertex 0 and 1. The
newly created vertex in the two children of $K$ are numbered by 2. In the child
element containing vertex 0 of $K$, vertex 0 and 2 of $K$ are renumbered by 1 and 0
respectively. In the other child element, vertex 1 and 2 of $K$ are renumbered by 0
and 1 respectively.

\begin{figure}[!htb]
  \centering

  \begin{minipage}[c]{0.33\textwidth}\centering
    \includegraphics[width=0.95\textwidth]{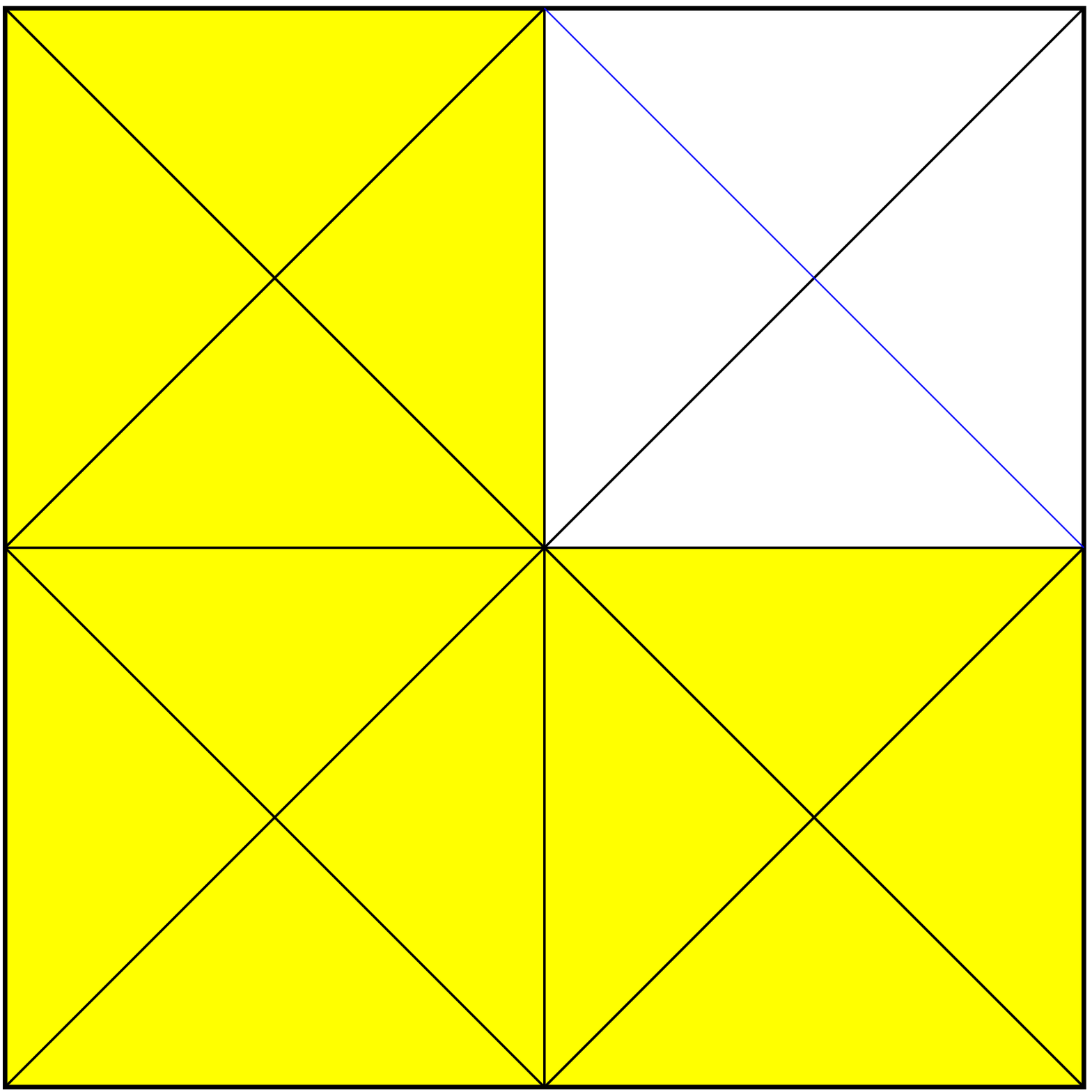}

    $\mesh_{1}$
  \end{minipage}%
  \begin{minipage}[c]{0.33\textwidth}\centering
    \includegraphics[width=0.95\textwidth]{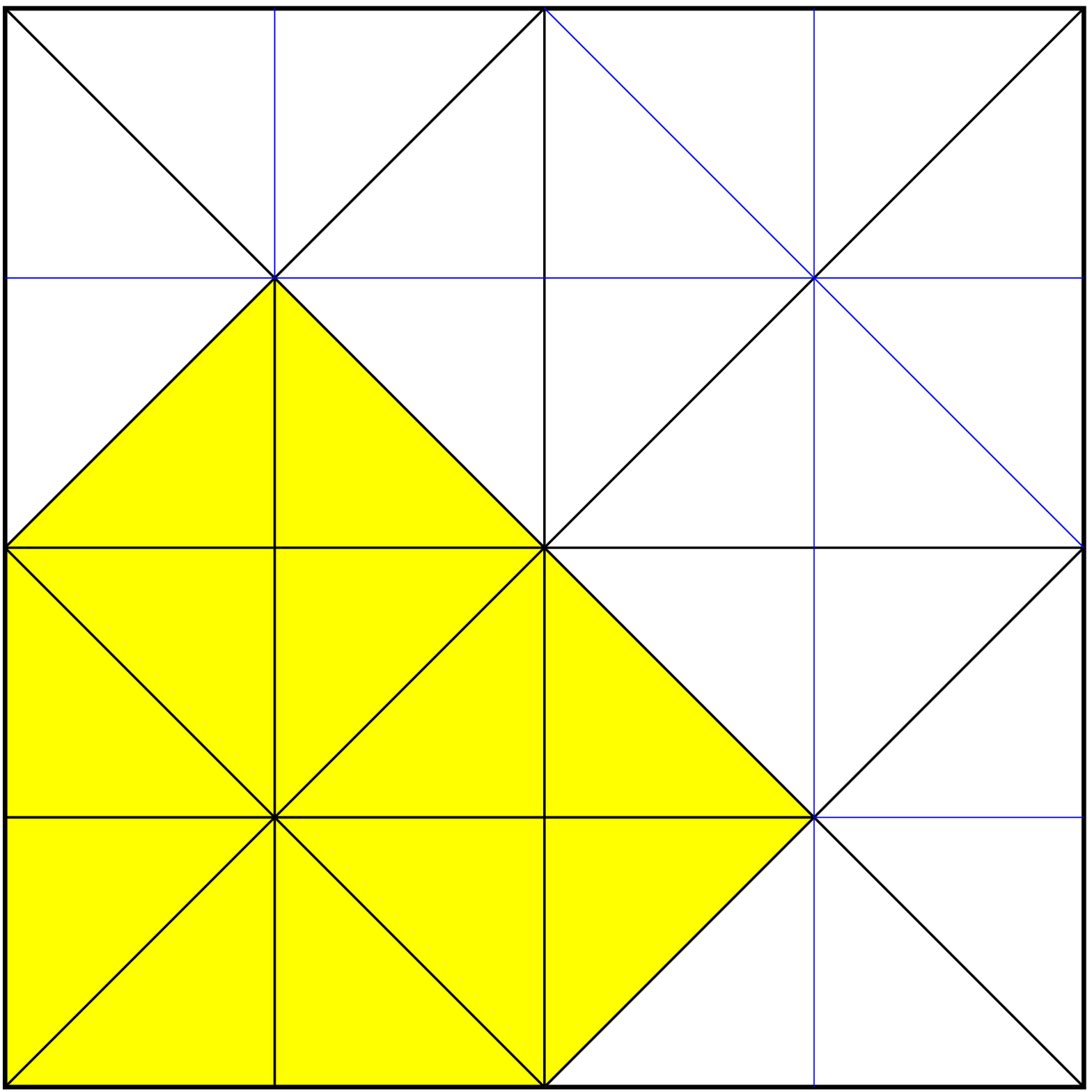}

    $\mesh_{2}$
  \end{minipage}%
  \begin{minipage}[c]{0.33\textwidth}\centering
    \includegraphics[width=0.95\textwidth]{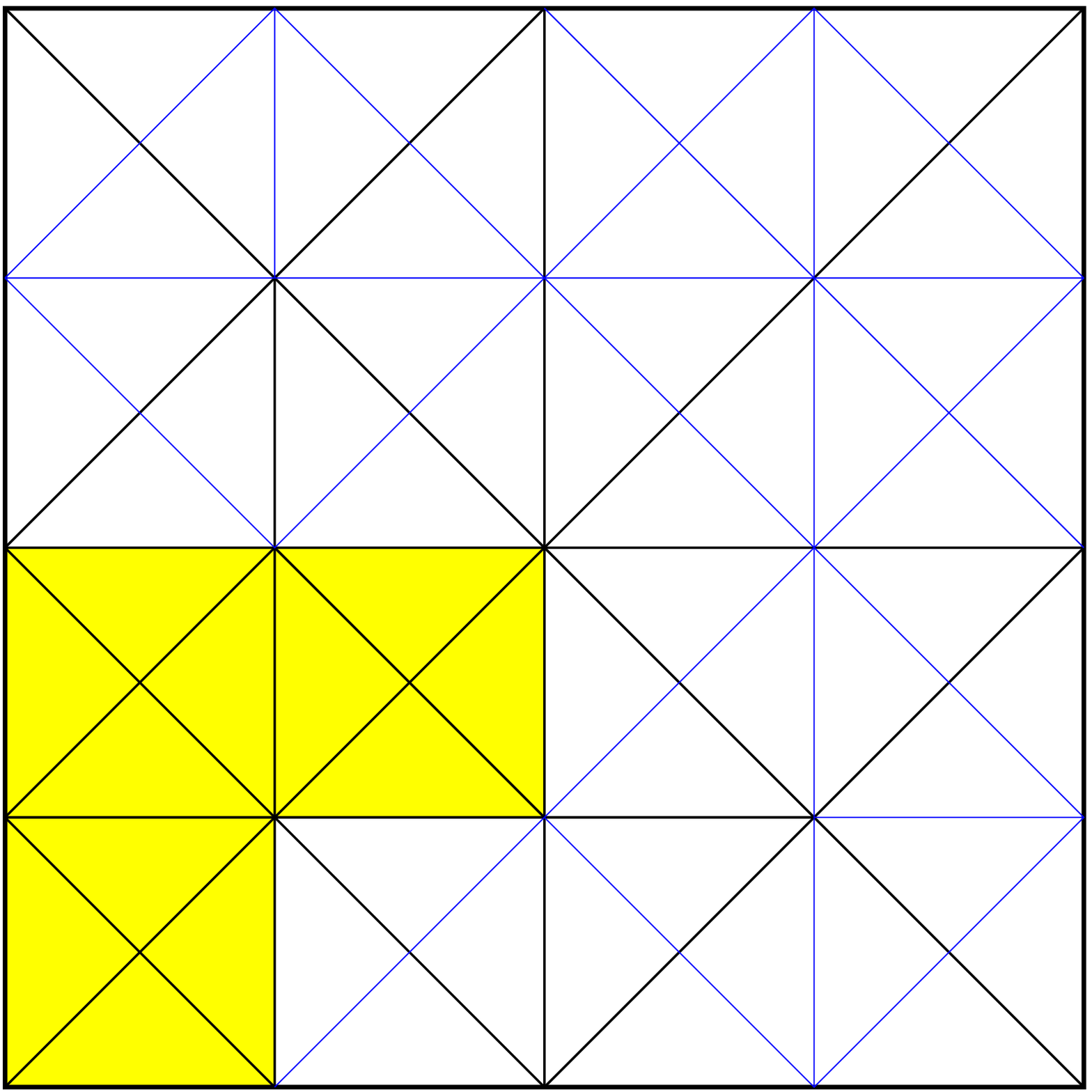}

    $\mesh_{3}$
  \end{minipage}%

  \begin{minipage}[c]{0.33\textwidth}\centering
    \includegraphics[width=0.95\textwidth]{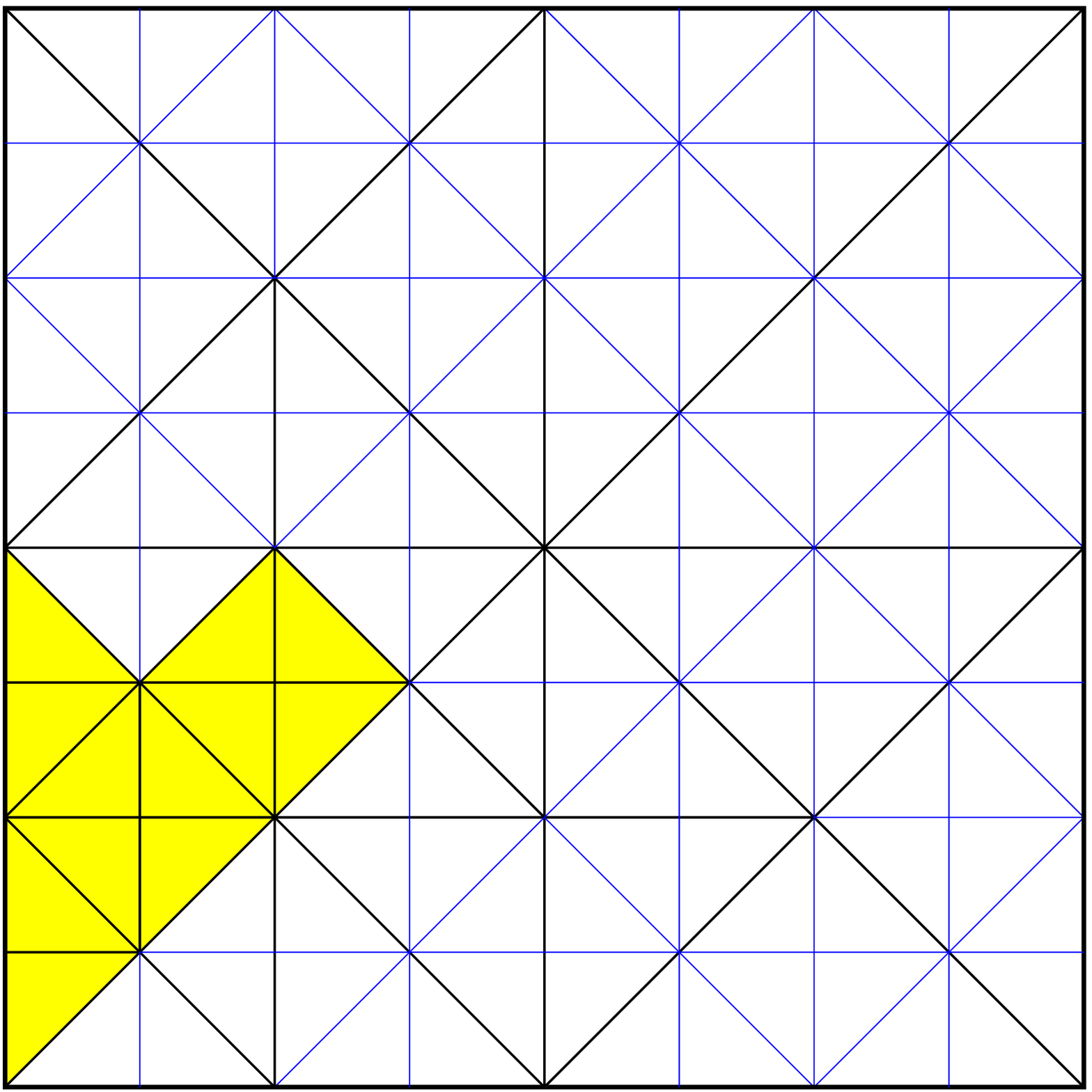}

    $\mesh_{4}$
  \end{minipage}%
  \begin{minipage}[c]{0.33\textwidth}\centering
    \includegraphics[width=0.95\textwidth]{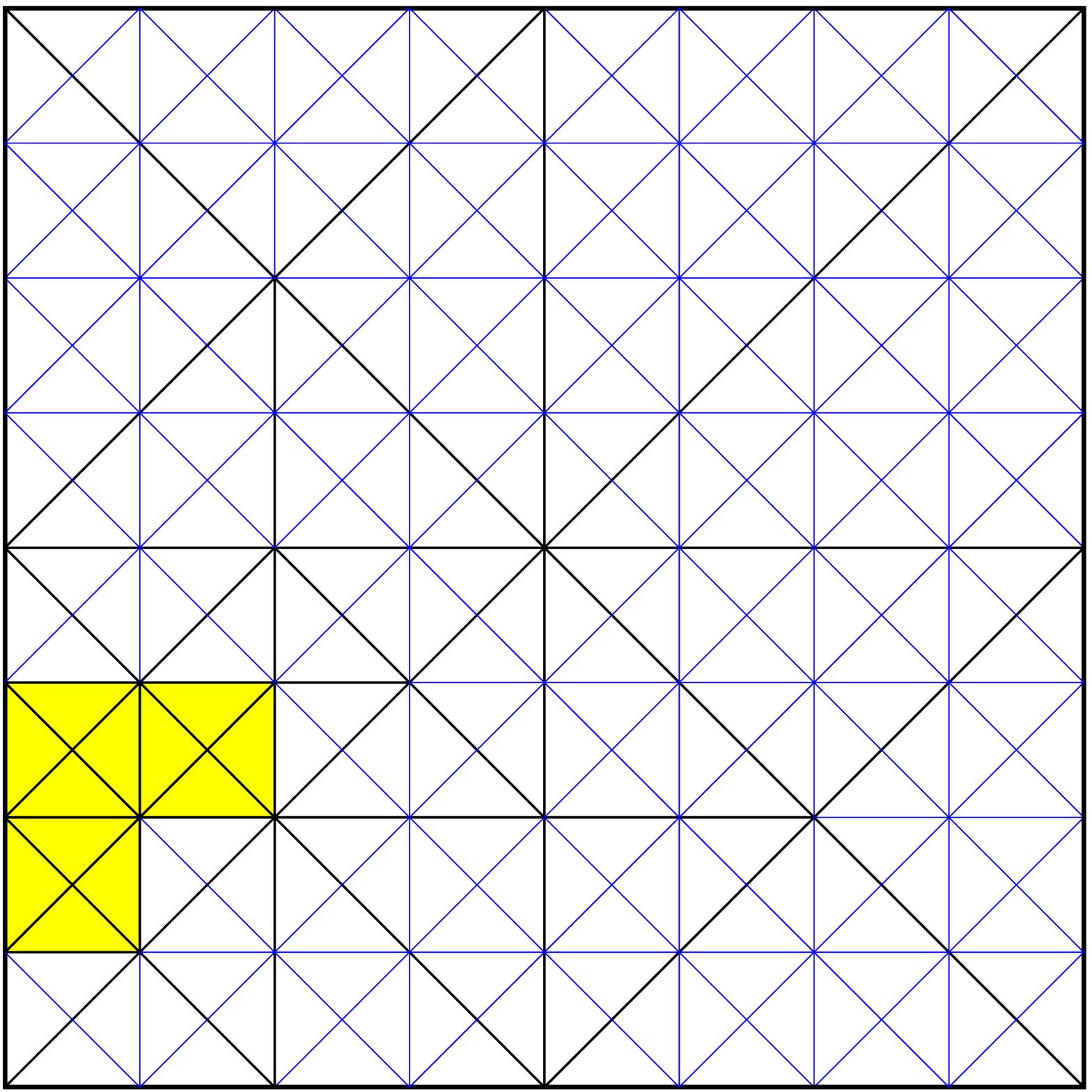}

    $\mesh_{5}$
  \end{minipage}%
  \begin{minipage}[c]{0.33\textwidth}\centering
    \includegraphics[width=0.95\textwidth]{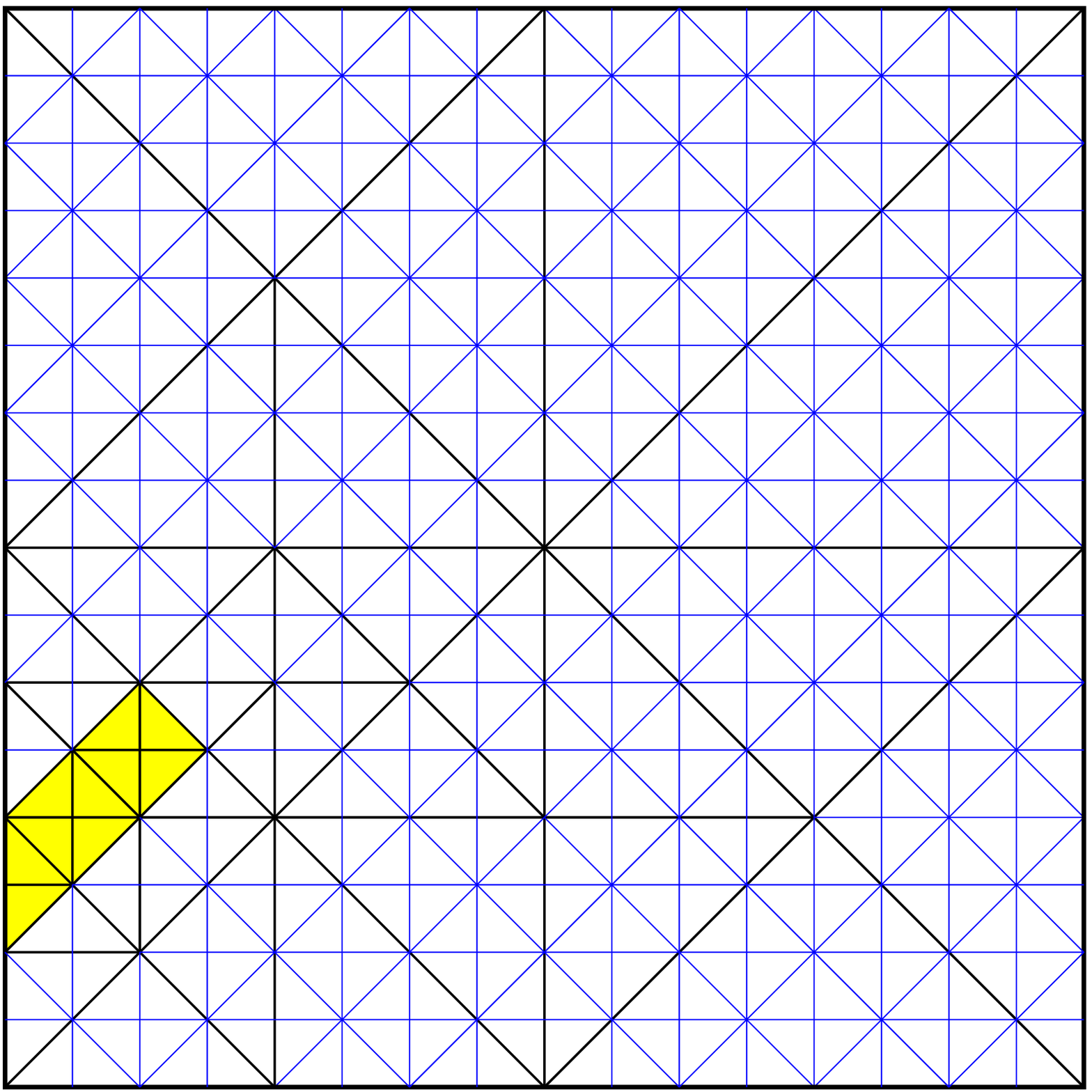}

    $\mesh_{6}$
  \end{minipage}%

  \caption{Virtual refinement hierarchy for 2D triangular meshes emerging in the
    course of successive local newest vertex bisection refinement of $\mesh_{0}$ from
    Fig.~\ref{fig:2dref}. Accompanying quasi-uniform
  meshes outlined in blue, maximally refined triangles marked yellow.}
  \label{fig:nvb}
\end{figure}

In order to keep the mesh conforming during refinements, the bisection of an edge is
only allowed when such an edge is the refinement edge for all elements which share
this edge. If a tetrahedron has to be refined, we have to loop around its refinement
edge and collect all elements at this edge to create an refinement patch. Then this
patch is refined by bisecting the common refinement edge. A more detailed discussion
can be found in \rh{\cite{KOS94a}}.

For any mesh $\mesh_l$ an associated ``quasi-uniform'' mesh $\wh{\mesh}_l$ according
to \eqref{eq:ref24}, $\mesh_l\prec\wh{\mesh}_l$, is obtained as follows: the elements
in $\{K\in\mesh_l:\;\lev(K)<l\}$ undergo bisection until $\lev(K)=l$ for any
$K\in\wh{\mesh}_l$.

We still have to make sure that the recursive bisection allows the definition of a
virtual refinement hierarchy. Thus, let $\mesh_h=\mesh_L$ be generated from the
initial mesh $\mesh_0$ by the bisection algorithm in \cite{KOS94a}. Denote by
$\mesh_{{\mathrm{hier}}}$ the set of all tetrahedra created during the bisection
process, i.e., for any $K\in\mesh_{\mathrm{hier}}$, there is a $K'\in\mesh_h$ such
that either $K'=K$ or $K'$ is created by refining $K$. \rh{Then, the virtual meshes
$\mesh_l$, $0<l<L$ can again be defined according to \eqref{eq:virt-mesh}.}

In the following, we are going to prove that each $\mesh_l$ is a
conforming mesh, that is, no hanging nodes occur in $\mesh_l$, $0\le l\le
L$. The proof depends on some mild assumptions on $\mesh_0$
(see assumptions (A1) and (A2) in \cite{KOS94a}), which will
be taken for granted.

\begin{lemma}\cite[Lemmas~2,3]{KOS94a}\label{lem:koss2}
  Let $T,T'\in \mesh_h$ be a pair of tetrahedra sharing a face $F=K\cap K'$.
  It holds true that
  \begin{enumerate}
  \item if $T$ contains the refinement edge of $T'$ and vice versa, then they have
    the same refinement edge,
  \item if $F$ contains the refinement edges of both $K$ and $K'$,
    then $\lev(K)=\lev(K')$,
  \item if $F$ contains the refinement edge of $K$, but does not
    contain the refinement edge of $K'$, then $\lev(K)=\lev(K')+1$,
  \item if $F$ does not contain the refinement edges of $K$ and
    $K'$, then $\lev(K)=\lev(K')$.
  \end{enumerate}
\end{lemma}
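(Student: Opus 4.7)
The plan is to prove the four assertions simultaneously by strong induction on $\max(\lev(K),\lev(K'))$. The base case handles both tetrahedra lying in $\mesh_{0}$, where the conclusions follow directly from the compatibility assumptions (A1)-(A2) on the initial numbering that were stipulated for the Kossaczky algorithm. For the inductive step, I would consider the parents $P\supseteq K$ and $P'\supseteq K'$ in the bisection tree and split into two situations: either $P=P'$ (so $K,K'$ are siblings produced by the same bisection, with level and refinement edges read off directly from the type-transition rules), or $P\ne P'$ share a face $F^{\ast}\supseteq F$ to which the inductive hypothesis applies.

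For assertion~1, the geometric observation is that whenever $K$ contains the refinement edge of $K'$, that edge lies in $\overline{K}\cap\overline{K'}=F$, and the symmetric hypothesis forces both refinement edges onto the single triangular face $F$. Since $F$ has only three edges and the Kossaczky numbering canonically distinguishes the refinement edge of a tetrahedron by the pair $(0,1)$ of local indices on each of its faces, a finite case analysis on $\Type(K)$ and $\Type(K')$ (together with the compatibility of vertex indexing inherited from the induction) shows that the two refinement edges coincide.

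For assertions~2-4, I would exploit two structural facts: (i) bisecting a tetrahedron of level $l$ produces two children of level $l+1$ whose new local vertex~$3$ is the midpoint of the bisected refinement edge, and (ii) the rule $\Type(\text{child})=(\Type(P)+1)\bmod 3$ together with the renumbering prescriptions of Sect.~\ref{sec:recurs-bisect-refin} completely determines where each child's refinement edge sits inside the parent. Applying the inductive hypothesis to $(P,P')$ across $F^{\ast}$, one identifies in which of the four configurations the parents lie, and then tracks, via the tabulated bisection rules, how $F\subseteq F^{\ast}$ is produced. This reduces each of cases 2-4 to a finite check: either $F$ coincides with a face of the parents and the level-equality is inherited, or $F$ arises as a sub-face created by the bisection step, in which case only one of $K,K'$ has been refined one more time, yielding the level gap of one required by assertion~3.

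The main obstacle will be the combinatorial bookkeeping of the renumbering across bisection in assertions~2-4, since the renumbering depends on whether $\Type(P)=0$ or $\Type(P)>0$, splitting each inductive step into several sub-cases. Fortunately the recursive bisection creates only a small number of similarity classes of tetrahedra, so the entire case analysis ultimately collapses to a finite lookup table of neighboring child configurations, which is precisely the content of Lemmas~2 and~3 of \cite{KOS94a}.
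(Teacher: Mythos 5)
The paper does not prove this lemma: it is quoted, via the citation tag \cite[Lemmas~2,3]{KOS94a}, directly from Kossaczk\'y's article on recursive bisection, and the sentence immediately preceding it explicitly defers the required compatibility assumptions (A1)--(A2) on $\mesh_{0}$ to that reference. There is therefore no in-paper proof to compare your attempt against; anything you write is a reconstruction of the cited source.

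Evaluated on its own terms, your sketch has a scoping problem in the induction. You induct on $\max(\lev(K),\lev(K'))$ and, in the inductive step, invoke the hypothesis for the parent pair $(P,P')$ across a face $F^{\ast}\supseteq F$. But the lemma as stated quantifies only over pairs of tetrahedra in the final mesh $\mesh_{h}$, while $P$ and $P'$ generally lie in $\mesh_{\mathrm{hier}}\setminus\mesh_{h}$ (they were subdivided on the way to producing $\mesh_{h}$). The inductive hypothesis therefore does not apply to them, and the step does not close. To make this rigorous you must strengthen the assertion so that it holds for any mesh produced at any stage of the recursive bisection process (equivalently, for any face-sharing pair in $\mesh_{\mathrm{hier}}$), and then the natural induction is over the number of compatible-patch bisection steps, checking that each refinement of a patch around a common refinement edge preserves the four invariants --- which is essentially the structure of Kossaczk\'y's own argument. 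A secondary gap: when $P\ne P'$, it is not automatic that the parents share a single full face $F^{\ast}$ containing $F$ (one may even be an ancestor of the other), so that transition requires its own justification. Finally, note a small slip in the statement itself: it introduces the tetrahedra as $T,T'$ but phrases the conclusions in terms of $K,K'$.
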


\begin{lemma}
  The meshes $\mesh_l$, $0\le l\le L$, according to \eqref{eq:virt-mesh} are
  conforming meshes.
\end{lemma}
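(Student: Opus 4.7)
The plan is to argue by induction on $l$, exploiting the fact that the recursive bisection algorithm of \cite{KOS94a} can be regarded as a finite sequence of \emph{patch bisections}, each of which acts on a maximal set of tetrahedra that share a common refinement edge and each of which preserves conformity of the intermediate mesh. The base case $l=0$ is immediate since $\mesh_0$ is a conforming input mesh by the standing assumptions (A1)--(A2) on $\mesh_0$ carried over from \cite{KOS94a}.

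For the inductive step, assume $\mesh_l$ is conforming and consider $\mesh_{l+1}$. From \eqref{eq:virt-mesh} one sees that $\mesh_{l+1}$ is obtained from $\mesh_l$ simply by replacing every $K\in\mesh_l$ with $\lev(K)=l$ that possesses a descendant in $\mesh_{\mathrm{hier}}$ of level $\leq l+1$ by the corresponding descendants (which will all have level $l+1$, since a single bisection raises the level by exactly one). Elements of level $<l$ in $\mesh_l$ are untouched. The task is therefore to show that performing \emph{only} those bisections that create level-$(l+1)$ children preserves conformity, starting from $\mesh_l$.

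I would carry this out by a reordering argument. Each patch bisection executed during the generation of $\mesh_h$ has a well-defined level, namely the common level of the tetrahedra it bisects; by Lemma~\ref{lem:koss2}(2) the elements of such a patch all have the same level. The key step is to show that patch bisections of different levels can be reordered so that all level-$k$ bisections are executed before any level-$(k{+}1)$ bisection. This rests on the observation that a patch of level $k'$ can only be formed once all its parent tetrahedra of level $k'$ exist, which requires level-$(k'-1)$ bisections but never level-$(k'{+}1)$ or higher ones; conversely a level-$k$ patch and a disjoint level-$k'$ patch with $k\not=k'$ act on disjoint sets of tetrahedra and hence commute. Iterating this exchange procedure yields a valid execution order of the bisection algorithm in which, at every stage, only patches of the currently lowest outstanding level are handled. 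By the design of the recursive bisection algorithm each intermediate mesh in any valid execution order is conforming.

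Finally, after completing all level-$\leq l$ patch bisections in this reordered execution, the resulting mesh consists precisely of those elements of $\mesh_{\mathrm{hier}}$ of level $\leq l$ that have no proper descendant of level $\leq l$ in $\mesh_{\mathrm{hier}}$; by \eqref{eq:virt-mesh} this mesh is $\mesh_l$. The same argument applied at the next level shows that after all level-$(l+1)$ patch bisections have been performed one obtains $\mesh_{l+1}$, and conformity is preserved by the algorithm. I expect the main obstacle to be a rigorous justification of the reordering step, i.e.\ checking that the dependency structure between patch bisections is indeed compatible with a level-wise execution; here the auxiliary Lemma~\ref{lem:koss2}, together with the fact that the refinement edge of any tetrahedron is intrinsically determined by the type and vertex numbering inherited from $\mesh_0$ and therefore fixed \emph{a priori}, will be decisive.
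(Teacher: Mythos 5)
Your approach is genuinely different from the paper's. The paper argues by \emph{backward} induction: starting from $\mesh_L=\mesh_h$, which is conforming by construction, it deduces conformity of $\mesh_{l-1}$ from that of $\mesh_l$ by a \emph{coarsening} argument. Around each new vertex $A_{\mathrm{new}}$ created at the finest level it identifies the refinement patch $P_E$ of level-$(L-1)$ parents (all of the same level by Lemma~\ref{lem:koss2}), observes that $A_{\mathrm{new}}$ is the unique new vertex inside $P_E$, and removes $A_{\mathrm{new}}$ together with its incident edges from ${\mesh_L}_{|P_E}$, replacing them with the edge $E$; this yields a conforming submesh, and doing it for every finest-level element gives a conforming $\mesh_{L-1}$. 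You instead run \emph{forward}, from $\mesh_0$ up, via a reordering argument: collect all patch bisections executed in generating $\mesh_h$, argue that their dependency relation (a level-$k$ patch requires level-$(k-1)$ bisections to exist, never higher ones, and patches of distinct levels act on disjoint element sets) admits a level-wise schedule, and then exploit that every intermediate state in a valid execution of the recursive bisection algorithm is conforming. Both routes work, but they are different proofs: yours is a "dynamic'' replay of the algorithm, the paper's is a "static'' coarsening of the end result. The paper's argument buys simplicity precisely at the point you flag as your main obstacle — it never has to check that a hypothetical reordered run is a legal execution of the algorithm, because it starts from the mesh that the actual run produced.

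Two concrete issues to repair if you pursue your route. First, the reordering step is only sketched; to close it you must show that at each stage of the level-wise schedule the tetrahedra to be bisected actually exist and still form a valid patch (i.e.\ they all carry the same refinement edge), which requires invoking Lemma~\ref{lem:koss2} and the a priori determination of refinement edges by type and vertex numbering — you allude to this but do not carry it out. Second, there is an off-by-one slip: executing all patch bisections of level $\le l$ produces elements up to level $l+1$ (a level-$l$ bisection creates level-$(l+1)$ children), so the resulting mesh is $\mesh_{l+1}$, not $\mesh_l$. The correct claim is that executing all bisections of level $<l$ (equivalently $\le l-1$) reproduces $\mesh_l$ as defined in \eqref{eq:virt-mesh}. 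With these two points fixed, the forward argument should go through, but it remains more involved than the paper's backward coarsening.
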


\begin{figure}[htbp]
  \begin{center}
    \includegraphics[width=4in]{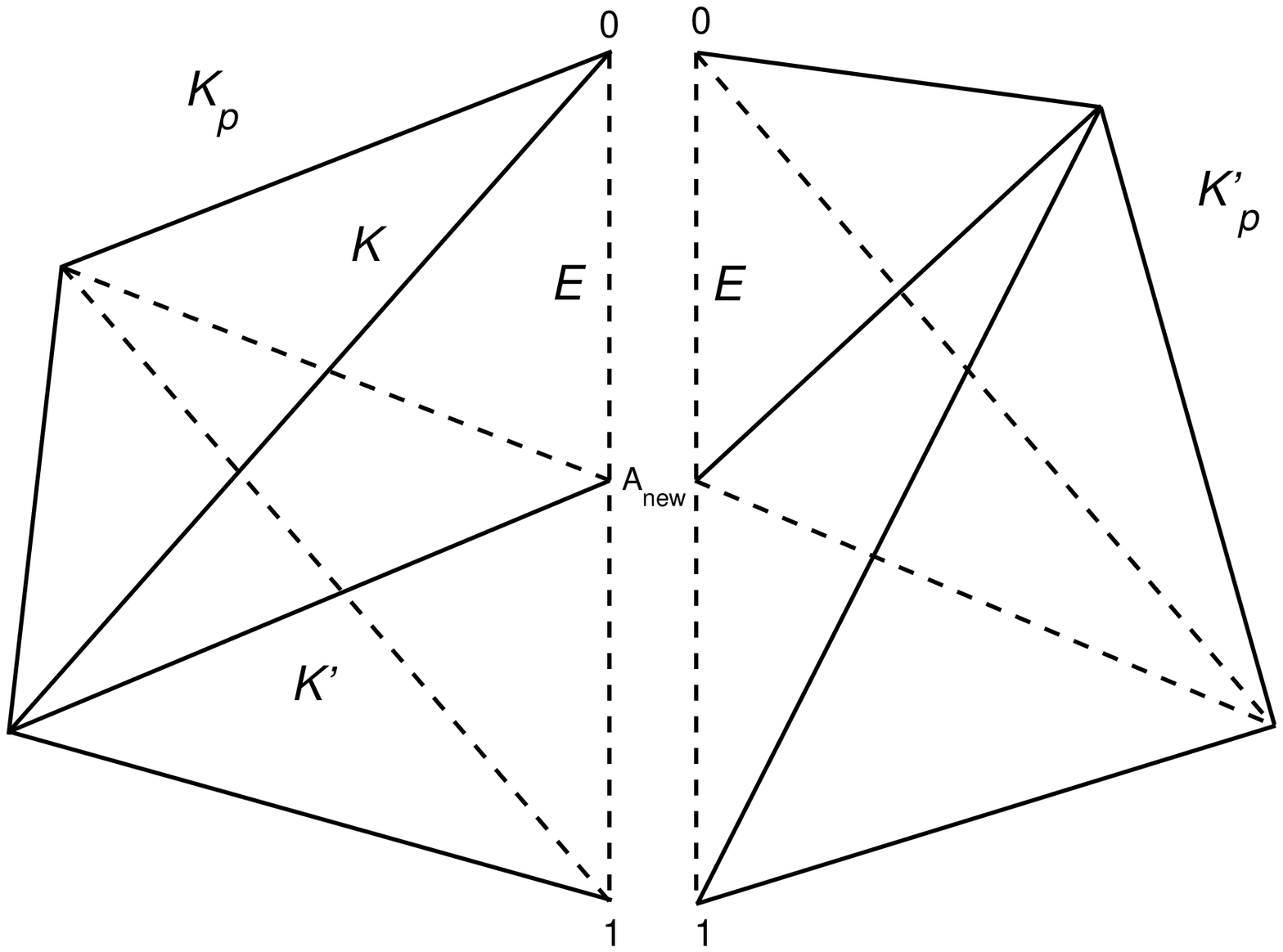}
    \caption{The patch around a refinement edge $E$ with vertex 0 and
      1. $\lev(K)=\lev(K')=L$ and
      $\lev(K_p)=\lev(K_p')=L-1$.}\label{fig:patch}
  \end{center}
\end{figure}

\begin{proof}
  We are going to prove the lemma by backward induction starting from $l=L$. Since
  $\mesh_L=\mesh_h$ is conforming, for any $K\in\mesh_L$ satisfying
  $\lev(K)=L$, there exists a brother of $K$, denoted by
  $K'\in\mesh_L$, such that $\lev(K')=L$ and $K_p:=K\cup K'\in
  \mesh_{L-1}$. Here $K_p$ is called the parent of $K$ and $K'$ with
  $\lev(K_p)=L-1$ (see Fig. \ref{fig:patch}).

  Let $E$ be the refinement edge of $K_p$. By the recursive bisection algorithm, $E$
  must be the common refinement edge of all tetrahedra in the refinement patch:
  $$P_E=\bigcup\{\,\ol{K_p'}:\;K_p'\in\mesh_{L-1} \hbox{ and } E\subset\ol{K_p'}\,\}.$$
  By Lemma \ref{lem:koss2}, $\lev(K_p')=L-1$ for any $K_p'\subset P_E$ and the midpoint
  of $E$, denoted by $A_{\mathrm{new}}$, is the unique new vertex of $\mesh_L$ in $P_E$. We
  conclude that
  $$P_E=\bigcup\{\,\ol{K}:\; K\in\mesh_L,\;\lev(K)=L,\hbox{ and } A_{\mathrm{new}}
  \hbox{ is a vertex of } K\,\}.$$ Coarsen the sub-mesh ${\mesh_L}_{|P_{E}}$ by
  removing the vertex $A_{\mathrm{new}}$ and all edges related to it and adding $E$ to this
  patch. Thus a conforming sub-mesh ${\mesh_{L-1}}_{|P_E}$ is obtained. Do the above
  coarsening process for every element $K\in\mesh_L$ with $\lev(K)=L$. This proves
  that $\mesh_{L-1}$ is conforming.

  Finally, an induction argument confirms that $\mesh_l$ is conforming, $l=L-2,\cdots,1$.
\end{proof}

\section{Local multigrid}
\label{sec:local-multigrid}

To begin with, we introduce nested refinement zones as open subsets of $\Omega$:
\begin{gather}
  \label{eq:3}
  \omega_{l} := \hbox{interior}\Big(
  \bigcup\{\overline{K}:\,K\in\mesh_{h},\, \lev(K)\ge l\}\Big)\subset {\Omega}\;,
\end{gather}
see Fig.~\ref{fig:2drefzone} and Fig.~\ref{fig:nvbrz}. The notion of refinement zones allows a concise
definition of the local multilevel decompositions of the finite element spaces
$\LFE(\mesh_{h})$ and $\EFE(\mesh_{h})$ that underly the local multigrid method.

\begin{figure}[!htb]
  \centering
  \begin{minipage}[c]{0.6\textwidth}
    \includegraphics[width=0.9\textwidth]{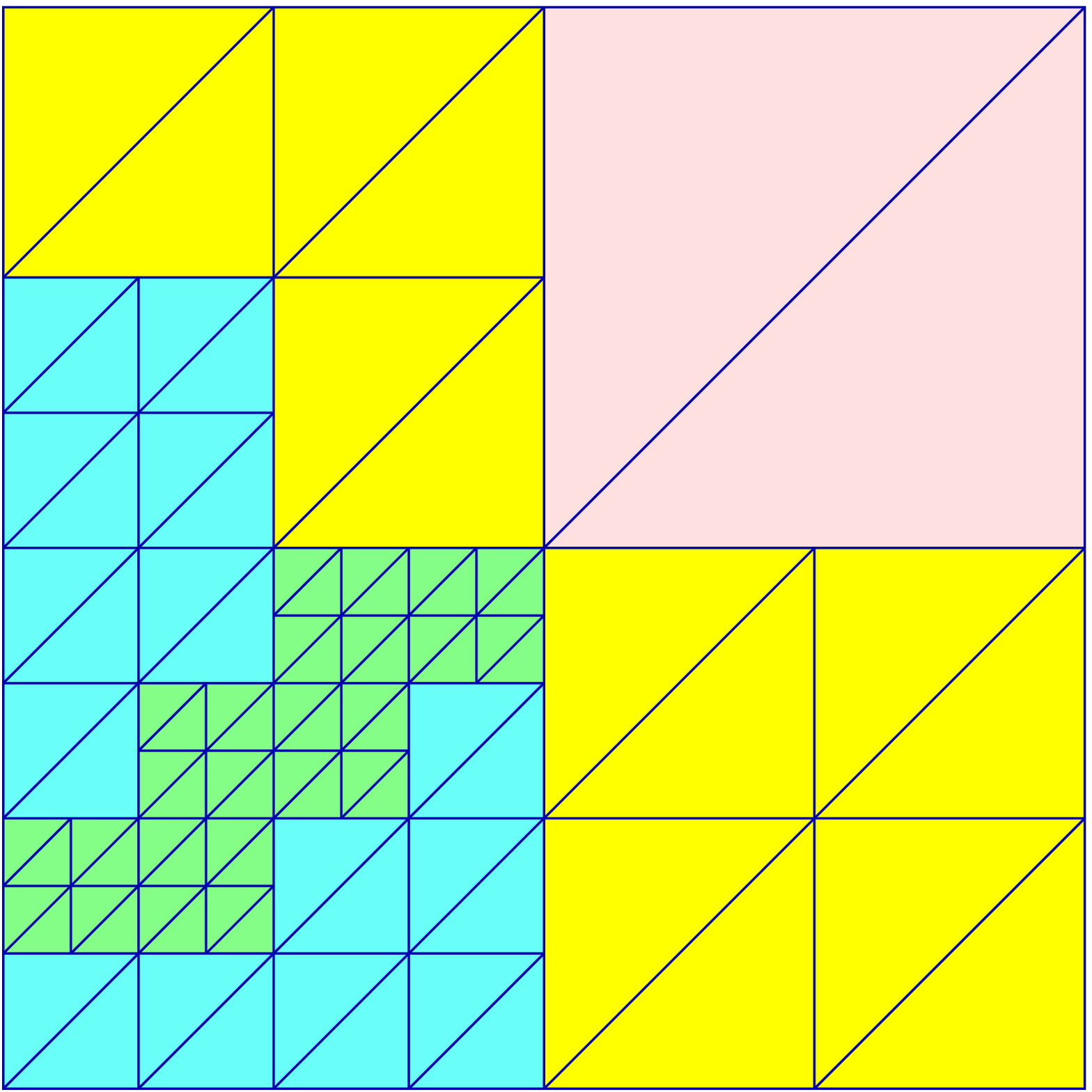}
  \end{minipage}%
  \begin{minipage}[c]{0.4\textwidth}
    ``Refinement strips'': set differences of refinement zones
    \vspace{2ex}

    \begin{tabular}[c]{cl}
      \includegraphics[width=2em]{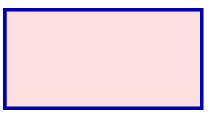} : & $\Sigma_{0}:=\omega_0\setminus\omega_{1}$ \\[2ex]
      \includegraphics[width=2em]{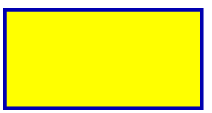} : & $\Sigma_{1}:=\omega_{1}\setminus\omega_{2}$ \\[2ex]
      \includegraphics[width=2em]{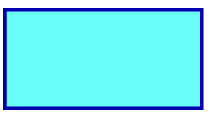} : & $\Sigma_{2}:=\omega_{2}\setminus\omega_{3}$ \\[2ex]
      \includegraphics[width=2em]{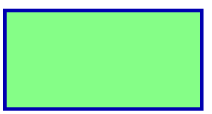} : & $\Sigma_{3}:=\omega_{3}$
    \end{tabular}
  \end{minipage}
  \caption{Refinement zones for the 2D refinement hierarchy of Figure~\ref{fig:2dref}.}
  \label{fig:2drefzone}
\end{figure}

\begin{figure}[!htb]
  \centering
  \begin{minipage}[c]{0.6\textwidth}
    \includegraphics[width=0.9\textwidth]{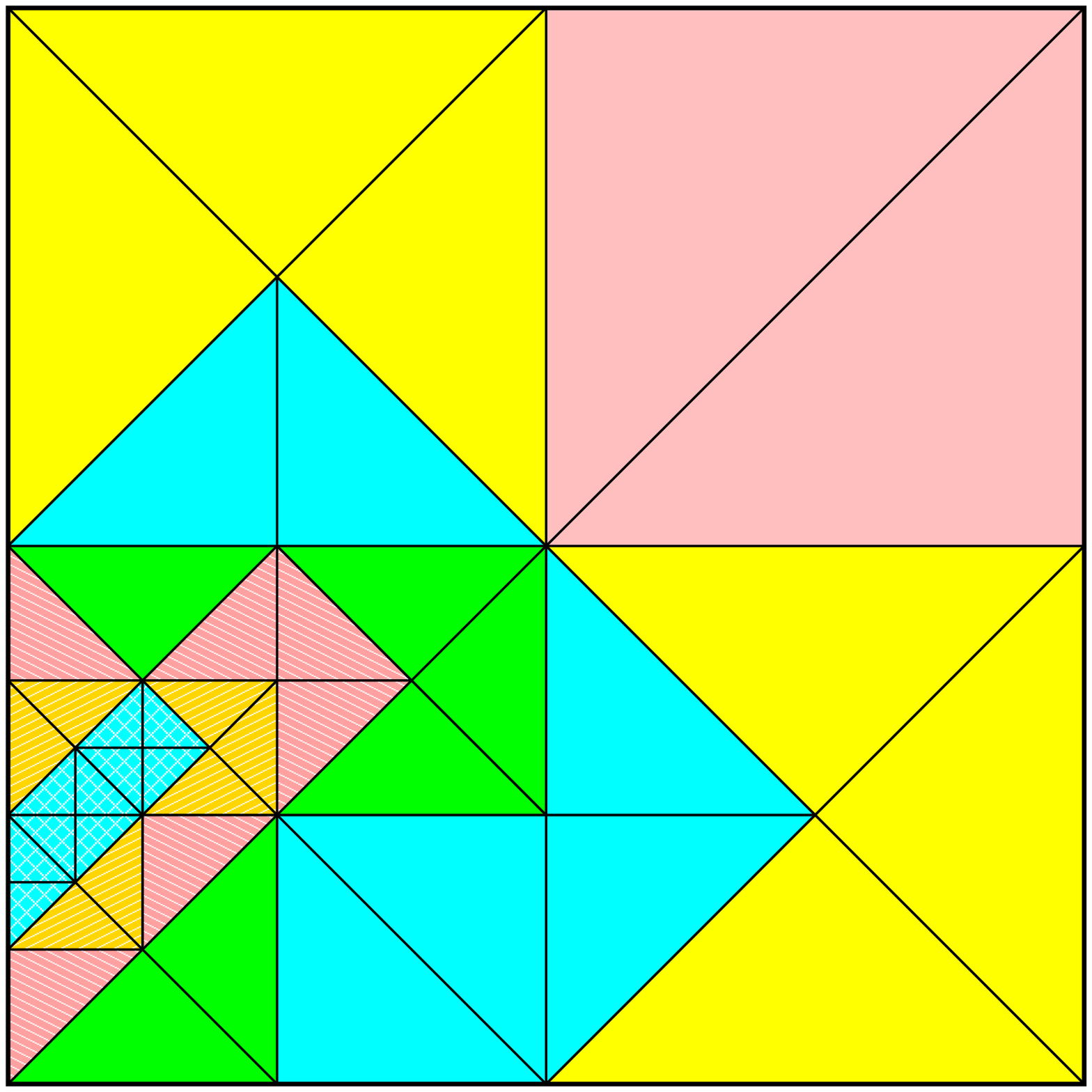}
  \end{minipage}%
  \begin{minipage}[c]{0.4\textwidth}
    ``Refinement strips'': set differences of refinement zones
    \vspace{2ex}

    \begin{tabular}[c]{cl}
      \includegraphics[width=2em]{zonesymb0} : & $\Sigma_{0}:=\omega_0\setminus\omega_{1}$ \\[2ex]
      \includegraphics[width=2em]{zonesymb1} : & $\Sigma_{1}:=\omega_{1}\setminus\omega_{2}$ \\[2ex]
      \includegraphics[width=2em]{zonesymb2} : & $\Sigma_{2}:=\omega_{2}\setminus\omega_{3}$ \\[2ex]
      \includegraphics[width=2em]{zonesymb3} : & $\Sigma_{3}:=\omega_{3}\setminus\omega_{4}$ \\[2ex]
      \includegraphics[width=2em]{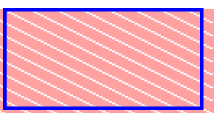} : & $\Sigma_{4}:=\omega_{4}\setminus\omega_{5}$ \\[2ex]
      \includegraphics[width=2em]{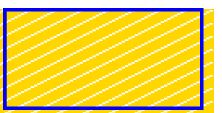} : & $\Sigma_{5}:=\omega_{5}\setminus\omega_{6}$ \\[2ex]
      \includegraphics[width=2em]{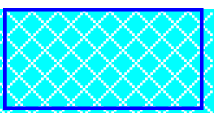} : & $\Sigma_{6}:=\omega_{6}$
    \end{tabular}
  \end{minipage}
  \caption{Refinement zones for the 2D refinement hierarchy of Figure~\ref{fig:nvb}.}
  \label{fig:nvbrz}
\end{figure}

We introduce local multigrid from the perspective of multilevel successive subspace
correction (SSC) \cite{JXU92,JXU97,XUZ00}. First, we give an abstract description for a linear
variational problem
\begin{gather}
  \label{LVP}
  u\in H:\quad \blf{a}(u,v) = f(v)\quad\forall v\in H\;,
\end{gather}
involving a positive definite bilinear form $\blf{a}$ on a Hilbert space $H$. The
method is completely defined after we have provided a finite subspace decomposition
\begin{gather}
  \label{ssc:dec}
  H = \sum\limits_{j=0}^{J}H_{j}\;,\quad H_{j}\subset H\;\text{closed subspaces},\;
  j=0,\ldots,J,\; J\in\bbN\;.
\end{gather}
Then the correction scheme implementation of one step of SSC acting on the iterate
$u^{m-1}$ reads:

\begin{itemize}
\item []\textsf{for} $m=1,2,\cdots$
\begin{itemize}
\item []$u_{-1}^{m-1}=u^{m-1}$

\item []\textsf{for} $j=0,1,\cdots,J$
\begin{itemize}
\item []\textsf{Let} $e_j\in H_j$ \textsf{solve}
$$\blf{a}(e_j,v_j)=f(v_j)-\blf{a}(u_{j-1}^{m-1},v_j)\quad \forall\,v_j\in H_j$$

\item [] $u_j^{m-1}=u^{m-1}_{j-1}+e_j$
\end{itemize}
\item []\textsf{endfor}

\item [] $u^m=u^{m-1}_J$
\end{itemize}
\item []\textsf{endfor}
\end{itemize}

This amounts to a stationary linear iterative method with error propagation operator
\begin{gather}
  \label{ssc:E}
  E=(I-P_J)(I-P_{J-1})\cdots(I-P_0)\;,
\end{gather}
where $P_{j}:H\mapsto H_{j}$ stands for the Galerkin projection
defined through
\begin{equation}
\blf{a}(P_jv,v_j)=\blf{a}(v,v_j)\quad\forall\,v_j\in
H_j.\label{eq:ssc-relax}
\end{equation}

The convergence theory of SSC for an inner product $\blf{a}$ and induced energy norm
$\N{\cdot}_{A}$ rests on two assumptions. The first one concerns the {\em stability
  of the space decomposition}. We assume that there exists a constant
$C_{\script{stab}}$ independent of $J$ such that
\begin{equation}
  \inf\Big\{\sum_{j=0}^J\N{v_j}_A^2:\; \sum_{j=0}^Jv_j=v\Big\} \le
  C_{\script{stab}}\N{v}_A^2 \quad \forall\,v\in H.
  \label{eq:C-stab}
\end{equation}
The second assumption is a {\em strengthened Cauchy-Schwartz
inequality}, namely, there exist two constants $0\le q< 1$ and
$C_{\script{orth}}$ independent of $j$ and $k$ such that
\begin{equation}
  \blf{a}(v_j,v_k)\le C_{\script{orth}}q^{|k-j|}\N{v_j}_A\N{v_k}_A\quad
  \forall\,v_j\in H_j,\; v_k\in H_k\;.
  \label{eq:C-orth}
\end{equation}
The above inequality states a kind of quasi-orthogonality between the subspaces. From
\cite[Theorem~4.4]{JXU92} and \cite[Theorem~5.1]{YSE93} we cite the following
central convergence theorem:

\begin{theorem}
  \label{thm:41}
  Provided that \eqref{eq:C-stab} and \eqref{eq:C-orth} hold,
  the convergence rate of Algorithm SSC is bounded by
  \begin{equation}
    \N{E}_A^2\le 1-\frac{1}{C_{\script{stab}}(1+\Theta)^2} \quad
    \hbox{with}\quad
    \Theta=C_{\script{orth}}\frac{1+q}{1-q},\label{eq:C-rat}
  \end{equation}
  where the operator norm is defined by
  $$\N{E}_A:=\sup_{v\in H,v\neq 0}\frac{\N{Ev}_{A}}{\N{v}_A}.$$
\end{theorem}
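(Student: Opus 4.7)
The plan is to follow the standard Xu–Yserentant analysis of multiplicative subspace correction. The starting point is the identity
\begin{gather*}
  \N{v}_A^2 - \N{Ev}_A^2 = \sum_{k=0}^{J}\N{P_k v_k}_A^2, \qquad v_0 := v,\; v_{k+1} := (I-P_k)v_k,
\end{gather*}
which follows by telescoping and the elementary fact that, because $P_k$ is the $\blf{a}$-orthogonal projection onto $H_k$, one has $\N{(I-P_k)w}_A^2 = \N{w}_A^2 - \N{P_k w}_A^2$. With this identity in hand, the theorem reduces to proving the lower bound $\sum_{k=0}^J \N{P_k v_k}_A^2 \ge C_{\script{stab}}^{-1}(1+\Theta)^{-2}\N{v}_A^2$.

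The main step is therefore to estimate $\N{v}_A^2$ against $\sum_k \N{P_k v_k}_A^2$ using an arbitrary admissible decomposition $v=\sum_{k=0}^J w_k$ with $w_k\in H_k$. From the recursion one has $v = v_k + \sum_{j<k}P_j v_j$; inserting this and splitting I would write
\begin{gather*}
  \N{v}_A^2 = \sum_k \blf{a}(v,w_k) = \sum_k \blf{a}(v_k,w_k) + \sum_k\sum_{j<k}\blf{a}(P_j v_j, w_k).
\end{gather*}
The first sum is handled by using $\blf{a}(v_k,w_k) = \blf{a}(P_k v_k,w_k)$ (since $w_k\in H_k$) and Cauchy–Schwarz, producing $\bigl(\sum_k \N{P_k v_k}_A^2\bigr)^{1/2}\bigl(\sum_k\N{w_k}_A^2\bigr)^{1/2}$.

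For the double sum I apply the strengthened Cauchy–Schwarz inequality \eqref{eq:C-orth} termwise and then invoke the Schur-test bound for the Toeplitz matrix $(q^{|k-j|})_{k,j}$, whose $\ell^2$-operator norm is at most $(1+q)/(1-q)$. This yields
\begin{gather*}
  \sum_k\sum_{j<k}\blf{a}(P_j v_j,w_k) \le C_{\script{orth}}\frac{1+q}{1-q}\Bigl(\sum_k\N{w_k}_A^2\Bigr)^{1/2}\Bigl(\sum_j \N{P_j v_j}_A^2\Bigr)^{1/2} = \Theta\,(\cdots).
\end{gather*}
Combining the two bounds, dividing by $\bigl(\sum_k\N{P_k v_k}_A^2\bigr)^{1/2}$, taking the infimum over admissible decompositions of $v$, and invoking the stability assumption \eqref{eq:C-stab} gives $\N{v}_A^2 \le (1+\Theta)^2 C_{\script{stab}} \sum_k\N{P_k v_k}_A^2$. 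Plugging this back into the telescoping identity and dividing through by $\N{v}_A^2$ delivers \eqref{eq:C-rat}.

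The only non-routine step is the double-sum estimate: the natural interchange-of-summation must be controlled sharply, and the Schur-test bound on the kernel $q^{|k-j|}$ is what produces the crucial factor $(1+q)/(1-q)$ in $\Theta$. Everything else is bookkeeping: the telescoping identity, the projection property of $P_k$, and Cauchy–Schwarz. No additional assumption on symmetry of the algorithm is required beyond the fact that each individual $P_k$ is a Galerkin projector with respect to $\blf{a}$.
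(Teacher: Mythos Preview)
The paper does not supply its own proof of this theorem; it explicitly cites the result from \cite[Theorem~4.4]{JXU92} and \cite[Theorem~5.1]{YSE93} and moves on. Your argument is the standard Xu--Yserentant proof from those references: the telescoping identity for the energy, the decomposition $v=v_k+\sum_{j<k}P_jv_j$, the use of $\blf{a}(v_k,w_k)=\blf{a}(P_kv_k,w_k)$, and the Schur-test bound on the kernel $q^{|k-j|}$. The logic is correct and matches what the cited papers do.

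One minor wording quibble: after combining the two bounds you say ``dividing by $\bigl(\sum_k\N{P_kv_k}_A^2\bigr)^{1/2}$,'' but the clean route is to first take the infimum over decompositions (bounding $\bigl(\sum_k\N{w_k}_A^2\bigr)^{1/2}$ by $C_{\script{stab}}^{1/2}\N{v}_A$), then divide by $\N{v}_A$ and square. Either way the stated final inequality $\N{v}_A^2\le(1+\Theta)^2C_{\script{stab}}\sum_k\N{P_kv_k}_A^2$ is correct.
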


The bottom line is that the subspace splitting \eqref{ssc:dec} already provides a
full description of the method. Showing that both constants $C_{\script{stab}}$ from
\eqref{eq:C-stab} and $C_{\script{orth}}$ from \eqref{eq:C-orth} can be chosen
independently of the number $L$ of refinement levels is the challenge in asymptotic
multigrid analysis.

In concrete terms, the role of the linear variational problem \eqref{LVP} is played
by \eqref{eq:VPcurl} considered on the edge element space $\EFE(\mesh_{h})$, which
replaces the Hilbert space $H$.  To define the local multilevel decomposition of
$\EFE(\mesh_{h})$, we define ``sets of new basis functions'' on the various
refinement levels
\begin{gather}
  \label{eq:NB}
  \begin{aligned}
    & \Bas_{\LFE}^0 :=\Bas_{\LFE}(\mesh_0),\quad
    \Bas_{\LFE}^{l} := \{b_{h}\in\Bas_{\LFE}(\mesh_{l}):\;\supp
    b_{h}\subset\ol{\omega}_l\}\;,\\
    & \Bas_{\EFE}^0 :=\Bas_{\EFE}(\mesh_0),\quad
    \Bas_{\EFE}^{l} := \{\Vb_{h}\in\Bas_{\EFE}(\mesh_{l}):\;\supp
    \Vb_{h}\subset\ol{\omega}_l\}\;,
  \end{aligned} \quad
  1\leq l \leq L\;.
\end{gather}
A 2D drawing of the sets $\Bas_{\LFE}^{l}$ is given in
Fig.~\ref{fig:hnactvert} where $\Gamma_D=\partial\Omega$. Note
that we also have to deal with $\LFE(\mesh_{h})$, because, as
suggested by the reasoning in \cite{HIP99}, a local multilevel
decomposition of $\EFE(\mesh_{h})$ has to incorporate an
appropriate local multilevel decomposition of $\LFE(\mesh_{h})$.

\begin{figure}[!htb]
 \centering
 \begin{minipage}[c]{0.25\textwidth}\centering
   \includegraphics[width=0.95\textwidth]{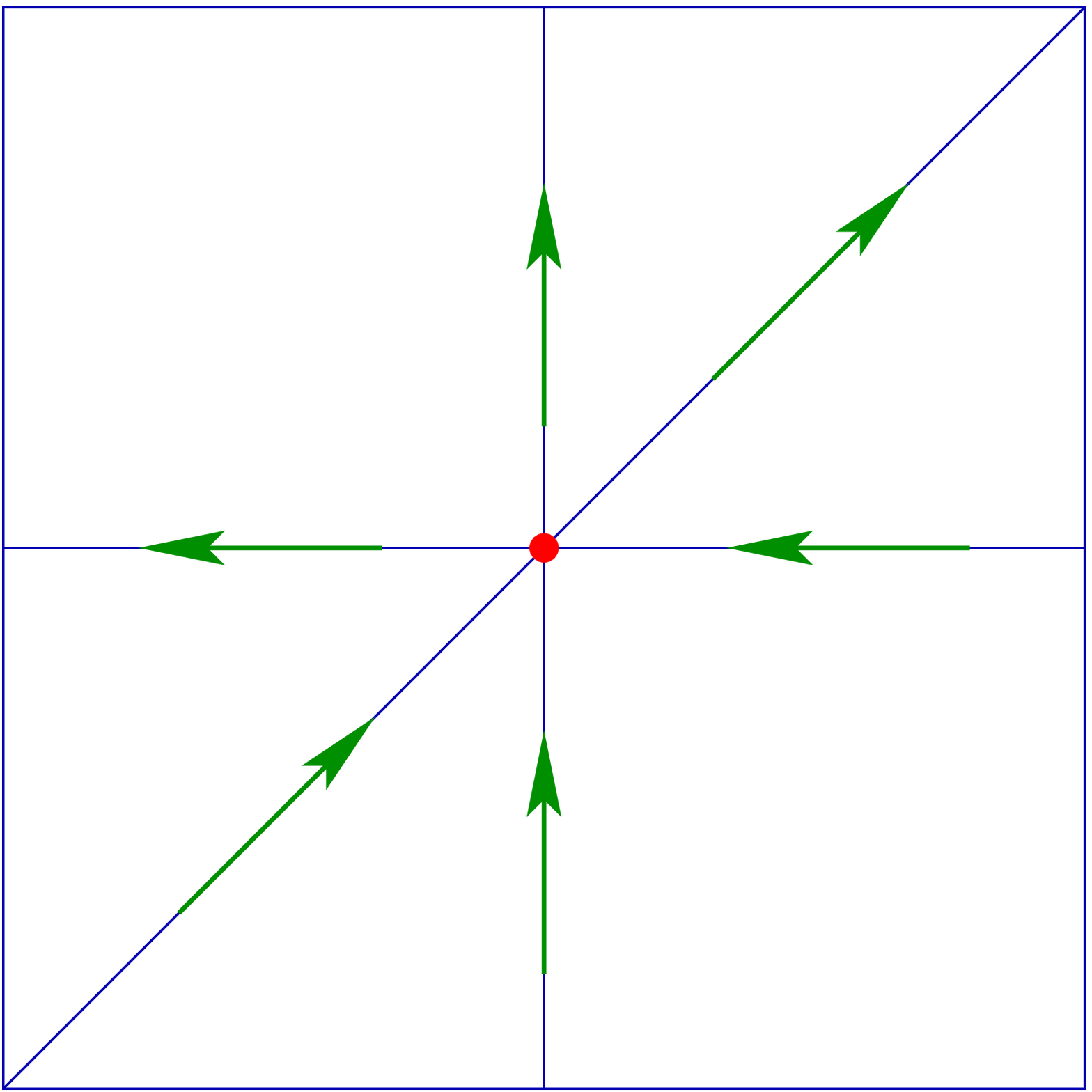}

   $l=0$
 \end{minipage}%
 \begin{minipage}[c]{0.25\textwidth}\centering
   \includegraphics[width=0.95\textwidth]{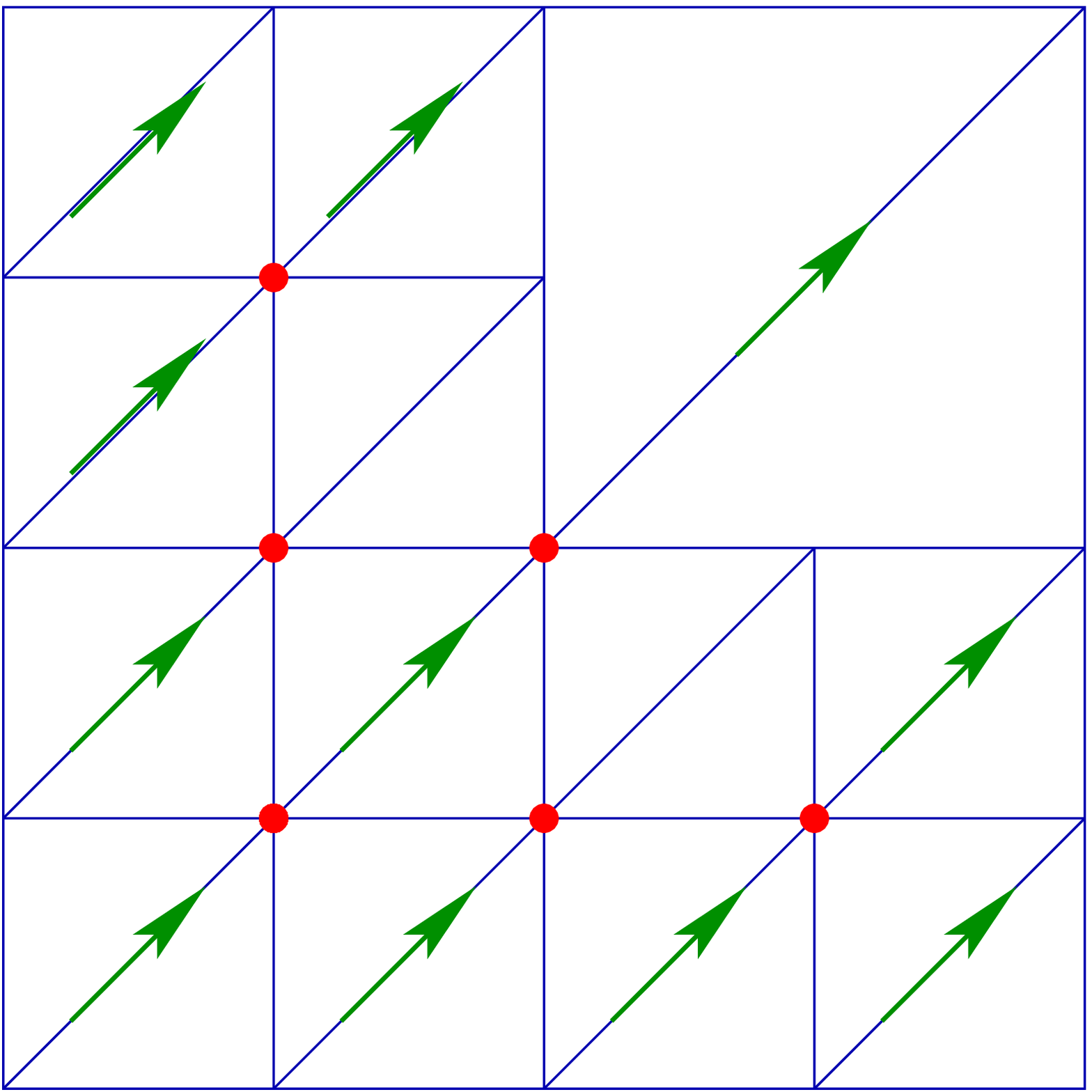}

   $l=1$
 \end{minipage}%
 \begin{minipage}[c]{0.25\textwidth}\centering
   \includegraphics[width=0.95\textwidth]{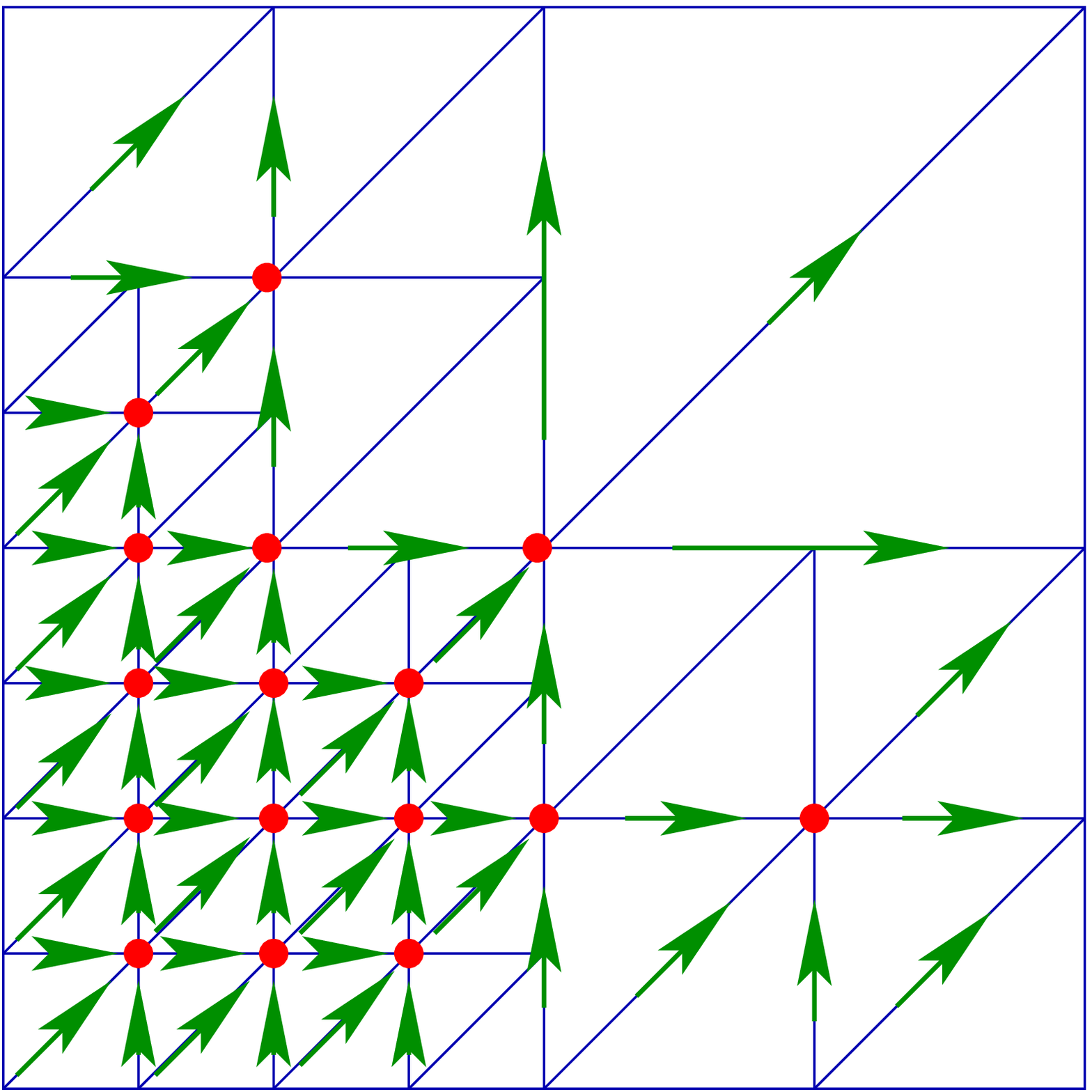}

   $l=2$
 \end{minipage}%
 \begin{minipage}[c]{0.25\textwidth}\centering
   \includegraphics[width=0.95\textwidth]{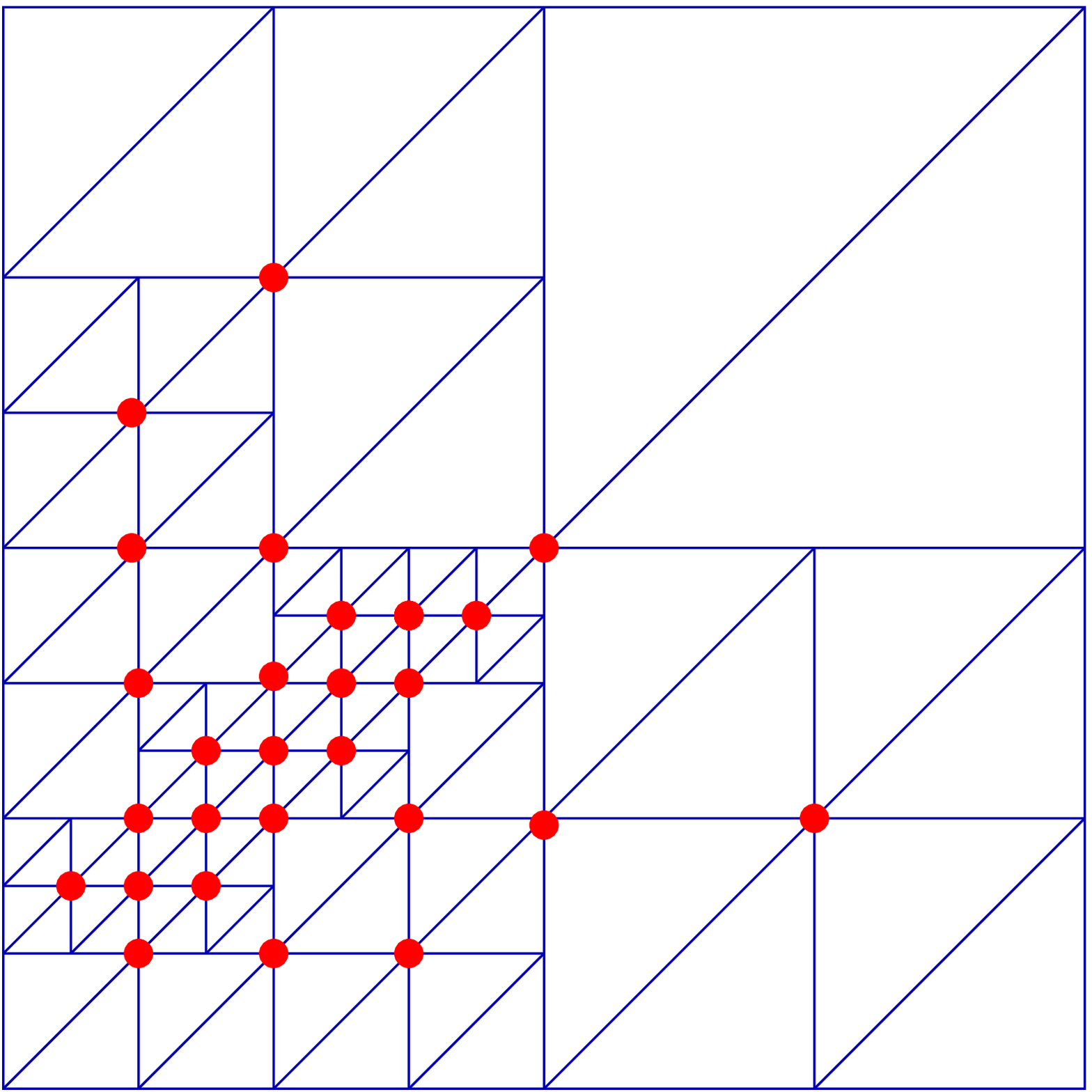}

   $l=3$
 \end{minipage}%
 \caption{Active vertices ({\color{red}red}) carrying ``tent functions'' in $\Bas_{\LFE}^{l}$,
 $\DBd=\partial\Omega$, refinement hierarchy of Fig.~\ref{fig:2dref}}
 \label{fig:hnactvert}
\end{figure}

Then, a possible local multigrid iteration for the linear system of equations arising
from a finite element Galerkin discretization of a $\bHone{\DBd}$-elliptic
variational problem boils down to a successive subspace correction method based on
the local multilevel decomposition
\begin{gather}
  \label{eq:LMGgrad}
  \LFE(\mesh_{h}) = \LFE(\mesh_{0}) +
  \sum\limits_{l=1}^{L} \sum\limits_{b_{h}\in\Bas_{\LFE}^{l} }\Span{b_{h}}\;.
\end{gather}

Similarly, the local multilevel splitting of $\EFE(\mesh_{h})$ is based on the
multilevel decomposition
\begin{gather}
  \label{eq:LMGcurl}
  \EFE(\mesh_{h}) = \EFE(\mesh_{0}) +
  \sum\limits_{l=1}^{L}
  \sum\limits_{b_{h}\in\Bas_{\LFE}^{l} }\Span{\grad b_{h}}
  +\sum\limits_{l=1}^{L}\sum\limits_{\Vb_{h}\in\Bas_{\EFE}^{l} }\Span{\Vb_{h}} \;.
\end{gather}
\rh{These splittings induce SSC iterations that can be implemented as non-symmetric
multigrid V-cycles with only one (hybrid) Gauss-Seidel post-smoothing step, see
\cite[Sect.~6]{HIP99}.  Duplicating components of \eqref{eq:LMGcurl} results in more
general multigrid cycles with various numbers of pre- and post-smoothing steps.}

The splitting \eqref{eq:LMGcurl} is motivated both by the design of multigrid methods
for \eqref{eq:VPcurl} and $\EFE(\mesh)$ in the case of uniform refinement and local
multigrid approaches to $\bHone{\DBd}$-elliptic variational problems after
discretization by means of linear finite elements \cite{MIT92,WUC05}. The occurrence
of gradients of ``tent functions'' $b_{h}$ in \eqref{eq:LMGcurl} is related to the
\emph{hybrid local relaxation}, which is essential for the performance of multigrid
in $\Hcurl$, see \cite{HIP99} for a rationale. A rigorous justification will emerge
during the theoretical analysis in the following sections. It will establish the
following main theorem.

\begin{theorem}[Asymptotic convergence of local multigrid for edge elements]
  \label{thm:main}
  Under the assumptions on the meshes made above and allowing at most one hanging
  node per edge, the decomposition \eqref{eq:LMGcurl} leads to an SSC iteration whose
  convergence rate is bounded away from $1$ uniformly in the number $L$ of refinement
  steps.
\end{theorem}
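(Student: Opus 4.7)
The proof strategy is to cast the V-cycle as the successive subspace correction iteration induced by the splitting \eqref{eq:LMGcurl} and then apply the abstract Theorem~\ref{thm:41}. Thus the whole task boils down to two separate verifications, with constants independent of $L$: the stability estimate \eqref{eq:C-stab} for the multilevel decomposition of $\EFE(\mesh_h)$, and the strengthened Cauchy--Schwarz inequality \eqref{eq:C-orth} across the subspaces. I would carry out the stability argument first, since it is where the essential difficulties of $\Hcurl$ reside, and then deal with the orthogonality, which is more standard.

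For the stability, my plan is to reduce to already available ingredients via a discrete Helmholtz-type decomposition. Given any $\Vu_h \in \EFE(\mesh_h)$, I would split
\begin{gather*}
  \Vu_h \;=\; \grad \varphi_h \;+\; \Vw_h,
\end{gather*}
where $\varphi_h \in \LFE(\mesh_h)$ captures the irrotational part (using Lemma~\ref{lem:notop} together with the commuting diagram property \eqref{CDP}), and $\Vw_h \in \EFE(\mesh_h)$ is an $\aform$-orthogonal complement in which $\curl \Vw_h$ controls $\NLtwov{\Vw_h}$. For the gradient part I would simply invoke the $L$-uniform stability of the local multilevel decomposition \eqref{eq:LMGgrad} of $\LFE(\mesh_h)$ (known from \cite{AIM01,WUC05} in the hanging-node and newest-vertex-bisection settings, respectively), applied to $\varphi_h$; the commuting diagram property then turns a stable splitting $\varphi_h = \sum_l \sum_{b_h} c_{b_h} b_h$ into a stable splitting $\grad \varphi_h = \sum_l \sum_{b_h} c_{b_h} \grad b_h$ using exactly the gradient blocks in \eqref{eq:LMGcurl}.

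The hard part is decomposing $\Vw_h$. My plan is to use the regular (``Birman--Solomyak'') decomposition of $\Hcurl$: write the continuous preimage as $\Vw_h = \Psibf + \grad p$ with $\Psibf \in (\Honev)^3$ and $p \in \Hone$, so that $\NHonev{\Psibf} + \NHone{p}/\operatorname{diam}(\Omega)$ is controlled by $\NHcurl{\Vw_h}$. Applying $\EIP_h$ and exploiting \eqref{CDP} gives
\begin{gather*}
  \Vw_h \;=\; \EIP_h \Psibf \;+\; \grad\bigl(\LIP_h p \;+\; \wt q_h\bigr),
\end{gather*}
where $\wt q_h \in \wt{\LFE}_2(\mesh_h)$ is the surplus from Lemma~\ref{lem:VLFEdec} applied to the piecewise linearization of $\Psibf$. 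The gradient piece is handed off to the $\Hone$ theory as before, while $\EIP_h \Psibf$ must be split across refinement zones. For this I would introduce the quasi-interpolation operator $\QIOp$ of Sect.~\ref{sec:local-multigrid} acting level-wise, writing
\begin{gather*}
  \EIP_h \Psibf \;=\; \EIP_0 \Psibf \;+\; \sum_{l=1}^{L}(\EIP_l - \EIP_{l-1}) \QIP_l \Psibf \;+\; \text{telescoping remainder},
\end{gather*}
so that each level contribution is supported in the strip $\Sigma_l \subset \omega_l$ and can be expanded in $\Bas_\EFE^l$; Lemma~\ref{lem:42} together with Proposition~\ref{prop:hn} and the geometric decay \eqref{eq:ref25} should deliver the bound $\sum_l \NLtwov{\ldots}^2 \le C\NHonev{\Psibf}^2$ with $L$-independent $C$. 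The main obstacle I anticipate is precisely this localization step for the non-gradient block, because the Helmholtz splitting is global while the subspaces in \eqref{eq:LMGcurl} are attached to refinement strips; controlling the $(\EIP_l - \EIP_{l-1})$-differences on $\Sigma_l$ requires the full force of the $L^2$-boundedness of $\EIP_h$ on $(\LFE(\mesh_h))^3$ from \eqref{eq:fem22} and of the quasi-interpolation $\QIOp$ on meshes with hanging nodes, which is why Assumption~\ref{ass:hn} and the virtual hierarchy \eqref{eq:ref2}--\eqref{eq:ref25} are indispensable.

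Once the stability is in place, the strengthened Cauchy--Schwarz \eqref{eq:C-orth} should follow by the standard route: elements $\Vv_j$ and $\Vv_k$ from one-dimensional blocks on levels $j < k$ have supports confined to refinement zones $\omega_j$ and $\omega_k$ with $\omega_k \subset \omega_j$, and the $\aform$-coupling $\aform(\Vv_j,\Vv_k)$ can be estimated by $\NLtwov{\Vv_j}\, \NLtwov{\curl \Vv_k} + \NLtwov{\Vv_j}\NLtwov{\Vv_k}$ on $\supp \Vv_k$. Using the inverse inequality $\NLtwov{\curl \Vv_k} \le C h_k^{-1} \NLtwov{\Vv_k}$, the $L^2$-stability \eqref{eq:fem17}, and the fact that $\Vv_j$ varies on scale $h_j$, one gets a factor $(h_k/h_j)^{1/2} \le C\theta^{(k-j)/2}$ from the mismatch of local meshwidths, yielding \eqref{eq:C-orth} with $q = \theta^{1/2} < 1$. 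Combining the two estimates and invoking \eqref{eq:C-rat} finishes the proof of Theorem~\ref{thm:main}.
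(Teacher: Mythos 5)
Your overall framework is the same as the paper's: cast the V-cycle as an SSC iteration, invoke Theorem~\ref{thm:41}, and establish the stability \eqref{eq:C-stab} and the strengthened Cauchy--Schwarz \eqref{eq:C-orth} with $L$-independent constants. You also correctly identify the regular (Birman--Solomyak-type) decomposition as the central tool for stability. However, the discretization step contains two genuine gaps. First, you write $\Vw_h=\EIP_h\Psibf+\grad(\LIP_h p+\wt q_h)$, which tacitly requires $\LIP_h p$; but $p$ is merely in $\Hone$, so nodal interpolation $\LIP_h p$ is undefined. The paper's Lemma~\ref{lem:disc-hem} never forms $\LIP_h p$: it introduces $q\in\bHone{\DBd}$ with $\grad q=(Id-\EIP_h)\Psibf$ (via Lemma~\ref{lem:notop}) and observes that $\grad(p+q)=\Vv_h-\EIP_h\Psibf\in\EFE(\mesh_h)$ has zero curl, so $p_h:=p+q$ is automatically a member of $\LFE(\mesh_h)$ — no interpolation of $p$ is ever needed. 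Second, your proposed telescoping $\EIP_h\Psibf=\EIP_0\Psibf+\sum_l(\EIP_l-\EIP_{l-1})\QIP_l\Psibf+\dots$ applies $\EIP_l$ to $\Psibf$, but $\EIP_l$ is only well defined on vector fields with $\curl$ in $\curl\EFE(\mesh_l)$ (Lemma~\ref{lem:42}); since $\curl\Psibf=\curl\Vv_h\in\curl\EFE(\mesh_h)\setminus\curl\EFE(\mesh_l)$ in general, $\EIP_l\Psibf$ is ill-defined for $l<L$. The paper's order of operations avoids this: first set $\Psibf_h:=\QIP_h\Psibf\in(\LFE(\mesh_h))^3$ and collect the error $\wt{\Vv}_h:=\EIP_h(\Psibf-\QIP_h\Psibf)$ as a high-frequency remainder, then apply the $\Hone$ multilevel splitting Theorem~\ref{thm:H1main} to $\Psibf_h$ to get level-wise piecewise-linear pieces $\Psibf_l\in\Span{\Bas_{\LFE}^l}^3$, and only then apply $\EIP_l$ to these (which is well defined and $L^2$-bounded by \eqref{eq:fem22}). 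Your ``telescoping remainder'' hides exactly this nontrivial reorganization.

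Two further issues, less central but still needed: (i) the high-frequency piece $\wt{\Vv}_h$ must be distributed to levels by scale separation on edges, using $\lev(E)$ and, for bisection refinement, the interface-edge decomposition \eqref{be:8}; your proposal does not account for $\wt{\Vv}_h$ at all after it disappears into the remainder. (ii) For the strengthened Cauchy--Schwarz, the one-dimensional spaces of \eqref{eq:LMGcurl} must first be aggregated into a bounded number of mutually $\aform$-orthogonal classes per level via the colouring Lemma~\ref{lem:subsplit}; without this, the inverse-inequality plus thin-strip estimate you sketch does not directly yield \eqref{eq:C-orth} with $L$-independent $C_{\script{orth}}$. Finally, your preliminary $\aform$-orthogonal split $\Vu_h=\grad\varphi_h+\Vw_h$ requires a uniform discrete Friedrichs inequality for the complement, which is provable but is an extra ingredient not in the paper's toolbox; the paper simply applies the regular decomposition to $\Vv_h$ itself, with the bound $|p|_{\Hone}\le C\N{\Vv_h}_{\Hcurl}$ from \eqref{eq:hstab}, avoiding the detour.
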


%%% Local Variables:
%%% mode: latex
%%% TeX-master: "main"
%%% End:

%\section{Analysis of local multigrid in $H^{1}$}
\section{Stability}
\label{sec:stability}

First we tackle the stability estimate \eqref{eq:C-stab} for the
local multilevel decomposition \eqref{eq:LMGgrad}, which is
implicitly contained in \eqref{eq:LMGcurl}.

\subsection{Local quasi-interpolation onto $\LFE(\mesh)$}
\label{sec:local-quasi-interp}

Quasi-interpolation operators are projectors onto finite element spaces that have
been devised to accommodate two conflicting goals: locality and boundedness in weak
norms \cite{CLE75,OSW94,SCZ90,SHO01,CHW07t}. \rh{As key tool they will be used in
  Sect.~\ref{sec:glob-mult-splitt} and the proof of Lemma~\ref{lem:disc-hem}. As in
  \cite[Sect.~2.1.1]{OSW94},} we resort to a construction employing local linear
$L^{2}$-dual basis functions.  \rh{We follow the analysis of \cite{SHO01} that
  permits us to take into account Dirichlet boundary conditions.}

For a generic tetrahedron $K$ define $\psi^{K}_{j}$, $j=1,2,3,4$,
by $L^{2}(K)$-duality to the barycentric coordinate functions
$\lambda_{i}$, $i=1,2,3,4$, of $K$:
\begin{gather}
  \label{eq:s2}
  \psi^{K}_{j}\in\bbP_{1}(K):\quad
  \int\nolimits_{K}\psi^{K}_{j}(\Bx)\lambda_{i}(\Bx)\,\mathrm{d}\Bx =
  \delta_{ij}\;,\quad i,j\in\{1,\ldots,4\}\;.
\end{gather}
Computing an explicit representation of the $\psi^{K}_{j}$ we
find
\begin{gather}
  \label{eq:s4}
  C^{-1} \leq |K|\NLtwo[K]{\psi^{K}_{j}}^{2} \leq C\quad,\quad
  C^{-1} \leq \NLp[K]{\psi^{K}_{j}}{1} \leq C\;,
\end{gather}
with an absolute constant $C>0$. We can regard $\psi^{K}_{j}$
as belonging to the $j$-th vertex of $K$. Thus, we will also
write $\psi^{K}_{\Bp}$, $\Bp\in \Cn(K)$, $\Cn(K)$ the set of
vetices of $K$.

\rh{Let $\mesh$ be one of the tetrahedral meshes $\mesh_{l}$ or $\wh{\mesh}_{l}$} of
$\Omega$. In order to introduce quasi-interpolation operators we take for granted
some ``node$\to$cell''--assignment, a mapping $\ol{\Cn}(\mesh)\mapsto \mesh$,
$\Bp\in\ol{\Cn}(\mesh)\mapsto K_{\Bp}\in\mesh$.

\begin{definition}
  \label{def:QIP}
  Writing ${\{b_{\Bp}\}}_{\Bp\in\Cn({\mesh})} :=
  \Bas_{\LFE}({\mesh})$, define the local quasi-interpolation operator
  \begin{equation}\label{eq:QIP}
    \QIOp :
    \left\{
      \begin{array}{ccl}
        \DS \Ltwo &\mapsto & \LFE({\mesh}) \\
        \DS u & \mapsto & 
        \sum_{\Bp\in\mcal{N}({\mesh})}\int_{K_{\Bp}}\psi^{K_{{\Bp}}}_{\Bp}(\Bx)
        u (\Bx)\,d\Bx\cdot b_{{\Bp}}\;.
      \end{array}
    \right.
  \end{equation}
  Analoguously, we introduce the local quasi-interpolation
  $\ol{\QIP}_{h}:\Ltwo\mapsto\ol{\LFE}(\mesh)$.
\end{definition}\vspace{2mm}

We point out that $\QIOp $ respects $u=0$ on $\DBd$, because the sum does not cover
basis functions attached to vertices on $\DBd$. From \eqref{eq:s2} it is also evident
that both $\QIOp $ and $\ol{\QIP}_{h}$ are projections, for instance,
\begin{equation}
  \QIOp  u_{h}= u_{h} \quad \forall\, u_{h}\in \LFE({\mesh})\;.
  \label{eq:Qproj}
\end{equation}%
Moreover, they satisfy the following strong continuity and
approximation properties:\vspace{2mm}

\begin{lemma}\label{lem:Qint}
  The quasi-interpolation operators from Def.~\ref{def:QIP} allow the estimates
  (set $\DBd=\emptyset$ for $\ol{\QIP}_{\mesh}$)
  \begin{align}
    \label{eq:L2stab}
    & \exists C=C(\rho_{{\mesh}}):\qquad\qquad\;\;\;
    \NLtwo{\QIOp  u}\le C\NLtwo{u}\;\;\,\forall\,u\in \Ltwo\;,\\
    \label{eq:H1stab}
    & \exists C=C(\rho_{{\mesh}},\Omega,\DBd):\quad\quad\;
    \SNHone{\QIOp  u}\le C\SNHone{u}\quad\;\forall\,u \in \bHone{\DBd}\;,\\
    \label{eq:int-err}
    & \exists C=C(\rho_{{\mesh}},k):\;
    \NLtwo{\mwf^{-k}(u-\QIOp u)}\le C \SNHm{u}{k}\quad
    \forall\, u\in \Hm{k}\cap \bHone{\DBd}\;,
  \end{align}
  and $k=1,2$.
\end{lemma}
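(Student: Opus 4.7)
The plan is to adapt the classical Cl\'ement/Scott--Zhang quasi-interpolation analysis, exploiting three ingredients: (i) the $L^{2}$-duality \eqref{eq:s2}, which makes $\QIOp$ a projection onto $\LFE(\mesh)$ and provides the explicit bounds \eqref{eq:s4} on the dual basis; (ii) the uniform $L^{2}$-stability \eqref{eq:fem18} of the nodal basis $\{b_{\Bp}\}$, valid also on meshes with one hanging node per edge by Proposition~\ref{prop:hn}; and (iii) the fact that the assignment $\Bp\mapsto K_{\Bp}$ has bounded overlap (each $K\in\mesh$ is the image of at most four vertices) and that $\supp b_{\Bp}$ and $K_{\Bp}$ are comparable in diameter under shape regularity.

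First, I would prove \eqref{eq:L2stab}. Writing $\QIOp u=\sum_{\Bp}\alpha_{\Bp}b_{\Bp}$ with $\alpha_{\Bp}=\int_{K_{\Bp}}\psi^{K_{\Bp}}_{\Bp}u\,\DX$, Cauchy--Schwarz combined with \eqref{eq:s4} yields $|\alpha_{\Bp}|^{2}\le C|K_{\Bp}|^{-1}\NLtwo[K_{\Bp}]{u}^{2}$. Since shape regularity gives $\NLtwo{b_{\Bp}}^{2}\sim|\supp b_{\Bp}|\sim|K_{\Bp}|$, applying \eqref{eq:fem18} and summing over $\Bp$ (with bounded overlap of the $K_{\Bp}$) delivers the claim.

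Next I would establish \eqref{eq:int-err}. Because $\QIOp$ is a projection onto $\LFE(\mesh)$, for any $v_{h}\in\LFE(\mesh)$ one has $u-\QIOp u=(u-v_{h})-\QIOp(u-v_{h})$. The evaluation of $\QIOp(u-v_{h})$ on an element $K$ depends only on $u-v_{h}$ over the patch $\omega_{K}^{\star}:=\bigcup\{K_{\Bp}:\Bp\in\overline{K}\cap\Cn(\mesh)\}$, so a local version of Step~1 yields $\NLtwo[K]{\QIOp(u-v_{h})}\le C\NLtwo[\omega_{K}^{\star}]{u-v_{h}}$. Choosing $v_{h}$ as a Scott--Zhang-type polynomial approximation of $u$ on $\omega_{K}^{\star}$ that respects the homogeneous trace on $\DBd$ (this is where the geometric dependence on $\Omega$ and $\DBd$ enters, via the argument of \cite{SHO01}), the Bramble--Hilbert lemma gives $\NLtwo[\omega_{K}^{\star}]{u-v_{h}}\le Ch_{K}^{k}\SNHm[\omega_{K}^{\star}]{u}{k}$ for $k=1,2$. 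Summing over $K$ with bounded overlap of the patches finishes \eqref{eq:int-err}.

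Finally I would deduce \eqref{eq:H1stab}. On each $K$, pick a constant $c_{K}$ (for interior patches the mean of $u$ over $\omega_{K}^{\star}$, for patches touching $\DBd$ simply $c_{K}=0$) such that $\QIOp c_{K}=c_{K}$ on $K$. An inverse inequality combined with Step~1 gives
\begin{equation*}
\SNHone[K]{\QIOp u}=\SNHone[K]{\QIOp(u-c_{K})}\le Ch_{K}^{-1}\NLtwo[K]{\QIOp(u-c_{K})}\le Ch_{K}^{-1}\NLtwo[\omega_{K}^{\star}]{u-c_{K}}.
\end{equation*}
For interior patches a standard Poincar\'e inequality bounds the right-hand side by $C\SNHone[\omega_{K}^{\star}]{u}$; for patches meeting $\DBd$ the Friedrichs inequality on $u\in\bHone{\DBd}$ does the same job, producing the stated dependence on $\Omega$ and $\DBd$. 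Summation with bounded overlap yields \eqref{eq:H1stab}. The main obstacle is the boundary-condition bookkeeping near $\DBd$: without a careful choice of $v_{h}$ and $c_{K}$ that preserve the vanishing trace, the constants would depend on the number of refinement levels; the global geometric assumption on $\DBd$ made in Section~\ref{sec:introduction} is exactly what allows a uniform Friedrichs/extension constant.
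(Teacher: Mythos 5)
Your treatment of \eqref{eq:L2stab} matches the paper's Part~I essentially verbatim, and your use of the projection property to reduce \eqref{eq:int-err} to a Bramble--Hilbert bound on patches is in the same spirit as the paper's argument (the paper subtracts the element-local linear interpolant $\LIP_{K}u$ of $u$ on $K$; your Scott--Zhang variant is a legitimate alternative). However, your proof of \eqref{eq:H1stab} has a genuine gap near the Dirichlet boundary, and it is precisely the gap that the paper's Hardy inequality (Theorem~\ref{thm:Hardy}) is designed to close.

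The problem is in the step where you take $c_{K}=0$ on elements whose closure touches $\overline{\DBd}$ and then invoke a ``Friedrichs inequality'' to get $\NLtwo[\omega_{K}^{\star}]{u}\le Ch_{K}\SNHone[\omega_{K}^{\star}]{u}$. The patch $\omega_{K}^{\star}=\bigcup\{K_{\Bq}:\Bq\in\overline{K}\cap\Cn(\mesh)\}$ is built only from the elements $K_{\Bq}$ associated with the \emph{active} vertices of $K$; the slave vertices on $\overline{\DBd}$ contribute nothing, so $\omega_{K}^{\star}$ need not contain any tetrahedron with a face on $\DBd$ --- indeed it may not touch $\DBd$ at all. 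For such a patch the scaled local Friedrichs inequality simply fails (take $u\equiv\mathrm{const}$ on the patch). Even when $K$ does touch $\DBd$, it may do so only along an edge or at a vertex, and since $H^{1}$-functions do not have traces on sets of codimension $\ge 2$ in $\bbR^{3}$, no scale-invariant local Friedrichs constant is available. Invoking the \emph{global} Friedrichs inequality on $\Omega$ does not help either, because it does not supply the factor $h_{K}$ needed to cancel the $h_{K}^{-1}$ from the inverse inequality.

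The paper sidesteps this by a structurally different route: it writes $\grad\QIOp u$ in the edge-element basis, reduces \eqref{eq:H1stab} to bounds on the nodal differences $\QIOp u(\Bp)-\QIOp u(\Bq)$ over active edges $E=[\Bp,\Bq]$, and treats the case $\Bp\in\DBd$ (so $\QIOp u(\Bp)=0$) by the insertion $u=\operatorname{dist}(\cdot,\DBd)\cdot\bigl(u/\operatorname{dist}(\cdot,\DBd)\bigr)$, using $\operatorname{dist}(\Bx,\DBd)\le C\,|\Bp-\Bq|$ on $K_{\Bq}$. This converts the missing local Friedrichs bound into the weighted $L^{2}$-estimate $\int_{\Omega}\bigl|u/\operatorname{dist}(\cdot,\DBd)\bigr|^{2}\le C\,\SNHone{u}^{2}$, i.e.\ the generalized Hardy inequality, whose constant depends on $\Omega$ and $\DBd$ alone. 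You will need this (or an equivalent device, e.g.\ enlarging every boundary patch so that it is guaranteed to contain a full mesh face on $\DBd$) to make your boundary case rigorous; as written, the argument does not go through.
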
\vspace{2mm}

\begin{proof}[Part I]
  Continuity in $\Ltwo$ is a simple consequence of the stability \eqref{eq:fem18}
  of the nodal bases $\Bas_{\LFE}({\mesh})$ and of the Cauchy-Schwarz inequality:
  \begin{eqnarray*}
    \NLtwo{\QIOp u}^{2} &\leq&
    C \sum\limits_{\Bp\in\Cn({\mesh})}
    |\QIOp u(\Bp)|^{2}\NLtwo{b_{\Bp}}^{2} \\ &=&
    C \hspace*{-1ex}\sum\limits_{\Bp\in\Cn({\mesh})}
    \left|\int\nolimits_{K_{\Bp}}\psi^{K_{{\Bp}}}_{\Bp}(\Bx)
    u (\Bx)\,d\Bx\right|^{2}\NLtwo{b_{\Bp}}^{2} \\
  &\leq& C \sum\limits_{\Bp\in\Cn({\mesh})}
  \NLtwo[K_{\Bp}]{\psi^{K_{\Bp}}_{\Bp}}^{2}\NLtwo{b_{\Bp}}^{2}
  \NLtwo[K_{\Bp}]{u}^{2} \leq C \NLtwo{u}^{2}\;,
  \end{eqnarray*}
  with $C=C(\rho_{{\mesh}})>0$, because
  $\NLtwo[K_{\Bp}]{\psi^{K_{\Bp}}_{\Bp}}^{2}\NLtwo{b_{\Bp}}^{2} \leq C$, too.
\end{proof}

The following estimate is instrumental in establishing continuity of $\QIOp $
in $\bHone{\DBd}$:

\begin{theorem}[Generalized Hardy inquality]
  \label{thm:Hardy}
    \begin{gather*}
      \exists C=C(\Omega,\DBd)>0:\quad\int\limits_{\Omega}
      \left|\frac{u}{\operatorname{dist}(\Bx,\DBd)}\right|^{2}\,\mathrm{d}\Bx
      \leq C \SNHone{u}^{2}\quad\forall u\in\bHone{\DBd}\;.
    \end{gather*}
\end{theorem}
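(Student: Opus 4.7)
The plan is to derive the inequality by localizing around $\ol{\DBd}$, using the Poincar\'e--Friedrichs inequality on pieces away from $\DBd$ and reducing each piece near $\DBd$ to the one-dimensional Hardy inequality via a bi-Lipschitz flattening chart. The geometric hypothesis \eqref{eq:1} on the attached domains $\Omega_{i}$ is exactly what guarantees that such flattening charts exist at every point of $\ol{\DBd}$, including at its relative boundary in $\partial\Omega$.

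First I would cover $\ol{\Omega}$ by finitely many open sets of two kinds. Sets of the first kind have positive distance from $\DBd$; on these the integrand $|u|^{2}/\operatorname{dist}(\Bx,\DBd)^{2}$ is uniformly dominated by $|u|^{2}$, and $\NLtwo{u}^{2}\leq C\SNHone{u}^{2}$ follows from the Poincar\'e--Friedrichs inequality since $u$ has vanishing trace on the nonempty Dirichlet part. Sets of the second kind form a neighborhood of $\ol{\DBd}$; to each point $\Bp\in\ol{\DBd}$ I associate a bi-Lipschitz chart $\Phi_{\Bp}\colon W_{\Bp}\to B\subset\bbR^{3}$ mapping $\DBd\cap W_{\Bp}$ into the hyperplane piece $\{y_{3}=0\}$ and $\Omega\cap W_{\Bp}$ into $\{y_{3}>0\}$. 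At a point in the relative interior of $\DBd$ in $\partial\Omega$ this is a routine Lipschitz boundary chart. At a point on the relative boundary of $\DBd$ in $\partial\Omega$, where $\DBd$ meets the Neumann part, I invoke the hypothesis \eqref{eq:1} to pass to the enlarged set $\wt{\Omega}:=\mathrm{int}\bigl(\ol{\Omega}\cup\bigcup_{i}\ol{\Omega_{i}}\bigr)$, which is Lipschitz by assumption. Inside $\wt{\Omega}$ the set $\DBd$ is an interior Lipschitz hypersurface, so a bi-Lipschitz straightening chart exists on a $\wt{\Omega}$-neighborhood of $\Bp$; its restriction to $\Omega$ yields the desired half-ball picture.

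With a subordinate partition of unity $\{\chi_{\Bp}\}$ I decompose $u=\sum_{\Bp}\chi_{\Bp}u$ and estimate each piece separately. On a chart of the second kind, $v:=(\chi_{\Bp}u)\circ\Phi_{\Bp}^{-1}$ has vanishing trace on $\{y_{3}=0\}$, so applying the classical one-dimensional Hardy inequality
\begin{gather*}
  \int_{0}^{h}\frac{|v(y',t)|^{2}}{t^{2}}\,\mathrm{d}t \leq 4\int_{0}^{h}|\partial_{t}v(y',t)|^{2}\,\mathrm{d}t \quad\text{whenever }v(y',0)=0
\end{gather*}
slice-wise and integrating in $y'$ controls the Hardy integrand in the flattened chart by $\SNHone{v}^{2}$. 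Pulling back via $\Phi_{\Bp}^{-1}$, and using that $y_{3}$ is comparable to $\operatorname{dist}(\cdot,\DBd)$ inside $W_{\Bp}$ with constants depending only on the bi-Lipschitz norms, converts this into a local bound on $\int_{W_{\Bp}\cap\Omega}|\chi_{\Bp}u|^{2}/\operatorname{dist}(\Bx,\DBd)^{2}$ by $C\SNHone{\chi_{\Bp}u}^{2}$. The product-rule commutator $u\,\grad\chi_{\Bp}$ is absorbed using $\NLtwo{u}\leq C\SNHone{u}$ from Poincar\'e--Friedrichs, as in the first kind of patch. Summing over the finite cover and the partition of unity yields the claim, with a constant depending on $\Omega$, $\DBd$, the chosen cover, and the bi-Lipschitz constants of the charts.

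The hard part of this argument is the chart construction at points of $\ol{\DBd}\setminus\mathrm{int}_{\partial\Omega}\DBd$, where the Dirichlet and Neumann parts of $\partial\Omega$ meet: here $\partial\Omega$ alone may not admit a Lipschitz straightening of $\DBd$ in a neighborhood of $\Bp$, and the naive application of 1D Hardy breaks down. The assumption \eqref{eq:1} is engineered precisely to remove this obstruction by attaching $\Omega_{i}$ along $\Gamma_{i}$, thereby embedding $\DBd$ as an interior Lipschitz hypersurface of the larger Lipschitz domain $\wt{\Omega}$ and making straightening charts available globally on $\ol{\DBd}$.
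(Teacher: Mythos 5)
The gap is at the relative boundary of $\DBd$ in $\partial\Omega$, the case you flag as hard but do not actually resolve. You want a bi-Lipschitz chart $\Phi_{\Bp}$ mapping $\DBd\cap W_{\Bp}$ into $\{y_{3}=0\}$ and $\Omega\cap W_{\Bp}$ into $\{y_{3}>0\}$, and then claim that $v:=(\chi_{\Bp}u)\circ\Phi_{\Bp}^{-1}$ has vanishing trace on $\{y_{3}=0\}$. But at a point $\Bp$ on the relative boundary of $\DBd$, the surface $\DBd$ locally terminates at an edge through $\Bp$; under any bi-Lipschitz chart that edge maps to a curve through $\Phi_{\Bp}(\Bp)$ which bounds the image of $\DBd$, so the image of $\DBd\cap W_{\Bp}$ is only a half-disk of $\{y_{3}=0\}$, say $\{y_{3}=0,\,y_{1}>0\}$. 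Thus $v$ vanishes there but not on $\{y_{3}=0,\,y_{1}<0\}$, and the one-dimensional Hardy inequality in $y_{3}$ is unavailable on those slices. They also cannot be absorbed by Poincar\'e: for $y_{1}<0$, $y_{3}>0$ one has $\operatorname{dist}(\cdot,\DBd)\approx\sqrt{y_{1}^{2}+y_{3}^{2}}$ (up to bi-Lipschitz constants), so the weight carries a genuine two-dimensional singularity along the edge $\{y_{1}=y_{3}=0\}$, not a one-dimensional singularity in $y_{3}$. Passing to $\wt{\Omega}$ does not remove the obstruction: $\DBd$ becomes an interior hypersurface of $\wt{\Omega}$, but it still ends at $\Bp\in\partial\wt{\Omega}$, so the half-surface picture and its edge singularity persist.

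This is exactly what the paper's second and third canonical flattened configurations handle. Instead of insisting that the image of $\DBd$ be a full hyperplane piece, the paper accepts the half-plane $\{z=0,\,x>0\}$ and quarter-plane models and switches to polar (respectively spherical) coordinates centered on the edge (corner) of $\DBd$, applying the one-dimensional Hardy/Poincar\'e inequality in the \emph{angular} variable $\varphi$, along which $u$ does vanish at one end. That angular slicing is the one-dimensional reduction compatible with the two-dimensional edge singularity, and it is precisely the step missing from your proposal. If you insert the polar-coordinate edge estimate and the spherical-coordinate corner estimate into your chart construction, you recover the paper's proof; without them the argument is incomplete.
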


\begin{proof}
  By density it suffices to consider $u\in C^{\infty}(\ol{\Omega})$,
  $\operatorname{supp}(u)\cap\DBd=\emptyset$. Using a partition of unity, we can
  confine the estimate to neighborhoods of $\DBd$, in which $\partial\Omega$ is the
  graph of a Lipschitz-continuous function. Thus, after bi-Lipschitz transformations,
  we need only investigate three canonical situations, see Fig.~\ref{fig:hardy}:
  \begin{enumerate}
  \item $\DBd = \{z=0\}$, for which the 1D Hardy inequality gives the estimate,
    see the proof of Thm.~1.4.4.4 in \cite{GRI85}.
  \item $\DBd = \{z=0\;\wedge\; x>0\}$, which can be treated using polar coordinates
    in the $(x,z)$-plane and then integrating in $y$-direction:
    \begin{gather*}
      \mbox{$$}\hspace*{-2em}\int\limits_{0}^{\infty} \int\limits_{0}^{\pi}
      \left|\frac{u(r,\varphi)}{r}\right|^{2}\mathrm{d}\varphi\,r\,\mathrm{d}r
      \leq \int\limits_{0}^{\infty} \int\limits_{0}^{\pi}
      \left|\frac{\pi}{r}\frac{\partial u}{\partial\varphi}(r,\varphi)\right|^{2}
      \mathrm{d}\varphi\,r\,\mathrm{d}r \leq\pi^2
      \int\limits_{z>0}|\grad_{x,z}u|^2\,\mathrm{d}x\mathrm{d}z\;.
    \end{gather*}
  \item $\DBd= \{z=0\,\wedge x>0\,\wedge\,y>0\}$, for which we obtain a similar
    estimate using spherical coordinates.
  \end{enumerate}
  This ends the proof.
\end{proof}

  \begin{figure}[!htb]
    \centering

    \begin{minipage}[c]{0.33\textwidth}\centering
      \psfrag{x}{$x$}
      \psfrag{y}{$y$}
      \psfrag{z}{$z$}
      \psfrag{GD}{$\DBd$}
      \psfrag{G0}{$\partial\Omega\setminus\DBd$}
      \includegraphics[width=0.95\textwidth]{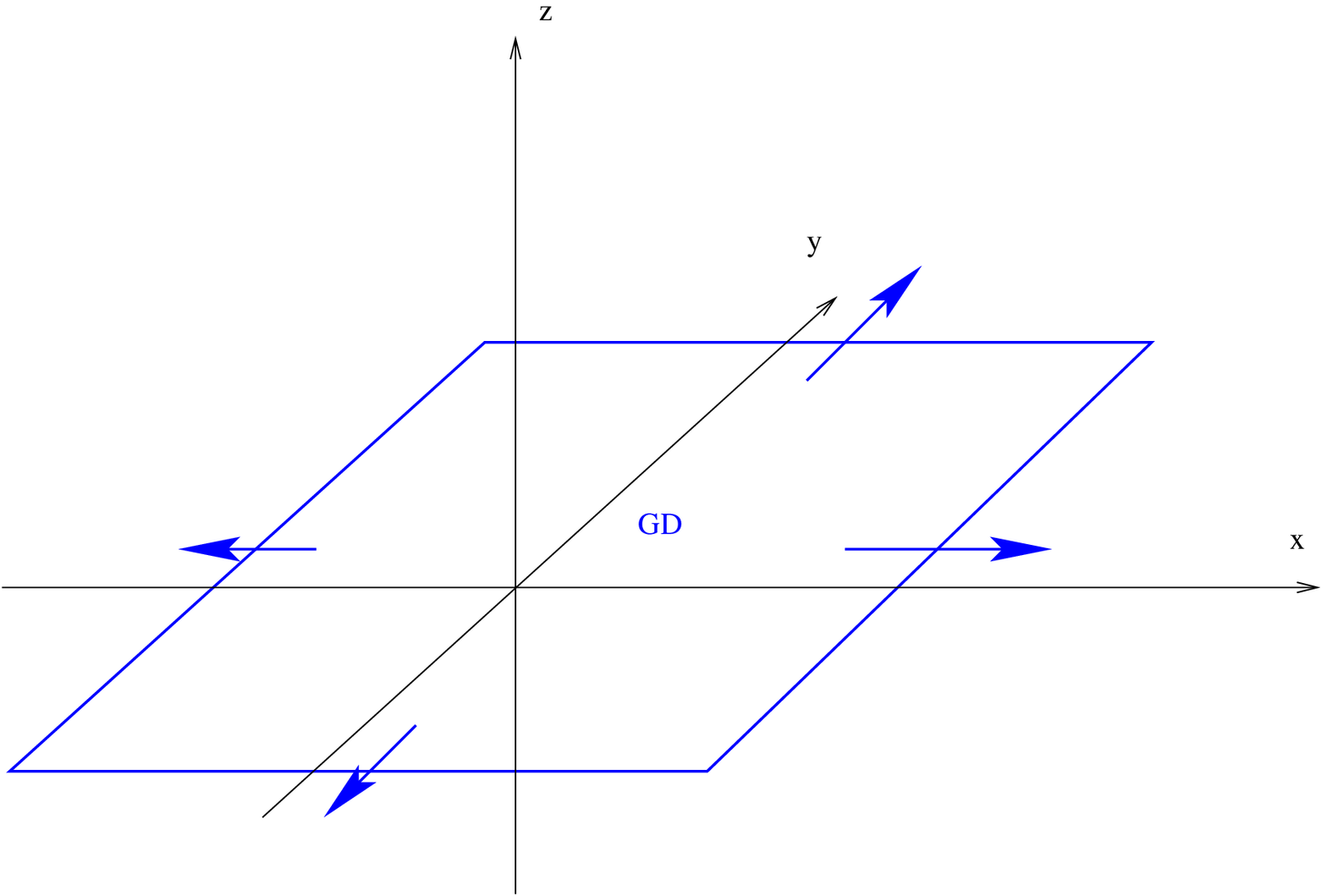}

      $\DBd = \{z=0\}$
    \end{minipage}%
    \begin{minipage}[c]{0.33\textwidth}\centering
      \psfrag{x}{$x$}
      \psfrag{y}{$y$}
      \psfrag{z}{$z$}
      \psfrag{GD}{$\DBd$}
      \psfrag{G0}[c][c][1][45]{$\partial\Omega\setminus\DBd$}
      \psfrag{phi}{$\varphi$}
      \psfrag{r}{$r$}
      \includegraphics[width=0.95\textwidth]{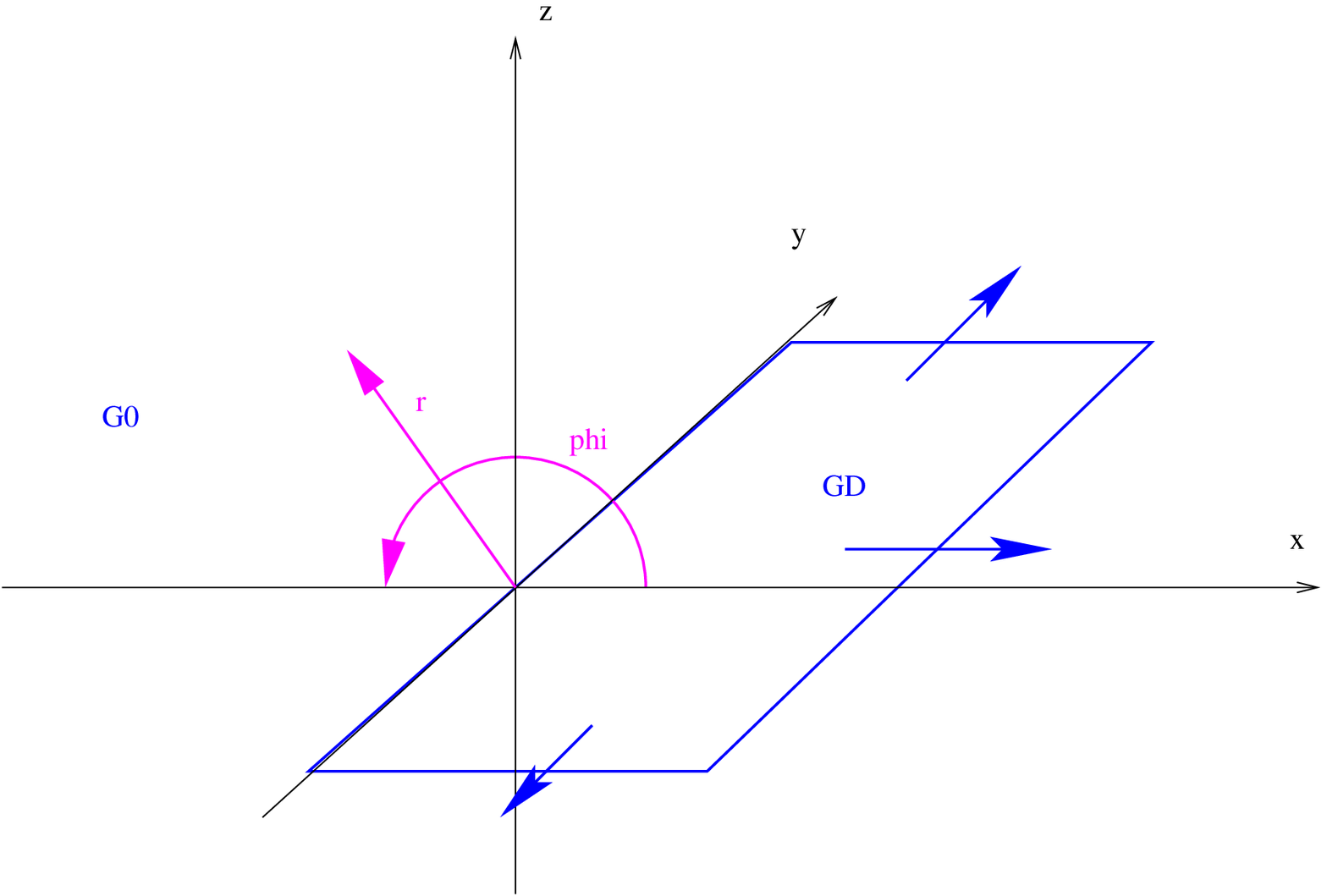}

      $\DBd = \{z=0\,\wedge\,x>0\}$
    \end{minipage}%
    \begin{minipage}[c]{0.33\textwidth}\centering
      \psfrag{x}{$x$}
      \psfrag{y}{$y$}
      \psfrag{z}{$z$}
      \psfrag{GD}{$\DBd$}
      \psfrag{G0}{$\partial\Omega\setminus\DBd$}
      \includegraphics[width=0.95\textwidth]{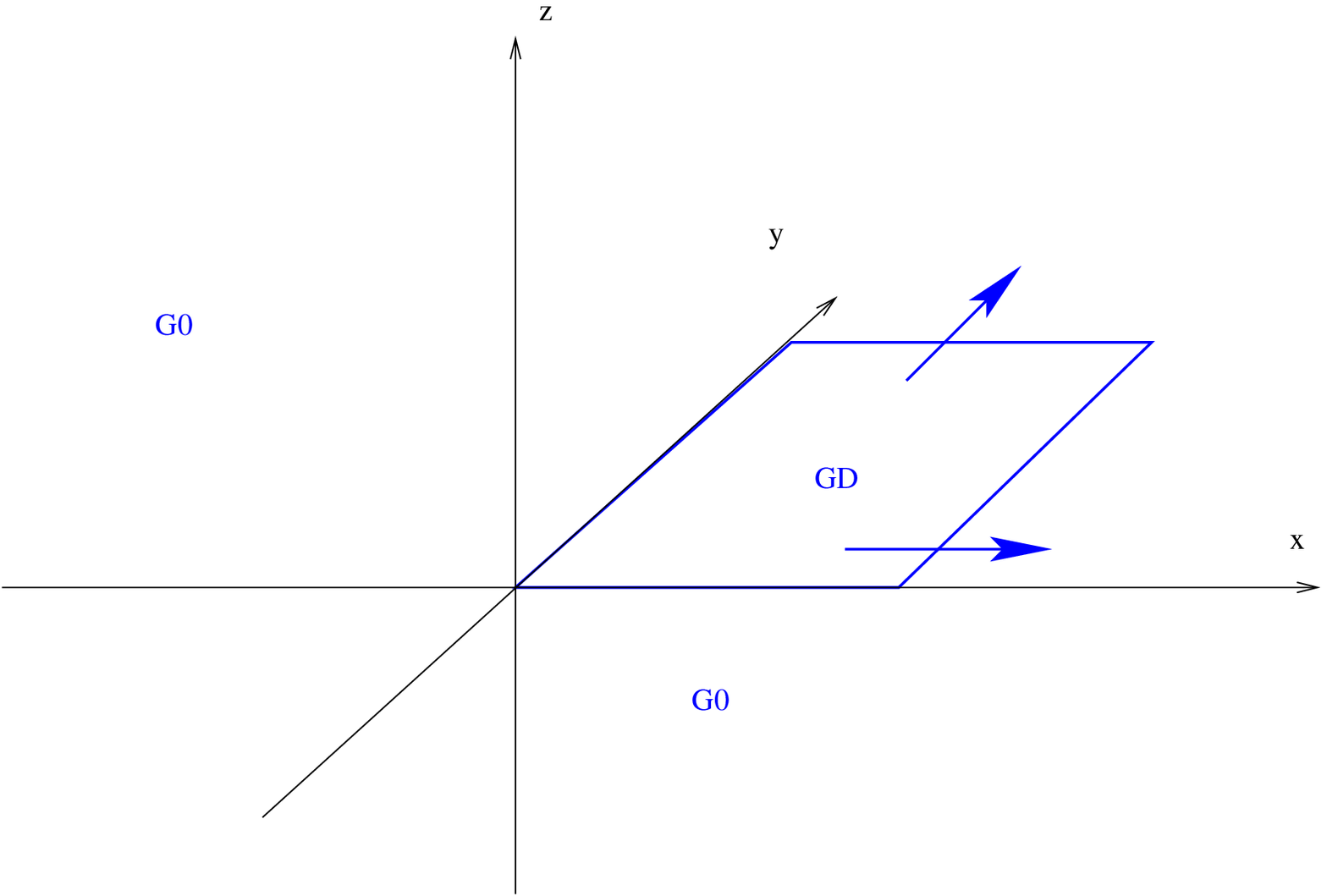}

      {\small $\DBd = \{z=0\,\wedge x>0\,\wedge\,y>0\}$}
    \end{minipage}%

    \caption{Canonical situations to be examined in the proof of Thm.~\ref{thm:Hardy}}
    \label{fig:hardy}
  \end{figure}

  \begin{proof}[of Lemma~\ref{lem:Qint}, part II]
    In order to tackle the $\Hone$-continuity of $\QIOp$, we use that $\grad
    \LFE(\mesh)\subset \EFE(\mesh)$ along with the stability estimate \eqref{eq:fem17}
    \begin{gather}
      \label{eq:s5}
      \NLtwo{\grad \QIOp u}^{2}\leq C \sum\limits_{E=[\Bp,\Bq]\in\Ce({\mesh})}
      (\QIOp u(\Bp) - \QIOp u(\Bq))^{2}\NLtwo{\Vb_{E}}^{2}\;,
    \end{gather}
    with the notation ${\{\Vb_{E}\}}_{E\in\Ce({\mesh})} :=
    \Bas_{\EFE}({\mesh})$.

    (i)\quad for the case {$E=[\Bp,\Bq]\in\Ce({\mesh})$},
    {$\Bp,\Bq\not\in\DBd$}, we adapt arguments from \cite{SHO01}. For any
    $u\in\bHone{\DBd}$, by \eqref{eq:s2}, we have the identity
    \begin{multline*}
      |(\QIOp  u)(\Bp)-(\QIOp  u)(\Bq)| =
      \Bigl|\int\limits_{K_{\Bp}}\int\limits_{K_{\Bq}}
      \psi_{\Bp}^{K_{\Bp}}(\Bx)\psi_{\Bq}^{K_{\Bq}}(\By)(u(\Bx)-u(\By))\,
      \mathrm{d}\By\mathrm{d}\Bx\Bigr| \\
      =  \Bigl|\int\limits_{K_{\Bp}}\int\limits_{K_{\Bq}}
      \psi_{\Bp}^{K_{\Bp}}(\Bx)\psi_{\Bq}^{K_{\Bq}}(\By)\,\int\limits_{0}^{1}
      \grad
      u(\By+\tau(\Bx-\By))\cdot(\Bx-\By)\,\mathrm{d}\tau\,\mathrm{d}\By\mathrm{d}\Bx
      \Bigr|\;.
    \end{multline*}
    Then split the innermost integral and transform
    \begin{gather*}
      \int\nolimits_{0}^{1} f(\By+\tau(\Bx-\By))\,\mathrm{d}\tau =
      \int\nolimits_{\hlb}^{1} f(\By+\tau(\Bx-\By))\,\mathrm{d}\tau +
      \int\nolimits_{\hlb}^{1} f(\Bx+\tau(\By-\Bx))\,\mathrm{d}\tau\;.
    \end{gather*}
    We infer
    \begin{multline*}
      |(\QIOp  u)(\Bp)-(\QIOp  u)(\Bq)| \\ \leq
      \begin{aligned}[t]
        & \int\limits_{\hlb}^{1} \int\limits_{K_{\Bp}}\int\limits_{K_{\Bq}}
        |\psi_{\Bp}^{K_{\Bp}}(\Bx)| |\psi_{\Bq}^{K_{\Bq}}(\By)|\,
        |\grad u(\By+\tau(\Bx-\By))||\Bx-\By|\,\mathrm{d}\By
        \mathrm{d}\Bx\mathrm{d}\tau \\
        + & \int\limits_{\hlb}^{1} \int\limits_{K_{\Bp}}\int\limits_{K_{\Bq}}
        |\psi_{\Bp}^{K_{\Bp}}(\Bx)| |\psi_{\Bq}^{K_{\Bq}}(\By)|\,
        |\grad u(\Bx+\tau(\By-\Bx))|\,|\Bx-\By|\,\mathrm{d}\By
        \mathrm{d}\Bx\mathrm{d}\tau
      \end{aligned}
    \end{multline*}
    The transformation formula for integrals reveals
    \begin{gather*}
      \int\nolimits_{K}f(\Bx+\tau(\By-\Bx))\,\mathrm{d}\By =
      \tau^{-3} \int\nolimits_{K'} f(\Bz)\,\mathrm{d}\Bz\;,\quad
      K' := \Bx+\tau(K-\Bx)\;.
    \end{gather*}
    Appealing to the bounds for {$\NLtwo[K]{\psi_{j}^{K}}$},
    {$\NLp[K]{\psi_{j}^{K}}{1}$}, $K\in{\mesh}$, from \eqref{eq:s4}, the
    {Cauchy-Schwarz inequality} yields
    \begin{align}
      \label{eq:qi1}
      &|(\QIOp  u)(\Bp)-(\QIOp  u)(\Bq)| \NLtwo{\Vb_{E}}\\
      &\qquad\qquad
      \leq C\underbrace{\left[\frac{|\Bp-\Bq| \NLtwo{\Vb_{E}}}
        {\min\{|K_{\Bq}|^{\frac{1}{2}},|K_{\Bp}|^{\frac{1}{2}}
         \}}\right]}_{{
          \leq C=C(\rho_{{\mesh}})}}
      \int\nolimits_{\hlb}^{1}\tau^{-3/2}\,\mathrm{d}\tau\cdot
      \SNHone[\left<\Omega_{E}\right>]{u}\;. \notag
    \end{align}
    Here {$\left<\Omega_{E}\right>$} stands for the convex hull of all tetrahedra
    adjacent to the edge $E$.

    (ii)\quad now consider {$E=[\Bp,\Bq]\in\Ce(\mesh)$},
    {$\Bp\in\DBd$}. Then, for any $u\in\bHone{\DBd}$
    \begin{align*}
    &|(\QIOp  u)(\Bq)-(\QIOp  u)(\Bp)|^{2} =
    |(\QIOp  u)(\Bq)|^{2} = \left|\int\nolimits_{K_{\Bq}}
      \psi_{\Bq}^{K_{\Bq}}(\Bx)u(\Bx)\,\mathrm{d}\Bx\right|^{2} \\
    = & \Bigl|\int\nolimits_{K_{\Bq}}
    \underbrace{\operatorname{dist}(\Bx,\DBd)}_{\leq C |\Bp-\Bq|}
    \psi_{\Bq}^{K_{\Bq}}(\Bx)
    \frac{u(\Bx)}{\operatorname{dist}(\Bx,\DBd)}\,\mathrm{d}\Bx\Bigr|^{2}
    \leq  \frac{C |\Bp-\Bq|^{2}}{|K_{\Bq}|}\cdot \int\limits_{K_{\Bq}}
    \left|\frac{u(\Bx)}{\operatorname{dist}(\Bx,\DBd)}\right|^{2}\,\mathrm{d}\Bx\\
    \leq &\frac{C}{|\Bp-\Bq|}  \int\limits_{\Omega_{E}}
    \left|\frac{u(\Bx)}{\operatorname{dist}(\Bx,\DBd)}\right|^{2}\,\mathrm{d}\Bx
    \;,
  \end{align*}
  with (different) constants {$C=C(\rho_{{\mesh}})>0$}.

  Combining \eqref{eq:s5}, \eqref{eq:qi1}, using the finite overlap property of
  ${\mesh}$ in the form
  \begin{gather*}
    \exists C=C(\rho_{{\mesh}}):\quad
    \sharp\{E\in\Ce({\mesh}):\, \Bx\in\left<\Omega_{E}\right>\} \leq C\quad
    \forall \Bx\in\Omega\;,
  \end{gather*}
  and appealing to Thm.~\ref{thm:Hardy} confirm $\SNHone{\QIOp  u} \leq C
  \SNHone{u}$. Observe that the Hardy inequality makes the constant depend on
  $\Omega$ and $\DBd$ in addition.

  The quasi-interpolation error estimate \eqref{eq:int-err} results from
  scaling arguments. Pick $K\in\mesh$, $u\in\Hm{2}\cap\bHone{\DBd}$, and write
  $\LIP_{K}u\in\bbP_{1}(K)$ for the linear interpolant of $u$ on $K$.
  Thanks to the projection property, we deduce as in Part I of the proof that,
  with $C=C(\rho_{\mesh})$,
  \begin{eqnarray*}
    \NLtwo[K]{(Id-\QIOp )u} &=&
    \NLtwo[K]{(Id-\QIOp )(u-\LIP_{K}u)} \leq
    C \NLtwo[\Omega_{K}]{u-\LIP_{K}u} \\ &\leq& C
    h_{K}^{2}\SNHm[\Omega_{K}]{u}{2}\;.
  \end{eqnarray*}
  Here, we wrote $\Omega_{K}:=\bigcup\{\ol{K'}:\,\ol{K'}\cap K\not=\emptyset\}$, and
  the final estimate can be shown by a simple scaling argument, \textit{cf.}
  \eqref{eq:fem15}. Estimate \eqref{eq:int-err} for $k=1$ follows by scaling
  arguments and interpolation between the Sobolev spaces $H^{2}(\Omega_{K})$ and
  $L^{2}(\Omega_{K})$.
\end{proof}

\subsection{Multilevel splitting of $\LFE({{\mesh}_{L}})$}
\label{sec:glob-mult-splitt}

In this section we first revisit the well-known \rh{\cite{JXU92,OSW92,ZHA92}} uniform
stability of multilevel splittings
% \begin{gather}
%   \label{eq:s6}
%   u_{h} = \QIP_{0}u_{h} + \sum\limits_{l=1}^{L}(\QIOp -\QIP_{l-1})u_{h}\;,\quad
%   u_{h}\in\LFE(\wh{\mesh}_{L})\;,
% \end{gather}
of $\Hone$-conforming Lagrangian finite element functions in the case of mesh
hierarchies generated by uniform, \textit{i.e.} non-local, regular refinement.

We take for granted a virtual refinement hierarchy \eqref{eq:ref2} of tetrahedral
meshes as introduced in Sect.~\ref{sec:LMG} and its accompanying quasi-uniform
family of meshes \eqref{eq:ref24}.

Owing to the $\inf$ in \eqref{eq:C-stab}, it is enough to find a concrete family of
admissible ``candidate'' decompositions that enjoys the desired $L$-uniform stability.
We aim for candidates that fit the locally refined mesh hierarchy.

The principal idea, \rh{borrowed from \cite[Sect.~4.2.2]{OSW94},} is to use a sequence
  of quasi-interpolation operators $\QIP_{l}:\Ltwo\mapsto \LFE(\wh{\mesh}_{l})$ based
  on a judiciously chosen node$\to$element--assignments.  For $\wh{\mesh}_{l}$ we
  introduce a ``coarsest neighbor node$\to$element--assignment'': First, for any
  $\Bp\in\Cn(\wh{\mesh}_{l})$, $l=1,\ldots,L$, we pick $K\in{\mesh}_{l}$ such that
\begin{gather*}
  \lev(K) = \min\{\lev(K):\;\Bp\in \ol{K},\; K\in\mesh_{l}\}\;.
\end{gather*}
Secondly, we select a ``coarsest neighbor''
$K_{\Bp}\in\wh{\mesh}_{l}$ among those elements of
$\wh{\mesh}_{l}$ that are contained in $K$. This defines a mapping
$\Cn(\wh{\mesh}_{l})\mapsto\wh{\mesh_{l}}$, $\Bp\mapsto K_{\Bp}$.
We write $\ol{\QIP}_{l}:\Ltwo\mapsto \ol{\LFE}(\wh{\mesh}_{l})$ for the induced
quasi-interpolation operator according to Def.~\ref{def:QIP}.

Next, we examine the candidate multilevel splitting
\begin{gather}
  \label{eq:s6}
  u_{h} = \ol{\QIP}_{0}u_{h} + \sum\limits_{l=1}^{L}(\ol{\QIP}_{l}-\ol{\QIP}_{l-1})u_{h}\;,\quad
  u_{h}\in\LFE(\wh{\mesh}_{L})\;.
\end{gather}

\begin{lemma}
  \label{lem:H1stab}
  There holds, with a constant $C>0$ depending only on $\Omega$ and the uniform
  bound for the shape regularity measures $\rho_{\wh{\mesh}_{l}}$, $0\leq l\leq L$,
  \begin{equation}
    \SNHone{\ol{\QIP}_0 u_{h} }^2+
    \sum_{l=1}^Lh_l^{-2} \NLtwo{(\ol{\QIP}_l-\ol{\QIP}_{l-1}) u_{h} }^2\le
    C\SNHone{ u_{h} }^2\quad \forall u_{h} \in \LFE(\wh{\mesh}_L)\;.
    \label{eq:Ustab}
  \end{equation}
\end{lemma}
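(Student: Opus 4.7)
The plan is to follow the classical BPX/Oswald strategy for $H^1$-stability of multilevel splittings of piecewise linear Lagrangian finite element functions on nested quasi-uniform mesh hierarchies, adapted to the quasi-interpolants $\ol{\QIP}_l$ defined by the coarsest-neighbor node-to-element assignment. The coarse-scale term is disposed of immediately: $\SNHone{\ol{\QIP}_{0}u_{h}}^{2}\leq C\SNHone{u_{h}}^{2}$ is exactly the $H^{1}$-continuity \eqref{eq:H1stab} of Lemma~\ref{lem:Qint} applied to $\ol{\QIP}_{0}$.

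For the multilevel sum I would first exploit the nesting $\ol{\LFE}(\wh{\mesh}_{l-1})\subset\ol{\LFE}(\wh{\mesh}_{l})$, which follows from $\wh{\mesh}_{l-1}\prec\wh{\mesh}_{l}$, together with the projector property $\ol{\QIP}_{l}\ol{\QIP}_{l-1}=\ol{\QIP}_{l-1}$. This yields the key rewriting
\begin{gather*}
  (\ol{\QIP}_{l}-\ol{\QIP}_{l-1})u_{h} \;=\; \ol{\QIP}_{l}(Id-\ol{\QIP}_{l-1})u_{h}
\end{gather*}
and, since $\ol{\QIP}_{L}u_{h}=u_{h}$, the telescoping $u_{h}-\ol{\QIP}_{0}u_{h}=\sum_{l=1}^{L}(\ol{\QIP}_{l}-\ol{\QIP}_{l-1})u_{h}$. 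The first-order estimate from \eqref{eq:int-err} together with $L^{2}$-stability would only give $\NLtwo{(\ol{\QIP}_{l}-\ol{\QIP}_{l-1})u_{h}}\leq Ch_{l}\SNHone{u_{h}}$, producing a non-uniform bound $CL\SNHone{u_{h}}^{2}$; removing this spurious factor $L$ is the main obstacle.

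The remedy is a $K$-functional / Jackson--Bernstein duality argument. For each level $l$ I would decompose $u_{h}=w_{l}+(u_{h}-w_{l})$ with $w_{l}\in\Hm{2}\cap\bHone{\DBd}$ chosen to realize the balance
\begin{gather*}
  \NLtwo{u_{h}-w_{l}}^{2} + h_{l}^{2}\,\SNHm{w_{l}}{2}^{2} \;\leq\; C\,h_{l}^{2}\,\SNHone{u_{h}}^{2},
\end{gather*}
e.g.\ by an auxiliary elliptic smoothing at scale $h_{l}$. Then the quadratic part of \eqref{eq:int-err} gives $\NLtwo{(Id-\ol{\QIP}_{l})w_{l}}\leq Ch_{l}^{2}\SNHm{w_{l}}{2}$, while the rough part is controlled by $L^{2}$-stability: $\NLtwo{(Id-\ol{\QIP}_{l})(u_{h}-w_{l})}\leq C\NLtwo{u_{h}-w_{l}}$, and the same bounds apply with $l$ replaced by $l-1$. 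Summation yields an inequality of the form $\NLtwo{(\ol{\QIP}_{l}-\ol{\QIP}_{l-1})u_{h}}^{2}\leq C\bigl(\NLtwo{u_{h}-w_{l}}^{2}+h_{l}^{2}\SNHm{w_{l}}{2}^{2}\bigr)$, each of whose terms is $\mathcal{O}(h_{l}^{2}\SNHone{u_{h}}^{2})$ only with respect to the chosen scale.

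Finally, I would insert these estimates into $\sum_{l}h_{l}^{-2}\NLtwo{(\ol{\QIP}_{l}-\ol{\QIP}_{l-1})u_{h}}^{2}$ and exploit the geometric decay $h_{l}\sim\theta^{l}$ from \eqref{eq:ref25}, together with a strengthened Cauchy--Schwarz between the multiscale components $w_{l}$, to absorb all cross-level contributions into a convergent geometric series. This produces the $L$-uniform constant depending only on $\Omega$, $\DBd$, and the shape-regularity bound of $\{\wh{\mesh}_{l}\}$. The hard part is organising the smoothers $w_{l}$ coherently across levels; this is precisely where the quasi-uniformity of $\wh{\mesh}_{l}$ and the coarsest-neighbor construction of $\ol{\QIP}_{l}$ enter, and also the reason the auxiliary hierarchy $\{\wh{\mesh}_{l}\}$ was introduced in the first place rather than working with the (possibly strongly graded) actual refinement hierarchy $\{\mesh_{l}\}$.
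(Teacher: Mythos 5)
Your outline is headed in the same direction as the paper's proof — Bornemann--Yserentant via $K$-functionals, with the coarse term disposed of by \eqref{eq:H1stab} and the key inequality $\NLtwo{(\ol{\QIP}_{l}-\ol{\QIP}_{l-1})u}\le C\bigl(\NLtwo{u-w}+h_{l}^{2}\SNHm{w}{2}\bigr)$ from \eqref{eq:L2stab}, \eqref{eq:int-err} — but the critical step, removing the factor $L$, is not actually carried out by your plan and, as stated, would fail.

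You correctly diagnose that the naive estimate $\NLtwo{(\ol{\QIP}_{l}-\ol{\QIP}_{l-1})u_{h}}\le Ch_{l}\SNHone{u_{h}}$ gives only $CL\SNHone{u_{h}}^{2}$. But the remedy you propose, namely picking a separate smoother $w_{l}\in\Hm{2}$ for each level satisfying a pointwise balance $\NLtwo{u_{h}-w_{l}}^{2}+h_{l}^{2}\SNHm{w_{l}}{2}^{2}\le Ch_{l}^{2}\SNHone{u_{h}}^{2}$ (which should in any case read $h_{l}^{4}\SNHm{w_{l}}{2}^{2}$ to match the $k=2$ case of \eqref{eq:int-err}), yields exactly $\NLtwo{(\ol{\QIP}_{l}-\ol{\QIP}_{l-1})u_{h}}^{2}\le Ch_{l}^{2}\SNHone{u_{h}}^{2}$ for each $l$ — and summing this over $l$ reproduces the factor $L$ you set out to avoid. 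A level-by-level $K$-functional bound of the form $K(h_{l}^{2},u)\lesssim h_{l}\SNHone{u}$ is the statement $\Hone\hookrightarrow B^{1}_{2,\infty}$ and cannot be uniformly summed; what is needed is an $\ell^{2}$-type bound $\sum_{l}h_{l}^{-2}K(h_{l}^{2},u)^{2}\lesssim\SNHone{u}^{2}$, i.e.\ $\Hone\hookrightarrow B^{1}_{2,2}$. Your appeal to a ``strengthened Cauchy--Schwarz between the multiscale components $w_{l}$'' does not supply this: there are no cross-terms in the sum $\sum_{l}h_{l}^{-2}\NLtwo{(\ol{\QIP}_{l}-\ol{\QIP}_{l-1})u_{h}}^{2}$ to begin with, so there is nothing for a strengthened Cauchy--Schwarz to absorb. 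The paper instead extends $u_{h}$ to $H^{1}(\bbR^{3})$, passes to Fourier transform, uses the explicit minimizer $\wh{w}(\xibf)=\wh{u}(\xibf)/(1+t^{2}\SN{\xibf}^{4})$ of the $K$-functional on $\bbR^{3}$, and then bounds
\begin{gather*}
\sum_{l=1}^{L}h_{l}^{-2}K(h_{l}^{2},u)^{2}
\le C\sup_{\xibf}\Bigl\{\sum_{l=1}^{L}\frac{\theta^{2l}\SN{\xibf}^{2}}{1+\theta^{4l}\SN{\xibf}^{4}}\Bigr\}
\int_{\bbR^{3}}\SN{\xibf}^{2}\SN{\wh{u}}^{2}\,\mathrm{d}\xibf\;,
\end{gather*}
where the supremum is uniformly bounded because for fixed $\xibf$ the summand decays geometrically away from the frequency-matched level $l\approx\log_{1/\theta}\SN{\xibf}$. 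This dyadic cancellation in frequency — not a choice of smoothers $w_{l}$ — is what removes the factor $L$, and it is the piece missing from your outline.
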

\newcommand{\ufn}{u}

\begin{proof}
  We take the cue from the elegant approach of Bornemann and Yserentant in
  \cite{BOY93}, who discovered how to bring techniques of real interpolation theory
  of Sobolev spaces \cite{LIM72}, \cite[Appendix B]{MCL00} to bear on \eqref{eq:s6}.
  The main tools are the so-called $K$-functionals given by
\begin{eqnarray*}
  K(t,u)^2&:=&\inf_{w\in H^2(\Omega)}\left\{\NLtwo{u-w}^2
    +t^2\SN{w}^2_{H^2(\Omega)}\right\}\;,\\
  K_{\bbR^3}(t,u)^2&:=&\inf_{w\in
    H^2(\bbR^3)}\left\{\N{u-w}^2_{L^2(\bbR^3)}
    +t^2\SN{w}^2_{H^2(\bbR^3)}\right\}\;.
\end{eqnarray*}
The estimates \eqref{eq:H1stab} and \eqref{eq:int-err} of Lemma~\ref{lem:Qint}
create a link between the terms in \eqref{eq:Ustab} and $K(t,u)$: owing to
\eqref{eq:L2stab} and \eqref{eq:int-err} there holds for any
$u\in\Ltwo$
\begin{eqnarray*}
  \NLtwo{(\ol{\QIP}_l-\ol{\QIP}_{l-1}) u } &\leq&
  \NLtwo{(\ol{\QIP}_l-\ol{\QIP}_{l-1})(u-w)} +
  \NLtwo{(\ol{\QIP}_l-\ol{\QIP}_{l-1})w} \\
  &\leq& C\bigl(\NLtwo{u-w} +  h_{l}^{2}\SNHm{w}{2}\bigr)\quad\forall w\in\Hm{2}\;.
\end{eqnarray*}
Here and below the generic constants $C$ may depend on shape regularity
$\max\limits_{0\leq l\leq L}\rho_{\wh{\mesh}_{l}}$ and the (quasi-uniformity)
constants in \eqref{eq:ref25}. We conclude
\begin{gather}
  \label{eq:s7}
  \NLtwo{(\ol{\QIP}_l-\ol{\QIP}_{l-1}) u }^2 \leq C\,K(h_{l}^{2},u)^{2}\quad
  \forall u\in\Ltwo\;,
\end{gather}
which implies
\begin{equation}
  \label{eq:Ksum}
  \SNHone{\ol{\QIP}_0\ufn}^2+\sum_{l=1}^Lh_l^{-2} \NLtwo{(\ol{\QIP}_l-\ol{\QIP}_{l-1})\ufn}^2\le
  C \Big\{\SNHone{\ufn}^2+\sum_{l=1}^Lh_l^{-2}
  K(h^2_l,\ufn)\Big\}\;.
\end{equation}
Let $\wt{\ufn}\in H^1(\bbR^3)$ be the Sobolev extension of $\ufn$ such
that, with $C=C(\Omega)>0$,
$$\wt{\ufn}_{|\Omega}=\ufn\quad\hbox{and}\quad \SN{\ufn}_{H^1(\bbR^3)}\le C\SNHone{\ufn}.$$
Define the Fourier Transform of $\wt{\ufn}$ by
$$\wh{\ufn}(\xibf)=\frac{1}{(2\pi)^{3/2}}\int_{\bbR^3}
\wt{\ufn}(\Bx) e^{-\imath\,\Bx\cdot\xibf}\,\mathrm{d}\Bx.$$ By the
equivalent definition of Sobolev-norms on $\bbR^{3}$
$$|\wt{\ufn}|^2_{H^i(\bbR^3)}\approx \int_{\bbR^3}|\xibf|^{2i}
|\wh{\ufn}(\xibf)|^2\,\mathrm{d}\xibf\;,\quad i=0,1,$$ we have
\begin{eqnarray}
K_{\bbR^3}(t,\wt{\ufn})^2&\le&C\inf_{w\in
H^2(\bbR^3)}\int_{\bbR^3}\Big\{\SN{\wh{\ufn}(\xibf)-\wh{w}(\xibf)}^2
+t^2|\xibf|^4\SN{\wh{w}}^2\Big\} \,\mathrm{d}\xibf\notag\\
&=&C\int_{\bbR^3}\frac{t^2|\xibf|^4}{1+t^2|\xibf|^4}|\wh{\ufn}(\xibf)|^2
\,\mathrm{d}\xibf\;,\notag
\end{eqnarray}
because the infimum is attained for \cite[Thm.~B7]{MCL00}
$\wh{w}(\xibf)=\wh{\ufn}(\xibf)/(1+t^2|\xibf|^4)$.
Since
\begin{eqnarray}
K(t,\ufn)^2&=&\inf_{w\in H^2(\Omega)}\left\{\NLtwo{\ufn-w}^2
+t^2\SN{w}^2_{H^2(\Omega)}\right\},\notag\\
&=&\inf_{w\in H^2(\bbR^3)}\left\{\NLtwo{\ufn-w}^2
+t^2\SN{w}^2_{H^2(\Omega)}\right\}\le
K_{\bbR^3}(t,\wt{\ufn})^2,\notag
\end{eqnarray}
we deduce that
\begin{eqnarray}
\sum_{l=1}^Lh_l^{-2} K(h^2_l,\ufn)^2&\le& C\sum_{l=1}^L
\int_{\bbR^3}\frac{h_l^2|\xibf|^4}{1+h_l^4|\xibf|^4}|\wh{\ufn}(\xibf)|^2
\,\mathrm{d}\xibf\label{eq:Ksum-stab}\\
&\le& C\sup_{\xibf\in\bbR^3}\Big\{\sum_{l=1}^L
\frac{\theta^{2l}|\xibf|^2}{1+\theta^{4l}|\xibf|^4}\Big\}
\int_{\bbR^3}|\xibf|^2|\wh{\ufn}(\xibf)|^2 \,\mathrm{d}\xibf\notag\\
&\le& C\SN{\wh{\ufn}}^2_{H^1(\bbR^3)}\le C\SNHone{\ufn}^2,\notag
\end{eqnarray}
where we have used assumption \eqref{eq:ref25}. The proof is finished by combining
\eqref{eq:Ksum} and \eqref{eq:Ksum-stab}.
\end{proof}\vspace{2mm}

\rh{Now we restrict ourselves to $u_{h}\in\LFE(\mesh_{h})$. Then,} thanks to the
particular design of the node$\to$element--assignment underlying $\ol{\QIP}_{l}$, the
terms in the decomposition \eqref{eq:s6} turn out to be localized.\vspace{2mm}

\begin{lemma}
  For all $u_h\in \LFE(\mesh_h)$ and $0\le l \leq j \le L$,
  \begin{eqnarray}
    \ol{\QIP}_j \rh{u_h} = \rh{u_h}\quad\text{in}\;\;\Omega\setminus\omega_{l+1}.
    \label{eq:Qluh=uh}
  \end{eqnarray}
\end{lemma}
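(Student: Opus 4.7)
My plan is a pointwise argument: I fix an arbitrary $\Bx \in \Omega \setminus \omega_{l+1}$ and show $(\ol{\QIP}_j u_h)(\Bx) = u_h(\Bx)$.

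First I identify the local mesh geometry. By the very definition \eqref{eq:3} of $\omega_{l+1}$, the unique $\mesh_h$-element $K^{\ast}$ containing $\Bx$ satisfies $\lev(K^{\ast}) \le l$. Because the refinement rule only subdivides those elements of $\mesh_{l'}$ that have level exactly $l'$ when passing to $\mesh_{l'+1}$, any element of level $\le l \le j$ that survives in $\mesh_h$ necessarily also belongs to every intermediate $\mesh_{l'}$ with $l' \ge \lev(K^{\ast})$; in particular $K^{\ast} \in \mesh_j$. The quasi-uniform companion mesh $\wh{\mesh}_j$ merely refines $K^{\ast}$ further down to level $j$, so $\Bx$ is contained in some $\hat K \in \wh{\mesh}_j$ with $\hat K \subset K^{\ast}$. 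Since $u_h \in \LFE(\mesh_h)$ is affine on $K^{\ast}$, it is affine on $\hat K$.

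Next I expand the quasi-interpolant at $\Bx$. Only the four basis functions $b_\Bp$ attached to the vertices $\Bp$ of $\hat K$ contribute, so
\begin{equation*}
(\ol{\QIP}_j u_h)(\Bx) \;=\; \sum_{\Bp \in \Cn(\hat K)} \Bigl(\int_{K_\Bp} \psi^{K_\Bp}_\Bp(\By) u_h(\By)\,\mathrm{d}\By\Bigr)\, b_\Bp(\Bx)\,.
\end{equation*}
The crux is to show that for every such $\Bp$ the function $u_h$ is affine on $K_\Bp$. Once this is established, the $L^2$-duality \eqref{eq:s2} between $\psi^{K_\Bp}_\Bp$ and the barycentric coordinate functions of $K_\Bp$ immediately yields $\int_{K_\Bp}\psi^{K_\Bp}_\Bp u_h\,\mathrm{d}\By = u_h(\Bp)$. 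Summing and using that $u_h|_{\hat K}$ is affine and $\{b_\Bp\}_{\Bp\in\Cn(\hat K)}$ restrict to the barycentric basis of $\hat K$, we obtain $(\ol{\QIP}_j u_h)(\Bx) = \sum_{\Bp \in \Cn(\hat K)} u_h(\Bp) b_\Bp(\Bx) = u_h(\Bx)$.

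The main obstacle is therefore justifying that $u_h$ is affine on $K_\Bp$. Each vertex $\Bp$ of $\hat K$ lies in $\overline{K^{\ast}} \subset \overline{\Omega\setminus\omega_{l+1}}$, and the coarsest-neighbor assignment provides an element $K \in \mesh_j$ with $\Bp\in\overline{K}$ whose level $l^{\ast}$ is minimal; thus $l^{\ast} \le \lev(K^{\ast}) \le l$. If $l^{\ast} < j$, then the subdivision rule says $K$ could only have been subdivided in the transition $\mesh_{l^{\ast}} \to \mesh_{l^{\ast}+1}$, but in that case it would have already disappeared from $\mesh_{l^{\ast}+1}$ and a fortiori from $\mesh_j \supset \mesh_h$; hence $K$ survives in $\mesh_h$. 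In the remaining boundary case $l^{\ast}=l=j$, ties between minimum-level candidates include $K^{\ast}$ itself, and we resolve ties by selecting an element of $\mesh_j \cap \mesh_h$ — namely $K^{\ast}$. In either situation $K \in \mesh_h$, so $K_\Bp \subset K$ sits inside a single $\mesh_h$-element on which $u_h$ is affine, completing the argument.
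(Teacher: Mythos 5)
Your argument follows exactly the paper's strategy: identify a coarse element $K$ of level $\le l$ that survives to $\mesh_h$ and contains the integration cell $K_\Bp$, so that $u_h$ is affine there and the $L^2$-duality \eqref{eq:s2} reproduces the nodal value; the only cosmetic difference is that you evaluate pointwise rather than at the nodes of $\wh{\mesh}_j$ and invoking piecewise linearity. One observation worth flagging: in the boundary case $l^*=l=j$ you introduce an ad hoc tie-breaking rule into the node$\to$element assignment. That assignment is fixed once and for all in Sect.~5.2 of the paper and cannot be redefined inside the proof of this lemma; what the paper really relies on (implicitly, in the phrase ``deliberately chosen'') is that among the minimum-level candidates of $\mesh_j$ containing $\Bp$ there is always one lying in $\mesh_h$ (the element $K''$ you construct), and the assignment is set up from the outset to prefer such elements. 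So the correct framing is not ``we resolve ties here'' but ``the assignment was constructed so that $K_\Bp$ always sits inside an unrefined element of minimal level'' — then your case distinction collapses and both cases read the same. Also, the phrase ``$\mesh_j\supset\mesh_h$'' is backwards as set inclusion and should read that $\mesh_j\prec\mesh_h$ (so an element missing from $\mesh_{l^*+1}$ is a fortiori missing from the finer $\mesh_j$, contradicting $K\in\mesh_j$).
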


\begin{proof}
  If $\Bp\in\ol{\Cn}(\wh{\mesh}_{j})$ and $\Bp\not\in \omega_{l+1}$ (open set !),
  then $K_{\Bp}\not\subset \omega_{l+1}$ ($K_{\Bp}\in \wh{\mesh}_{j}$). Recall that
  $K_{\Bp}$ was deliberately chosen such that there is $K\in\mesh_{l}$ with
  $K_{\Bp}\subset K$. Since $u_{h}$ is linear on $K$, the same holds for $K_{\Bp}$
  and \eqref{eq:s2} guarantees
  \begin{gather*}
    (\ol{\QIP}_{j}u_{h})(\Bp) = u_{h}(\Bp)\;.
  \end{gather*}
  When restricted to $\Omega\setminus\omega_{l+1}$, the mesh $\wh{\mesh}_{j}$ is a
  refinement of $\mesh_{h}$. Hence, agreement of the $\mesh_{h}$-piecewise linear
  function $u_{h}$ with $\ol{\QIP}_ju_h$ in all nodes of $\wh{\mesh}_{j}$ outside
  $\omega_{l+1}$ implies ${\ol{\QIP}_ju_h}_{|\Omega\setminus\omega_{l+1}} =
  {u_{h}}_{|\Omega\setminus\omega_{l+1}}$.
\end{proof}\vspace{2mm}

Consequently, for any $u_{h}\in\LFE(\mesh_{h})$, outside
$\omega_{l}$ both $\ol{\QIP}_{l}u_{h}$ and $\ol{\QIP}_{l-1}u_{h}$
agree with $u_{h}$. \vspace{2mm}

\begin{corollary}
  \label{cor:supp}
  For any $u_h\in \LFE(\mesh_h)$ and $1\le l\le L$,
  \begin{gather*}
    \operatorname{supp}((\ol{\QIP}_{l}-\ol{\QIP}_{l-1})u_{h}) \subset \ol{\omega}_{l}\;.
  \end{gather*}
\end{corollary}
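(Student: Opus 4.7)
The plan is to deduce this corollary directly from the preceding lemma by applying it twice with suitably chosen indices. The lemma says that $\ol{\QIP}_j u_h$ agrees with $u_h$ on $\Omega \setminus \omega_{l+1}$ whenever $0 \le l \le j \le L$. So the strategy is to use the lemma for both $\ol{\QIP}_l u_h$ and $\ol{\QIP}_{l-1} u_h$ with the same ``exterior'' region $\Omega \setminus \omega_l$, and then subtract.

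Concretely, fix $u_h \in \LFE(\mesh_h)$ and $1 \le l \le L$. I would first apply the previous lemma with the roles of its indices being $j := l$ and $l_{\text{lem}} := l-1$ (so that $\omega_{l_{\text{lem}}+1} = \omega_l$), which is legitimate since $0 \le l-1 \le l \le L$. This gives
\[
\ol{\QIP}_l u_h = u_h \quad \text{on } \Omega \setminus \omega_l.
\]
Next, apply the lemma with $j := l-1$ and again $l_{\text{lem}} := l-1$ (valid since $0 \le l-1 \le l-1 \le L$), yielding
\[
\ol{\QIP}_{l-1} u_h = u_h \quad \text{on } \Omega \setminus \omega_l.
\]
Subtracting gives $(\ol{\QIP}_l - \ol{\QIP}_{l-1}) u_h = 0$ pointwise on $\Omega \setminus \omega_l$, which is an open set, so its support is contained in $\Omega \setminus (\Omega \setminus \omega_l) = \ol{\omega}_l$.

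There is no real obstacle here — the work has already been done in the previous lemma, which carefully established that each $\ol{\QIP}_j$ reproduces $u_h$ outside the appropriate refinement zone thanks to the ``coarsest neighbor'' node-to-element assignment. The only subtlety worth a one-line remark is that $\omega_l$ is open, so the vanishing of $(\ol{\QIP}_l - \ol{\QIP}_{l-1})u_h$ on the open complement indeed forces the (closed) support to lie inside $\ol{\omega}_l$.
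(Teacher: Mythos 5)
Your argument is correct and matches the paper's own reasoning exactly: the paper likewise invokes the preceding lemma for both $\ol{\QIP}_l u_h$ and $\ol{\QIP}_{l-1} u_h$ (with the exclusion region $\omega_l$) and subtracts. One small slip in your final topological step worth flagging: $\Omega\setminus\omega_l$ is not open (it is the intersection of $\Omega$ with the closed complement of $\omega_l$), and $\Omega\setminus(\Omega\setminus\omega_l)$ equals $\omega_l$, not $\ol{\omega}_l$ --- but the conclusion still stands, since vanishing of $(\ol{\QIP}_l-\ol{\QIP}_{l-1})u_h$ on $\Omega\setminus\omega_l$ means the set where it is nonzero lies in $\omega_l$, and its closure, the support, therefore lies in $\ol{\omega}_l$.
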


In other words, the components of \eqref{eq:s6} are localized inside refined
regions of $\Omega$. In light of the definition \eqref{eq:3} of the refinement
zones, we also find
\begin{gather}
  \label{eq:s8}
  (\ol{\QIP}_{l}-\ol{\QIP}_{l-1})u_{h} \in \ol{\LFE}(\mesh_{l})\;!
\end{gather}

However, having used $\ol{\QIP}_{l}$ we cannot expect the
splitting to match potential homogeneous Dirichlet boundary
conditions. This can be remedied using Oswald's trick
\cite[Cor.~30]{OSW90}. We fix $u_{h}\in\LFE(\mesh_{h})$ and
abbreviate $u_{0}=\ol{\QIP}_{0}u_{h}\in\ol{\LFE}(\mesh_{0})$,
$u_{l}:=(\ol{\QIP}_{l}-\ol{\QIP}_{l-1})u_{h}\in\ol{\LFE}(\mesh_{l})$,
$l\ge 1$. Then, we consider the partial sums
\begin{gather}
  \label{eq:s9}
  \ol{s}_{l} := \sum\limits_{j=0}^{l} u_{j} \in \ol{\LFE}(\mesh_{l})
  \quad l\ge 0\;.
\end{gather}
Dropping those basis functions in
$\ol{\Bas}_{\LFE}(\mesh_{l})$ that belong to vertices in
$\ol{\DBd}$ in the representation of $\ol{s}_{l}$ we arrive at
$s_{l}\in\LFE(\mesh_{l})\in\bHone{\DBd}$.

Due to Cor.~\ref{cor:supp}, we observe that
\begin{gather}
  \label{eq:s10}
  \text{$\ol{s}_{l}$ and $\ol{s}_{l-1}$ agree on $\Omega\setminus\omega_{l}$.}
\end{gather}
Hence, away from $\ol{\omega}_{l}\cap\ol{\DBd}$ the same basis contribution are
removed from both functions when building $s_{l}$ and $s_{l-1}$, respectively.
This permits us to conclude
\begin{gather}
  \label{eq:s11}
  \text{${s}_{l}$ and ${s}_{l-1}$ agree on $\Omega\setminus\omega_{l}$.}
\end{gather}
Putting it differently,
\begin{gather}
  \label{eq:s13}
  \operatorname{supp}(s_{l}-s_{l-1}) \subset \overline{\omega}_{l}\;.
\end{gather}
Hence, for all $1\leq l \leq L$ we can estimate
\begin{gather}
  \label{eq:s12}
  \begin{aligned}
  \NLtwo{s_{l}-s_{l-1}} =&
  \NLtwo[\omega_{l}]{s_{l}-s_{l-1}} \\ \leq &
  \NLtwo[\omega_{l}]{s_{l}-\ol{s}_{l}}+
  \NLtwo[\omega_{l}]{s_{l-1}-\ol{s}_{l-1}} +
  \NLtwo[\omega_{l}]{u_{l}}\;.
  \end{aligned}
\end{gather}
The benefit of zeroing in on $\omega_{l}$ is that on this subdomain $\ol{s}_{l}$ has
the same ``uniform scale'' $h_{l}$ as $u_{l}$. Thus, repeated application of
uniform $L^{2}$-stability estimates \eqref{eq:fem18} for basis representations and
elementary Cauchy-Schwarz inequalities make possible the estimates (for arbitrary
$0<\epsilon<\frac{1}{2}$)
\begin{eqnarray*}
  \NLtwo[\omega_{l}]{s_{l}-\ol{s}_{l}}^{2} &\leq& C
  h_{l}^{3} \sum\limits_{\Bp\in\Cn(\Gamma_{l})} \ol{s}_{l}(\Bp)^{2} \leq
  C h_{l} \NLtwo[\Gamma_{l}]{{\ol{s}_{l}}_{|\partial\Omega}}^{2} =
  C h_{l} \NLtwo[\Gamma_{l}]{{\sum\limits_{j=0}^{L}u_{j}-\sum\limits_{j=0}^{l}u_{j}
    }}^{2}  \\ &\leq&
  Ch_{l} \Bigl(\sum\limits_{j=l+1}^{L}\NLtwo[\Gamma_{l}]{u_{j}}\Bigr)^{2} \leq
  Ch_{l}
  \Bigl(\sum\limits_{j=l+1}^{L}h_{j}^{-\frac{1}{2}}\NLtwo[\omega_{l}]{u_{j}}\Bigr)^{2}\\
  &\leq& Ch_{l}\cdot
  \sum\limits_{j=l+1}^{L} h_{j}^{1-2\epsilon}\cdot
  \sum\limits_{j=l+1}^{L}
  h_{j}^{2\epsilon-2}\NLtwo[\omega_{l}]{u_{j}}^{2}\\
  &\leq& Ch_{l}^{2-2\epsilon}\cdot
  \sum\limits_{j=l+1}^{L} h_{j}^{2\epsilon-2}\NLtwo[\omega_{l}]{u_{j}}^{2}\;.
\end{eqnarray*}
Here the set $\Cn(\Gamma_{l})$ comprises the nodes of $\ol{\Cn}(\wh{\mesh}_{l})$
that lie on $\ol{\omega}_{l}\cap\ol{\DBd}$ and we make heavy use of the geometric
decay of $h_{l}$. The latter also yields
\begin{eqnarray*}
  \sum\limits_{l=1}^{L}h_{l}^{-2}\NLtwo[\omega_{l}]{s_{l}-\ol{s}_{l}}^{2} &\leq&
  C \sum\limits_{l=1}^{L} h_{l}^{-2\epsilon}
  \sum\limits_{j=l+1}^{L} h_{j}^{2\epsilon-2}\NLtwo[\omega_{l}]{u_{j}}^{2} \\ & =&
  C \sum\limits_{j=2}^{L}\left(\sum\limits_{l=1}^{j}h_{l}^{-2\epsilon}\right)
  h_{j}^{-2+2\epsilon}\NLtwo[\omega_{l}]{u_{j}}^{2} \\
  &\leq& C \sum\limits_{j=2}^{L} h_{j}^{-2}\NLtwo{u_{j}}^{2}
  \leq C \SNHone{u_{h}}^{2}\;,
\end{eqnarray*}
by virtue of Lemma~\ref{lem:H1stab}. Except for the last line, all constants
depend only on $\rho_{\wh{\mesh}_{l}}$ and the constants in \eqref{eq:ref25}. Merging the
last estimate with \eqref{eq:s12} gives us
\begin{gather}
  \label{eq:s14}
  \sum\limits_{l=1}^{L} h_{l}^{-2}\NLtwo{s_{l}-s_{l-1}}^{2} \leq C \SNHone{u_{h}}^{2}\;.
\end{gather}
Thus, in light of \eqref{eq:s13} and the following identity
\begin{equation*}
s_0+\sum\limits_{l=1}^{L} (s_{l}-s_{l-1})=s_L=\ol{s}_L=u_h,
\end{equation*}
we have accomplished the proof of the following theorem:

\begin{theorem}
  \label{thm:H1main}
  For any $u_{h}\in \LFE(\mesh_{h})$ we can find $u_{l}\in \LFE(\mesh_{l})$ such that
  \begin{gather}
    \label{eq:r2}
    u_{h}=\sum\limits_{l=0}^{L}u_{l},
    \qquad\operatorname{supp}(u_{l}) \subset \ol{\omega}_{l}\;,
  \end{gather}
  and
  \begin{gather*}
    \SNHone{u_{0}}^{2} + \sum\limits_{l=1}^{L}h_{l}^{-2}\NLtwo{u_{l}}^{2}
    \leq C \SNHone{u_{h}}^{2}\;,
  \end{gather*}
  with $C>0$ independent of $L$.
%  only depending on $\Omega$, $\DBd$, and the $L$-uniform bound on
%   the shape-regularity measures of $\rho_{\wh{\mesh}_{l}}$.
\end{theorem}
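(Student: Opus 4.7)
The plan is to exploit Lemma~\ref{lem:H1stab} and the localization results (Cor.~\ref{cor:supp} and \eqref{eq:s8}) almost directly, the only real issue being that the quasi-interpolants $\ol{\QIP}_l$ are oblivious of the Dirichlet boundary condition on $\DBd$. So the components $u_l := (\ol{\QIP}_l - \ol{\QIP}_{l-1}) u_h$ live in $\ol{\LFE}(\mesh_l)$ rather than in $\LFE(\mesh_l) \subset \bHone{\DBd}$, and I need Oswald's trick to push them into the correct subspace without destroying stability.

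Concretely, I start with $u_0 := \ol{\QIP}_0 u_h$ and $u_l := (\ol{\QIP}_l - \ol{\QIP}_{l-1}) u_h$ for $1 \leq l \leq L$, form the partial sums $\ol{s}_l := \sum_{j=0}^{l} u_j \in \ol{\LFE}(\mesh_l)$, and define $s_l \in \LFE(\mesh_l)$ by dropping those nodal basis contributions associated with vertices lying on $\ol{\DBd}$. Since $s_L = \ol{s}_L = u_h$ (because $u_h$ already vanishes on $\DBd$), the telescoping $u_h = s_0 + \sum_{l=1}^{L}(s_l - s_{l-1})$ furnishes the desired decomposition once I verify: (i) $\operatorname{supp}(s_l - s_{l-1}) \subset \ol{\omega}_l$ and (ii) $\sum_{l=1}^L h_l^{-2}\NLtwo{s_l - s_{l-1}}^2 \leq C\SNHone{u_h}^2$. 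Property (i) is easy: by Cor.~\ref{cor:supp}, $\ol{s}_l = \ol{s}_{l-1}$ off $\ol{\omega}_l$, and in that common region the same basis contributions are zeroed out when passing to $s_l, s_{l-1}$, hence $s_l = s_{l-1}$ there.

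The hard part is (ii). I split $\NLtwo[\omega_l]{s_l-s_{l-1}} \leq \NLtwo[\omega_l]{s_l - \ol{s}_l} + \NLtwo[\omega_l]{s_{l-1} - \ol{s}_{l-1}} + \NLtwo[\omega_l]{u_l}$. The last term is controlled by Lemma~\ref{lem:H1stab}. For the first two (boundary-correction) terms, I use the $L^2$-stability \eqref{eq:fem18} of the nodal basis on the quasi-uniform mesh $\wh{\mesh}_l$ to convert $\NLtwo[\omega_l]{s_l - \ol{s}_l}^2$ into $h_l^3 \sum_{\Bp} \ol{s}_l(\Bp)^2$ summed over the nodes lying on $\Gamma_l := \ol{\omega}_l \cap \ol{\DBd}$, and then bound this by a boundary $L^2$-norm $h_l \NLtwo[\Gamma_l]{\ol{s}_l|_{\partial\Omega}}^2$. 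Since $u_h|_{\DBd} = 0$, on $\Gamma_l$ one has $\ol{s}_l|_{\partial\Omega} = -\sum_{j>l} u_j|_{\partial\Omega}$, and a trace-type scaling estimate $\NLtwo[\Gamma_l]{u_j} \leq C h_j^{-1/2}\NLtwo[\omega_l]{u_j}$ (available because $u_j$ is piecewise linear on $\wh{\mesh}_j$, which refines $\wh{\mesh}_l$ there) converts everything to volume norms.

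The remaining task is a discrete Hardy-type summation: I apply Cauchy--Schwarz with a small weight $0 < \epsilon < \tfrac{1}{2}$, namely $(\sum_{j>l} h_j^{-1/2}\NLtwo{u_j})^2 \leq (\sum_{j>l} h_j^{1-2\epsilon})(\sum_{j>l} h_j^{2\epsilon-2}\NLtwo{u_j}^2)$; the geometric decay \eqref{eq:ref25} makes $\sum_{j>l} h_j^{1-2\epsilon} \leq C h_l^{1-2\epsilon}$. Multiplying by $h_l^{-2}$ and swapping the order of summation yields $\sum_l h_l^{-2\epsilon}$ up to index $j$, which again sums geometrically to $C h_j^{-2\epsilon}$. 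The net result is $\sum_l h_l^{-2}\NLtwo[\omega_l]{s_l-\ol{s}_l}^2 \leq C \sum_j h_j^{-2}\NLtwo{u_j}^2 \leq C \SNHone{u_h}^2$ via Lemma~\ref{lem:H1stab}. Combining with the trivial bound on $\NLtwo[\omega_l]{u_l}$ and relabelling $u_0 := s_0$, $u_l := s_l - s_{l-1}$ completes the proof. I expect the trace/Hardy-type double summation to be the only delicate step; everything else is bookkeeping atop the already-established unconstrained stability.
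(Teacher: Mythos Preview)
Your proposal is correct and follows essentially the same route as the paper's proof: the unconstrained decomposition via the $\ol{\QIP}_l$, Oswald's trick of truncating partial sums at Dirichlet nodes, the localization via Cor.~\ref{cor:supp}, the trace/scaling bound $\NLtwo[\Gamma_l]{u_j}\le Ch_j^{-1/2}\NLtwo{u_j}$, and the weighted Cauchy--Schwarz plus summation swap are all exactly what the paper does. Your identification of the double-summation/Hardy step as the only delicate point is also accurate.
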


Notice that in combination with the $L^{2}$-stability \eqref{eq:fem18} of nodal bases
and inverse inequalities, this theorem asserts an $L$-uniform estimate of the form
\eqref{eq:C-stab} for the splitting \eqref{eq:LMGgrad} w.r.t. the energy norm
$\SNHone{\cdot}$. From \eqref{eq:r2} it is clear that the basis functions admitted in
\eqref{eq:LMGgrad} can represent the functions $u_{l}$ of Thm.~\ref{thm:H1main}.

\begin{remark}
  \label{rem:58}
  It is interesting to note that, in contrast to other analyses \cite{AIM01,BOY93},
  the above proof does not \rh{hinge on Assumption~\ref{ass:hn}.
    Thm.~\ref{thm:H1main} remains valid for an arbitrary number of hanging nodes on
    an active edge. Howver, this does not translate into asymptotically optimal
    convergence of local $H^{1}$-multigrid in this case, because, in order to infer it from
    Thm.~\ref{thm:H1main}, we also need uniform $L^{2}$-stability of the bases.}
\end{remark}

%%% Local Variables:
%%% mode: latex
%%% TeX-master: "main"
%%% End:

%\section{Helmholtz-type decompositions}
\subsection{Helmholtz-type decompositions}
\label{sec:helmh-type-decomp}

\newcommand{\EQ}{\mcal{Q}} % Quasi-interpolation operator
\newcommand{\EQH}{\EQ_h} % Quasi-interpolation operator

Helmholtz-type decompositions, \rh{also called \emph{regular decompositions}}, have
emerged as a powerful tool for answering questions connected with $\Hcurl$. In
particular, they have paved the way for a rigorous multigrid theory for
$\Hcurl$-elliptic problems \cite{HIP99,HIX06,HWZ05,PAZ02,GPD03,HIP00b,ZCW05a,KOV08}. We
refer to \cite[Sect.~2.4]{HIP02} for more information.

We will need a very general version provided by the following theorem.

\begin{theorem}
  \label{thm:hdec}
  Let $\Omega$ meet the requirements stated in Sect.~\ref{sec:introduction}. Then,
  for any $\Vv\in \bHcurl{\DBd}$, there exists a $p\in \bHone{\DBd}$
  and $\BPsi\in (H^1_{\DBd}(\Omega))^3$ such that
  \begin{eqnarray}
    &\Vv=\nabla p + \BPsi,&\label{eq:hdec}\\
    &\SNHone{p}\le C\|\Vv\|_{\Hcurl}, \quad\NHone{\BPsi}\le
    C\|\curl\Vv\|_{\Ltwo},&\label{eq:hstab}
  \end{eqnarray}
  where the constant $C$ depends only on $\Omega$.
\end{theorem}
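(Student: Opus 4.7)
The plan is to construct $\BPsi$ first, as a regular $H^{1}$-potential for $\curl\Vv$ with vanishing trace on $\DBd$, and then to obtain $p$ as the scalar potential of the curl-free residual $\Vv-\BPsi$. The key enabler is the zero-extension of $\Vv$ across $\DBd$: assembling the completed Lipschitz domain $\wt{\Omega} := \operatorname{int}(\ol{\Omega}\cup\bigcup_{i}\ol{\Omega}_{i})$ permitted by hypothesis \eqref{eq:1}, the vanishing tangential trace $\Vv\times\Vn=0$ on $\DBd$ makes the trivial zero-extension $\wt\Vv$ an element of $\Hcurl[\wt\Omega]$ with $\curl\wt\Vv$ equal to the zero-extension of $\curl\Vv$, so that $\NHcurl[\wt\Omega]{\wt\Vv}=\NHcurl{\Vv}$ and $\NLtwo[\wt\Omega]{\curl\wt\Vv}=\NLtwo{\curl\Vv}$. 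Applying the classical regular decomposition on the Lipschitz domain $\wt\Omega$ without boundary conditions yields $\wt{p}\in H^{1}(\wt\Omega)$ and $\wt{\BPsi}\in (H^{1}(\wt\Omega))^{3}$ with $\wt\Vv=\nabla\wt p+\wt\BPsi$, $\|\wt\BPsi\|_{H^{1}(\wt\Omega)}\le C\,\NLtwo{\curl\Vv}$, and $\|\wt p\|_{H^{1}(\wt\Omega)}\le C\,\NHcurl{\Vv}$.

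Next, I would restore the Dirichlet trace on $\BPsi$. On each $\Omega_{i}$ the identity $\wt\Vv\equiv 0$ forces $\curl\wt\BPsi=0$, hence $\wt\BPsi|_{\Omega_{i}}=\nabla\eta_{i}$ for some $\eta_{i}\in H^{2}(\Omega_{i})$ chosen to have mean zero on $\Omega_{i}$; Poincaré combined with $\nabla\eta_{i}=\wt\BPsi$ then yields the sharp estimate $\|\eta_{i}\|_{H^{2}(\Omega_{i})}\le C\,\NLtwo{\curl\Vv}$. A Stein-type $H^{2}$-reflection of $\eta_{i}$ across $\Gamma_{i}$, multiplied by a cutoff $\chi_{i}$ that equals $1$ in a neighbourhood of $\Gamma_{i}$ and is supported away from the other $\Gamma_{j}$ (feasible by the positive-distance assumption in \eqref{eq:1}), produces $\wt\eta_{i}\in H^{2}(\Omega)$ with $\nabla\wt\eta_{i}|_{\Gamma_{i}}=\wt\BPsi|_{\Gamma_{i}}$, $\nabla\wt\eta_{i}|_{\Gamma_{j}}=0$ for $j\neq i$, and $\|\wt\eta_{i}\|_{H^{2}(\Omega)}\le C\,\NLtwo{\curl\Vv}$. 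Setting
\begin{equation*}
  \BPsi\;:=\;\wt\BPsi|_{\Omega}\;-\;\sum_{i}\nabla\wt\eta_{i}
\end{equation*}
then yields $\BPsi\in(\bHone{\DBd})^{3}$ with $\curl\BPsi=\curl\Vv$ on $\Omega$ and $\NHone{\BPsi}\le C\,\NLtwo{\curl\Vv}$.

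The residual $\Vv-\BPsi$ is then curl-free and has vanishing tangential trace on $\DBd$, that is, $\Vv-\BPsi\in\bkHcurl{\DBd}$; by the relative de Rham property of the polyhedron $\Omega$ (absorbing any finite-dimensional harmonic contribution into $\BPsi$ should the relative cohomology be nontrivial), there exists $p\in\bHone{\DBd}$ with $\nabla p=\Vv-\BPsi$, and $\SNHone{p}=\NLtwo{\nabla p}\le\NLtwo{\Vv}+\NLtwo{\BPsi}\le C\,\NHcurl{\Vv}$. The main obstacle is the correction step in the second paragraph: the sharp dichotomy of bounds in \eqref{eq:hstab} hinges on $\eta_{i}$ living on the auxiliary domain $\Omega_{i}$ where $\wt\BPsi$ itself delivers the $H^{1}$-regularity directly via $\nabla\eta_{i}=\wt\BPsi$, together with the mean-zero Poincaré shift; without this trick the $L^{2}$-piece of $\|\eta_{i}\|_{H^{2}(\Omega_{i})}$ would only be controlled by $\NHcurl{\Vv}$ (inherited from $\|\wt p\|_{H^{1}(\wt\Omega)}$), which would spoil the crucial estimate $\NHone{\BPsi}\le C\NLtwo{\curl\Vv}$ that drives the multigrid analysis.
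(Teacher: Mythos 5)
Your proposal follows the same basic strategy as the paper's proof: zero-extend across $\DBd$ to the completed Lipschitz domain $\wt{\Omega}$ (so the vanishing tangential trace becomes an interior compatibility condition), produce an $H^1$-regular vector potential for the curl, and then restore the vanishing trace on $\DBd$ by subtracting the gradient of an $H^2$-extension of the scalar potential of the regular part from each buffer domain $\Omega_i$. The principal route difference is that the paper further extends from $\wt{\Omega}$ to $\bbR^3$ (via a Calderon-type $\Hcurl$-extension) before invoking the Fourier construction of the regular potential $\Phibf$ and the scalar potential $q$. Working on $\bbR^3$ trivializes all topological questions once and for all; you work on $\wt{\Omega}$ and on the $\Omega_i$, which forces you to confront topology twice, and both times the argument as written is incomplete.

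The first such place is the inference that $\curl\wt{\BPsi}=0$ on $\Omega_i$ implies $\wt{\BPsi}|_{\Omega_i}=\nabla\eta_i$. The buffer domains $\Omega_i$ are only required to be Lipschitz, not simply connected, so this step is not automatic. It is, however, easily salvaged: on $\Omega_i$ one has $\wt{\BPsi}=\wt{\Vv}-\nabla\wt{p}=-\nabla\wt{p}$, so $\eta_i:=-\wt{p}|_{\Omega_i}$ (plus a constant) does the job without any topological hypothesis. You should make that explicit rather than appealing to curl-freeness alone.

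The second issue is genuine and sits in the final step. With the mean-zero normalization of $\eta_i$, the potential $p:=\wt{p}|_\Omega+\sum_i\wt{\eta}_i$ of the residual $\Vv-\BPsi$ does \emph{not} lie in $\bHone{\DBd}$ when $\DBd$ has more than one component: a direct computation of traces gives $p|_{\Gamma_i}=\bar{c}_i$, where $\bar{c}_i$ is the mean of $\wt{p}$ over $\Omega_i$, and these constants differ from component to component. Your remark about ``absorbing a harmonic contribution into $\BPsi$'' is aimed at the wrong obstruction: the residual $\Vv-\BPsi$ is \emph{manifestly} a gradient (equal to $\nabla(\wt{p}|_\Omega+\sum_i\wt{\eta}_i)$), so no de~Rham argument is needed; what fails is the boundary condition on its potential. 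Repairing this requires either establishing $|\bar{c}_i-\bar{c}_j|\le C\,\N{\curl\Vv}_{\Ltwo}$ --- a nontrivial but true fact that follows from the closedness of the range of $\curl$ on $\bHcurl{\DBd}$, since the linear functional $\Vv\mapsto\bar{c}_i-\bar{c}_j$ vanishes on irrotational fields --- and subtracting a smooth correction that interpolates these constants, or else extending the \emph{un-normalized} scalar potential as a whole, as the paper does with its $\ol{q}$ so that $p=q-\ol{q}$ vanishes identically on each $\Omega_i$. You do articulate the tension between the mean-zero Poincar\'e shift and the sharp bound, but you do not notice that this very shift breaks the trace condition, and you provide no resolution. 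It is worth adding that the paper's own estimate \eqref{eq:pr24} quietly absorbs the same $L^2$-normalization subtlety, so both arguments ultimately hinge on this same control of the constants.
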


  \begin{figure}[!htb]
    \centering
    \psfrag{O}{$\Omega$}
    \psfrag{O1}{$\Omega_{1}$}
    \psfrag{O2}{$\Omega_{2}$}
    \psfrag{O3}{$\Omega_{3}$}
    \includegraphics[width=0.7\textwidth]{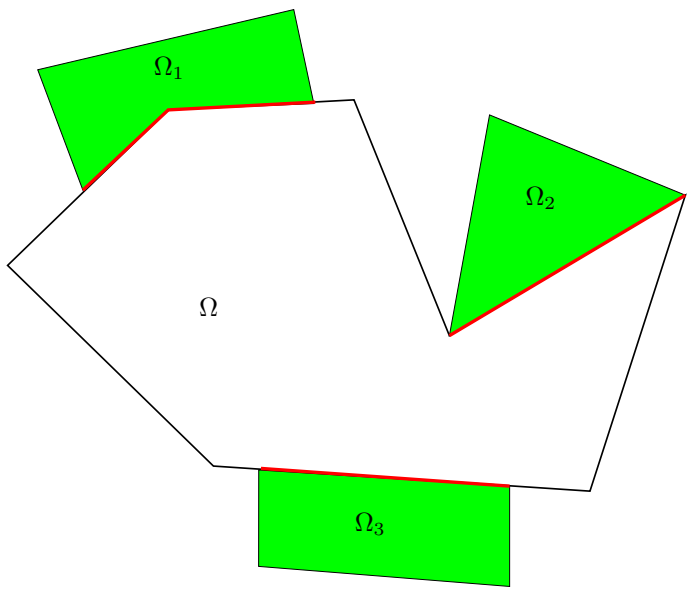}
    \caption{Buffer zones attached to connected components of (red) Dirichlet
      boundary part $\DBd$}
    \label{fig:attachdom}
  \end{figure}

  \begin{proof} Given $\Vu\in\bHcurl{\DBd}$, we define
    $\wt{\Vu}\in\Hcurl[\wt{\Omega}]$, $\wt{\Omega} :=
    \operatorname{interior}(\ol{\Omega}\cup \ol{\Omega}_{1}\cup
    \ol{\Omega}_{2}\cup\dots)$ (see Sect.~\ref{sec:introduction} and
    Fig.~\ref{fig:attachdom} for the meaning of $\Omega_{i}$), by
  \begin{gather}
    \label{eq:pr19}
    \wt{\Vu}(\Bx) =
    \begin{cases}
      \Vu(\Bx) & \text{for } \Bx\in \Omega\;,\\
      0 & \text{for } \Bx\in \Omega_{i}\;\text{for some } i\;.
    \end{cases}
  \end{gather}
  Notice that the tangential components of $\wt{\Vu}$ are continuous across
  $\partial\Omega$, which ensures $\wt{\Vu}\in\Hcurl[\wt{\Omega}]$. Then extend
  $\wt{\Vu}$ to $\ol{\Vu}\in\Hcurl[\bbR^{3}]$, see \cite{BCS00}.

  Since {$\curl\ol{\Vu}\in\kHdiv[\bbR^{3}]$}, Fourier techniques \cite[Sect.~3.3]{GIR86} yield
  a $\Phibf\in (\Hone[\bbR^{3}])^{3}$ that fulfills
  \begin{gather}
    \label{eq:pr20}
    \curl\Phibf = \curl\ol{\Vu}\;,\quad
    \NHone[\bbR^{3}]{\Phibf} \leq C \NLtwo[\bbR^{3}]{\curl\ol{\Vu}}\;,
  \end{gather}
  with $C=C(\Omega)>0$. As a consequence
  \begin{gather}
    \label{eq:pr21}
    \curl(\ol{\Vu}-\Phibf) = 0 \quad\Rightarrow\quad \ol{\Vu}-\Phibf=\grad
    q\quad\text{in }\bbR^{3}\;.
  \end{gather}
  On every $\Omega_{i}$, by definition $\ol{\Vu} = 0$, which implies
  $q_{|\Omega_{i}} \in \Hm[\Omega_{i}]{2}$. As the attached domains $\Omega_{i}$
  are well separated Lipschitz domains, see Fig.~\ref{fig:attachdom}, the
  $H^{2}$-extension of $q_{|\bigcup_{i}\Omega_{i}}$ to $\ol{q}\in \Hm[\bbR^{3}]{2}$ is
  possible. Moreover, it satisfies
  \begin{gather}
    \label{eq:pr24}
     \NHm[\bbR^{3}]{\ol{q}}{2} \leq C \NHm[\bigcup_{i}\Omega_{i}]{{q}}{2} \leq
     C \NHone[\bbR^{3}]{\Phibf} \leq \NLtwo{\curl \Vu}\;.
  \end{gather}
  \begin{gather}
    \label{eq:pr22}
    \ol{\Vu} = \Phibf - \grad\ol{q} + \grad(q+\ol{q})\;.
  \end{gather}
  Finally, set $\Psibf:=(\Phibf - \grad\ol{q})_{|\Omega}$, $p:= q+\ol{q}$, and observe
  \begin{gather}
    \label{eq:pr23}
    \NHone{\Psibf} \leq \NHone[\bbR^{3}]{\Phibf} + \NHm[\bbR^{3}]{\ol{q}}{2} \leq
    C \NLtwo{\curl \Vu}\;.
  \end{gather}
  The constants may depend on $\Omega$, $\DBd$, and the chosen $\Omega_{i}$.
\end{proof}

The stable Helmholtz-type decomposition \eqref{eq:hdec}
immediately suggests the following idea: when given
$\Vv_{h}\in\EFE(\mesh_{h})$, first split it according to
\eqref{eq:hdec} and then attack both components by the uniformly
$H^{1}$-stable local multilevel decompositions explored in the
previous section. Alas, the idea is flawed, because neither of the
terms in \eqref{eq:hdec} is guaranteed to be a finite element
function, even if this holds for $\Vv_h$.

Fortunately, the idea can be mended by building a purely discrete counterpart of
\eqref{eq:hdec} as in \cite[Lemma~5.1]{HIX06} (\rh{called there ``discrete regular
  decomposition''}). For the sake of completeness we elaborate the proof below.

\begin{lemma}
  \label{lem:disc-hem}
  For any $\Vv_h\in\EFE(\mesh_h)$, there is $\BPsi_h\in (\LFE(\mesh_h))^3$,
  $p_h\in \LFE(\mesh_h)$, and $\wt{\bf v}_h\in\EFE(\mesh_h)$ such that
  \begin{eqnarray}
    &\Vv_h=\wt{\bf v}_h+\BPi_h\BPsi_h+\nabla p_h\;,&\label{eq:dec-vh}\\
    &\qquad\;\N{p_h}_{\Hone}\le C\N{\Vv_h}_{\bHcurl{}}\;,& \label{eq:hd15}\\
    & \N{h^{-1}\wt{\Vv}_h}_{\Ltwo}+\N{\BPsi_h}_{\Hone}\le
    C\N{\curl\Vv_h}_{\Ltwo}\;,&\label{eq:stab-vh}
  \end{eqnarray}
  where the constant $C$ depends only on $\Omega$, $\DBd$, and the shape
  regularity of $\mesh_h$.
\end{lemma}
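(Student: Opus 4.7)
The plan is to construct a discrete counterpart of the continuous Helmholtz-type decomposition of Theorem~\ref{thm:hdec} by quasi-interpolating the continuous pieces into lowest-order finite element spaces. First I would apply Theorem~\ref{thm:hdec} to $\Vv_{h}\in\EFE(\mesh_{h})\subset\bHcurl{\DBd}$, producing $p\in\bHone{\DBd}$ and $\BPsi\in(\bHone{\DBd})^{3}$ with $\Vv_{h}=\nabla p+\BPsi$ and the stability bounds \eqref{eq:hstab}. Then I would set $\BPsi_{h}\in(\LFE(\mesh_{h}))^{3}$ to be the componentwise action of the quasi-interpolation $\QIOp$ from Def.~\ref{def:QIP}. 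The $H^{1}$-continuity \eqref{eq:H1stab} together with the approximation estimate \eqref{eq:int-err} of Lemma~\ref{lem:Qint} immediately give
\begin{equation*}
\N{\BPsi_{h}}_{\Hone}\le C\N{\BPsi}_{\Hone}\le C\N{\curl\Vv_{h}}_{\Ltwo}\;,\quad \N{h^{-1}(\BPsi-\BPsi_{h})}_{\Ltwo}\le C\SNHone{\BPsi}\;.
\end{equation*}

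Next I would build $p_{h}$. Observe that $\Vw_{h}:=\Vv_{h}-\BPi_{h}\BPsi$ lies in $\EFE(\mesh_{h})$: $\BPi_{h}\BPsi$ is admissible by Lemma~\ref{lem:42} because $\curl\BPsi=\curl\Vv_{h}\in\curl\EFE(\mesh_{h})$, and its tangential trace vanishes on $\DBd$ since $\BPsi$ does. A short Stokes' theorem computation comparing face fluxes yields $\curl\BPi_{h}\BPsi=\curl\BPsi=\curl\Vv_{h}$, so $\Vw_{h}$ is curl-free. Then, repeating the path-integral reasoning from Lemma~\ref{lem:notop} (as used at the end of the proof of Lemma~\ref{lem:VLFEdec}), one obtains $p_{h}\in\LFE(\mesh_{h})$ with $\nabla p_{h}=\Vw_{h}$. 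I would finally define $\wt{\Vv}_{h}:=\Vv_{h}-\BPi_{h}\BPsi_{h}-\nabla p_{h}\in\EFE(\mesh_{h})$, and note that by construction the three summands telescope to the convenient identity $\wt{\Vv}_{h}=\BPi_{h}(\BPsi-\BPsi_{h})$.

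The two remaining stability bounds then follow from short calculations. For $p_{h}$: Lemma~\ref{lem:42} bounds $\N{\BPi_{h}\BPsi}_{\Ltwo}\le\N{\BPsi}_{\Ltwo}+C\N{\BPsi}_{\Hone}$, whence $\N{\nabla p_{h}}_{\Ltwo}\le\N{\Vv_{h}}_{\Ltwo}+C\N{\BPsi}_{\Hone}\le C\N{\Vv_{h}}_{\Hcurl}$, and Poincar\'e--Friedrichs on $\bHone{\DBd}$ (or, if $\DBd=\emptyset$, normalization by an additive constant) delivers \eqref{eq:hd15}. For $\wt{\Vv}_{h}$ I would split
\begin{equation*}
\wt{\Vv}_{h}=\BPi_{h}(\BPsi-\BPsi_{h})=(\BPsi-\BPsi_{h})-(Id-\BPi_{h})(\BPsi-\BPsi_{h})\;,
\end{equation*}
bound the first summand via the approximation estimate \eqref{eq:int-err}, and apply Lemma~\ref{lem:42} to the second. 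This last step is legitimate because $\curl(\BPsi-\BPsi_{h})$ is piecewise constant, divergence-free, and normally continuous across the faces of $\mesh_{h}$, and therefore lies in $\curl\EFE(\mesh_{h})$ by discrete de~Rham exactness. Both contributions are controlled by $\SNHone{\BPsi}\le C\N{\curl\Vv_{h}}_{\Ltwo}$, yielding \eqref{eq:stab-vh}.

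The main technical obstacle is securing the commuting identity $\curl\BPi_{h}\BPsi=\curl\BPsi$ and the inclusion $\curl\BPsi_{h}\in\curl\EFE(\mesh_{h})$ in the hanging-node regime, where the edge interpolation has been redefined in Sect.~\ref{sec:meshes-with-hanging}. Both are manifestations of the discrete de~Rham commuting diagram, which the weighted edge basis of Sect.~\ref{sec:meshes-with-hanging} was specifically engineered to preserve; under Assumption~\ref{ass:hn} the standard mapping and scaling arguments summarized in Proposition~\ref{prop:hn} transport all the conforming-mesh identities verbatim.
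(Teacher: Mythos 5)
Your construction reproduces the paper's proof essentially verbatim: you decompose $\Vv_h$ via Theorem~\ref{thm:hdec}, set $\BPsi_h=\QIP_h\BPsi$ and $\wt{\Vv}_h=\EIP_h(\BPsi-\BPsi_h)$, absorb the curl-free residual into $\nabla p_h$, and derive the bounds from Lemma~\ref{lem:42} together with the quasi-interpolation estimates of Lemma~\ref{lem:Qint}, exactly as in the paper. Two small imprecisions worth fixing: to justify $\curl(\BPsi-\BPsi_h)\in\curl\EFE(\mesh_h)$ you can simply note that Lemma~\ref{lem:VLFEdec} gives $\curl\BPsi_h=\curl\EIP_h\BPsi_h\in\curl\EFE(\mesh_h)$, avoiding the appeal to ``discrete de Rham exactness'' (which would otherwise require a topological hypothesis), and Lemma~\ref{lem:notop} does not apply directly to $\Vw_h=\Vv_h-\EIP_h\BPsi\in\EFE(\mesh_h)$ since $\EIP_h\Vw_h=\Vw_h\neq 0$ --- the paper avoids this by applying Lemma~\ref{lem:notop} to $(Id-\EIP_h)\BPsi=\grad q$ and then taking $p_h=p+q$, which is what your ``path-integral reasoning'' remark implicitly amounts to.
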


\begin{proof} (\textit{cf.} \cite[Lemma~5.1]{HIX06})
  % \ding{202} Initially, we confine ourselves to conforming meshes.
  We fix a $\Vv_{h}\in
  \EFE(\mesh_{h})$ and use the stable regular decomposition of Thm.~\ref{thm:hdec} to
  split it according to
  \begin{gather}
    \label{eq:hd1}
    \Vv_{h} = \Psibf + \grad p\;,\quad
    \Psibf\in(\bHone{\DBd})^{3}\;,\quad
    p \in \bHone{\DBd}\;.
  \end{gather}
  We have already known that the functions $\Psibf$ and $p$ satisfy
  \begin{gather}
    \label{eq:hd2}
    \NHone{\Psibf} \leq C \NLtwo{\curl\Vv_{h}}\quad,\quad
    \NLtwo{\grad p} \leq C \NHcurl{\Vv_{h}}\;,
  \end{gather}
  with constants depending only on $\Omega$ and $\DBd$.

  Next, note that in \eqref{eq:hd1} $\curl\Psibf=\curl\Vv_{h}\in \curl\EFE(\mesh_{h})$,
  and, owing to Lemma~\ref{lem:42}, $\EIP_{h}\Psibf$ is well defined.
  Further, a commuting diagram property together with Lemma~\ref{lem:notop} implies
\begin{gather}
  \label{eq:hd3}
  \curl(Id-\EIP_{h})\Psibf = 0\quad \Rightarrow \quad
  \exists q\in \bHone{\DBd}:\quad
  (Id-\EIP_{h})\Psibf = \grad q\;.
\end{gather}
The estimate of Lemma~\ref{lem:42} together with \eqref{eq:hd2} yields
\begin{gather}
  \label{eq:hd6}
  \NLtwo{h^{-1}\grad q} = \NLtwo{h^{-1}(Id-\EIP_{h})\Psibf}
  \leq C \SNHone{\Psibf} \leq C\NLtwo{\curl\Vv_{h}} \;.
\end{gather}

In order to push $\Psibf$ into a finite element space, a quasi-interpolation operator
$\QIP_{h}:(\Ltwo)^{3}\mapsto (\LFE(\mesh_{h}))^{3}$ is the right tool.  We simply get
it from componentwise application of an operator according to Def.~\ref{def:QIP}
where any node$\to$element--assignment will do. Thus, we can define the terms in the
decomposition \eqref{eq:dec-vh} as
\begin{align}
  \label{eq:hd8}
  \widetilde{\Vv}_{h} & := \EIP_{h}(\Psibf-\QIP_h\Psibf) \in \EFE(\mesh_{h})\;,\\
  \label{eq:hd9}
  \Psibf_{h} & := \QIP_h\Psibf \in (\LFE(\mesh_{h}))^{3}\;,\\
  \label{eq:hd10}
  \grad p_{h} & := \grad(p+q)\;,\quad p_{h} \in \LFE(\mesh_{h})\;.
\end{align}
Indeed, $\grad(p+q)\in \EFE(\mesh_{h})$ such that $p+q\in\LFE(\mesh_{h})$.
The stability of the decomposition \eqref{eq:dec-vh} can be established as
follows: first, make use of Lemma~\ref{lem:42} and \eqref{eq:int-err} to obtain,
with $C=C(\rho_{\mesh_{h}})>0$,
\begin{gather}
  \label{eq:hd11}
  \begin{aligned}
    \NLtwo{h^{-1}\widetilde{\Vv}_{h}} &
    \leq
    \NLtwo{h^{-1}(Id-\EIP_{h})(\Psibf-\QIP_h\Psibf)} +
    \NLtwo{h^{-1}(Id-\QIP_h)\Psibf} \notag \\
    & \leq C \SNHone{(Id-\QIP_h)\Psibf} + \SNHone{\Psibf} \notag \\
    & \leq C \SNHone{\Psibf} \leq C \NLtwo{\curl\Vv_{h}}\;.\\
  \end{aligned}
\end{gather}
Due to the definition \eqref{eq:hd9}, the next estimate is a simple consequence of
\eqref{eq:H1stab} and Thm.~\ref{thm:hdec}
\begin{gather}
  \label{eq:hd12}
  \NHone{\Psibf_{h}}  \leq C \NHone{\Psibf} \leq C  \NLtwo{\curl\Vv_{h}}\;.
\end{gather}
Finally, the estimates established so far plus the triangle inequality yield
\begin{gather}
\label{eq:hd13}
  \NLtwo{\grad p_{h}}  \leq C \NHcurl{\Vv_{h}} \;.
\end{gather}

\end{proof}

%%% Local Variables:
%%% mode: latex
%%% TeX-master: "main"
%%% End:

%\section{Stable multilevel splitting}
\subsection{Local multilevel splitting of $\VU(\mesh_h)$}
\label{sec:local-mult-splitt}

With the discrete Helmholz-type decomposition of Lemma~\ref{lem:disc-hem}
at our disposal, we can now tackle its piecewise linear and continuous
components with Thm.~\ref{thm:H1main}.

\begin{lemma}
  \label{lem:main}
  For any $\Vv_h\in\EFE(\mesh_h)$, there exists a constant $C$ only depending on the
  domain, the Dirichlet boundary part $\DBd$, the shape regularity of the meshes
  $\mesh_{l}$, $\wh{\mesh}_{l}$, $0\leq l \leq L$, and the constants in
  \eqref{eq:ref25}, such that
  \begin{gather}
    \label{eq:mdec-vh}
    \Vv_h=\sum_{l=0}^L\Big(\Vv_l+\nabla p_l\Big),\quad \Vv_l\in
    \Span{\Bas_{\EFE}^l},
    \;p_l\in \Span{\Bas_{\LFE}^l}\;,
  \end{gather}
  and
  \begin{multline}
    \label{eq:mstab-vh}
    \N{\Vv_0}^{2}_{\Hcurl}+\SNHone{p_0}^2+
    \sum\limits_{l=1}^{L}
    h_{l}^{-2}\left(\NLtwo{\Vv_l}^{2}+\NLtwo{p_l}^2\right) \leq C
    \N{\Vv_h}^2_{\Hcurl}\;,
  \end{multline}
  where $\Bas_{\LFE}^l$ and $\Bas_{\EFE}^l$ are defined in
  \eqref{eq:NB}.
\end{lemma}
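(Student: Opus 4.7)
The plan is to chain three ingredients: the discrete regular decomposition of Lemma~\ref{lem:disc-hem}, the scalar local multilevel splitting of Theorem~\ref{thm:H1main} (applied both to scalars and componentwise to piecewise-linear vector fields), and the elementary decomposition of Lemma~\ref{lem:VLFEdec}, with the fine-scale residue distributed by an analogous multilevel argument on the edge element side. First, I would invoke Lemma~\ref{lem:disc-hem} to split $\Vv_h = \widetilde{\Vv}_h + \BPi_h\Psibf_h + \grad p_h$ with $\Psibf_h \in (\LFE(\mesh_h))^{3}$, $p_h \in \LFE(\mesh_h)$, and $\NLtwo{h^{-1}\widetilde{\Vv}_h} + \NHone{\Psibf_h} \le C\NLtwo{\curl\Vv_h}$, $\NHone{p_h} \le C\NHcurl{\Vv_h}$. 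Applying Theorem~\ref{thm:H1main} componentwise to $\Psibf_h$ yields $\Psibf_h = \sum_{l=0}^{L} \Psibf_l$ with $\Psibf_l \in (\LFE(\mesh_l))^{3}$, $\supp\Psibf_l \subset \ol{\omega}_l$, and the corresponding uniform stability. For each level $l$, Lemma~\ref{lem:VLFEdec} applied on $\mesh_l$ produces $\Psibf_l = \BPi_l\Psibf_l + \grad\wt{v}_l$ with $\wt{v}_l \in \wt{\LFE}_{2}(\mesh_l)$; since $\wt{v}_l$ vanishes at every $\mesh_l$-vertex and $\grad\wt{v}_l$ vanishes outside $\ol{\omega}_l$, both $\wt{v}_l$ and $\BPi_l\Psibf_l$ are supported in $\ol{\omega}_l$, hence $\BPi_l\Psibf_l \in \Span{\Bas_{\EFE}^{l}}$.

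Using the commuting diagram $\BPi_h\circ\grad = \grad\circ\LIP_h$ together with the projection identity $\BPi_h(\BPi_l\Psibf_l) = \BPi_l\Psibf_l$ (valid because $\EFE(\mesh_l) \subset \EFE(\mesh_h)$), one obtains
\begin{gather*}
  \BPi_h\Psibf_h = \sum_{l=0}^{L} \BPi_l\Psibf_l + \grad\LIP_h\Big(\sum_{l=0}^{L} \wt{v}_l\Big).
\end{gather*}
The extra gradient is absorbed into the scalar part by setting $\wh{p}_h := p_h + \LIP_h\sum_l\wt{v}_l \in \LFE(\mesh_h)$ and applying Theorem~\ref{thm:H1main} to $\wh{p}_h$, producing $\wh{p}_h = \sum_l p_l$ with $p_l \in \Span{\Bas_{\LFE}^{l}}$. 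The remaining piece $\widetilde{\Vv}_h$, which satisfies the strong bound $\NLtwo{h^{-1}\widetilde{\Vv}_h} \le C\NLtwo{\curl\Vv_h}$, is distributed by a multilevel argument analogous to Theorem~\ref{thm:H1main} adapted to the edge element setting: the uniform $L^{2}$-stability of $\Bas_{\EFE}(\mesh_h)$ from Proposition~\ref{prop:hn}, combined with the geometric decay of $h_l$, allows an edge-by-edge assignment yielding $\widetilde{\Vv}_h = \sum_l \widetilde{\Vv}_l$ with $\widetilde{\Vv}_l \in \Span{\Bas_{\EFE}^{l}}$ and $\sum_l h_l^{-2}\NLtwo{\widetilde{\Vv}_l}^{2} \le C\NLtwo{h^{-1}\widetilde{\Vv}_h}^{2}$. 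Setting $\Vv_l := \BPi_l\Psibf_l + \widetilde{\Vv}_l$ then assembles the decomposition $\Vv_h = \sum_l(\Vv_l + \grad p_l)$.

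The stability estimate collects the bounds above: on the edge-element side, $\sum_{l\ge 1} h_l^{-2}\NLtwo{\Vv_l}^{2} \le C\bigl(\SNHone{\Psibf_h}^{2} + \NLtwo{h^{-1}\widetilde{\Vv}_h}^{2}\bigr) \le C\NLtwo{\curl\Vv_h}^{2}$, while the scalar contributions are controlled by Theorem~\ref{thm:H1main} applied to $\wh{p}_h$. The chief technical obstacle is showing that $\SNHone{\wh{p}_h}$ is $L$-uniformly bounded, which reduces to controlling $\SNHone{\LIP_h\sum_l\wt{v}_l}$. Using $\grad\LIP_h\wt{v}_l = \BPi_h\grad\wt{v}_l$ and the $L^{2}$-continuity \eqref{eq:fem22} of $\BPi_h$ on piecewise-linear fields, we obtain $\NLtwo{\grad\LIP_h\wt{v}_l} \le C\NLtwo{\Psibf_l}$; a Cauchy--Schwarz summation exploiting the geometric bound $\sum_l h_l^{2} < \infty$ then yields $\NLtwo{\sum_l\grad\LIP_h\wt{v}_l} \le C\SNHone{\Psibf_h} \le C\NLtwo{\curl\Vv_h}$, as required. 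The level-$0$ bound $\NHcurl{\Vv_0}^{2} + \SNHone{p_0}^{2}$ is obtained analogously, the $L^{2}$-part of $\NHcurl{\Vv_0}$ coming directly from Lemma~\ref{lem:disc-hem} and Theorem~\ref{thm:H1main}.
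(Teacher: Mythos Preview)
Your proof is correct and follows essentially the same route as the paper: discrete regular decomposition (Lemma~\ref{lem:disc-hem}), componentwise application of Theorem~\ref{thm:H1main} to $\Psibf_h$, level-wise splitting via Lemma~\ref{lem:VLFEdec}, absorption of the quadratic-surplus gradients into the scalar part, and edge-by-edge level assignment for $\widetilde{\Vv}_h$. One technical point you gloss over in the last step: for bisection refinement, an edge $E$ on $\overline{\Sigma_l}\cap\overline{\omega_{l+1}}$ has $\Vb_E\in\Bas_{\EFE}(\mesh_h)$ lying in neither $\Bas_{\EFE}^{l}$ nor $\Bas_{\EFE}^{l+1}$, so the naive edge-level assignment does not land in $\Span{\Bas_{\EFE}^{l}}$; the paper fixes this with the identity $\Vb_E=\Vb_E^{l}+\tfrac12\sum_i\Vb_{E_i}^{l+1}$ and a stable redistribution to level $l+1$.
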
\vspace{2mm}

\begin{proof}
  We start from the discrete Helmholtz-type decomposition of $\Vv_h$ in \eqref{eq:dec-vh}:
  \begin{eqnarray}
    \Vv_h=\wt{\bf v}_h+\BPi_h\BPsi_h+\nabla p_h,\quad \BPsi_h\in
    (V(\mesh_h))^3,\;p_h\in V(\mesh_h),\;\wt{\bf
      v}_h\in\EFE(\mesh_h).\notag
  \end{eqnarray}
  We apply the result of Thm.~\ref{thm:H1main} about the existence of stable local
  multilevel splittings of $V(\mesh_h)$ componentwise to $\BPsi_{h}$: this gives
  \begin{eqnarray}
    &\BPsi_{h} = \sum\limits_{l=0}^{L}\BPsi_{l}\;,
    \quad \BPsi_l\in \Span{\mathfrak{B}_V^l}^3\;, &\label{eq:mdec-Psih}\\
    &\SNHone{\BPsi_{0}}^{2} + \sum\limits_{l=1}^{L}
    h_{l}^{-2}\NLtwo{\BPsi_{l}}^{2} \leq C \SNHone{\BPsi_{h}}^2\;.&
    \label{eq:mstab-Psih}
  \end{eqnarray}
  Observe that the functions $\BPsi_{l}$ do not belong to $\EFE(\mesh_{l})$. Thus, we
  target them with edge element interpolation operators $\EIP_{l}$ onto
  $\EFE(\mesh_{l})$, see \eqref{eq:fem14}, and obtain the splitting
  described in Lemma~\ref{lem:VLFEdec}:
\begin{equation}
  \label{eq:dec-Psi}
  \BPsi_{l}=\BPi_l\BPsi_l+\nabla w_l\;,\quad
  w_l\in\wt{V}_2(\mesh_l)\;.
\end{equation}
The gradient terms introduced by \eqref{eq:dec-Psi} are well under
control: writing $s_{h}:=\sum\nolimits_{l=0}^{L} w_{l}$, the $L^{2}$-stability
of \eqref{eq:dec-Psi}, see Lemma~\ref{lem:VLFEdec}, yields
\begin{eqnarray*}
  \NLtwo{\EIP_{l}\Psibf_{l}} &\leq& C \NLtwo{\Psibf_{l}}\;,\\
  \SNHone{s_h}^2 & \leq &
  C \Big(\sum_{l=0}^L\NLtwo{\BPsi_l}\Big)^2
  \leq C \sum_{l=0}^Lh_l^2\cdot
  \sum_{l=0}^Lh_l^{-2}\NLtwo{\BPsi_l}^2
  \overset{\text{\eqref{eq:mstab-Psih}}}{\leq}
  C\SNHone{\BPsi_h}^2\;.
\end{eqnarray*}
Because of $\curl\EIP_{0}\Psibf_{0} = \curl\Psibf_{0}$, we infer
from \eqref{eq:mstab-Psih}
\begin{gather}
  \label{eq:sp6}
  \N{\BPi_0\BPsi_{0}}^{2}_{\BH(\curl,\Omega)} + \sum\limits_{l=1}^{L}
  h_{l}^{-2}\NLtwo{\BPi_l\BPsi_{l}}^{2} \leq C \SNHone{\BPsi_{h}}^2\;.
\end{gather}
Above and throughout the remainder of the proof, constants are
independent of $L$.

By the projector property $\EIP_{h}\circ \EIP_{l} = \EIP_{l}$, $l=0,\ldots,L$,
and the commuting diagram property \eqref{CDP}, we arrive at
\begin{gather}
  \label{eq:sp4}
  \Vv_{h} = \wt{\Vv}_{h} + \sum\limits_{l=0}^{L}\EIP_{l}\Psibf_{l}
  + \grad(\LIP_{h} s_{h} + p_{h})\;,
\end{gather}
where $\LIP_h$ is the nodal linear interpolation operator onto $\LFE(\mesh_h)$.
Recall \eqref{eq:fem21} to see that
\begin{gather*}
  \SNHone{\LIP_{h} s_{h} + p_{h}} \leq
  C \SNHone{s_{h}} + \SNHone{p_{h}} \leq C \NHcurl{\Vv_{h}}\;.
\end{gather*}

The local multilevel splitting of $\LIP_h s_h+p_{h}$ according
to Thm.~\ref{thm:H1main} gives
\begin{gather}
\LIP_{h} s_{h} + p_{h} = \sum\limits_{l=0}^{L}p_{l}\;,
\quad p_l\in \Span{\mathfrak{B}_V^l}\;,\label{eq:mdec-ph}\\
\SNHone{p_{0}}^{2} + \sum\limits_{l=1}^{L}
h_{l}^{-2}\NLtwo{p_{l}}^{2} \leq C \SNHone{\LIP_{h} s_{h} +
p_{h}}^2\leq C \NHcurl{\Vv_{h}}^{2}\;. \label{eq:mstab-ph}
\end{gather}

Still, the contribution $\wt{\Vv}_{h}$ does not yet match
\eqref{eq:LMGcurl}. The idea is to distribute $\wt{\Vv}_{h}$ to
the terms $\EIP_{l}\Psibf_{l}$ by scale separation. To that end, we
assign a level to each active edge of $\mesh_{h}$
\begin{gather}
  \label{eq:sp8}
  \lev(E) := \min\{\lev(K):\; K\in\mesh_{h},\; E\;\text{is edge of }{K}\}\;,\quad
  E\in\Ce(\mesh_{h})\;.
\end{gather}
Thus, we distinguish parts of $\wt{\Vv}_{h}$ on different levels:
given the basis representation
\begin{gather}
  \label{eq:sp9}
  \wt{\Vv}_{h} = \sum\limits_{E\in\Ce(\mesh_{h})} \alpha_{E}\Vb_{E}\;,\quad
  {\{\Vb_{E}\}}_{E\in\Ce(\mesh_{h})} = \Bas_{\EFE}(\mesh_{h})\;,
\end{gather}
we split
\begin{gather}
  \label{eq:sp10}
  \wt{\Vv}_{h} = \sum\limits_{l=0}^{L} \wt{\Vv}_{l}\;,\quad
  \wt{\Vv}_{l} := \sum\limits_{{E\in\Ce(\mesh_{h})}\atop{\lev(E)=l}}
  \alpha_{E}\Vb_{E}\;,
  \quad \operatorname{supp}(\wt{\Vv}_{l}) \subset \ol{\omega}_{l}\;.
\end{gather}
The estimate $\NLtwo{\mwf^{-1}\wt{\Vv}_{h}} \leq C\NLtwo{\curl\Vv_{h}}$ from
Lemma~\ref{lem:disc-hem} means that $\wt{\Vv}_{h}$ is ``small on fine scales''.
Thanks to the $L^{2}$-stability \eqref{eq:fem17} of the edge bases, this carries over
to $\wt{\Vv}_{l}$:
\begin{eqnarray}
  \sum\limits_{l=0}^{L}h_{l}^{-2}\NLtwo{\wt{\Vv}_{l}}^{2} &\leq&
  C \sum\limits_{l=0}^{L}h_{l}^{-2}\sum\limits_{E\in\Ce(\mesh_{h}),
    \lev(E)=l}\alpha_{E}^{2}\NLtwo{\Vb_{E}}^{2} \notag \\
  \label{eq:spA}
  &\leq&  C \sum\limits_{l=0}^{L}h_{l}^{-2}\sum\limits_{E\in\Ce(\mesh_{h}),
    \lev(E)=l}\alpha_{E}^{2}\NLtwo[T_{E}]{\Vb_{E}}^{2} \\ &\leq&
  C \sum\limits_{l=0}^{L}h_{l}^{-2}\NLtwo[\Sigma_{l}]{\wt{\Vv}_{h}}^{2}
  \leq C \NLtwo{\mwf^{-1}\wt{\Vv}_{h}}^{2}\;,\notag
\end{eqnarray}
where $T_{E}\in\mesh_{h}$ is coarsest element adjacent to $E$, \textit{cf.}
\eqref{eq:sp8}, and refinement strips are defined by
\begin{gather}
  \label{eq:sp7}
  \Sigma_l := {\omega}_{l}\setminus \ol{\omega_{l+1}}\;,\;0\leq l < L,\quad
  \Sigma_{L} := \omega_{L}\;,
\end{gather}
see Figs.~\ref{fig:2drefzone} and \ref{fig:nvbrz}.

Yet, in the case of bisection refinement, $\wt{\Vv}_{l}$ may not be spanned by basis
functions in $\Bas_{\EFE}^{l}$, because the basis functions of $\EFE(\mesh_{h})$
attached to each edge on $\ol{\Sigma_l}\bigcap\ol{\omega_{l+1}}$, $0\le l<L$ do not
belong to any $\Bas_{\EFE}^{l}$!

\begin{figure}[!htb]
  \centering
  \begin{minipage}[t]{0.33\textwidth}\centering
    \psfrag{E}{$E$}
    \includegraphics[width=0.95\textwidth]{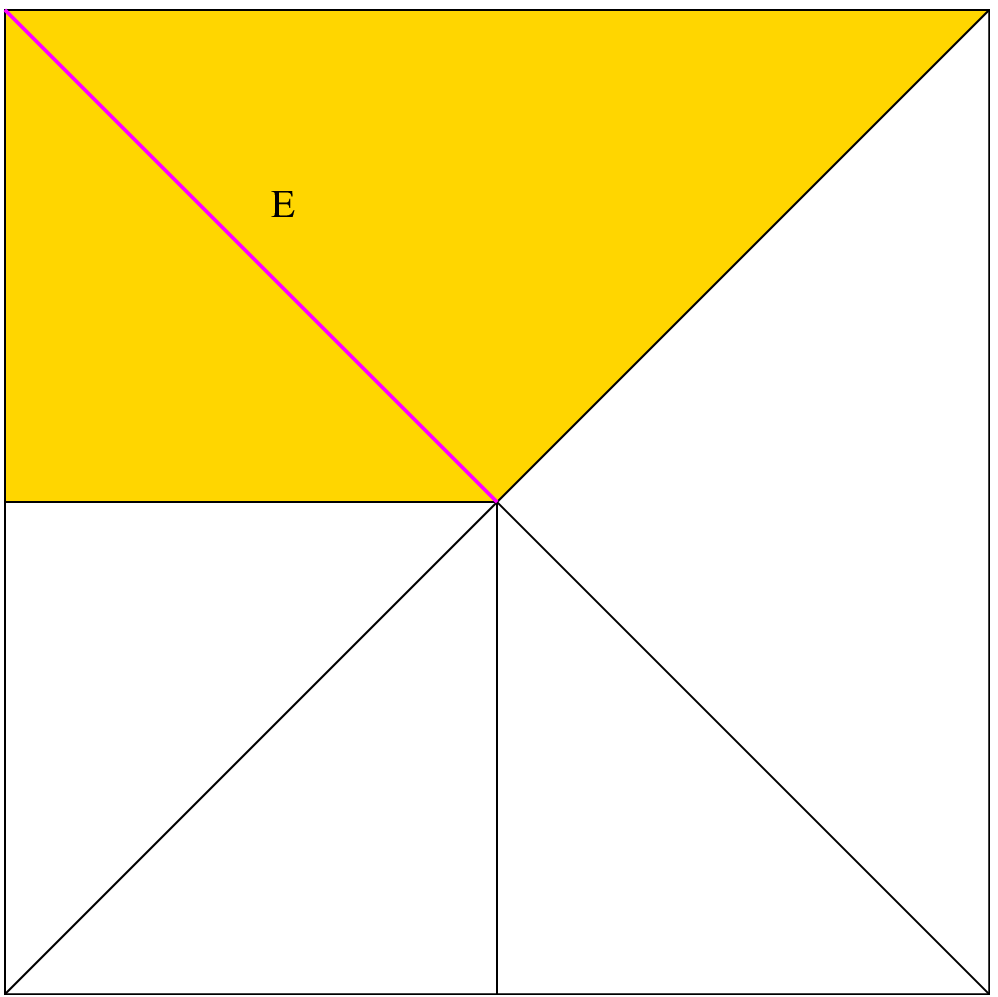}

    Edge $E$, support of basis function $\Vb_{E}$
  \end{minipage}%
  \begin{minipage}[t]{0.33\textwidth}\centering
    \psfrag{E}{$E$}
    \includegraphics[width=0.95\textwidth]{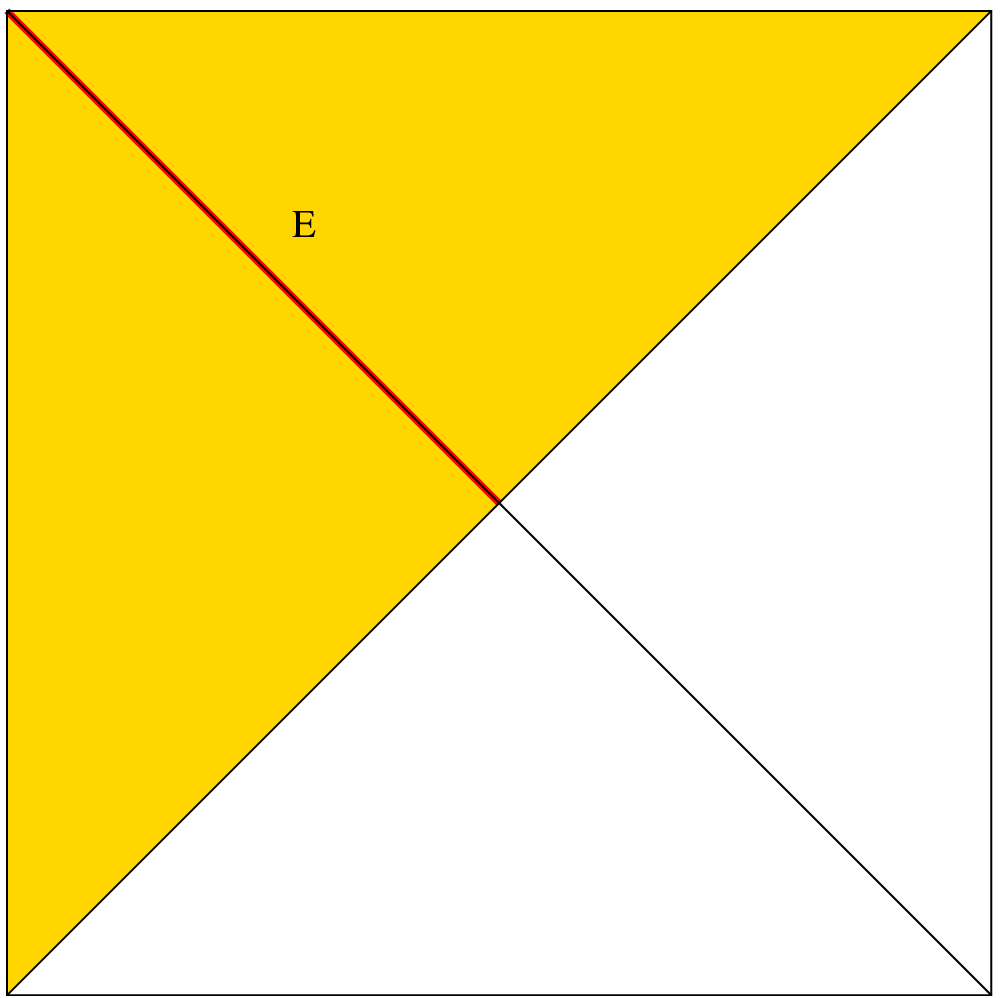}

    Support of $\Vb_{E}^{l}$
  \end{minipage}%
  \begin{minipage}[t]{0.33\textwidth}\centering 
    \psfrag{E1}{$E_{1}$}
    \psfrag{E2}{$E_{2}$}
    \includegraphics[width=0.95\textwidth]{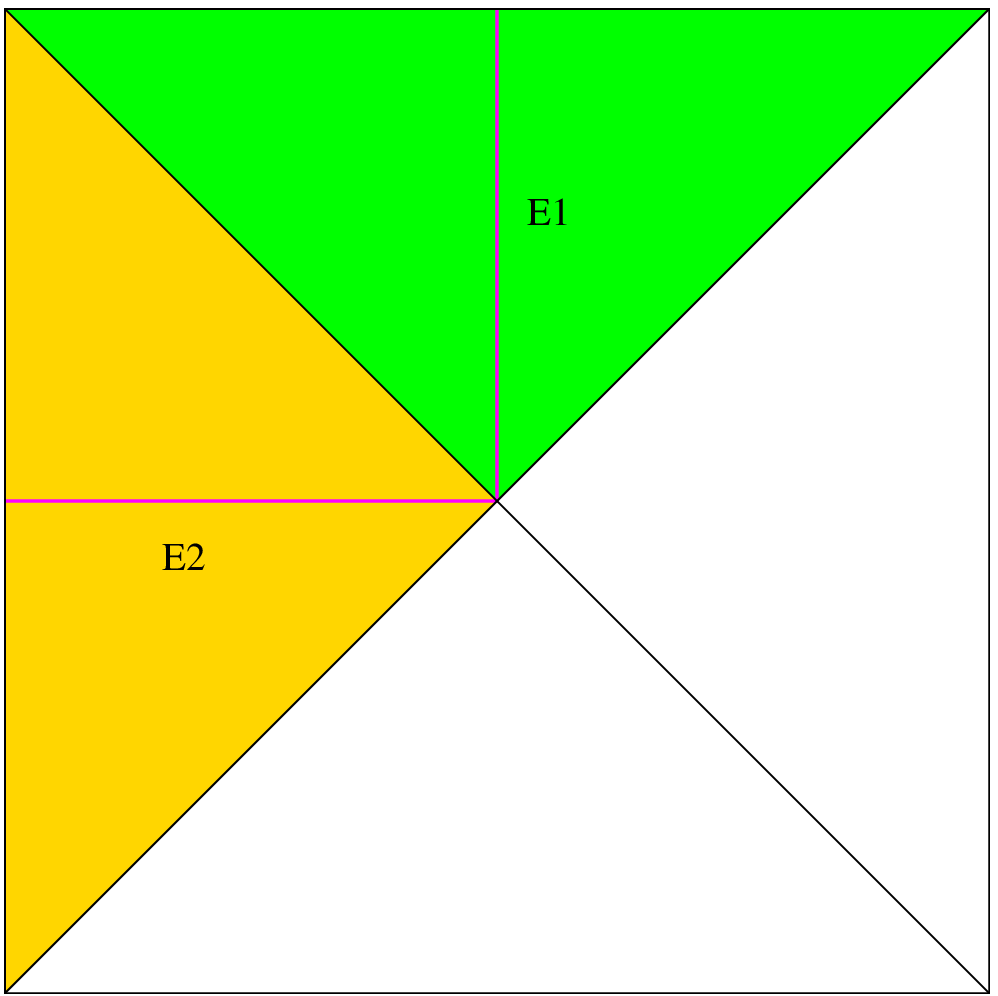}

    Edges supporting $\Vb^{l+1}_{E_{1}}$, $\Vb^{l+1}_{E_{2}}$
  \end{minipage}
  \caption{Basis function with which $\Vb_E$ can be represented}
  \label{fig:be}
\end{figure}

Take any $E\subset \ol{\Sigma_l}\bigcap\ol{\omega_{l+1}}$. Let $\Vb_E$, $\Vb_E^l$,
and $\Vb_E^{l+1}$ be the basis functions of $\EFE(\mesh_h)$, $\EFE(\mesh_l)$, and
$\EFE(\mesh_{l+1})$ associated with $E$, see Fig.~\ref{fig:be} for a 2D illustration.
Denote by $K_1,\ldots,K_n$ all elements in $\omega_{l+1}$ and $\mesh_l$ which contain
$E$, and by $E_1,\ldots,E_m$ their new edges connecting $E$ but not contained in the
refinement edges of $K_1,\ldots,K_n$ (see Fig.  \ref{fig:elm}). Supposing the
orientations of each $E_i$ and $E$ point to their common endpoint, we have
\begin{gather}
  \label{be:8}
  \Vb_E = \Vb^l_E+\frac{1}{2}\sum_{i=1}^m\Vb_{E_i}^{l+1}\;.
\end{gather}
This decomposition is $L^{2}$-stable with constants merely depending
on shape regularity.

\begin{figure}[h!]
\centerline{
\includegraphics*[width=3in]{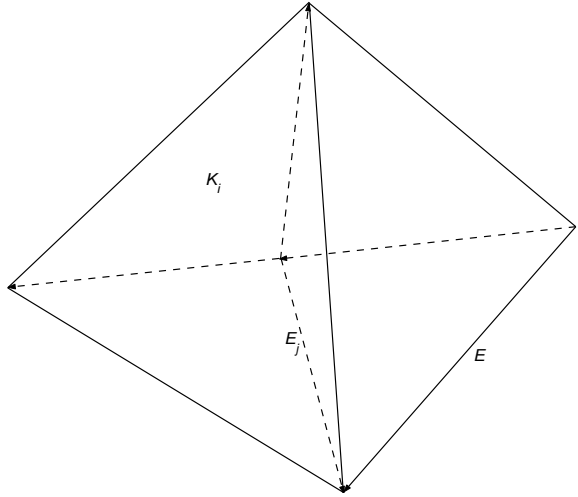}}
\caption {Situation at an edge $E$ lying on the interface between $\Sigma_l$ and
$\omega_{l+1}$.\label{fig:elm}}
\end{figure}

Since $\sum_{i=1}^m\Vb_{E_i}^{l+1}\in\Bas_{\EFE}^{l+1}$, we may move the component of
$\wt{\Vv}_l$ associated with this term to $\wt{\Vv}_{l+1}$ for any $E$. Then the
decomposition \eqref{eq:sp10} and the stability estimate \eqref{eq:spA} remain valid.

Summing up, the stability estimate \eqref{eq:sp6} is preserved after replacing
$\EIP_{l}\Psibf_{h}$ with $\EIP_{l}\Psibf_{h}+\wt{\Vv}_{l}\in \EFE(\mesh_{l})$.
\end{proof}

Eventually, the proof of Thm.~\ref{thm:main} is readily accomplished. With
Lemma~\ref{lem:main} at our disposal, we merely appeal to the $L^{2}$-stabilities
expressed in \eqref{eq:fem17} and \eqref{eq:fem18} and inverse inequalities to see
that all components in \eqref{eq:mdec-vh} can be split into local contributions of
basis functions in $\Bas_{\EFE}^{l}$ and $\Bas_{\LFE}^{l}$, respectively.

\section{Quasi-orthogonality}
\label{sec:quasi-orthogonality}

The strengthened Cauchy-Schwartz inequality \eqref{eq:C-orth} has been established in
\cite{JXU92,YSE86} for $H^1$-conforming linear Lagrangian finite element spaces, in
\cite[Sect.~6]{HIP97} for $\VH(\Div)$-elliptic variational problems and so-called
face elements. It is discussed in \cite[Sect.~4]{HIP00b} for \eqref{eq:VPcurl}, edge
elements, and geometric multigrid with global refinement.  The considerations for locally
refined meshes are fairly similar, but will be elaborated for the sake of
completeness. 

The trick is, not to consider the one-dimensional spaces spanned by individual basis
functions as building blocks of the splitting \eqref{ssc:dec}, but larger aggregates.
Thus, we put the nodal basis functions in $\Bas_{\LFE}^{l}$ and $\Bas_{\EFE}^{l}$
into a small number of classes, such that the supports of any two basis functions in
the same class do not overlap. Since these basis functions are attached to vertices
and edges respectively, the definition of those classes can be based on a
partitioning the vertices/edges of ${{\mesh}_l}_{|\omega_{l}}$ into disjoint sets such
that any two vertices/edges of the same set do not belong to the same tetrahedron.
The is formally stated in the following ``colouring lemma'':

\begin{lemma}
  \label{lem:subsplit}
  There exist $P_{\Cn},P_{\Ce}\in\mathbb{N}$ depending only on shape regularity such
  that the sets ${\Cn({\mesh}_{l})}_{|\omega_{l}}$ and
  ${\Ce(\mesh_{l})}_{\omega_{l}}$ of vertices and edges of $\mesh_{l}$ inside the
  refinement zone $\omega_{l}$ can be partitioned into subsets 
  \begin{align*}
    {\Cn({\mesh}_{l})}_{|\omega_{l}} := 
    \{\Bp\in \Cv(\mesh_{l}),\;\supp b_{\Vp}\subset\overline{\omega}_{l}\} =\; & 
    \Cn_{l}^{1}\cup\dots\cup \Cn_{l}^{P_{\Cn}} \;,\\
    {\Ce({\mesh}_{l})}_{|\omega_{l}} := 
    \{E\in \Ce(\mesh_{l}),\;\supp b_{E}\subset\overline{\omega}_{l}\} =\; & 
    \Ce_{l}^{1}\cup\dots\cup \Ce_{l}^{P_{\Ce}}\;,
  \end{align*}
  and for any $K\in{\mesh}_{l}$, $K\subset\omega_{l}$, two of its vertices/edges
  will belong to different subsets.
\end{lemma}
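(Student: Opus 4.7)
The plan is to recast the task as a bounded-degree graph colouring problem and invoke the greedy colouring algorithm. Define the vertex-conflict graph $G_{\Cn}$ on the node set ${\Cn(\mesh_{l})}_{|\omega_{l}}$ by joining two nodes whenever they are vertices of a common tetrahedron $K\in\mesh_{l}$ with $K\subset\omega_{l}$; analogously, introduce the edge-conflict graph $G_{\Ce}$ on ${\Ce(\mesh_{l})}_{|\omega_{l}}$, with two edges joined whenever they share a tetrahedron. A proper vertex colouring of $G_{\Cn}$ (resp.\ $G_{\Ce}$) with $P_{\Cn}$ (resp.\ $P_{\Ce}$) colours yields exactly the desired partition into subsets of mutually non-conflicting nodes/edges.

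The central step is to bound the maximum degree of $G_{\Cn}$ and $G_{\Ce}$ by a constant $C=C(\rho_{\mesh_{l}})$. For a fixed vertex $\Bp$ the number of tetrahedra $K\in\mesh_{l}$ with $\Bp\in\overline{K}$ is controlled by shape regularity: each such $K$ contains a ball of radius $\geq C^{-1}h_{K}$, and the shape regularity of the mesh $\mesh_{l}$ (together with Assumption~\ref{ass:hn}, which limits the meshwidth ratio across neighbouring elements) forces local quasi-uniformity in the ring around $\Bp$, so a volume comparison with a ball of radius $\max_{K\ni\Bp}h_{K}$ yields a uniform bound on the valence of $\Bp$. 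Since every tetrahedron contributes at most three further vertices and at most five further edges, the degrees of $G_{\Cn}$ and $G_{\Ce}$ admit bounds $\Delta_{\Cn},\Delta_{\Ce}$ depending only on $\rho_{\mesh_{l}}$.

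Once these degree bounds are in hand, the greedy colouring algorithm (enumerate the nodes/edges and assign the smallest colour index not yet taken by an already-coloured neighbour) produces proper colourings with at most $\Delta_{\Cn}+1$ and $\Delta_{\Ce}+1$ colours, respectively. Setting $P_{\Cn}:=\Delta_{\Cn}+1$ and $P_{\Ce}:=\Delta_{\Ce}+1$, which depend only on the shape regularity constants (uniform in $l$ and $L$ by the standing assumption on $\{\mesh_{l}\}$), we obtain the required decompositions $\Cn_{l}^{1}\cup\dots\cup\Cn_{l}^{P_{\Cn}}$ and $\Ce_{l}^{1}\cup\dots\cup\Ce_{l}^{P_{\Ce}}$.

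The only delicate point is the derivation of the degree bound, since a priori a node on the interface between $\Sigma_{l}$ and $\omega_{l+1}$ could be surrounded by tetrahedra of disparate sizes. This is precisely where Assumption~\ref{ass:hn} (at most one hanging node per edge) enters: it restricts the admissible local configurations in $\mesh_{l}$ to a finite list of patterns up to affine equivalence, so a uniform bound on the number of tetrahedra meeting at a vertex follows by scaling arguments as in \cite[Sect.~3.6]{HIP02}.
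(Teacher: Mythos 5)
Your proof is correct and takes essentially the same approach as the paper's own brief sketch: the paper's ``elementary combinatorial arguments'' is precisely your bounded-degree conflict graph plus greedy colouring, and the paper also derives the degree bound from shape regularity. One small remark: the vertex-valence bound follows from shape regularity of $\mesh_l$ alone via a solid-angle comparison at $\Bp$ (each tetrahedron meeting $\Bp$ subtends a solid angle bounded below by a constant depending only on $\rho_{\mesh_l}$, and these sum to at most $4\pi$), so your detour through Assumption~\ref{ass:hn} and local quasi-uniformity, while harmless since that assumption is in force, is not actually needed for this step.
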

Here, $b_{\Bp}$ is the nodal basis function of $\LFE(\wh{\mesh}_l)$ attached to the
vertex $\Bp$, and $\Vb_E$ is the nodal basis function of $\EFE(\wh{\mesh}_l)$
associated with the edge $E$, see \eqref{eq:fem13}.

\begin{proof}
  A crude argument cites the fact that each vertex and each edge belongs to only a
  finite number of elements. A bound for this number can be deduced from the shape
  regularity measure. The rest is elementary combinatorial arguments. 
\end{proof}

Next, define subspaces of $\LFE(\wh{\mesh}_l)$ and $\EFE(\wh{\mesh}_l)$ by
\begin{align*}
  \LFE^i_l:= & \Span{b_{\Bp},\;\Bp\in\Cn_l^i}\subset \Span{\Bas_{\LFE}^{l}},\;
  i=1,\ldots,P_{\Cn}\;,\\
  \EFE^i_l:= & \Span{\Vb_E,\;E\in\Ce_l^i}\subset\Span{\Bas_{\EFE}^{l}},\;i=1,\ldots,P_{\Ce}\;.
\end{align*}%
Note that the basis functions spanning both $\LFE^{i}_{l}$ and $\EFE^{i}_{l}$ are
mutually orthogonal (w.r.t. $\aform$ and the $H^{1}(\Omega)$-inner product). Thus, it
suffices to establish the strengthened Cauchy-Schwarz inequality \eqref{eq:C-orth}
for the family of subspaces ${\{H_{j}\}}_{j} = {\{\EFE^i_l\}}_{l,i}\cup{\{\grad
  \LFE^{i}_{l}\}}_{l,i}$ of $\EFE({\mesh_{L}})$. This will yield the relevant constants
in \eqref{eq:C-rat}. In other words, we analyze the quasi-orthogonality property of
the multilevel decomposition
\begin{gather}
  \label{eq:LMGcurllump}
  \EFE(\mesh_{h}) = \EFE(\mesh_{0}) +
  \sum\limits_{l=1}^{L}
  \sum\limits_{i=1}^{P_{\Cn}} \grad \LFE^{i}_{l} 
  +\sum\limits_{l=1}^{L}\sum\limits_{i=1}^{P_{\Ce}} \EFE_{l}^{i} \;.
\end{gather}
\rh{Note that \eqref{eq:LMGcurllump} gives rise to a multigrid algorithm, for which
Thm.~\ref{thm:41} gives exactly the same convergence estimate as for the method
induced by \eqref{eq:LMGcurl}!}

\begin{lemma}
   \label{lem:orth-curl}
   For all $\Vv_{m}\in\EFE({\mesh}_{m})$ and $\Vu_{l}^{i}\in\EFE_l^i$, $0\leq m
   \leq l \leq L$, $i=1,\ldots,P_{\Ce}$, it holds that, with $C>0$ depending only on
   the bound for the shape regularity measures of the meshes ${\mesh}_{l}$,
   \begin{align}
     \label{eq:orth-curl}
     \SP{\curl\Vv_{m}}{\curl\Vu_{l}^{i}}_{\Ltwo} & \leq C 
     h_{l}^{\frac{1}{2}}h_{m}^{-\frac{1}{2}}
     \N{\curl\Vv_{m}}_{\Ltwo}\N{\curl\Vu_{l}^{i}}_{\Ltwo}\;,\\ 
     \label{eq:orth-l2}
     \SP{\Vv_{m}}{\Vu_{l}^{i}}_{\Ltwo} & \leq C 
     h_{l} \N{\Vv_{m}}_{\Ltwo}\N{\curl\Vu_{l}^{i}}_{\Ltwo}\;.
   \end{align}
 \end{lemma}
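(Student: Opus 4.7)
The proof rests on two pillars: a localized inverse-type inequality
\begin{equation*}
  \NLtwo{\Vu_l^i}\le Ch_l\NLtwo{\curl\Vu_l^i}\qquad\forall\,\Vu_l^i\in\EFE_l^i\;,
\end{equation*}
that exploits the disjoint-support structure of the basis functions indexed by $\Ce_l^i$ (Lemma~\ref{lem:subsplit}), together with elementwise integration by parts on $\mesh_m$, which collapses $\SP{\curl\Vv_m}{\curl\Vu_l^i}_{\Ltwo}$ to face integrals because $\curl\Vv_m$ is piecewise constant on $\mesh_m$.

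First I would establish the inverse-type estimate. Expanding $\Vu_l^i=\sum_{E\in\Ce_l^i}\alpha_E\Vb_E$ and using that no two edges of $\Ce_l^i$ lie in a common tetrahedron, the open supports of the $\Vb_E$ are pairwise disjoint, so
\begin{equation*}
  \NLtwo{\Vu_l^i}^{2}=\sum_{E\in\Ce_l^i}\alpha_E^{2}\NLtwo{\Vb_E}^{2},\qquad
  \NLtwo{\curl\Vu_l^i}^{2}=\sum_{E\in\Ce_l^i}\alpha_E^{2}\NLtwo{\curl\Vb_E}^{2}.
\end{equation*}
Scaling applied to the explicit representation \eqref{eq:fem13}, combined with shape regularity and the local quasi-uniformity \eqref{eq:ref25} of $\mesh_l$ on $\omega_l$, delivers $\NLtwo{\Vb_E}\approx h_l^{1/2}$ and $\NLtwo{\curl\Vb_E}\approx h_l^{-1/2}$, whence $\NLtwo{\Vb_E}\le Ch_l\NLtwo{\curl\Vb_E}$; summation over $E\in\Ce_l^i$ closes the argument. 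The estimate \eqref{eq:orth-l2} is then immediate: Cauchy--Schwarz and this inverse bound give $\SP{\Vv_m}{\Vu_l^i}_{\Ltwo}\le\NLtwo{\Vv_m}\NLtwo{\Vu_l^i}\le Ch_l\NLtwo{\Vv_m}\NLtwo{\curl\Vu_l^i}$.

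For \eqref{eq:orth-curl} I would integrate by parts on every $K\in\mesh_m$. The identity
\begin{equation*}
  \int_K\curl\Vv_m\cdot\curl\Vu_l^i\,d\Bx=\int_K\curl\curl\Vv_m\cdot\Vu_l^i\,d\Bx
  -\int_{\partial K}(\Bn_K\times\curl\Vv_m)\cdot\Vu_l^i\,dS,
\end{equation*}
together with $\curl\curl\Vv_m\equiv0$ in $K$ (piecewise constancy of $\curl\Vv_m$) and the vanishing of $\Vu_l^i\times\Bn$ on $\DBd$, reduces the whole inner product to
\begin{equation*}
  \SP{\curl\Vv_m}{\curl\Vu_l^i}_{\Ltwo}=-\sum_{F}\int_F[\Bn\times\curl\Vv_m]_F\cdot\Vu_l^i\,dS,
\end{equation*}
the sum running over interior and Neumann faces of $\mesh_m$. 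Since $[\Bn\times\curl\Vv_m]_F$ is constant on $F$, scaling gives $\NLtwo[F]{[\Bn\times\curl\Vv_m]_F}\le Ch_m^{-1/2}\NLtwo[\omega_F]{\curl\Vv_m}$ with $\omega_F$ the two $\mesh_m$-elements adjacent to $F$. For the other factor I would decompose $F$ into its sub-faces in $\mesh_l$ and combine the standard trace inequality with the inverse estimate $\NLtwo{\nabla\Vu_l^i}\le Ch_l^{-1}\NLtwo{\Vu_l^i}$ on each adjacent $\mesh_l$-element, obtaining $\NLtwo[F]{\Vu_l^i}\le Ch_l^{-1/2}\NLtwo[U_F]{\Vu_l^i}$ where $U_F$ is the patch of $\mesh_l$-elements touching $F$. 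Face-wise Cauchy--Schwarz, the finite-overlap property, and summation then yield $|\SP{\curl\Vv_m}{\curl\Vu_l^i}|\le Ch_m^{-1/2}h_l^{-1/2}\NLtwo{\curl\Vv_m}\NLtwo{\Vu_l^i}$, and a final appeal to the inverse-type estimate replaces $\NLtwo{\Vu_l^i}$ by $h_l\NLtwo{\curl\Vu_l^i}$, producing the desired factor $h_l^{1/2}h_m^{-1/2}$.

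The main obstacle is the first estimate $\NLtwo{\Vu_l^i}\le Ch_l\NLtwo{\curl\Vu_l^i}$: it fails on the whole space $\EFE(\mesh_l)$ (discrete gradients lie in the kernel of $\curl$) and becomes true for $\EFE_l^i$ only because the colouring of Lemma~\ref{lem:subsplit} keeps the constituent basis functions spatially separated and hence bounded away from $\grad\LFE(\mesh_l)$.
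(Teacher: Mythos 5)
Your proof is correct but takes a genuinely different technical route for \eqref{eq:orth-curl} than the paper. For \eqref{eq:orth-l2} you and the paper proceed essentially identically: Cauchy--Schwarz plus the inverse estimate $\NLtwo{\Vu_l^i}\le Ch_l\NLtwo{\curl\Vu_l^i}$, which holds on $\EFE_l^i$ precisely because the colouring of Lemma~\ref{lem:subsplit} gives the constituent basis functions disjoint supports (the paper uses the per-basis-function version $\NLtwo[K]{\Vb}\le Ch_l\NLtwo[K]{\curl\Vb}$ and the non-interaction of the $\Vb_E$, $E\in\Ce_l^i$, which amounts to the same thing). For \eqref{eq:orth-curl}, however, you integrate by parts fully on each $K\in\mesh_m$, collapse the whole $\curl$--$\curl$ pairing onto faces of $\mesh_m$, estimate the piecewise constant jump by $h_m^{-1/2}\N{\curl\Vv_m}_{\Ltwo[\omega_F]}$, control $\N{\Vu_l^i}_{\Ltwo[F]}$ by a trace--plus--inverse inequality at scale $h_l$, and finally convert $\NLtwo{\Vu_l^i}$ back into $h_l\NLtwo{\curl\Vu_l^i}$. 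The paper avoids trace inequalities altogether: on each $K$ it splits $\Vu_l^i|_K$ into a part carried by $\Ce_l^i$-edges lying strictly inside $K$ and a part carried by edges on $\partial K$, uses one integration by parts only to show the interior part contributes nothing (since $\curl\curl\Vv_m=0$ and the interior part has vanishing tangential trace on $\partial K$), and then applies Cauchy--Schwarz over the boundary strip $\Sigma\subset K$ of measure $|\Sigma|\le Ch_lh_m^{-1}|K|$, which directly produces the factor $\sqrt{h_l/h_m}$. Both arguments are sound; the paper's stays with volume integrals and exploits the area ratio of the strip, while yours is the more conventional strengthened Cauchy--Schwarz calculation via surface integrals and makes the separation of scales a bit more visible, at the cost of invoking the trace inequality.
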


 \begin{proof} Pick any (open) tetrahedron $K\in\wh{\mesh}_{m}$. Use the basis
   representation of $\Vu_l^i$ to isolate ``interior'' and ``boundary'' parts
   \begin{equation*}
     {\Vu_l^i}_{|K} =\sum_{E\subset\bar{K}, 
       E\in\Ce_l^i}\Bigl(\int_E\Vu_l^i\cdot\mathrm{d}\vec{s}\Bigr)\cdot\Vb_E
     = \Vu_{l,bd}^i+\Vu_{l,int}^i,
   \end{equation*}
   where
   \begin{equation*}
     \Vu_{l,bd}^i :=\sum_{E\subset\partial K,E\in\Ce_l^i}
     \Bigl(\int_E\Vu_l^i\cdot\mathrm{d}\vec{s}\Bigr)\cdot\Vb_E \quad\hbox{and}\quad
     \Vu_{l,int}^i=\sum_{E\subset K,E\in\Ce_l^i}
     \Bigl(\int_E\Vu_l^i\cdot\mathrm{d}\vec{s}\Bigr)\cdot\Vb_E\;.
   \end{equation*}  
   Since $\curl\Vv_m$ is a constant vector in $K$ and $\Vu_{l,int}^i\times\Bn={\bf
     0}$ on $\partial K$, by Green's formula, it is easy to see
   \begin{equation*}
     \int_K\curl\Vu_{l}^i\cdot\curl\Vv_m\mathrm{d}\Bx
     =\int_K\curl\Vu_{l,bd}^i\cdot\curl\Vv_m\mathrm{d}\Bx
     =\int_{\Sigma}\curl\Vu_{l,bd}^i\cdot\curl\Vv_m\mathrm{d}\Bx,
   \end{equation*}
   where 
   \begin{gather*}
     \Sigma:=\bigcup\{\supp{\Vb_E}:\;E\subset\partial K,E\in\Ce_l^i\}
   \end{gather*}
   is contained in a narrow strip along the boundary of $K$ of width $\approx
   h_{l}$. Hence, we arrive at the area ratio
   \begin{gather*}
     |\Sigma| \leq C h_{l}h_{m}^{-1}|K|\;.
   \end{gather*}
   Here and throughout the remainder of the proof, $C>0$ depends on
   shape regularity only. Thus, using the Cauchy-Schwartz inequality
   and noting that the basis functions in $\EFE_l^i$ are mutually orthogonal, we have
   \begin{eqnarray}
     \label{eq:orth2}
     \int_K\curl\Vu_{l}^i\cdot\curl\Vv_m\mathrm{d}\Bx &\le&
     \N{\curl\Vu_{l,bd}^i}_{L^2(\Sigma)} |\Sigma|^{1/2}
     \SN{\curl\Vv_m}\\
     &\le& C\,\sqrt{\frac{h_l}{h_m}}\,\N{\curl\Vu_{l}^i}_{L^2(K)}
     |K|^{1/2}\SN{\curl\Vv_m}\notag\\
     &=& C\,\sqrt{\frac{h_l}{h_m}}\,\N{\curl\Vu_{l}^i}_{L^2(K)}
     \N{\curl\Vv_m}_{L^2(K)}\;.\notag
   \end{eqnarray}
   To estimate the $L^2$-inner product, we recall the following simple fact about the
   norms of edge basis functions on level $l$:
   \begin{eqnarray*}
     \N{\Vb}_{L^2(K)} \le Ch_l\N{\curl\Vb}_{L^2(K)}\quad
     \forall\,\Vb\in\Bas_{\EFE}^{l}\;.
   \end{eqnarray*}
   Since the basis functions of $\EFE_l^i$ do not interact, we have
   \begin{eqnarray}
     \label{eq:orth0}
     \int_{K}\Vu_{l}^i\cdot \Vv_m\mathrm{d}\Bx
     \le\N{\Vu_{l}^i}_{L^2(K)} \N{\Vv_m}_{L^2(K)} \le
     Ch_l\N{\curl\Vu_{l}^i}_{L^2(K)}
     \N{\Vv_m}_{L^2(K)}.
   \end{eqnarray}
   Now \eqref{eq:orth-curl} and  \eqref{eq:orth-l2} follows by summation
   over all elements of ${\mesh}_{m}$ and another Cauchy-Schwarz inequality.
 \end{proof}

After replacing $\EFE_l^i$ with $\LFE_l^i$ in the proof of Lemma
\ref{lem:orth-curl}, similar arguments establish the following estimate:

\begin{lemma}
  \label{lem:orth-grad}
  For all $\Vv_{m}\in\EFE({\mesh}_{m})$ and $u_{l}^{i}\in\LFE_l^i$, $0\leq m
  \leq l \leq L$, $i=1,\ldots,P_{\Cn}$, it holds that, with $C>0$ depending only on
  the bound for the shape regularity measures of the meshes ${\mesh}_{l}$,
  \begin{align}
    \label{eq:orth-grad0}
    \SP{\Vv_{m}}{\grad u_{l}^{i}}_{\Ltwo} & \leq C 
    h_{l}^{\frac{1}{2}}h_{m}^{-\frac{1}{2}}
    \N{\Vv_{m}}_{\Ltwo}\N{\grad u_{l}^{i}}_{\Ltwo}\;.
  \end{align}
\end{lemma}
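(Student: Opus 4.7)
The plan is to mirror the proof of Lemma~\ref{lem:orth-curl} almost step for step, with integration by parts replacing the Green identity for $\curl$ and the gradient localization replacing the rotation localization. The key structural feature exploited there was that $\curl\Vv_{m}$ is piecewise constant on elements of $\mesh_m$ and that the ``interior'' portion of $\Vu_l^i$ contributes nothing; the analogue here is that $\Vv_m \in \EFE(\mesh_m)$ has the local form $\Ba + \Bb\times\Bx$, hence $\Div\Vv_m = 0$, and the ``interior'' portion of $u_l^i$ is killed by one integration by parts.

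Concretely, I would pick $K\in\wh{\mesh}_m$ (so $K$ lies inside an element of $\mesh_m$ and $\Div\Vv_m = 0$ on $K$) and split
\begin{equation*}
  u_l^i\big|_{K} \;=\; u_{l,\text{int}}^i + u_{l,\text{bd}}^i\;, \qquad
  u_{l,\text{bd}}^i := \sum_{\Bp\in\Cn_l^i,\,\Bp\in\partial K} u_l^i(\Bp)\,b_{\Bp}\;,
\end{equation*}
with the complementary sum taken over nodes in the interior of $K$. The disjoint-support property within a single colour class $\Cn_l^i$ granted by Lemma~\ref{lem:subsplit} ensures $u_{l,\text{int}}^i$ vanishes on $\partial K$, so Green's formula yields
\begin{equation*}
  \int_K \Vv_m\cdot\grad u_{l,\text{int}}^i\,\mathrm{d}\Bx
  = \int_{\partial K} u_{l,\text{int}}^i \,\Vv_m\cdot\Bn \,\mathrm{d}S
    - \int_K u_{l,\text{int}}^i \Div\Vv_m \,\mathrm{d}\Bx = 0\;.
\end{equation*}
Only the boundary part survives, and $\grad u_{l,\text{bd}}^i$ is supported in the strip $\Sigma := \bigcup\{\supp b_{\Bp}:\Bp\in\Cn_l^i\cap\partial K\}$ of width $\sim h_l$, for which shape regularity gives $|\Sigma|\le C\,h_l h_m^{-1}|K|$.

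Now I apply Cauchy--Schwarz on $\Sigma$ and use that $\Vv_m$ is a polynomial of fixed degree on $K$. A standard inverse estimate $\NLinf[K]{\Vv_m}\le C|K|^{-1/2}\NLtwo[K]{\Vv_m}$ produces
\begin{equation*}
  \NLtwo[\Sigma]{\Vv_m} \;\le\; C\Bigl(\tfrac{|\Sigma|}{|K|}\Bigr)^{1/2}\NLtwo[K]{\Vv_m}
  \;\le\; C\,\sqrt{h_l/h_m}\,\NLtwo[K]{\Vv_m}\;,
\end{equation*}
and the disjointness of supports inside $\Cn_l^i$ gives $\NLtwo[\Sigma]{\grad u_{l,\text{bd}}^i}\le \NLtwo[K]{\grad u_l^i}$. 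Thus, on each $K\in\wh{\mesh}_m$,
\begin{equation*}
  \Bigl|\int_K \Vv_m\cdot\grad u_l^i\,\mathrm{d}\Bx\Bigr|
  \le C\sqrt{h_l/h_m}\,\NLtwo[K]{\Vv_m}\,\NLtwo[K]{\grad u_l^i}\;.
\end{equation*}
Summation over $K\in\wh{\mesh}_m$ and a final Cauchy--Schwarz in $\ell^2$ give \eqref{eq:orth-grad0}.

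The only real point requiring care, and what I expect to be the main (if mild) obstacle, is the geometric consistency between the colour-class supports and the quasi-uniform mesh $\wh{\mesh}_m$: one must verify that a node $\Bp\in\Cn_l^i$ sitting in the interior of $K\in\wh{\mesh}_m$ really does carry a basis function $b_{\Bp}$ whose support is contained in $\bar K$. If the nesting between $\wh{\mesh}_m$ and $\mesh_l$ is not strict in all configurations, one reclassifies borderline nodes with $\supp b_{\Bp}\cap\partial K\neq\emptyset$ into $u_{l,\text{bd}}^i$; the strip $\Sigma$ is enlarged only by a shape-regular constant factor and the remainder of the argument is unaffected.
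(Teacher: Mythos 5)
Your proof is correct and follows essentially the same route as the paper's: you split $u_l^i$ into an interior and boundary part on each element, exploit $\Div\Vv_m=0$ via Green's formula to kill the interior contribution, bound the boundary-strip integral using the measure ratio $|\Sigma|/|K|\lesssim h_l/h_m$ and an inverse estimate, and finish by summing over elements with Cauchy--Schwarz. The only cosmetic difference is that you localize over $K\in\wh{\mesh}_m$ rather than $K\in\mesh_m$ — the latter avoids the nesting subtlety you flag at the end (since $\mesh_m\prec\mesh_l$ automatically ensures interior nodes of $\Cn_l^i$ carry basis functions supported in $\ol{K}$) — but your reclassification fix handles it and the rest of the argument is unchanged.
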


\begin{proof} Again, pick $K\in {\mesh}_{m}$. By separating
  interior and boundary parts of $u_{l}^{i}$ as above and noting
  $\Div{\Vv_{m}}_{|K}=0$ on $K$, we find by Green's formula
  \begin{gather*}
    \int\nolimits_{K}\Vv_{m}\cdot\grad u_{l}^{i}\,\mathrm{d}\Bx = 
    \int\nolimits_{\Sigma}\Vv_{m}\cdot\grad u_{l}^{i} \,\mathrm{d}\Bx\;.
  \end{gather*}
  As above, we infer
  \begin{gather*}
    \int\nolimits_{K}\Vv_{m}\cdot\grad u_{l}^{i}\,\mathrm{d}\Bx \leq 
    C\,\sqrt{\frac{h_l}{h_m}}\,\N{\Vv_{m}}_{L^2(K)}
    \N{\grad u_l^{i}}_{L^2(K)}\;.
  \end{gather*}
  Summation over all $K$ and a Cauchy-Schwarz inequality finish the proof.
\end{proof}

Because of the geometric decay of the meshwidths $h_{l}$ of the (uniformly refined) meshes
$\wh{\mesh}_{l}$, these estimates clearly imply the desired quasi-orthogonality
for \eqref{eq:LMGcurllump}.

\begin{theorem} (Strengthened Cauchy-Schwartz inequality) 
  For any $\Vu_{l}^{i}\in\EFE_{l}^{i}$ or $\Vu_{l}^{i}\in\grad \LFE_{l}^{i}$ and any
  $\Vv_{l}^{j}\in\EFE_{l}^{j}$ or $\Vv_{l}^{j}\in\grad \LFE_{l}^{j}$, $0\le i,j\le
  P_{\Ce}$ or $0\le i,j\le P_{\Cn}$, resp., the estimate
\begin{equation}
  \label{eq:SCSI}
  \aform(\Vu_l^i,\Vv_m^j)\le C\theta^{|l-m|/2}\N{\Vu_l^i}_A\N{\Vv_m^j}_A \quad
  0\le i,j\le N_l,\; 0\le l,\,m\le L
\end{equation}
holds, where $C=C(\max_{l}{\rho}_{l})>0$ and $0<\theta<1$ is the decrease rate of
the meshwidths defined in \eqref{eq:ref25}.
\end{theorem}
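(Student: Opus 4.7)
The plan is to reduce the strengthened Cauchy-Schwarz inequality to the pairwise bounds already provided by Lemmas~\ref{lem:orth-curl} and~\ref{lem:orth-grad}, which encode all the geometric work. Since $\aform$ is symmetric in its arguments, I may assume $l\ge m$ and split
\begin{equation*}
\aform(\Vu_l^i,\Vv_m^j)=\SP{\curl\Vu_l^i}{\curl\Vv_m^j}_{\Ltwo}+\SP{\Vu_l^i}{\Vv_m^j}_{\Ltwo}.
\end{equation*}
Four cases arise according to whether each argument lies in an edge subspace $\EFE_\bullet^\bullet$ or in a gradient subspace $\grad\LFE_\bullet^\bullet$. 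Whenever at least one argument is a gradient, the corresponding $\curl$-factor vanishes and the curl-curl contribution drops out.

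In the edge/edge case the curl-curl inner product is bounded by \eqref{eq:orth-curl}, supplying a factor $(h_l/h_m)^{1/2}$, while the $\Ltwo$-pairing is bounded by \eqref{eq:orth-l2}, supplying a factor $h_l$. In the fine-gradient/coarse-anything case Lemma~\ref{lem:orth-grad} applies directly, yielding $(h_l/h_m)^{1/2}$; note that the coarse argument $\Vv_m^j$ is admissible because $\grad\LFE(\mesh_m)\subset\EFE(\mesh_m)$, so both sub-cases (coarse edge, coarse gradient) are covered. In the remaining fine-edge/coarse-gradient case, I simply view $\grad v_m^j\in\EFE(\mesh_m)$ as the coarse argument in \eqref{eq:orth-l2}, which again delivers the factor $h_l$. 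The $\Ltwo$- and $\curl$-norms on the right-hand sides of the lemmas are trivially majorized by the energy norm $\N{\cdot}_A$, so each case ultimately produces an estimate of the form $(\text{geometric factor})\cdot \N{\Vu_l^i}_A\N{\Vv_m^j}_A$.

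The final step is to convert the geometric factors into the desired $\theta^{|l-m|/2}$. From the quasi-uniformity assumption \eqref{eq:ref25} I have $h_l/h_m \le C\theta^{l-m}$ and $h_l \le C\theta^l \le C\theta^{l-m}$ (since $\theta^m \le 1$), so both offending factors are dominated by $\theta^{(l-m)/2}=\theta^{|l-m|/2}$. There is no genuine obstacle in this argument: the heavy lifting has already been done in Lemmas~\ref{lem:orth-curl} and~\ref{lem:orth-grad}, and the theorem is essentially a bookkeeping exercise. The only point requiring a moment's care is the mixed edge/gradient pairing, which works out because coarse gradients live inside the coarse edge space and can therefore be substituted into the lemmas without introducing any new estimate.
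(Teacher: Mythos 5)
Your proof is correct and takes essentially the same approach the paper intends: the paper does not give an explicit proof but simply remarks that Lemmas~\ref{lem:orth-curl} and~\ref{lem:orth-grad} together with the geometric decay of $h_l$ "clearly imply" the result, and your four-case bookkeeping — including the observation that coarse gradients lie in $\EFE(\mesh_m)$ and so can be fed into \eqref{eq:orth-l2} and Lemma~\ref{lem:orth-grad} as the coarse-level argument — is exactly the argument being left implicit.
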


%%% Local Variables:
%%% mode: latex
%%% TeX-master: "main"
%%% End:

%\section{Numerical experiments}
\section{Numerical experiments}
\label{sec:numer-exper}

In the reported numerical experiments the implementation of adaptive mesh refinement
was based on the adaptive finite element package ALBERTA \cite{ALBT}, which uses the
bisection strategy of \cite{KOS94a}, see Sect.~\ref{sec:LMG}.

Let $\mesh_0$ be an initial mesh satisfying the two assumptions (A1)
and (A2) in \cite[P.~282]{KOS94a}, the adaptive mesh refinements are
governed by a residual based a posteriori error estimator. In the
experiments we assume the current density $\Vf\in\Hdiv$ and use the
estimator given by \cite[\S 5]{ZCW05a}: given a finite element
approximation $\Vu_{h}\in\EFE(\mesh_{h})$, for any $T\in\mesh_{h}$
\begin{equation}
\eta_T^2:=h_T^2\|\Vf-\Vu_h\|_{\BH(\Div,T)}^2
+\frac{h_T}{2}\sum_{F\subset\partial T}\Big\{\|[\Vu_h]_F\|_{0,F}^2
+\|[\curl\Vu_h\times\nubf]_F\|_{0,F}^2\Big\}, \nonumber
\end{equation}
where $F$ is a face of $T$, $\nubf$ is the unit normal of $F$, and
$[\Vu_h]_F$ is the jump of $\Vu_h$ across $F$. The global a
posteriori error estimate and the maximal estimated element error
on $\mesh_h$ are defined by
\begin{equation}
  \eta_h:=\left(\sum_{T\in\mesh_h}\eta_T^2 \right)^{1/2},\qquad
  \eta_{\script{max}}=\max_{T\in\mesh_h}\eta_T.
\end{equation}
Using $\eta_h$ and $\eta_{\script{max}}$, we use \cite[Algorithm~5.1]{ZCW05a} to mark
and refine $\mesh_h$ adaptively.

In the following, we report two numerical experiments to demonstrate the competitive
behavior of the local multigrid method and to validate our convergence theory.

\setcounter{theorem}{0}
\begin{example}\label{exp1}
We consider the Maxwell equation on the three-dimensional
``L-shaped'' domain
$\Omega=(-1,\;1)^3\setminus\{(0,\;1)\times(-1,\;0)\times
(-1,\;1)\}$. The Dirichlet boundary condition and the righthand
side $\Vf$ are chosen so that the exact solution is
$$\Vu:=\nabla\left\{r^{1/2}\sin(\phi/2)\right\}$$
in cylindrical coordinates $(r,\phi,z)$.
\end{example}

Table \ref{exp1:tab} shows the numbers of multigrid iterations required to reduce the
initial residual by a factor $10^{-8}$ on different levels. We observe that the
multigrid algorithm converges in almost the same small number of steps, though the
number of elements varies from 156 to 100,420.

\begin{table}[h!]
\centering \caption{The number of adaptive iterations
$N_{\script{it}}$, the number of elements $N_{\script{el}}$, the
number of multigrid iterations $N_{itrs}$ required to reduce the
initial residual by a factor $10^{-8}$, the relative error between
the true solution $\Vu$ and the discrete solution $\Vu_h$:\quad
$E_{\script{rel}}=\N{\Vu-\Vu_h}_{\VH(\curl,\Omega)}/
\N{\Vu}_{\VH(\curl,\Omega)}$ (Example
\ref{exp1}).}\label{exp1:tab}
\begin{tabular}{*{9}{c}}
\hline  $N_{\script{it}}$
& 2 & 5 & 10 &  15 & 20 & 25 & 30 & 35  \\
\hline  $N_{\script{el}}$ &  156 &  388 &  1,900 & 4,356 &
 9608  &  19,424 &  48,088  &  100,420 \\
\hline  $E_{\script{rel}}$  & 0.4510 & 0.3437&   0.2456  & 0.1919
& 0.1600 &  0.1350 & 0.1094 &   0.0915   \\
\hline $N_{itrs}$ &  11 & 21 & 19 &  19 & 19 & 19 & 19 & 19 \\
\hline\\
\end{tabular}
\end{table}

Fig. \ref{exp1:cpu} (left) plots the CPU time versus the number of degrees of freedom
on different adaptive meshes. It shows that the CPU time of solving the algebraic
system increases roughly linearly with respect to the number of elements.  Fig.
\ref{exp1:cpu} (rught) depicts a locally refined mesh of 100,420 elements created by
the adaptive finite element algorithm.

\begin{figure}[h!]
  \subfigure{%
    \includegraphics*[width=0.45\textwidth]{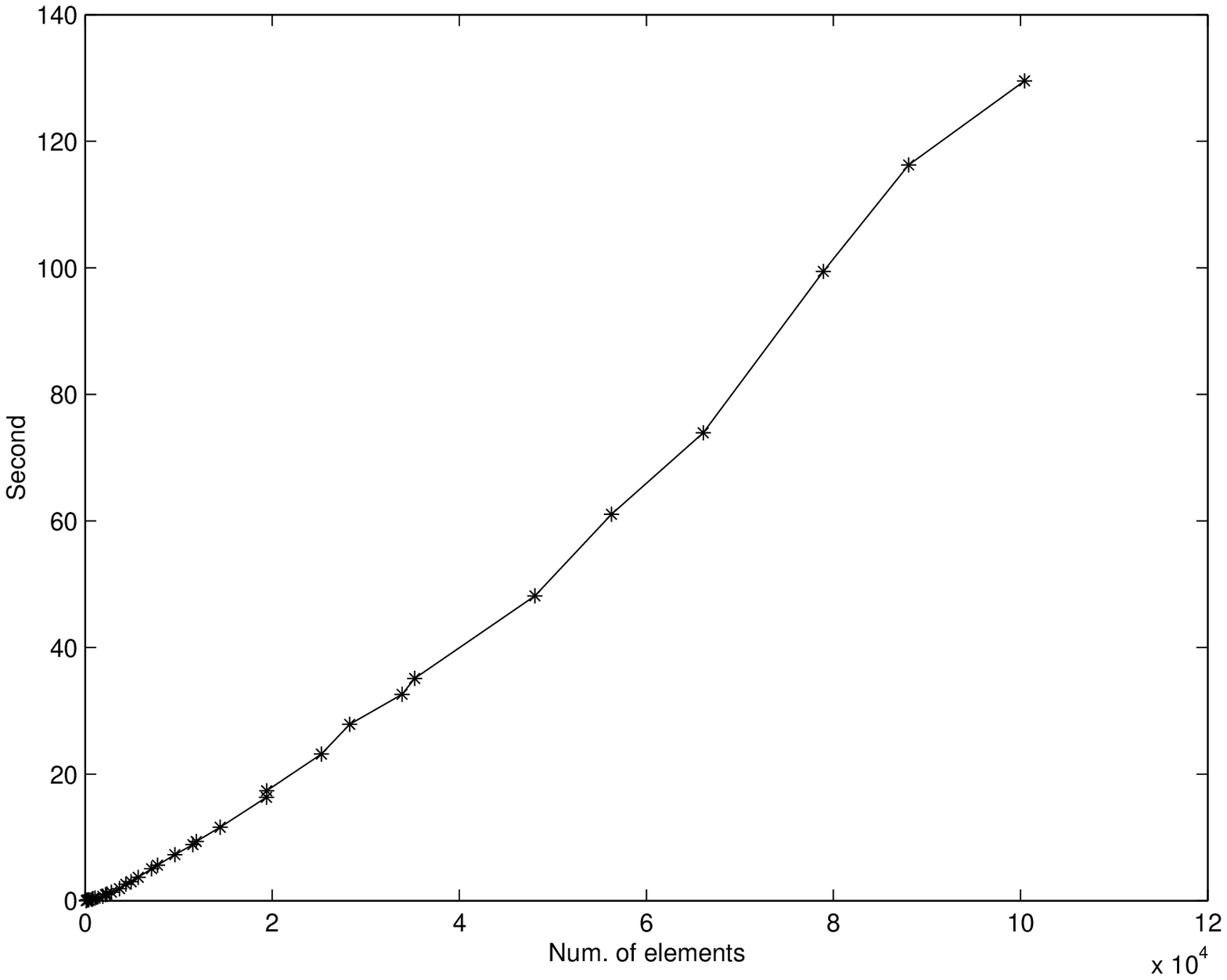}}
  \subfigure{%
    \includegraphics*[width=0.5\textwidth]{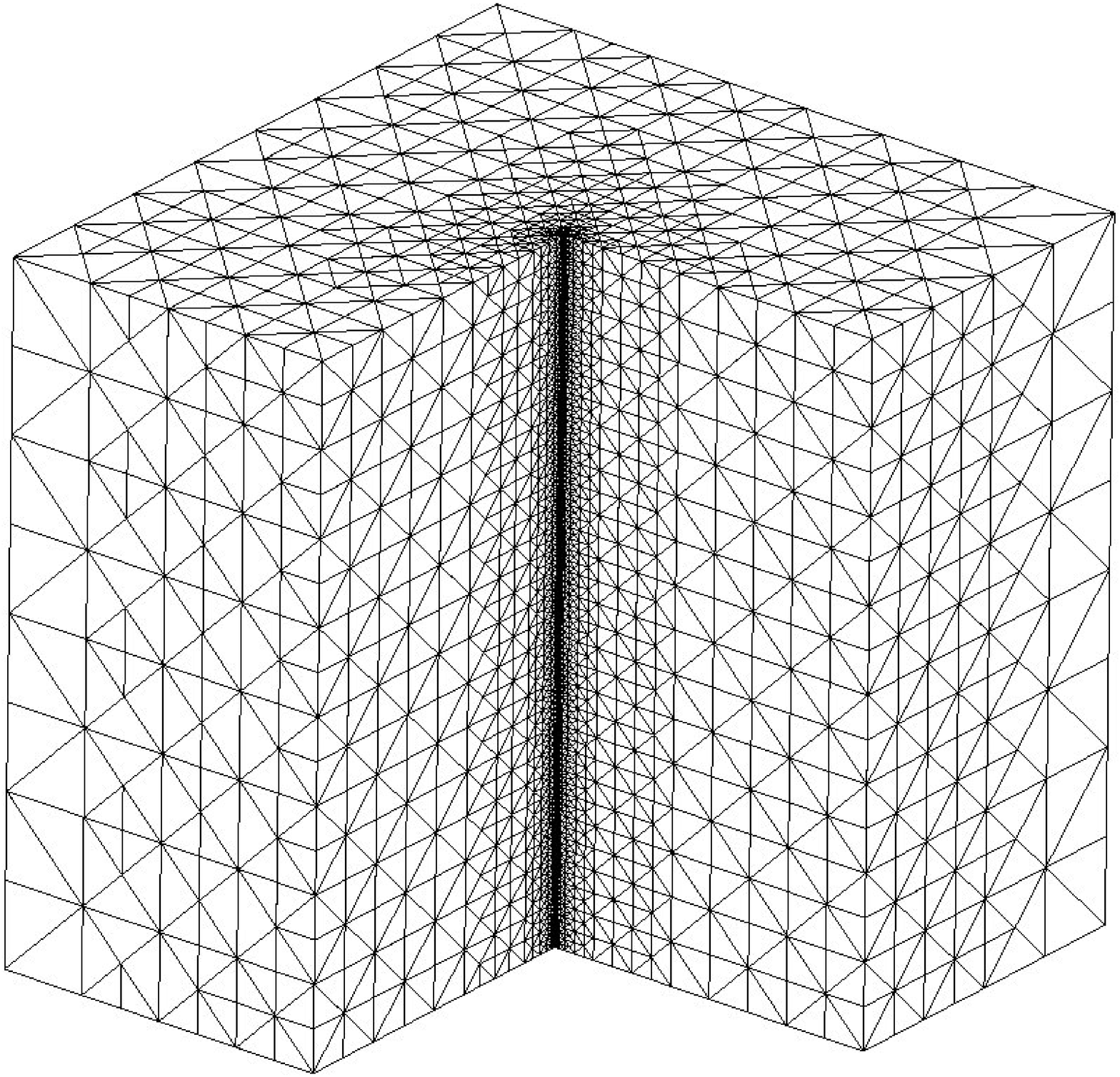}}
  \caption {Example \ref{exp1}, left:
    execution time for local multigrid method,
    right: instance of a locally refined mesh (100,420 elements)
    \label{exp1:cpu}}
\end{figure}

\begin{example}
  \label{exp2}
This example uses the same solution as Example \ref{exp1}
$$\Vu:=\nabla\left\{r^{1/2}\sin(\phi/2)\right\}$$
in cylindrical coordinates $(r,\phi,z)$. But the computational domain is changed to a
three-dimensional non-Lipschitz domain with an inner crack-type boundary, which is
defined by
$$\Omega=(-1,\;1)^3\setminus\{(x,0,z):\;0\le x< 1,\;-1<z<1\}.$$
The Dirichlet boundary condition and the source function $\Vf$ are
the same as above.
\end{example}

Table \ref{exp2:tab} records the numbers of multigrid iterations required to reduce the
initial residual by a factor $10^{-8}$ on different levels. We observe that the
multigrid algorithm converges in less than 30 steps, with the number of elements
soaring from 128 to 135,876.

\begin{table}[h!]
  \centering \caption{The number of adaptive iterations
    $N_{\script{it}}$, the number of elements $N_{\script{el}}$, the
    number of multigrid iterations $N_{itrs}$ required to reduce the
    initial residual by a factor $10^{-8}$, the relative error between
    the true solution $\Vu$ and the discrete solution $\Vu_h$:\quad
    $E_{\script{rel}}=\N{\Vu-\Vu_h}_{\VH(\curl,\Omega)}/
    \N{\Vu}_{\VH(\curl,\Omega)}$ (Example
    \ref{exp2}).}\label{exp2:tab}
\begin{tabular}{*{9}{c}}
\hline  $N_{\script{it}}$
& 2 & 5 & 10 &  15 & 20 & 25 & 30 & 33  \\
\hline  $N_{\script{el}}$ &  128 &  404 &  1,236 & 3,416 &
12,420 &  29,428 &  81,508  &  135,876 \\
\hline  $E_{\script{rel}}$  & 0.4616 & 0.3762&   0.2992  & 0.2347
& 0.1752 &  0.1394 & 0.1095 &   0.0958   \\
\hline $N_{itrs}$ &  14 & 30 & 25 &  26 & 26 & 27 & 27 & 27 \\
\hline\\
\end{tabular}
\end{table}

Fig. \ref{exp2:mesh} (left) shows the CPU time versus the number of degrees of freedom on
different adaptive meshes. Obviously, the CPU time for solving the algebraic system
increases nearly linearly with respect to the number of elements.

Fig. \ref{exp2:mesh} (right) displays a locally refined mesh of 135,876 elements using
adaptive finite element algorithm. In addition, the restriction of the mesh to the
cross-section $\{y=0\}$, which contains the inner boundary, is drawn. This reveals
strong local refinement.

This experiment bears out that the local multigrid is also efficient for the problems
in non-Lipschitz doamins, which are outside the scope of our theory.

\begin{figure}[h!]
  \subfigure{%
    \includegraphics*[width=0.45\textwidth]{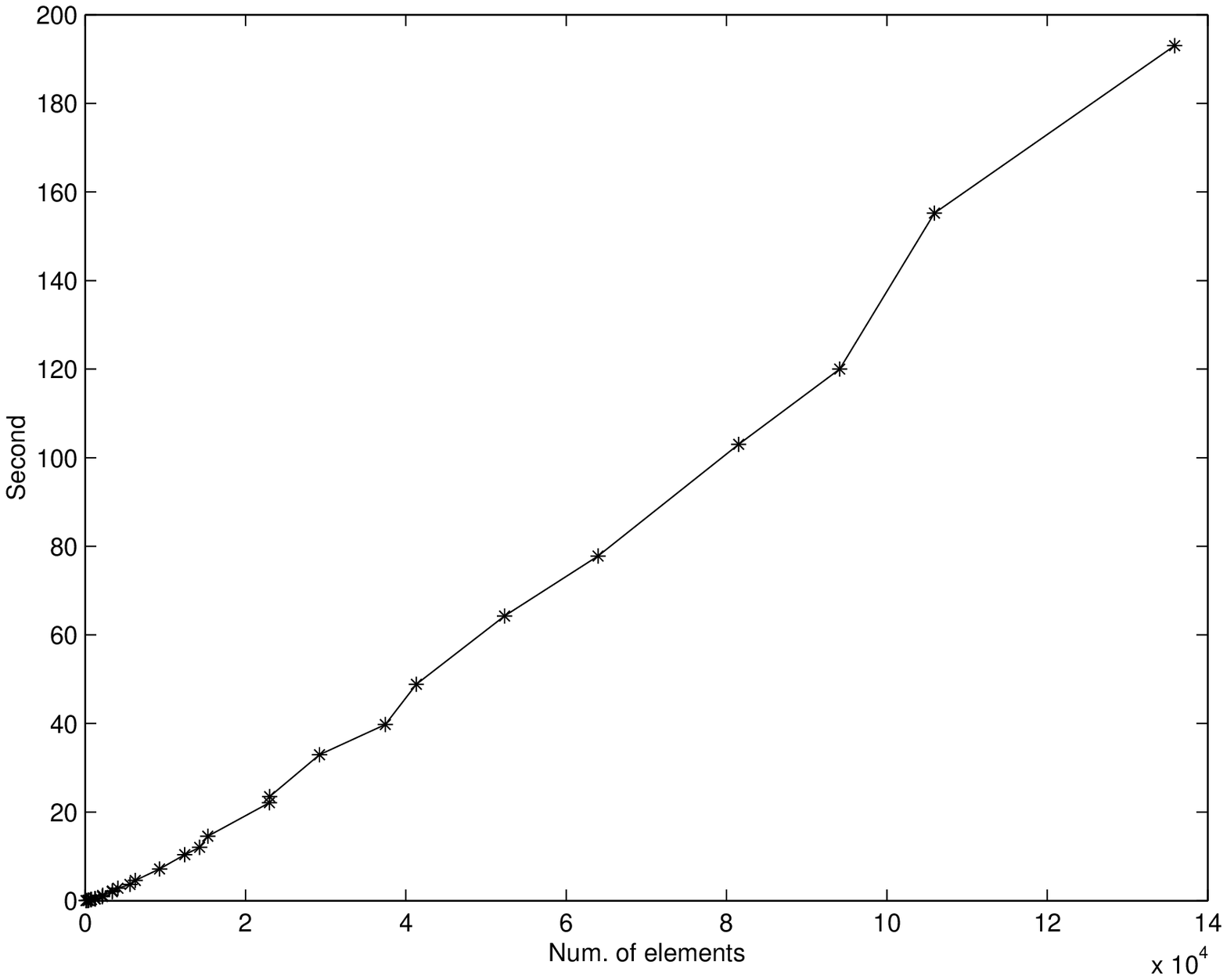}}
  \subfigure{%
    \includegraphics*[width=0.5\textwidth]{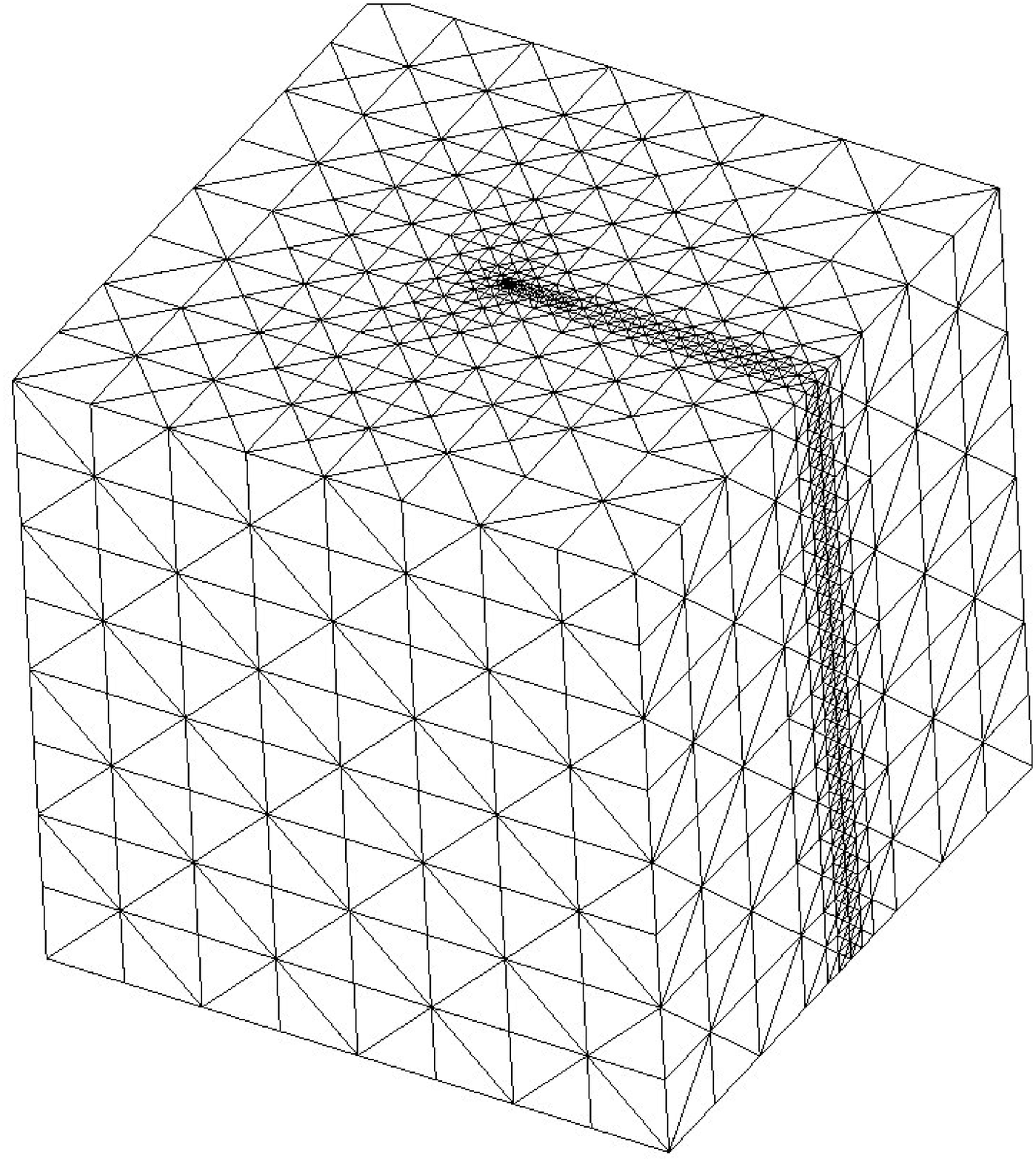}}
  \caption{Example \ref{exp2},
    left: CPU time for solving the algebraic system by multigrid method,
    right: a locally refined mesh (135,876 elements)
    \label{exp2:mesh}}
\end{figure}

% \begin{figure}[h!]
%   \centerline{
%     \includegraphics*[width=9in]{figures/Pscreen.eps}}
%   \caption {The mesh on the cross $\{x_2=0\}$ of the domain $\Omega$
%     $($ Example \ref{exp1}$)$.\label{exp2:pmesh}}
% \end{figure}

%%% Local Variables:
%%% mode: latex
%%% TeX-master: "main"
%%% End:

\section*{Acknowledgement}

The authors would like to thank Dr. L. Wang of Computer Network
Information Center, Prof. Z. Chen and Prof. L. Zhang of the Institute
of Computational Mathematics, Chinese Academy of Sciences, for
their support in the implementation of the local multigrid method. They
are grateful to one referees who detected an error in an earlier version
of the manuscript. 

% R.H. I using bibtex to create the list of references
% BIBTEX-file: references.bib
\bibliographystyle{siam}
\bibliography{lit}
%\bibliography{references}

% \appendix

% \section{Notations}
% \label{sec:notations}
% \mbox{$$}\vspace{1ex}

\end{document}